\newtheorem{lemme}{Lemma}[section]
\newtheorem{prop}{Proposition}[section]
\newtheorem{theo}{Theorem}[section]
\newcommand{\Z}{\mathbb{Z}}
\newcommand{\N}{\mathbb{N}}
\newcommand{\C}{\mathbb{C}}
\newcommand{\F}{\mathbb{F}}
\newcommand{\Q}{\mathbb{Q}}
\newcommand{\T}{\mathbb{T}}
\DeclareMathOperator{\diag}{diag}
\DeclareMathOperator{\tr}{Tr}
\DeclareMathOperator{\lcm}{lcm}
\DeclareMathOperator{\Gal}{Gal}
\title{Image of the Artin groups of classical types inside the finite Iwahori-Hecke algebras}
\author{Alexandre Esterle}
\date{}
\begin{document}
\address{Laboratoire Ami\'enois
de Math\'ematiques Fondamentales et Appliqu\'ees, CNRS-UMR 7352,
Universit\'{e} de Picardie Jules Verne, 33 rue Saint Leu, 80039
Amiens Cedex 1, France.} 

\begin{abstract}

 We determine the image of the Artin groups of types B and D inside the Iwahori-Hecke algebras, when defined over finite fields, in the semisimple case. This generalizes earlier work on type A by Brunat, Magaard and Marin. In this multi-parameter case, this image depends heavily on the parameters.
 \end{abstract}

\email{alexandre.esterle@u-picardie.fr}

\subjclass[2010]{Primary 20C08, 20F36 ; Secondary 20E28}
% Pour plusieurs classifications : \subjclass{Primary 37B10 ; Secondary 20C10}
\keywords{Artin group, Iwahori-Hecke algebra, Representation, Finite classical group} 

\maketitle

\section{Introduction and notation}

In this article, we determine the image of the Artin groups of types $B$ and $D$ inside the Iwahori-Hecke algebras defined over finite fields in the semisimple case. We now recall previous work done on this subject. O. Brunat and I. Marin determine the image of the usual braid group inside the finite Temperley-Lieb algebra \cite{BM} and O. Brunat, K. Magaard and I. Marin determine the image of the usual braid group inside its finite Iwahori-Hecke algebra \cite{BMM}. In \cite{IH2}, I. Marin determines the Zariski closure of the image of the Artin groups inside the corresponding Iwahori-Hecke algebra in characteristic $0$ and for generic parameters. In this paper we extend and conclude the study of classical types over finite fields and establish results which are a natural sequel to the results in \cite{IH2}. For a Coxeter group $W$, we write $A_W$ for its associated Artin group and $\mathcal{A}_W$ for the derived subgroup of $A_W$. In theorems analogous to Theorem 1.1 of \cite{BMM}, we determine, under certain conditions on the parameters of our finite Iwahori-Hecke algebras, the image of the derived subgroups of the Artin groups $\mathcal{A}_{B_n}$ and $\mathcal{A}_{D_n}$ inside their associated Iwahori-Hecke algebras. Restricting to the derived subgroup is, as it was in type $A$, more convenient, and does not significantly weaken our results since $A_{B_n}/\mathcal{A}_{B_n}\simeq \Z^2$ and $A_{D_n}/\mathcal{A}_{D_n}\simeq \Z$. For type $B$, the main results are given in Theorems \ref{result1}, \ref{result2} and \ref{result3} in Section \ref{lalala}. The main result for type $D$ is given in Theorem \ref{result4}.

As in \cite{BHRC}, we write $SL_n(q), SU_n(q^\frac{1}{2}), SP_n(q)$ and $\Omega_n^\pm(q)$ for the finite classical groups acting naturally on the vector space $\F_q^n$. As in \cite{BM} and \cite{BMM} for type $A$, the irreducible representations of the Iwahori-Hecke algebras in types $B$ and $D$ are explicitly described by the Hoefsmit model (see \cite{G-P} or \cite{HOEF}). The irreducible representations of the Iwahori-Hecke algebras are indexed by double-partitions $\lambda$ of $n$ and have a basis formed by the standard double-tableaux $\T$ associated with those double-partitions. A double-partition of $n$ is a partition of $r$ and a partition of $n-r$ for a given $r\in [\![0,n]\!]$. Each double-partition is therefore associated with two Young diagrams, a standard double-tableau associated with this double-partition is the result of some filling of these Young diagrams with the integers from $1$ to $n$ which increase towards the right and towards the bottom within each diagram. The image of $\mathcal{A}_{B_n}$ and $\mathcal{A}_{D_n}$ by a given irreducible representation associated to a double-partition is one of the above finite classical groups defined over $\F_q$ or $\F_{q^\frac{1}{2}}$, depending on properties of the double-partitions and of the field on which the Iwahori-Hecke algebra is defined. In order to recover the image inside the full Iwahori-Hecke algebra over the field on which it is defined, we find the factorizations between different irreducible representations (see Propositions \ref{isomorphisme}, \ref{patate} and \ref{isomorphisme2}) depending on combinatorial properties of the double-partitions, and then use Goursat's Lemma (see \cite{Gour}). 

 Certain problems arise when treating types $B$ and $D$ which were not present in the case of type $A$. The main difficulty which arises in type $B$ comes from the second parameter which forces us to deal with a larger variety of field extensions (see Section \ref{lalala}). The image varies according to the field extension which is considered, in the last three cases, the results obtained are quite different from the ones in type $A$. The outline of the proof is the same as in \cite{BMM}, we first find the factorizations between the different representations and prove the result for small $n$. Then we get the result for all $n$ by induction using a theorem by  Guralnick and Saxl \cite{GS} and the branching rule. The small cases are the most interesting parts of the study and require techniques different from the ones in \cite{BMM}. One of these techniques is to use the maximal subgroups of low-dimensional classical groups which were determined in \cite{BHRC}.
 
 In type $D$, the main difficulty comes from the fact that the representations associated with double-partitions with the same components split into two irreducible representations and we have to consider the double-partitions up to transposition of the two components. For example, this produces a more complex branching rule (Lemma \ref{branch}).

Our results about the image of Artin groups inside the finite Hecke algebras may have various applications. For instance, finite classical groups and direct products of finite classical groups appear as finite quotients of the Artin groups. Since the latter are fundamental groups of algebraic varieties, this also defines interesting finite coverings of these varieties. Since these varieties are defined over $\Q$, this may have applications to inverse Galois theory (see for example \cite{SV}). It is also interesting in terms of finite classical groups because we get explicit generators verifying the braid relations for those groups. This can provide interesting constructions of these groups and some of their subgroups by looking at restrictions to parabolic subgroups of the Artin groups.

We now introduce various notations which we use throughout the article. For a finite group $G$, we write $O_p(G)$ for its maximal normal $p$-subgroup and $G'=[G,G]$ for its derived subgroup. We  write $k$ for the cyclic group of order $k$, $N.H$ for an extension of $N$ by $H$ which can be split and $N:H$ for a split extension of $N$ by $H$ where in both cases $N$ is the normal subgroup. We write $E_{p^n}$ for the elementary abelian group of order $p^n$. If $\lambda$ is a double-partition of $n$, we write $\lambda\vdash\!\vdash n$ and if $\T=(\T_1,\T_2)$ is a double-tableau associated with $\lambda$, we write $\T\in \lambda$ and we call $\T_1$ and $\T_2$ the components of $\T$. For $i\in [\![1,n]\!]$ and $\T\in \lambda$, we write $\tau_{\T}(i)=1$ if $i$ is in the left component of $\T$ and $\tau_{\T}(i)=2$ if $i$ is in the right component of $\T$. We write $n_\lambda$ for the number of standard double-tableaux associated with $\lambda$. We write $I_N$ the identity matrix and $E_{i,j}$ the elementary matrices.

\bigskip

\textbf{Acknowledgments.} This article is part of a doctoral thesis directed by O. Brunat and I. Marin. The author thanks I. Marin for a careful reading of the manuscript and suggested improvements. The author thanks O. Brunat and I. Marin for help understanding \cite{BMM} and \cite{BM} and discussions on some proofs of this article. The author also thanks K. Magaard for suggestions which were a great help to simplify proofs for the low dimensional cases. The author thanks R. Chaneb for pointing out \cite{BHRC} for maximal subgroups of low-dimensional finite classical groups.

\section{Type B}

Let $p$ be a prime, $n\in \N^\star$, $\alpha\in \overline{\F_p}$ of order $a$ greater than $n$ and not in $\{1,2,3,4,5,6,8,10\}$ and $\beta \in \overline{\F_p}\setminus\{-\alpha^i,-(n-1)\leq i\leq n-1\}$ different from $1$. We set $\F_q =\F_p(\alpha,\beta)$. The Artin group of type $B$ is the group generated by the elements $T=S_0,S_1,\dots,S_{n-1}$ verifying the relation $S_0S_1S_0S_1=S_1S_0S_1S_0$, for $i\in [\![1,n-2]\!]$, $S_iS_{i+1}S_i=S_{i+1}S_iS_{i+1}$ and for $(i,j)\in [\![0,n-1]\!]$ such that $\vert i-j\vert \geq 2$, $S_iS_j=S_jS_i$. The associated Iwahori-Hecke $\F_q$-algebra is defined by the generators indexed in the same way as for the Artin group and verifying the previous relations and deformations of the relations of order $2$ of the Coxeter groups : $(T-\beta)(T+1)=0$ and for $i\in [\![1,n-1]\!]$, $(S_i-\alpha)(S_i+1)=0$. In the following, we identify the Artin group with its image inside the Iwahori-Hecke algebra. We write $\ell_1,\ell_2$ for the length functions on $A_{B_n}=\langle T,S_i\rangle_{i\in [\![1,n-1]\!]}$ such that for all $i\in [\![1,n-1]\!]$, $\ell_1(S_i)=1, \ell_1(T)=0, \ell_2(S_i)=0$ and $\ell_2(T)=1$.

In Section $2.1$ we give the irreducible representations described by the Hoefsmit model in \cite{G-P} and \cite{HOEF} (Theorem \ref{mod})  and define a weight on standard tableaux and double-partitions of $n$ which allows us to define a bilinear form verifying nice properties (Proposition \ref{bilin}).

In Section $2.2$, we determine all the isomorphisms between different irreducible representations and then state the main results for type $B$ (Theorem \ref{result1}, \ref{result2} and \ref{result3}).

In Section $2.3$, we prove the result in all possible cases depending on the properties of the field extensions $\F_q$ of $\F_p(\alpha+\alpha^{-1},\beta+\beta^{-1})$ and $\F_p(\alpha)$ of $\F_p(\alpha+\alpha^{-1})$.

\subsection{Hoefsmit model and first properties}

\begin{theo}\label{mod}
Assume $\alpha$ is of order greater than $n$ and $\beta \in \overline{\F_p}\setminus\{-\alpha^i,-(n-1)\leq i\leq n-1\}$. The following matrix model gives a list of the pairwise non-isomorphic absolutely irreducible modules $V_\lambda$ of the Iwahori-Hecke algebra $\mathcal{H}_{n,\alpha,\beta}$ indexed by double-partitions of $n$.\\
If $\T =(\T_1,\T_2)$ is a standard double-tableau, then
\begin{itemize}
\item if $1\in \T_1, T.\T= \beta \T$ and if $1\in \T_2, T.\T = -\T$,
\item $S_i .\T = m_i(\T)\T+(1+m_i(\T))\tilde{T}$ where $\tilde{T} = \T_{i\leftrightarrow i+1}$ if $\T_{i\leftrightarrow i+1}$ is standard and $0$ otherwise.
\end{itemize}
Above we have $m_i(\T)=\frac{\alpha-1}{1-\frac{ct(\T:i)}{ct(\T:i+1)}}$, $ct(\T:j) = \alpha^{c_j(\T)-r_j(\T)}\beta$ if $j \in \T_1$ and $ct(\T:j)=-\alpha^{c_j(\T)-r_j(\T)}$ otherwise and $r_j(\T)$ (resp $c_j(\T)$) is the row (resp column) of $j$ in the tableau of $\T$ containing $j$.
\end{theo}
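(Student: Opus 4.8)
The plan is to follow the classical strategy for Hoefsmit-type models: reduce every relation to a local verification on at most four consecutive boxes, establish absolute irreducibility via a distinctness argument for Jucys--Murphy eigenvalues, and then close the list by a dimension count using semisimplicity.

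\emph{Step 1: the matrices define an $\mathcal{H}_{n,\alpha,\beta}$-module.} First I would check that the formulas make sense, i.e. that $ct(\T:i)\neq ct(\T:i+1)$ whenever $\T_{i\leftrightarrow i+1}$ is standard, so that $m_i(\T)$ is well-defined; this uses exactly that $\alpha$ has order $>n$ and that $\beta\notin\{-\alpha^i:\,-(n-1)\leq i\leq n-1\}$, since $ct(\T:i)/ct(\T:i+1)$ is then a power $\alpha^k$ with $|k|<n$, or $\pm\beta\alpha^k$, none of which equals $1$. The quadratic relation $(T-\beta)(T+1)=0$ is immediate because $T$ acts diagonally in the tableau basis with eigenvalues $\beta,-1$. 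The relations $(S_i-\alpha)(S_i+1)=0$, the braid relations $S_iS_{i+1}S_i=S_{i+1}S_iS_{i+1}$ and the commutations $S_iS_j=S_jS_i$ for $|i-j|\geq 2$ are the relations already verified in the Hoefsmit model of type $A$ (the $S_i$, $i\geq 1$, only see the underlying ordinary Young tableaux), and reduce to $2\times 2$ and $3\times 3$ computations in the spans of the standard tableaux obtained from a fixed one by permuting $i,i+1,i+2$, with the coefficients $m_i$. The only genuinely new relation is $S_0S_1S_0S_1=S_1S_0S_1S_0$, which I would verify directly on the span of the (at most four) standard double-tableaux obtained from a fixed $\T$ by toggling which component contains $1$ and which contains $2$, together with the transposition $1\leftrightarrow 2$; here one inserts $ct(\T:1)=\pm\beta$ or $\pm\alpha^{\pm1}\beta$ (resp. $\mp 1$) as prescribed and expands the two length-four words.

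\emph{Step 2: absolute irreducibility and pairwise non-isomorphism.} Under the hypotheses, the Jucys--Murphy elements (the images of $T$ and of the standard lifts $L_i$) act on the basis $\{\T:\T\in\lambda\}$ with pairwise distinct eigenvalue-tuples $(ct(\T:1),\dots,ct(\T:n))$, because the multiset of these contents, read off component by component, reconstructs the two Young diagrams and the filling. A Vandermonde argument then places all diagonal idempotents $E_{\T,\T}$ in the image algebra; since any two standard double-tableaux of $\lambda$ are joined by a chain of adjacent transpositions $i\leftrightarrow i+1$ preserving standardness, the off-diagonal contribution $(1+m_i(\T))\tilde T$ of the $S_i$ (which is nonzero exactly when such a transposition is admissible) produces all elementary matrices $E_{\T,\T'}$, so $V_\lambda\cong M_{n_\lambda}(\F_q)$ as a module, hence absolutely irreducible. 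Two modules $V_\lambda$, $V_\mu$ with $\lambda\neq\mu$ are non-isomorphic because the common eigenvalue-multiset of the Jucys--Murphy elements recovers $\lambda$.

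\emph{Step 3: completeness.} The hypotheses on $\alpha$ and $\beta$ are precisely the conditions making $\mathcal{H}_{n,\alpha,\beta}$ split semisimple (all relevant Schur elements / the value of the Poincar\'e polynomial are nonzero: $[k]_\alpha\neq0$ for $k\leq n$ since $\mathrm{ord}(\alpha)>n$, and $\beta+\alpha^i\neq0$ for $|i|\leq n-1$). Therefore $\dim_{\F_q}\mathcal{H}_{n,\alpha,\beta}=|W(B_n)|=2^n n!=\sum_{\lambda\vdash\!\vdash n} n_\lambda^2$, and since we have exhibited pairwise non-isomorphic absolutely irreducible modules of dimensions $n_\lambda$ whose squares already sum to this dimension, the $V_\lambda$ form a complete list and $\mathcal{H}_{n,\alpha,\beta}\cong\prod_{\lambda\vdash\!\vdash n}M_{n_\lambda}(\F_q)$.

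The main obstacle I expect is Step 1, specifically the bookkeeping for the type $B$ braid relation $S_0S_1S_0S_1=S_1S_0S_1S_0$ on the four-dimensional local block, together with checking in characteristic $p$ that no denominator in the $m_i$ vanishes and that semisimplicity genuinely persists; all of these are controlled by, and explain, the exclusions imposed on $a=\mathrm{ord}(\alpha)$ and on $\beta$.
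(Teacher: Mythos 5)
Your outline is correct and is essentially the standard Hoefsmit-model argument; the paper itself does not reproduce a proof but refers to Geck--Pfeiffer, Hoefsmit's thesis, and the author's thesis, where precisely this strategy (local verification of the relations, absolute irreducibility via the Jucys--Murphy/content spectrum, completeness by the split-semisimple dimension count $\sum_{\lambda\vdash\!\vdash n} n_\lambda^2 = 2^n n!$) is carried out. The only small overclaim is in Step 1: since $T$ acts diagonally and $S_1$ at most transposes $1\leftrightarrow 2$ in a standard double-tableau, the $\langle T,S_1\rangle$-invariant block is at most two-dimensional (spanned by $\T$ and, when standard, $\T_{1\leftrightarrow 2}$), not four; the verification of $TS_1TS_1 = S_1TS_1T$ is thus a single $2\times 2$ identity in the contents of the boxes containing $1$ and $2$.
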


\begin{proof}
This theorem is a standard result and the proof in the generic case can be found in \cite{G-P} or Hoefsmit's Ph.D. thesis \cite{HOEF}. The proof in the finite field case is similar. For more details, we refer the interested reader to \cite{phD}.
\end{proof}

\bigskip

We now generalize the work done in \cite{BMM} for type $A$ to type $B$, that is we define a bilinear form which is fixed by the image of the derived subgroup of the Artin group of the Iwahori-Hecke algebra. We  define a weight by $\omega(\mathbb{T}) =\omega_1(\mathbb{T})\omega_2(\mathbb{T})\omega_3(\mathbb{T})$ where $\omega_1(\mathbb{T})= \underset{ i < j , i\in \mathbb{T}_1, j \in \mathbb{T}_2}\prod (-1)$, $\omega_2(\mathbb{T})=\underset{ i<j, i,j \in \mathbb{T}_2, r_i(\mathbb{T})>r_j(\mathbb{T})}\prod (-1)$ and $\omega_3(\mathbb{T})=\underset{ i<j, i,j \in \mathbb{T}_1, r_i(\mathbb{T})>r_j(\mathbb{T})}\prod(-1)$ and a bilinear form $(.|.)$ by $(\T|\tilde{\T}) = \omega(\T)\delta_{\T',\tilde{\T}}$ where $\T' = (\T_2',\T_1')$ for $\T=(\T_1,\T_2)$. In the same way $\lambda'=(\lambda_2',\lambda_1')$ is the transpose of $\lambda=(\lambda_1,\lambda_2)$.

If $\mu$ is a partition of $m$ with diagonal size $b(\mu) =\max\{ i, \mu_i \geq i\}$, we let $\nu(\mu) =(-1)^{\frac{m-b(\mu)}{2}}$ and if $\lambda=(\lambda_1,\lambda_2)$ is a double-partition with $\lambda_1$ a partition of $r$ and $\lambda_2$ a partition of $n-r$, we let $\tilde{\nu}(\lambda) = \nu(\lambda_1)\nu(\lambda_2)(-1)^{r(n-r)}$.
 
 We now give a proposition similar to Proposition 3.1 in \cite{BMM}.

\begin{prop}\label{bilin}
For all standard double-tableaux $\T, \tilde{\T}$, we have the following properties.
\begin{enumerate}
\item $(S_i.\T|S_i.\tilde{\T}) = (-\alpha)(\T|\tilde{\T})$ and $(T.\T|T.\tilde{\T}) = (-\beta)(\T|\tilde{\T})$.
\item
For all $b\in \mathcal{B}_n, (b.\T|b.\tilde{\T}) = (\T|\tilde{\T})$.
\item
The restriction of $(.|.)$ to $V_\lambda$ when $\lambda =\lambda'$ and to $V_\lambda \oplus V_{\lambda'}$ when $\lambda \neq \lambda'$ is non-degenerate.\\
Suppose that $\lambda=\lambda'$. Then $(.,.)$ is symmetric on $V_\lambda$ if $\tilde{\nu}(\lambda)=1$ and skew-symmetric otherwise.
Moreover, its Witt index is positive.
\end{enumerate}
\end{prop}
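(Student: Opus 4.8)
The plan is to prove (1) by a direct computation from Theorem~\ref{mod}, to deduce (2) from (1) via the length functions $\ell_1,\ell_2$, and to reduce (3) to (1) together with a combinatorial analysis of the weight $\omega$. For $T$, claim (1) is immediate: if $1\in\T_1$ then $1$ lies in $\T_1'$, which is the \emph{right} component of $\T'=(\T_2',\T_1')$, so $T.\T=\beta\T$ while $T.\T'=-\T'$ (and symmetrically when $1\in\T_2$); as the form pairs $\T$ only with $\T'$, this gives $(T.\T\mid T.\tilde\T)=(-\beta)(\T\mid\tilde\T)$. For $S_i$: since $(S_i-\alpha)(S_i+1)=0$ and $\alpha\neq0$, one has $\alpha S_i^{-1}=S_i-(\alpha-1)$, so $(S_i.x\mid S_i.y)=(-\alpha)(x\mid y)$ is equivalent to the adjointness relation $(S_i.\T\mid\tilde\T)+(\T\mid S_i.\tilde\T)=(\alpha-1)(\T\mid\tilde\T)$ for all standard double-tableaux. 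Expanding both sides with Theorem~\ref{mod}, using $(\T\mid\tilde\T)=\omega(\T)\delta_{\T',\tilde\T}$ and the fact that $\T\mapsto\T'$ commutes with $\T\mapsto\T_{i\leftrightarrow i+1}$, the left-hand side becomes a multiple of $\delta_{\T',\tilde\T}$ plus a multiple of $\delta_{(\T_{i\leftrightarrow i+1})',\tilde\T}$. The first coefficient reduces to $m_i(\T)+m_i(\T')=\alpha-1$, which follows from $ct(\T':j)=-\beta\,ct(\T:j)^{-1}$ (a check on the two cases defining $ct$), since this gives $m_i(\T')=-\tfrac{ct(\T:i)}{ct(\T:i+1)}m_i(\T)$; the second coefficient, after using $m_i((\T_{i\leftrightarrow i+1})')=m_i(\T)$, equals $(1+m_i(\T))\big(\omega(\T_{i\leftrightarrow i+1})+\omega(\T)\big)$.

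So (1) comes down to the sign identity $\omega(\T_{i\leftrightarrow i+1})=-\omega(\T)$ whenever $\T_{i\leftrightarrow i+1}$ is standard. I would prove this by recognizing that $\omega(\T)=(-1)^{\operatorname{inv}(\T)}$, where $\operatorname{inv}(\T)$ counts the inversions of the sequence $k\mapsto(\tau_\T(k),r_k(\T))$ for the preorder in which component $2$ precedes component $1$ and rows increase: the three factors $\omega_1,\omega_2,\omega_3$ count precisely the inversions in which the two labels lie in different components, both in $\T_2$, or both in $\T_1$, respectively. Passing to $\T_{i\leftrightarrow i+1}$ transposes the entries in positions $i$ and $i+1$ of this sequence, changing $\operatorname{inv}$ by $\pm1$ unless those entries coincide; they coincide only when $i$ and $i+1$ lie in the same row of the same component, which for consecutive labels forces $\T_{i\leftrightarrow i+1}$ to be non-standard. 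This completes (1). Part (2) follows at once: inside the algebra every $b\in\mathcal{B}_n$ is a word in $S_0^{\pm1},\dots,S_{n-1}^{\pm1}$ (with $S_0=T$), and applying (1) one letter at a time, together with the version $(g^{-1}.x\mid g^{-1}.y)=\varepsilon_g^{-1}(x\mid y)$ for inverses ($\varepsilon_{S_i}=-\alpha$, $\varepsilon_T=-\beta$), gives $(b.\T\mid b.\tilde\T)=(-\alpha)^{\ell_1(b)}(-\beta)^{\ell_2(b)}(\T\mid\tilde\T)=(\T\mid\tilde\T)$, since $\ell_1$ and $\ell_2$ vanish on the derived subgroup.

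For (3), non-degeneracy is visible from the Gram matrix: $\T\mapsto\T'$ is an involution carrying the standard-double-tableau basis of $\lambda$ onto that of $\lambda'$, so in the tableau basis the Gram matrix is $D\,P$ with $P$ the permutation matrix of this involution and $D=\operatorname{diag}(\omega(\T))$ when $\lambda=\lambda'$ (determinant $\pm1$), and anti-block-diagonal with two invertible signed-permutation blocks when $\lambda\neq\lambda'$. When $\lambda=\lambda'$, symmetry (resp.\ skew-symmetry) of $(.|.)$ on $V_\lambda$ means exactly $\omega(\T')=\omega(\T)$ (resp.\ $=-\omega(\T)$) for all $\T\in\lambda$, so one must check that $\omega(\T')/\omega(\T)$ is the constant $\tilde\nu(\lambda)$: here $\omega_1(\T)\omega_1(\T')=(-1)^{r(n-r)}$ because the mixed pairs $\{k,l\}$ distribute between the two $\omega_1$'s, whereas the contribution of $\omega_2,\omega_3$ to $\omega(\T')/\omega(\T)$ equals $(-1)$ raised to the total number of row- and column-inversions in the two components, a quantity which for fixed shapes $\lambda_1,\lambda_2$ is constant and encoded by $\nu(\lambda_1)\nu(\lambda_2)$. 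Finally, the Witt index is positive: in the skew case $(.|.)$ vanishes identically on the diagonal of $V_\lambda$ (since $\T\neq\T'$ always), so it is a symplectic form of positive even dimension $n_\lambda$, of Witt index $n_\lambda/2$; in the symmetric case one has $n_\lambda\ge3$, and a non-degenerate symmetric bilinear form of dimension $\ge3$ over a finite field is isotropic. (The same isotropy remark handles $V_\lambda\subset V_\lambda\oplus V_{\lambda'}$ when $\lambda\neq\lambda'$, where $V_\lambda$ is already totally isotropic of dimension $n_\lambda>0$.)

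The main obstacle is (1) for the $S_i$: organizing the expansion so that only two coefficients survive, and above all the two sign lemmas for $\omega$ — its behaviour under transposing consecutive labels and under $\T\mapsto\T'$ — which demand careful bookkeeping of $\omega_1,\omega_2,\omega_3$; once these are in place, the remaining ingredients (the quadratic-relation reductions and the elementary theory of bilinear forms over finite fields) are routine.
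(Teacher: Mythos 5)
Your argument is correct, and for part (1) it is organized differently from the paper's. The paper expands $(S_i.\T\mid S_i.\tilde\T)$ directly and splits into the cases $\tau_\T(i)=\tau_\T(i+1)$ (reduced to the type-$A$ computation in \cite{BMM}) and $\tau_\T(i)\neq\tau_\T(i+1)$ (two further subcases $\tilde\T=\T'$, $\tilde\T=\T'_{i\leftrightarrow i+1}$), with some bilinear bookkeeping. You instead observe that, because of the quadratic relation $\alpha S_i^{-1}=S_i-(\alpha-1)$, the claim is equivalent to the linear adjointness identity $(S_i.\T\mid\tilde\T)+(\T\mid S_i.\tilde\T)=(\alpha-1)(\T\mid\tilde\T)$, which expands into just two coefficients: the $\delta_{\T',\tilde\T}$ one reduces to $m_i(\T)+m_i(\T')=\alpha-1$ (a clean consequence of $ct(\T':j)=-\beta\,ct(\T:j)^{-1}$, giving $m_i(\T')=-\gamma m_i(\T)$ with $\gamma=ct(\T:i)/ct(\T:i+1)$), and the $\delta_{(\T_{i\leftrightarrow i+1})',\tilde\T}$ one reduces to $\omega(\T_{i\leftrightarrow i+1})=-\omega(\T)$. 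This is genuinely tidier: the case split $\tau_\T(i)=\tau_\T(i+1)$ versus not disappears, the quadratic cross-terms never arise, and the appeal to \cite[Prop.\ 2.4]{BMM} is replaced by a self-contained proof of the sign lemma via the inversion-count interpretation $\omega(\T)=(-1)^{\operatorname{inv}(\T)}$ (which the paper's own computation uses implicitly in the $\tilde\T=\T'$ subcase, when it replaces $\omega(\T_{i\leftrightarrow i+1})$ by $-\omega(\T)$). Your reading of $\omega_1\omega_2\omega_3$ as the three inversion types under the preorder ``component $2$ before component $1$, then by row'' is exactly what makes the adjacent-swap argument work and is worth making explicit in a write-up.

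For (3) the two arguments are essentially the same: you both reduce $\omega(\T)\omega(\T')$ to the single-partition identity $\omega(\T)\omega(\T')=\nu(\mu)$ (cited from \cite{M2} in the paper) times the mixed-pair sign $(-1)^{r(n-r)}$. You only sketch the shape-invariance of the within-component contribution; either cite it as the paper does or prove it (the quantity is the number of ``incomparable'' pairs in a standard tableau, which is a hook-length/content invariant). For the Witt index, the paper's argument, that every $\T$ is isotropic and the basis is partitioned into pairs $(\T,\T')$ spanning hyperbolic planes, is more direct than your dimension-count; your symmetric case also leans on the unproved remark $n_\lambda\ge3$, which is true here only because the unique self-transpose double-partition with $n_\lambda=2$ is $([1],[1])$ and that one is skew. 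Using the explicit isotropic pairs avoids having to check this.

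Minor points: the equivalence between $(S_i.x\mid S_i.y)=-\alpha(x\mid y)$ and the adjointness identity holds without any non-degeneracy hypothesis in the direction you actually need, which is all that matters, so there is no circularity with (3). And your statement (2) ``$(b^{-1}.x\mid b^{-1}.y)=\varepsilon_b^{-1}(x\mid y)$'' is the obvious consequence of (1); it is worth noting that the identification $(b.\T\mid b.\tilde\T)=(-\alpha)^{\ell_1(b)}(-\beta)^{\ell_2(b)}(\T\mid\tilde\T)$ requires $\ell_1,\ell_2$ to be well-defined homomorphisms on $A_{B_n}$ (they are), after which vanishing on $\mathcal{A}_{B_n}=[A_{B_n},A_{B_n}]$ is automatic.
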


\begin{proof}
Let $\T$ and $\tilde{\T}$ be two double standard-tableaux.

1. We have $(T.\T|T.\tilde{\T})\neq 0\Leftrightarrow (\T|\tilde{\T})\neq 0 \Leftrightarrow \T'=\tilde{\T} \Rightarrow (T.\T|T.\tilde{\T})=-\beta(\T|\tilde{\T})$ because $\tau_\T(1)=3-\tau_{\T'}(1)$. Let $i\in [\![1,n-1]\!]$, if $\tau_{\T}(i)=\tau_{\T}(i+1)$ then by \cite[Prop 2.4.]{BMM}, we have $(S_i.\T|S_i.\tilde{\T})=(\T|\tilde{\T})$ because in the same way, we have $\omega(\T)=-\omega(\T_{i \leftrightarrow i+1})$ for any standard double-tableau $\T$ and $m_i(\T)=m_i(\T_{\tau_{\T}(i)})$ when $\tau_{\T}(i)=\tau_{\T}(i+1)$.  We now assume $\tau_{\T}(i)\neq \tau_{\T}(i+1)$. We have that $S_i.\T = m_i(\T)\T+(1+m_i(\T))\T_{i\leftrightarrow i+1}$ and $S_i.\tilde{\T}=m_i(\tilde{\T})\tilde{\T}+(1+m_i(\tilde{\T}))\tilde{\T}_{i\leftrightarrow i+1}$. It follows that

\begin{eqnarray*}
(S_i.\T|S_i.\tilde{\T}) & = & m_i(\T)m_i(\tilde{\T})(\T|\tilde{\T})+(1+m_i(\T))m_i(\tilde{\T})(\T_{i\leftrightarrow i+1},\tilde{\T})   +  m_i(\T)(1+m_i(\tilde{\T}))(\T|\tilde{\T}_{i \leftrightarrow i+1})\\
 & + & (1+m_i(\T))(1+m_i(\tilde{\T}))(\T_{i\leftrightarrow i+1}|\tilde{\T}_{i\leftrightarrow i+1}).
 \end{eqnarray*}
 
This is non-zero only if $\tilde{\T}=\T'$ or $\tilde{\T}=\T_{i\leftrightarrow i+1}'$. We now have two possible cases.

The first case is $\tilde{\T}=\T'$. We write $a=c_i-r_i+r_{i+1}-c_{i+1}$ with $(r_i,c_i)$ and $(r_{i+1},c_{i+1})$ the boxes in $\T$, we then have
\begin{eqnarray*}
(S_i.\T|S_i.\tilde{\T}) & = & m_i(\T)m_i(\T')\omega(\T)+(1+m_i(\T))(1+m_i(\T'))\omega(\T_{i\leftrightarrow i+1})\\
& = & -\omega(\T)(1+m_i(\T)+m_i(\T'))\\
& = & -\omega(\T)\left(1+\frac{\alpha-1}{1+\frac{u_{\tau(i)}}{u_{\tau(i+1)}}\alpha^{a}}+\frac{\alpha-1}{1+\frac{u_{\tau(i+1)}}{u_{\tau(i)}}\alpha^{-a}}\right)\\
& = & -\omega(\T)1+(\alpha-1)\left(\frac{1+\frac{u_{\tau(i)}}{u_{\tau(i+1)}}\alpha^{a}+1+\frac{u_{\tau(i+1)}}{u_{\tau(i)}}\alpha^{-a}}{1+\frac{u_{\tau(i)}}{u_{\tau(i+1)}}\alpha^{a}+\frac{u_{\tau(i+1)}}{u_{\tau(i)}}\alpha^{-a}+1}\right)\\
& = & -\alpha(\T|\tilde{\T}).
\end{eqnarray*}

The second case is $\tilde{\T}=\T_{i\leftrightarrow i+1}'$, we then have
\begin{eqnarray*}
(S_i.\T|S_i.\tilde{\T}) & = & (1+m_i(\T))(m_i(\T_{i\leftrightarrow i+1}'))\omega(\T_{i\leftrightarrow i+1})+m_i(\T)(1+m_i(\T_{i\leftrightarrow i+1}'))\omega(\T)\\
& = & -\omega(\T)(m_i(\T_{i\leftrightarrow i+1}')-m_i(\T))\\
& = & -\omega(\T)\left(\frac{\alpha-1}{1+\frac{u_{\tau(i)}\alpha^{r_{i+1}-c_{i+1}}}{u_{\tau(i+1)}\alpha^{r_i-c_i}}}-\frac{\alpha-1}{1+\frac{u_{\tau(i)}\alpha^{c_i-r_i}}{u_{\tau(i+1)}\alpha^{c_{i+1}-r_{i+1}}}}\right)\\
& = & 0\\
& = & -\alpha(\T|\tilde{\T}).
\end{eqnarray*}
This concludes the proof of the first result, $(2)$ follows from the first result.

3. By definition, the bilinear form is non-degenerate. Assume $\lambda=(\lambda_1,\lambda_2)=\lambda'$. We consider $\T=(\T_1|\T_2)\vdash\!\vdash n$. Since substituting $i_1<i_2<...<i_l$ in $\T_1$ by $1<2<...<l$ does not change the product and the  weight $\omega$ on $\T\in \mu$ for $\mu\vdash\!\vdash n$ satisfies $\omega(\T)\omega(\T')=\nu(\mu)$ by \cite{M2}(Lemme 6), we have  $\omega(\T)\omega(\T')=\nu(\lambda_1)\nu(\lambda_2)\underset{i<j, i\in \T_1,j\in \T_2 ~\texttt{or}~ i\in \T_2,j\in \T_1}\prod (-1)$. The cardinal of the set  $\{i<j,i\in \T_1,j\in \T_2 ~\mbox{or}~ i\in \T_2,j\in \T_1\}$ is equal to the number of pairs $(i,j)$ with $i$ in $\T_1$ and $j$ in $\T_2$, which equals $(\frac{n}{2})^2$. It follows that $\omega(\T)\omega(\T') =\tilde{\nu}(\lambda)$ for any standard double-tableau $\T$ associated with $\lambda$. For any pair $(\T,\tilde{\T})$, we have that
 $$(\tilde{\T}|\T)=\omega(\tilde{\T})\delta_{\T,\tilde{\T}'}=\tilde{\nu}(\lambda)\omega(\tilde{\T}')\delta_{\T,\tilde{\T}'}=\tilde{\nu}(\lambda)\omega(\T)\delta_{\tilde{\T},\T'}=\tilde{\nu}(\lambda)(\T|\tilde{\T}).$$
 The Witt index is positive since the basis can be partitioned in pairs $(\T|\T')$.
 \end{proof}

We remark that we have proved that $\omega(\T)\omega(\T')=\tilde{\nu}(\lambda)$ for any double-partition $\lambda$ and standard double-tableau $\T$ in $V_\lambda$.

\subsection{Factorization of the image of the Artin group in the Iwahori-Hecke algebra}

\subsubsection{Isomorphisms between representations}

Let $\mathcal{L}\in End(V)$ be defined by  $\mathbb{T}\mapsto \omega(\T)\T'$. We give a generalization of Lemma 3.2. of \cite{BMM}.

\begin{prop}\label{transpose}
Let $\lambda$ be a double-partition of $n$ such that $\lambda \neq \lambda'$ (resp $\lambda=\lambda'$). $\mathcal{L}$ induces an endomorphism of $V_{\lambda} \oplus V_{\lambda'}$ (resp $V_\lambda$) which switches $V_{\lambda}$ and $V_{\lambda'}$ (resp leaves $V_\lambda$ stable) such that the actions of $S_r$ and $T$ satisfy
$$\mathcal{L}S_r\mathcal{L}^{-1}(-\alpha)^{-1} = {}^t\!S_r^{-1}, \mathcal{L}T\mathcal{L}^{-1}(-\beta)^{-1} = {}^t\!T^{-1}.$$
\end{prop}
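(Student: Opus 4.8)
The plan is to verify directly on the Hoefsmit basis that the map $\mathcal{L}\colon \T \mapsto \omega(\T)\T'$ intertwines the given representation with the transpose-inverse one, normalized by the scalars $(-\alpha)^{-1}$ and $(-\beta)^{-1}$. First I would record the elementary structural facts: taking transposes of double-tableaux exchanges the two components and swaps rows with columns, so $\tau_{\T'}(i) = 3 - \tau_\T(i)$, $r_i(\T') = c_i(\T)$, $c_i(\T') = r_i(\T)$, and $(\T_{i\leftrightarrow i+1})' = \T'_{i\leftrightarrow i+1}$. Consequently $\mathcal{L}$ sends the $V_\lambda$-basis to the $V_{\lambda'}$-basis; since $(\T')' = \T$ and $\omega(\T)\omega(\T') = \tilde\nu(\lambda)$ (the remark just proved after Proposition \ref{bilin}), we get $\mathcal{L}^2 = \tilde\nu(\lambda)\,\mathrm{Id}$ on $V_\lambda \oplus V_{\lambda'}$ (resp.\ on $V_\lambda$), so $\mathcal{L}$ is invertible with $\mathcal{L}^{-1} = \tilde\nu(\lambda)\mathcal{L}$, and it switches $V_\lambda$ and $V_{\lambda'}$ (resp.\ stabilizes $V_\lambda$) exactly as claimed.

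Next I would check the relation for $T$. On a tableau $\T$ with $1 \in \T_1$ we have $T.\T = \beta\T$, while $1 \in \T'_2$ so $T.\T' = -\T'$; hence $T^{-1}.\T' = -\T'$ and ${}^t T^{-1}$ acts on the dual/transpose basis accordingly. Comparing $\mathcal{L}T.\T = \beta\omega(\T)\T'$ with $(-\beta)\,{}^tT^{-1}\mathcal{L}.\T$ reduces to the scalar identity $\beta \cdot (\text{eigenvalue of }{}^tT^{-1}\text{ on }\T')^{-1}\cdot(-\beta)^{-1}$... more cleanly: since $T$ acts diagonally in this basis with eigenvalues in $\{\beta,-1\}$ and transposition swaps $\beta \leftrightarrow -1$, one checks $\mathcal{L}T\mathcal{L}^{-1}$ acts on $\T'$ with eigenvalue the ``other'' one, and $(-\beta)\cdot\beta^{-1}\cdot(\cdots)$ matches ${}^tT^{-1}$: concretely $\beta^{-1}\cdot(-\beta) = -1$ and $(-1)^{-1}\cdot(-\beta)=\beta$, which are precisely the eigenvalues of ${}^tT^{-1}$ on the two types of tableaux. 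This is a short diagonal computation.

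The main work is the relation for $S_r$, and this is where I expect the real obstacle. The key input is that $\omega(\T) = -\omega(\T_{i\leftrightarrow i+1})$ for every standard double-tableau (used already in the proof of Proposition \ref{bilin}), together with the behaviour of the coefficient $m_i$. Writing $S_i.\T = m_i(\T)\T + (1+m_i(\T))\T_{i\leftrightarrow i+1}$, I would compute $\mathcal{L}S_i\mathcal{L}^{-1}$ on $\T'$: it equals $\tilde\nu(\lambda)\,\mathcal{L}S_i(\omega(\T)\T) = \tilde\nu(\lambda)\omega(\T)\mathcal{L}(m_i(\T)\T + (1+m_i(\T))\T_{i\leftrightarrow i+1})$, and since $\omega(\T_{i\leftrightarrow i+1}) = -\omega(\T)$ and $\mathcal{L}^2 = \tilde\nu(\lambda)$, this collapses to $m_i(\T)\T' - (1+m_i(\T))\T'_{i\leftrightarrow i+1}$. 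On the other hand one computes the matrix of $(-\alpha)\,{}^tS_i^{-1}$ on the pair $\{\T', \T'_{i\leftrightarrow i+1}\}$ using $(S_i - \alpha)(S_i+1) = 0$, hence $S_i^{-1} = \alpha^{-1}(\alpha+1 - S_i)$, transpose it, and use the identity $ct(\T':j) = ct(\T:j)^{-1}\cdot(\pm\text{sign adjustment})$ coming from $r \leftrightarrow c$ under transposition — in fact $ct(\T':i)/ct(\T':i+1) = \bigl(ct(\T:i)/ct(\T:i+1)\bigr)^{-1}$ — to see that $m_i$ transforms in the controlled way $m_i(\T') = (\alpha-1) - m_i(\T) + (\text{correction})$, forcing the two $2\times 2$ matrices to agree. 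I would organize this as a single $2\times 2$ matrix identity, verified by checking that both sides have the correct trace and determinant (or simply matching the four entries), using $\frac{ct(\T:i)}{ct(\T:i+1)}$ as the single variable. The potential pitfall is sign bookkeeping between $\omega(\T')$, the component-swap, and the case $\tau_\T(i) \ne \tau_\T(i+1)$ versus $\tau_\T(i) = \tau_\T(i+1)$; I would treat these two cases separately exactly as in the proof of Proposition \ref{bilin}, since in the equal-component case $\T_{i\leftrightarrow i+1}$ need not be standard and the relevant identities are those already established there.

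Finally, since the relations hold on a spanning set of basis vectors and both $S_r \mapsto \mathcal{L}S_r\mathcal{L}^{-1}(-\alpha)^{-1}$ and $S_r \mapsto {}^tS_r^{-1}$ (likewise for $T$) are algebra-compatible, the identity of operators follows, completing the proof. I would remark that this is the natural $B$-type analogue of Lemma 3.2 of \cite{BMM}, the only new feature being the second parameter $\beta$ and the component-swap built into $\T \mapsto \T'$, both of which are handled by the weight $\omega$.
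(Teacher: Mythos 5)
Your plan to verify the intertwining relation directly on the Hoefsmit basis is correct, but it is considerably more work than the paper's proof, which is a one-liner. The paper's observation is that the matrix $M$ of $\mathcal{L}$ in the standard-tableau basis, with entry $M_{\tilde{\T},\T} = \omega(\T)\delta_{\tilde{\T},\T'}$, is (up to the sign $\tilde{\nu}(\lambda)$) precisely the Gram matrix $J$ of the bilinear form $(\cdot|\cdot)$ of Proposition \ref{bilin}; part (1) of that proposition, rewritten in matrix form, reads ${}^t\!S_r\,J\,S_r = (-\alpha)J$ and ${}^t\!T\,J\,T = (-\beta)J$, which rearranges immediately to $J S_r J^{-1} = (-\alpha)\,{}^t\!S_r^{-1}$ and $J T J^{-1} = (-\beta)\,{}^t\!T^{-1}$, and hence to the stated identities for $\mathcal{L}$; there is nothing left to compute. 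Your direct approach essentially re-derives the $2\times 2$ calculations already carried out inside the proof of Proposition \ref{bilin}. It does work: the content ratio inverts under $\T \mapsto \T'$, giving $m_i(\T) + m_i(\T') = \alpha - 1$ exactly (there is no ``correction'' term as you tentatively suggest), and this together with $\omega(\T_{i\leftrightarrow i+1}) = -\omega(\T)$ and ${}^t\!S_i^{-1} = \alpha^{-1}{}^t\!S_i - (1-\alpha^{-1})$ makes the $2\times 2$ blocks agree; the $1\times 1$ case (when $\T_{i\leftrightarrow i+1}$ is non-standard) follows from the same identity since $m_i(\T)\in\{\alpha,-1\}$ forces $m_i(\T')$ to be the other value. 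One bookkeeping slip worth fixing if you execute this: your intermediate expression $\tilde{\nu}(\lambda)\mathcal{L}S_i(\omega(\T)\T)$ for $\mathcal{L}S_i\mathcal{L}^{-1}(\T')$ carries a spurious factor of $\tilde{\nu}(\lambda)$; in fact $\mathcal{L}^{-1}(\T') = \omega(\T)\T$ on the nose, the two $\tilde{\nu}(\lambda)$'s in $\mathcal{L}^{-1}=\tilde{\nu}(\lambda)\mathcal{L}$ and $\omega(\T')=\tilde{\nu}(\lambda)\omega(\T)$ cancelling, but the final formula $m_i(\T)\T' - (1+m_i(\T))\T'_{i\leftrightarrow i+1}$ you state is nevertheless correct.
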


\begin{proof}
This follows directly from Proposition \ref{bilin} by writing the matrix of the bilinear form and the matrix of $\mathcal{L}$. 
\end{proof}
We now suppose $\F_p(\alpha,\beta)= \F_p(\alpha+\alpha^{-1},\beta) =\F_p(\alpha,\beta+\beta^{-1})\neq \F_p(\alpha+\alpha^{-1}, \beta+\beta^{-1})$.

We then have an $\F_q$-automorphism $\epsilon$ of order $2$ such that $\epsilon(\alpha)=\alpha^{-1}$ and $\epsilon(\beta)=\beta^{-1}$.

 Let $\langle .,.\rangle $ be the hermitian form defined by $\langle\mathbb{T},\tilde{\mathbb{T}}\rangle = d(\mathbb{T})\delta_{\mathbb{T},\tilde{\mathbb{T}}}$ where $$d(\mathbb{T})=\tilde{d}(\mathbb{T}_1)\tilde{d}(\mathbb{T}_2)\underset{i\in \T_1, j\in \mathbb{T}_2, i<j}\prod \frac{2+\beta\alpha^{a_{i,j}-1}+\beta^{-1}\alpha^{1-a_{i,j}}}{\alpha+\alpha^{-1}+\beta\alpha^{a_{i,j}}+\beta^{-1}\alpha^{-a_{i,j}}}$$
with $a_{i,j} = c_i-r_i+r_j-c_j$ and $\tilde{d}$ induced by the $d$ defined in  \cite{BMM} applied to $\mathbb{T}_1$ and $\mathbb{T}_2$ by seeing them as standard tableaux using the ordered bijections onto $[\![1,r]\!]$ and $[\![1,n-r]\!]$.

 We now check that $d(\T)$ is well defined and non-zero for any standard double-tableau. We prove in what follows that the big product in the expression of $d$ is indeed well-defined and non-zero for any double-tableau with no empty components.
 
 Let $\lambda \vdash\!\vdash n, \T=(\T_1,\T_2)\in \lambda$ and $(i,j)$ a pair of integers such that $i<j, i\in \T_1$ and $j\in \T_2$. We set $r$ to be the number of boxes of $\T_1$, we have $1-r\leq c_i-r_i\leq r-1$ and $1-(n-r)\leq c_j-r_j \leq n-r-1$ so $2-n \leq a=a_{i,j}\leq n-2$. We have $\alpha+\alpha^{-1}+\beta\alpha^{a}+\beta^{-1}\alpha^{-a} = \alpha(1+\beta\alpha^{a-1})+\alpha^{-a}\beta^{-1}(1+\beta\alpha^{a-1}) =\alpha(1+\beta\alpha^{a-1})(1+\alpha^{-a-1}\beta^{-1}).$ This product never cancels because $\beta\notin \{-\alpha^i,1-n\leq i \leq n-1\}$. In the same way $2+\beta\alpha^{a-1}+\beta^{-1}\alpha^{1-a}=(1+\beta\alpha^{a-1})(1+\beta^{-1}\alpha^{1-a})$ never cancels.  
 
 Now we have defined this hermitian form, we can generalize Proposition $3.6$ from \cite{BMM}.

\begin{prop}\label{unitary}
The group $A_{B_n}$ acts in a unitary way on $V$ with respect to this hermitian form and this form is non-degenerate on $V_\lambda$ for any double-partition $\lambda$ of $n$. In particular, for any double-partition $\lambda$ of $n$, there exists a matrix $P\in GL_{n_\lambda}(q)$ such that $PR_{\lambda}(T)P^{-1}=\epsilon(R_{\lambda}^\star(T))=R_{\lambda}(T)$ and $PR_{\lambda}(S_r)P^{-1}=\epsilon(R_{\lambda}^{\star}(S_r))$.
\end{prop}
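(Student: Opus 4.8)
The plan is to prove the proposition in two stages, mirroring the structure used for the bilinear form in Proposition \ref{bilin}. First I would establish that the hermitian form $\langle .,.\rangle$ is preserved (up to the scalars $-\alpha$, $-\beta$) by the generators, and then deduce from this the existence of the conjugating matrix $P$.

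For the first stage, the key computation is to check, for each generator, that $\langle S_r.\T, S_r.\tilde{\T}\rangle = \epsilon$-twisted of $\langle \T,\tilde{\T}\rangle$ with the appropriate scalar, and similarly for $T$. The action of $T$ is diagonal with eigenvalues $\beta$ and $-1$, both of which are moved to their inverses by $\epsilon$, so the $T$-case reduces to checking that $d(\T)$ and the eigenvalue of $T$ on $\T$ are compatible — this is immediate from the definition of $d$ and the fact that $\epsilon(\beta)=\beta^{-1}$. For $S_r$, one splits into the two cases $\tau_{\T}(r)=\tau_{\T}(r+1)$ and $\tau_{\T}(r)\neq\tau_{\T}(r+1)$, exactly as in the proof of Proposition \ref{bilin}. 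When $r$ and $r+1$ lie in the same component, the factor $\tilde d(\T_1)\tilde d(\T_2)$ handles everything and one invokes the corresponding statement of \cite[Prop.\ 3.6]{BMM} for a single tableau (applied via the ordered bijections). When $r,r+1$ lie in different components, $S_r$ acts as the scalar $m_r(\T)$ since $\T_{r\leftrightarrow r+1}$ is not standard, so one only needs $m_r(\T)\,\epsilon(\overline{m_r(\T)}) = -\alpha$, or rather the correct unitary identity relating $m_r(\T)$, its $\epsilon$-conjugate, and the ratio $d(\T)/d(\tilde\T)$; here the explicit factorization $\alpha+\alpha^{-1}+\beta\alpha^a+\beta^{-1}\alpha^{-a} = \alpha(1+\beta\alpha^{a-1})(1+\alpha^{-a-1}\beta^{-1})$ established just above the proposition is exactly what makes the cross-terms cancel. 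The non-vanishing of $d(\T)$ — already verified in the paragraph preceding the statement — guarantees the form is non-degenerate on each $V_\lambda$, since it is diagonal in the standard-tableau basis with nonzero entries.

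For the second stage, once we know $A_{B_n}$ acts unitarily, the matrices $R_\lambda(S_r)$ and $R_\lambda(T)$ satisfy ${}^t\!\overline{R_\lambda(g)}\, D_\lambda\, R_\lambda(g) = D_\lambda$ where $D_\lambda = \diag(d(\T))_{\T\in\lambda}$ and the bar is $\epsilon$ applied entrywise; equivalently ${}^t\!\epsilon(R_\lambda(g)) = D_\lambda R_\lambda(g)^{-1} D_\lambda^{-1}$. Taking inverse-transpose and applying $\epsilon$ gives $\epsilon(R_\lambda^\star(g)) = \epsilon({}^t\!R_\lambda(g)^{-1}) = D_\lambda^{-1} \epsilon(R_\lambda(g))\, D_\lambda$... rather, one untangles the definitions so that conjugation by a suitable matrix $P$ built from $D_\lambda$ (for instance $P = D_\lambda$ itself, up to checking which side the twist lands on) sends $R_\lambda(g)$ to $\epsilon(R_\lambda^\star(g))$. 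For $g=T$ the extra point is that $R_\lambda(T)$ is already defined over the fixed field of $\epsilon$ (its entries are $\beta$ and $-1$... wait, $\beta\notin\F_p(\alpha+\alpha^{-1},\beta+\beta^{-1})$, so instead one uses that $R_\lambda(T)$ is diagonal and $\epsilon(R_\lambda^\star(T)) = \epsilon({}^t\!R_\lambda(T)^{-1})$ has diagonal entries $\epsilon(\beta^{-1})=\beta$ and $\epsilon((-1)^{-1})=-1$, hence equals $R_\lambda(T)$), which is the content of the stated identity $PR_\lambda(T)P^{-1}=\epsilon(R_\lambda^\star(T))=R_\lambda(T)$; this forces $P$ to commute with the diagonal matrix $R_\lambda(T)$, which is compatible with $P=D_\lambda$ since the latter is diagonal.

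The main obstacle I anticipate is the bookkeeping in the $\tau_{\T}(r)\neq\tau_{\T}(r+1)$ case: one must verify the precise unitary relation between $m_r(\T)$, $\epsilon(\overline{m_r(\T)})$ and the quotient $d(\T)/d(\tilde\T)$, and this is where the particular shape of the correction factor in the definition of $d$ — with numerator $2+\beta\alpha^{a-1}+\beta^{-1}\alpha^{1-a}$ and denominator $\alpha+\alpha^{-1}+\beta\alpha^a+\beta^{-1}\alpha^{-a}$ — has been engineered to make things work. This is a direct but slightly delicate algebraic identity, entirely parallel to the computation in the two displayed cases of the proof of Proposition \ref{bilin}, and I would carry it out by the same method: expand $\langle S_r.\T, S_r.\tilde\T\rangle$, observe it is nonzero only for $\tilde\T$ equal to $\T$ or $\T_{r\leftrightarrow r+1}$ (the form being diagonal, not anti-diagonal, here — this is the one structural difference from Proposition \ref{bilin}), and simplify using the factorization. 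Everything else is formal.
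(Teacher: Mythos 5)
The overall plan is the right one and mirrors the paper's proof, but there are two concrete errors in the case analysis that would derail the computation.

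First, you assert that when $r$ and $r+1$ lie in different components, ``$S_r$ acts as the scalar $m_r(\T)$ since $\T_{r\leftrightarrow r+1}$ is not standard.'' This is backwards: if $r\in\T_1$ and $r+1\in\T_2$ (or vice versa), the swap moves each of them to a box in the other component with no constraint from adjacency, so $\T_{r\leftrightarrow r+1}$ is \emph{always} standard in this case. It is the same-component case where non-standardness can occur (when $r$ and $r+1$ sit in the same row or column). Consequently, in the mixed-component case $S_r.\T = m_r(\T)\T + (1+m_r(\T))\T_{r\leftrightarrow r+1}$ is a genuine two-term expression, and this is precisely the case to which the paper devotes its three explicit verifications: $\langle S_r.\T,S_r.\T\rangle=d(\T)$, $\langle S_r.\T_{r\leftrightarrow r+1},S_r.\T_{r\leftrightarrow r+1}\rangle=d(\T_{r\leftrightarrow r+1})$, and the cross term $\langle S_r.\T,S_r.\T_{r\leftrightarrow r+1}\rangle=0$. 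Your later sentence (``expand $\langle S_r.\T,S_r.\tilde\T\rangle$, observe it is nonzero only for $\tilde\T$ equal to $\T$ or $\T_{r\leftrightarrow r+1}$'') is consistent with this, but contradicts your earlier claim, so it reads as a self-contradiction rather than a resolution. You cannot reduce this case to a one-line scalar identity; the correction factor in $d$ is engineered precisely to make the three identities hold, and working them out is the heart of the proof.

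Second, you say the hermitian form is ``preserved (up to the scalars $-\alpha$, $-\beta$).'' That is the bilinear form's behaviour from Proposition \ref{bilin}; the hermitian form here is preserved \emph{exactly}, with no scalar, because the eigenvalues of the generators come in $\epsilon$-dual pairs: $\beta\epsilon(\beta)=1$, $(-1)\epsilon(-1)=1$, $\alpha\epsilon(\alpha)=1$. If you instead try to chase a factor of $-\alpha$ or $-\beta$ through the hermitian computation you will produce an incorrect intermediate identity. Your second stage (taking $P=D_\lambda$, the diagonal Gram matrix) is fine once the exact unitarity is in place.
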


\begin{proof}
The action of $T$ is indeed unitary with regards to this hermitian form  because $\beta\epsilon(\beta)=(-1)\epsilon(-1)=1$. Let $\T$ be a standard double-tableau and $r\in [\![1,n-1]\!]$. If $\tau_{\T}(r)=\tau_{\T}(r+1)$ then the result is a consequence of Proposition 3.6. in \cite{BMM}.

We now assume that $\tau_{\T}(r)\neq\tau_{\T}(r+1)$, up to switching $\T$ and $\T_{r\leftrightarrow r+1}$ we can assume that $\tau_{\T}(r)=1$ and $\tau_{\T}(r+1)=2$.  It remains to show that $\langle\T,\T\rangle=\langle S_r.\T,S_r.\T\rangle, \langle\T_{r\leftrightarrow r+1},\T_{r\leftrightarrow r+1}\rangle = \langle S_r.\T_{r\leftrightarrow r+1},S_r.\T_{r\leftrightarrow r+1}\rangle$ and  $\langle S_r.\T,S_r.\T_{r\leftrightarrow r+1}\rangle =\langle \T,\T_{r\leftrightarrow r+1}\rangle $. In the following computation, we write $a=a_{i,i+1}$ and $\tilde{\T}=\T_{r\leftrightarrow r+1}$. We have
\begin{eqnarray*}
 \langle S_r.\T,S_r.\T\rangle & = & m_r(\T)\epsilon(m_r(\T))d(\T)+(1+m_r(\T))\epsilon(1+m_r(\T))d(\tilde{\T})\\
 & = & d(\T)(m_r(\T)\epsilon(m_r(\T))+\left(\frac{\alpha+\alpha^{-1}+\beta\alpha^{a}+\beta^{-1}\alpha^{-a}}{2+\beta\alpha^{a-1}+\beta^{-1}\alpha^{1-a}}(1+m_r(\T))\epsilon(1+m_r(\T)))\right)\\
 & = & d(\T)\left(\frac{\alpha-1}{1+\beta\alpha^{a}}\frac{\alpha^{-1}-1}{1+\beta^{-1}\alpha^{-a}}+\frac{\alpha+\alpha^{-1}+\beta\alpha^{a}+\beta^{-1}\alpha^{-a}}{2+\beta\alpha^{a-1}+\beta^{-1}\alpha^{1-a}}\frac{\alpha+\beta\alpha^{a}}{1+\beta\alpha^{a}}\frac{\alpha^{-1}+\beta^{-1}\alpha^{-a}}{1+\beta^{-1}\alpha^{-a}}\right)\\
 & = & d(\T)\left(\frac{2-\alpha-\alpha^{-1}}{2+\beta\alpha^a+\beta^{-1}\alpha^{-a}}+\frac{\alpha+\alpha^{-1}+\beta\alpha^{a}+\beta^{-1}\alpha^{-a}}{2+\beta\alpha^a+\beta^{-1}\alpha^{-a}}\right)\\
 & = & d(\T)\\
 & = & \langle\T,\T\rangle.
\end{eqnarray*}
We also have
\begin{eqnarray*}
\langle S_r.\tilde{\T},S_r.\tilde{\T}\rangle & = & m_r(\tilde{\T})\epsilon(m_r(\tilde{\T}))d(\tilde{\T})+(1+m_r(\tilde{\T}))\epsilon(1+m_r(\tilde{\T}))d(\T)\\
& = & d(\tilde{\T})\left(\frac{\alpha-1}{1+\beta^{-1}\alpha^{-a}}\frac{\alpha^{-1}-1}{1+\beta\alpha^a}+\frac{2+\beta\alpha^{a-1}+\beta^{-1}\alpha^{1-a}}{\alpha+\alpha^{-1}+\beta\alpha^a+\beta^{-1}\alpha^{-a}}\frac{\alpha+\beta^{-1}\alpha^{-a}}{1+\beta^{-1}\alpha^{-a}}\frac{\alpha^{-1}+\beta\alpha^a}{1+\beta\alpha^a}\right)\\
& = & d(\tilde{\T})\left(\frac{2-\alpha-\alpha^{-1}}{2+\beta\alpha^a+\beta^{-1}\alpha^{-a}}+\right.\\
& &\left. \frac{4+2\beta\alpha^{a+1}+2\beta^{-1}\alpha^{-a-1}+2\beta\alpha^{a-1}+\beta^2\alpha^{2a}+\alpha^{-2}+2\beta^{-1}\alpha^{1-a}+\alpha^2+\beta^{-2}\alpha^{-2a}}{(\alpha+\alpha^{-1}+\beta\alpha^a+\beta^{-1}\alpha^{-a})(2+\beta\alpha^a+\beta^{-1}\alpha^{-a})}\right)\\
& = & d(\tilde{\T})\left(\frac{2-\alpha-\alpha^{-1}}{2+\beta\alpha^a+\beta^{-1}\alpha^{-a}}+\frac{(\alpha+\alpha^{-1}+\beta\alpha^a+\beta^{-1}\alpha^{-a})^2}{(\alpha+\alpha^{-1}+\beta\alpha^a+\beta^{-1}\alpha^{-a})(2+\beta\alpha^a+\beta^{-1}\alpha^{-a})}\right)\\
& = & d(\tilde{\T})\\
& = & \langle\tilde{\T},\tilde{\T}\rangle.
\end{eqnarray*}
Finally, we have
\begin{eqnarray*}
\langle S_r.\T,S_r.\tilde{\T}\rangle  & = & m_r(\T)\epsilon(1+m_r(\tilde{\T}))d(\T)+(1+m_r(\T))\epsilon(m_r(\tilde{\T}))d(\tilde{\T})\\
& = & d(\T)\left(\frac{\alpha-1}{1+\beta\alpha^a}\frac{\alpha^{-1}+\beta\alpha^a}{1+\beta\alpha^a}+\frac{\alpha+\alpha^{-1}+\beta\alpha^{a}+\beta^{-1}\alpha^{-a}}{2+\beta\alpha^{a-1}+\beta^{-1}\alpha^{1-a}}\frac{\alpha+\beta\alpha^a}{1+\beta\alpha^a}\frac{\alpha^{-1}-1}{1+\beta\alpha^a}\right)\\
& = &d(\T)\frac{\alpha-1}{(1+\beta\alpha^a)^2(2+\beta\alpha^{a-1}+\beta^{-1}\alpha^{1-a})}\left((\alpha^{-1}+\beta\alpha^a)(2+\beta\alpha^{a-1}+\beta^{-1}\alpha^{1-a})-\right.\\
& &\left. \alpha^{-1}(\alpha+\alpha^{-1}+\beta\alpha^a+\beta^{-1}\alpha^{-a})(\alpha+\beta\alpha^a)\right)\\
& = & d(\T)\frac{\alpha-1}{(1+\beta\alpha^a)^2(2+\beta\alpha^{a-1}+\beta^{-1}\alpha^{1-a})}\left(2\alpha^{-1}+\beta\alpha^{a-2}+\beta^{-1}\alpha^{-a}+2\beta\alpha^a+\right.\\
& & \left.\beta^2\alpha^{2a-1}+\alpha-\alpha-\alpha^{-1} -\beta\alpha^a-\beta^{-1}\alpha^{-a}-\beta\alpha^a-\beta\alpha^{a-2}-\beta^{2}\alpha^{2a-1}-\alpha^{-1}\right)\\
& = & 0\\
& = & \langle \T,\tilde{\T}\rangle.
\end{eqnarray*}
\end{proof}

\bigskip
We recall that $\mathcal{A}_{B_n} = [A_{B_n},A_{B_n}]$ is the derived subgroup of $A_{B_n}$. When it exists, we write $\epsilon$ for the automorphism of order $2$ of $\F_q= \F_p(\alpha,\beta)$.

\begin{lemme}\label{Lincoln}
If $\lambda$ is a double-partition of $n$ then the restriction of  $R_{\lambda}$ to $\mathcal{A}_{B_n}$ is absolutely irreducible.
\end{lemme}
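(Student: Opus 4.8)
The plan is to deduce absolute irreducibility of $R_\lambda|_{\mathcal{A}_{B_n}}$ from the already-established absolute irreducibility of $R_\lambda$ as a representation of the full Artin group $A_{B_n}$ (Theorem \ref{mod}), by controlling the abelianization $A_{B_n}/\mathcal{A}_{B_n}\simeq \Z^2$. The general principle is Clifford-theoretic: if $H\trianglelefteq G$ with $G/H$ abelian and $V$ is an absolutely irreducible $G$-module, then $V|_H$ decomposes as a sum of $G/H$-conjugate irreducibles, each appearing with the same multiplicity, and $V|_H$ is irreducible precisely when the inertia group of one (hence every) irreducible constituent is all of $G$. Since $G/H$ is abelian here, the obstruction to irreducibility is entirely measured by scalars: $V|_H$ fails to be irreducible only if $V\otimes\chi\simeq V$ for some nontrivial character $\chi$ of $G/H$, i.e. some character of $\Z^2$ valued in $\overline{\F_p}^\times$.

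First I would set up this dichotomy precisely over the field $\F_q$ (or its algebraic closure, since we want absolute irreducibility): write $\overline{R_\lambda}$ for the extension of scalars of $R_\lambda$ to $\overline{\F_q}$, which is irreducible by Theorem \ref{mod}. If $\overline{R_\lambda}|_{\mathcal{A}_{B_n}}$ were reducible, then by Clifford theory there is a character $\chi\colon A_{B_n}/\mathcal{A}_{B_n}\to\overline{\F_q}^\times$, nontrivial, with $\overline{R_\lambda}\otimes\chi\simeq\overline{R_\lambda}$. A character of $\Z^2$ is determined by the two scalars $z_1=\chi(S_1),z_2=\chi(T)$ (using that all $S_i$ are conjugate in $A_{B_n}$, hence have the same image under $\chi$, and that $\ell_1,\ell_2$ as defined in the excerpt realize the two coordinate projections $A_{B_n}\to\Z$). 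So I must show: there is no pair of roots of unity $(z_1,z_2)\in(\overline{\F_q}^\times)^2$, not both $1$, such that the representation $S_i\mapsto z_1 R_\lambda(S_i)$, $T\mapsto z_2 R_\lambda(T)$ is isomorphic to $R_\lambda$.

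The key step is to rule out the twist. I would use the eigenvalue structure coming from the quadratic relations: $R_\lambda(S_i)$ has eigenvalues among $\{\alpha,-1\}$ and $R_\lambda(T)$ has eigenvalues among $\{\beta,-1\}$; these follow from $(S_i-\alpha)(S_i+1)=0$ and $(T-\beta)(T+1)=0$. An isomorphism $\overline{R_\lambda}\otimes\chi\simeq\overline{R_\lambda}$ would force the multiset of eigenvalues of $z_1 R_\lambda(S_i)$ to equal that of $R_\lambda(S_i)$, hence $\{z_1\alpha,-z_1\}$ (with multiplicities given by $\lambda$) to coincide with $\{\alpha,-1\}$ whenever both eigenvalues actually occur, which happens as soon as $n\ge 2$ and $\lambda$ is not one of a few trivial tableaux (and those trivial one-dimensional cases can be checked directly). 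If both eigenvalues of $R_\lambda(S_i)$ are present, then either $z_1\alpha=\alpha$ (so $z_1=1$) or $z_1\alpha=-1$ and $-z_1=\alpha$, forcing $\alpha^2=1$ — impossible since $\alpha$ has order $a>n\ge 2$. Similarly for $T$: either $z_2=1$, or $\beta^2=1$ and $\beta=-1$, contradicting $\beta\ne 1$ together with $\beta\notin\{-\alpha^i\}$ (in particular $\beta\ne -1=-\alpha^0$). Hence $z_1=z_2=1$, $\chi$ is trivial, and no nontrivial twist exists; thus $\overline{R_\lambda}|_{\mathcal{A}_{B_n}}$ is irreducible. The handful of degenerate double-partitions where $S_i$ or $T$ acts by a scalar on all of $V_\lambda$ (e.g. $\lambda=((n),\varnothing)$, $((1^n),\varnothing)$, or the analogues with an empty first component) must be dispatched separately: there the representation is at most one-dimensional, and one checks the restriction to $\mathcal{A}_{B_n}$ is still absolutely irreducible — automatic when $\dim=1$, unless the image is trivial, which does not occur here since $R_\lambda(S_1^a)$ or an analogous element lies in $\mathcal{A}_{B_n}$ and acts nontrivially whenever $\dim>1$.

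The main obstacle I anticipate is bookkeeping rather than conceptual: making sure the eigenvalue-multiplicity argument is airtight for every double-partition $\lambda\vdash\!\vdash n$, i.e. verifying that for all but the explicitly listed one-dimensional $\lambda$ the operator $R_\lambda(S_i)$ genuinely has both eigenvalues $\alpha$ and $-1$ occurring (equivalently, that some standard double-tableau $\T\in\lambda$ admits a standard $\T_{i\leftrightarrow i+1}$, giving a genuine $2$-dimensional block on which $S_i$ is not scalar) — this is a small combinatorial lemma about Young diagrams, and is where one leans on $n\ge 2$ and on the hypothesis that $\alpha$ has large order. An alternative, perhaps cleaner, route that avoids per-$\lambda$ case analysis: invoke Proposition \ref{unitary} / Proposition \ref{transpose} to see that the image of $A_{B_n}$ already lies (projectively) in a classical group and that the scalars by which the abelianization can act are pinned down by the determinant and the invariant forms, forcing any twisting character to be trivial; I would present the eigenvalue argument as the primary one since it is the most elementary and self-contained given only the material in the excerpt.
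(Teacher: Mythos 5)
Your route is genuinely different from the paper's, which handles $n\le 3$ by citing Marin's results on two-column representations and then inducts on $n$ using the decomposition $A_{B_n}=\langle A_{B_{n-1}},\mathcal{A}_{B_n}\rangle$ together with the branching rule, as in Lemma~3.4(i) of \cite{BMM}. Your eigenvalue computation ruling out nontrivial twists is correct, and it in fact also disposes of the degenerate $\lambda$'s you propose to treat separately, since $z_1\alpha=\alpha$ forces $z_1=1$ even when $-1$ is absent from the spectrum. The gap is in the Clifford-theoretic premise. The assertion that $V|_H$ can only fail to be irreducible when $V\otimes\chi\simeq V$ for some nontrivial character $\chi$ of $G/H$ is \emph{false} for modular representations. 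Take $G=\mathfrak{S}_3$, $H=\mathfrak{A}_3$, $k=\overline{\F_2}$, and $V$ the two-dimensional simple $kG$-module: $V|_H$ splits into the two non-isomorphic eigenlines for the $3$-cycle, yet $G/H\simeq\Z/2$ admits no nontrivial character into $k^\times$ because $\mathrm{char}\, k=2$. What fails is the Frobenius-reciprocity identity $\dim\mathrm{End}_H(V)=\#\{\chi: V\otimes\chi\simeq V\}$, which needs $k[G/H]$ to be semisimple. Your paraphrase of Clifford's theorem is also off: $V|_H$ irreducible requires both full inertia group \emph{and} multiplicity one; the two-dimensional irreducible of the quaternion group restricted to its centre has full inertia group and multiplicity two.

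The argument can be repaired, but a step you skipped must be supplied: pass to the finite image $\Gamma=R_\lambda(A_{B_n})$, note $R_\lambda(\mathcal{A}_{B_n})=[\Gamma,\Gamma]$, and verify that the finite abelian group $\Gamma/[\Gamma,\Gamma]$ has order coprime to $p$. That coprimality does hold: $R_\lambda(S_1)$ and $R_\lambda(T)$ each satisfy a quadratic with two distinct roots of $p'$-order in $\overline{\F_p}^\times$ (one has $\alpha\ne-1$ since $\alpha$ has order at least $7$, and $\beta\ne-1$ since $-\alpha^0=-1$ is among the excluded values for $\beta$), hence both matrices are semisimple of $p'$-order, and an abelian group generated by two $p'$-elements is a $p'$-group. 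Only once this is checked does the twist-dichotomy become a valid theorem, and then your eigenvalue argument completes the proof. Without that check, the implication you rely on is simply not true in positive characteristic.
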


\begin{proof}
For $n\leq 3$, this is a consequence of the results on the representations with two columns by \cite{M} (Proposition 5). Since $A_{B_n}$ is generated by $A_{B_{n-1}}$ and $\mathcal{A}_{B_n}$, we have the result by the same method as in the Lemma 3.4(i) of \cite{BMM}.
\end{proof}

We now recall Lemma $2.2$ of \cite{BMM}.
\begin{lemme}\label{abel}
Let $G$ be a group, $k$ a field and $R_1,R_2$ two representations of $G$ in $GL_N(k)$ such that the restrictions to the derived subgroup of $G$ are equal and the restriction of at least one of them is absolutely irreducible. There exists a character $\eta : G \rightarrow k^\star$ such that $R_2 = R_1 \otimes \eta$.
\end{lemme}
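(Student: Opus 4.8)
The plan is to construct $\eta$ pointwise: for each $g\in G$ I would show that $R_2(g)R_1(g)^{-1}$ is a scalar matrix, and then verify that the resulting map $g\mapsto\eta(g)$ is a group homomorphism. Without loss of generality assume $R_1|_{G'}$ is absolutely irreducible (if it is $R_2|_{G'}$ that is assumed absolutely irreducible, swap the roles; in any case $R_1|_{G'}=R_2|_{G'}$, so both restrictions are absolutely irreducible).

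Fix $g\in G$. Since the derived subgroup is normal, $ghg^{-1}\in G'$ for every $h\in G'$, hence $R_1(ghg^{-1})=R_2(ghg^{-1})$. Expanding both sides as products and using $R_1(h)=R_2(h)$ for $h\in G'$, this becomes
\[
R_1(g)R_1(h)R_1(g)^{-1}=R_2(g)R_1(h)R_2(g)^{-1},
\]
i.e. $R_1(g)^{-1}R_2(g)$ centralizes $R_1(h)$ for all $h\in G'$. Because $R_1|_{G'}$ is absolutely irreducible, the commutant in $M_N(k)$ of $\{R_1(h):h\in G'\}$ is exactly $k\cdot I_N$. Consequently $R_1(g)^{-1}R_2(g)=\eta(g)I_N$ for a unique $\eta(g)\in k^\star$ (it is invertible, being a ratio of invertible matrices), that is, $R_2(g)=\eta(g)R_1(g)$.

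To finish, I would check $\eta$ is a character: for $g_1,g_2\in G$, comparing $R_2(g_1g_2)=\eta(g_1g_2)R_1(g_1g_2)$ with $R_2(g_1)R_2(g_2)=\eta(g_1)\eta(g_2)R_1(g_1)R_1(g_2)=\eta(g_1)\eta(g_2)R_1(g_1g_2)$ and cancelling the invertible matrix $R_1(g_1g_2)$ gives $\eta(g_1g_2)=\eta(g_1)\eta(g_2)$. Thus $\eta:G\to k^\star$ is a homomorphism (and automatically $\eta|_{G'}=1$, consistent with $R_1|_{G'}=R_2|_{G'}$), and $R_2=R_1\otimes\eta$.

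I do not expect a genuine obstacle: the only substantive input is Schur's lemma in the form ``the commutant of an absolutely irreducible subset of $M_N(k)$ is $k\cdot I_N$'', together with normality of $G'$; the rest is bookkeeping. The one point that must be respected is that \emph{absolute} irreducibility of the restriction is genuinely needed here: plain irreducibility would only force $R_1(g)^{-1}R_2(g)$ to lie in a division algebra over $k$, not to be a scalar.
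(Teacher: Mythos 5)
Your proof is correct and complete. The paper itself does not prove this lemma; it merely recalls it as Lemma~2.2 of \cite{BMM} without reproducing an argument, so there is nothing in the text to compare against. Your argument is the standard one: normality of $G'$ plus agreement of $R_1$ and $R_2$ on $G'$ forces $R_1(g)^{-1}R_2(g)$ into the commutant of $R_1(G')$, which by absolute irreducibility (Schur) is $k\cdot I_N$, and the multiplicativity of $\eta$ then follows by cancelling $R_1(g_1g_2)$. Your closing remark is also well taken: with mere irreducibility over a non--algebraically closed $k$ the commutant could be a proper division algebra, so absolute irreducibility is the hypothesis that makes the scalar conclusion available.
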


\begin{prop}
If $\lambda_1$ a partition of $n$ then $R_{(\lambda_1,\emptyset)|\mathcal{A}_{B_n}}\simeq R_{(\emptyset,\lambda_1)|\mathcal{A}_{B_n}}$.
\end{prop}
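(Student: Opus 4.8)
The plan is to exhibit an explicit isomorphism of $\mathcal{A}_{B_n}$-modules between $V_{(\lambda_1,\emptyset)}$ and $V_{(\emptyset,\lambda_1)}$ by producing an intertwiner for the $S_i$ with $i\in[\![1,n-1]\!]$, and then checking that the two representations of the full group $A_{B_n}$ differ by a character, so that Lemma \ref{abel} applies. The starting observation is that on a double-tableau with one empty component, the combinatorial data is that of an ordinary standard tableau of shape $\lambda_1$: if $\T=(\T_1,\emptyset)$ and $\T^\circ=(\emptyset,\T_1)$ denote the two double-tableaux built from the same filling, then $\tau_\T(i)=1$ and $\tau_{\T^\circ}(i)=2$ for every $i$, so in both cases the rule $S_i.\T=m_i(\T)\T+(1+m_i(\T))\T_{i\leftrightarrow i+1}$ involves only tableaux with the same (non-empty) component, and the off-diagonal coefficients are governed by $m_i$.

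First I would compute $m_i$ in the two cases. For $\T=(\T_1,\emptyset)$ we have $ct(\T:j)=\alpha^{c_j-r_j}\beta$, while for $\T^\circ=(\emptyset,\T_1)$ we have $ct(\T^\circ:j)=-\alpha^{c_j-r_j}$. In the ratio $ct(\T:i)/ct(\T:i+1)$ the factor $\beta$ (resp.\ the sign $-1$) cancels, so $m_i(\T)=m_i(\T^\circ)$ for all $i\in[\![1,n-1]\!]$ and all fillings. Hence the map $P\colon V_{(\lambda_1,\emptyset)}\to V_{(\emptyset,\lambda_1)}$ sending each basis vector $\T=(\T_1,\emptyset)$ to $\T^\circ=(\emptyset,\T_1)$ is an isomorphism of representations of the subgroup $\langle S_1,\dots,S_{n-1}\rangle$. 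Since $\mathcal{A}_{B_n}$ is generated by commutators and in particular is contained in that subgroup together with conjugates — more precisely, the images of $S_i$ already satisfy $R_{(\lambda_1,\emptyset)}(S_i)=P^{-1}R_{(\emptyset,\lambda_1)}(S_i)P$, and a commutator $[x,y]$ with $x,y\in A_{B_n}$ is a product of $S_i$'s and $T$'s in which $T$ appears with total exponent zero — I would then observe that $T$ acts on $V_{(\lambda_1,\emptyset)}$ by the scalar $\beta$ and on $V_{(\emptyset,\lambda_1)}$ by the scalar $-1$; thus $R_{(\emptyset,\lambda_1)}=\big(P\,R_{(\lambda_1,\emptyset)}\,P^{-1}\big)\otimes\eta$ where $\eta\colon A_{B_n}\to\F_q^\star$ is the linear character with $\eta(S_i)=1$ and $\eta(T)=-\beta^{-1}$, i.e.\ $\eta=(-\beta^{-1})^{\ell_2}$. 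On the derived subgroup $\ell_2$ vanishes, so $\eta$ is trivial there and $R_{(\lambda_1,\emptyset)|\mathcal{A}_{B_n}}\simeq R_{(\emptyset,\lambda_1)|\mathcal{A}_{B_n}}$, as claimed.

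The only genuine content is the computation $m_i(\T)=m_i(\T^\circ)$, which I expect to be the main (and essentially the sole) obstacle: one must verify that passing from the first to the second component replaces every content $ct(\T:j)$ by a common scalar multiple (namely by $-\beta^{-1}$ times it), so that the normalized ratios entering $m_i$ are unchanged. This is immediate from the formulas in Theorem \ref{mod}, and everything else is bookkeeping: that $P$ is a well-defined bijection of bases (the map on fillings is the identity), that $\mathcal{A}_{B_n}$ lies in the kernel of $\ell_2$, and the application of Lemma \ref{abel} using Lemma \ref{Lincoln} to guarantee absolute irreducibility of the restriction. Alternatively, and perhaps more cleanly for the write-up, one can phrase it as: the two representations $R_{(\lambda_1,\emptyset)}$ and $R_{(\emptyset,\lambda_1)}\otimes(-\beta)^{\ell_2}$ of $A_{B_n}$ have equal restriction to $\mathcal{A}_{B_n}$ after conjugation by $P$, and then invoke Lemma \ref{abel} directly in the contrapositive direction.
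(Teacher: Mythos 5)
Your argument is correct and is the same one the paper makes, just spelled out: the paper's proof is the single sentence that $T$ acts diagonally and $S_i$ acts identically on $(\T_1,\emptyset)$ and $(\emptyset,\T_1)$, and you have fleshed that out by checking $m_i(\T)=m_i(\T^\circ)$ (the $\beta$ and the sign cancel in the content ratio) and by packaging the discrepancy in $T$ into the character $\eta=(-\beta^{-1})^{\ell_2}$, which vanishes on $\mathcal{A}_{B_n}$. One small cleanup: the appeal to Lemma~\ref{abel} is unnecessary here, since you are going in the trivial direction (twisting by a character that is trivial on the derived subgroup obviously preserves the restriction), and the aside claiming $\mathcal{A}_{B_n}$ sits inside $\langle S_1,\dots,S_{n-1}\rangle$ is not true and should just be dropped --- the character argument alone suffices.
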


\begin{proof}
The action of $T$ is diagonal and the action of $S_i$ on $(\T_1,\emptyset)$ is identical to the one on $(\emptyset,\T_1)$ so the proof of the result is straightforward.
\end{proof}

\begin{prop}\label{isomorphisme}
Let $\lambda$ and $\mu$ be double-partitions of $n$ with no empty components. We then have the following properties.
\begin{enumerate}
\item If $\F_q = \F_p(\alpha,\beta)=\F_p(\alpha+\alpha^{-1},\beta+\beta^{-1})$, then
                \begin{enumerate}
                 \item $R_{\lambda|\mathcal{A}_{B_n}} \simeq R_{\mu|\mathcal{A}_{B_n}} \Leftrightarrow \lambda = \mu$,
                 \item $R_{\lambda|\mathcal{A}_{B_n}} \simeq R_{\mu|\mathcal{A}_{B_n}}^{\star} \Leftrightarrow \lambda = \mu'$.
                 \end{enumerate}
\item If $\F_q=\F_p(\alpha,\beta)=\F_p(\alpha+\alpha^{-1},\beta) = \F_p(\alpha,\beta+\beta^{-1}) \neq \F_p(\alpha+\alpha^{-1},\beta+\beta^{-1})$, then 
                \begin{enumerate}
               \item $R_{\lambda|\mathcal{A}_{B_n}} \simeq R_{\mu|\mathcal{A}_{B_n}} \Leftrightarrow \lambda = \mu$,
                 \item $R_{\lambda|\mathcal{A}_{B_n}} \simeq R_{\mu|\mathcal{A}_{B_n}}^{\star} \Leftrightarrow \lambda = \mu'$,
                 \item   $ R_{\lambda|\mathcal{A}_{B_n}} \simeq \epsilon \circ R_{\mu|\mathcal{A}_{B_n}} \Leftrightarrow  \lambda =\mu'$,
                 \item $R_{\lambda|\mathcal{A}_{B_n}} \simeq \epsilon\circ R_{\mu|\mathcal{A}_{B_n}}^{\star} \Leftrightarrow   \lambda=\mu$.
                 \end{enumerate}
\item If $\F_q=\F_p(\alpha,\beta)=\F_p(\alpha,\beta+\beta^{-1}) \neq \F_p(\alpha+\alpha^{-1},\beta) = \F_p(\alpha+\alpha^{-1},\beta+\beta^{-1})$, then 
                \begin{enumerate}
                 \item $R_{\lambda|\mathcal{A}_{B_n}} \simeq R_{\mu|\mathcal{A}_{B_n}} \Leftrightarrow \lambda = \mu$,
                 \item $R_{\lambda|\mathcal{A}_{B_n}} \simeq R_{\mu|\mathcal{A}_{B_n}}^{\star} \Leftrightarrow \lambda = \mu'$,
                 \item   $ R_{\lambda|\mathcal{A}_{B_n}} \simeq \epsilon \circ R_{\mu|\mathcal{A}_{B_n}} \Leftrightarrow  (\lambda_1,\lambda_2) =(\mu_1',\mu_2')$,
                 \item $R_{\lambda|\mathcal{A}_{B_n}} \simeq \epsilon\circ R_{\mu|\mathcal{A}_{B_n}}^{\star} \Leftrightarrow   (\lambda_1,\lambda_2)=(\mu_2,\mu_1)$.
                 \end{enumerate}
\item If $\F_q=\F_p(\alpha,\beta)=\F_p(\alpha+\alpha^{-1},\beta) \neq \F_p(\alpha,\beta+\beta^{-1}) = \F_p(\alpha+\alpha^{-1},\beta+\beta^{-1})$, then 
                \begin{enumerate}
               \item $R_{\lambda|\mathcal{A}_{B_n}} \simeq R_{\mu|\mathcal{A}_{B_n}} \Leftrightarrow \lambda = \mu$,
                 \item $R_{\lambda|\mathcal{A}_{B_n}} \simeq R_{\mu|\mathcal{A}_{B_n}}^{\star} \Leftrightarrow \lambda = \mu'$,
                 \item   $ R_{\lambda|\mathcal{A}_{B_n}} \simeq \epsilon \circ R_{\mu|\mathcal{A}_{B_n}} \Leftrightarrow  (\lambda_1,\lambda_2) =(\mu_2,\mu_1)$,
                 \item $R_{\lambda|\mathcal{A}_{B_n}} \simeq \epsilon\circ R_{\mu|\mathcal{A}_{B_n}}^{\star} \Leftrightarrow   (\lambda_1,\lambda_2)=(\mu_1',\mu_2')$.
                 \end{enumerate}
\end{enumerate}
\end{prop}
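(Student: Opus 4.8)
The plan is to prove all four items simultaneously by reducing each equivalence to a statement about the diagonal action of $T$ together with the known type-$A$ facts about the components, and then using the auxiliary operators $\mathcal{L}$ (Proposition \ref{transpose}) and $P$ (Proposition \ref{unitary}) to handle the transpose and the Galois twist. First I would fix notation: for a double-partition $\lambda$ with no empty components, $R_\lambda$ restricted to $\langle S_1,\dots,S_{n-1}\rangle$ decomposes, by the branching/restriction from type $B$ to type $A$ on each block of the tableau, into pieces governed by $\lambda_1$ and $\lambda_2$; and the operator $T$ acts on the subspace spanned by tableaux with $1\in\T_1$ by $\beta$ and on those with $1\in\T_2$ by $-1$. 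Since $\beta\neq -1$ (indeed $\beta$ avoids $-\alpha^i$, including $i=0$) and $\beta$ has order distinct from $1$, the two eigenvalues $\beta,-1$ of $T$ are distinct, so an isomorphism $R_\lambda\simeq R_\mu$ of $\mathcal{H}$-modules forces the eigenspace dimensions to match; combined with the type-$A$ classification (Hoefsmit for the symmetric group Hecke algebra, via \cite{BMM}) this pins down $\{\lambda_1,\lambda_2\}$ as an ordered pair once we track which eigenvalue goes with which component. This gives (1a), and (2a), (3a), (4a) are identical since the statement only concerns $\mathcal{A}_{B_n}$-equivalence, which by Lemma \ref{abel} differs from $A_{B_n}$-equivalence only by a character $\eta:A_{B_n}\to\F_q^\star$; one checks that the only characters available are powers of $\ell_1,\ell_2$ (as $A_{B_n}/\mathcal{A}_{B_n}\simeq\Z^2$), and evaluating such a twist on the $T$- and $S_i$-eigenvalues shows it cannot create a new isomorphism between distinct $\lambda,\mu$ (the scalars $\alpha,\beta$ are not roots of unity of small order by hypothesis on $a$).

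**The transpose and the Galois twist.** For the statements involving $\star$ (contragredient) and $\epsilon$, I would invoke Proposition \ref{transpose}: the map $\mathcal{L}:\T\mapsto\omega(\T)\T'$ intertwines $R_\lambda$ with a twist of ${}^tR_{\lambda'}^{-1}$ by the scalars $(-\alpha)^{-1},(-\beta)^{-1}$ on the generators. Since a scalar twist by a character does not change the $\mathcal{A}_{B_n}$-isomorphism class (Lemma \ref{abel} again), this yields $R_{\lambda|\mathcal{A}_{B_n}}^\star\simeq R_{\lambda'|\mathcal{A}_{B_n}}$, hence (1b)/(2b)/(3b)/(4b) follow from (1a) applied to $\lambda$ and $\mu'$. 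For the Galois twist: in cases (2),(3),(4) the automorphism $\epsilon$ exists, and Proposition \ref{unitary} gives a matrix $P$ with $PR_\lambda(T)P^{-1}=\epsilon(R_\lambda(T)^\star)$ and $PR_\lambda(S_r)P^{-1}=\epsilon(R_\lambda(S_r)^\star)$; that is, $\epsilon\circ R_\lambda^\star\simeq R_\lambda$ on all of $A_{B_n}$ in case (2). Combining with $R_{\lambda|\mathcal{A}_{B_n}}^\star\simeq R_{\lambda'|\mathcal{A}_{B_n}}$ gives $\epsilon\circ R_{\lambda|\mathcal{A}_{B_n}}\simeq R_{\lambda'|\mathcal{A}_{B_n}}$, which is (2c), and then (2d) follows by composing (2b) and (2c). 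The subtlety is that in cases (3) and (4) the field equalities are different: in (3) we have $\epsilon(\beta)=\beta$ but $\epsilon(\alpha)=\alpha^{-1}$ (since $\F_q=\F_p(\alpha,\beta+\beta^{-1})$ fixes $\beta$ — wait, rather $\F_q$ contains $\beta$ but not $\beta+\beta^{-1}$ as a proper subfield issue), so the Galois action only inverts $\alpha$ and fixes $\beta$; tracing this through the Hoefsmit matrices shows $\epsilon\circ R_\lambda$ has the $\alpha$-contents inverted, which on the level of partitions transposes each component separately but does not swap the two eigenvalue-blocks — giving $(\lambda_1,\lambda_2)=(\mu_1',\mu_2')$ in (3c). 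In case (4), symmetrically, $\epsilon$ fixes $\alpha$ and inverts $\beta$, so it swaps the roles $\beta\leftrightarrow\beta^{-1}=-1\cdot(-\beta^{-1})$, effectively interchanging the "in $\T_1$" and "in $\T_2$" conditions up to the transpose bookkeeping, yielding $(\lambda_1,\lambda_2)=(\mu_2,\mu_1)$ in (4c); then (3d),(4d) follow by composing with (b).

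**Main obstacle.** The genuinely delicate point is the bookkeeping in cases (3) and (4): one must compute precisely how the partial Galois automorphism (inverting only $\alpha$, resp. only $\beta$) acts on the Hoefsmit matrices and match the result to a combinatorial operation on double-partitions, being careful that inverting $\alpha$ corresponds to transposing Young diagrams (since $\alpha^{c-r}\mapsto\alpha^{r-c}$) while the effect on $\beta$ controls whether the two components get swapped. I expect the argument to require checking that no extra character twist $\eta$ can be absorbed to produce a spurious equivalence — this is where the hypothesis that $\alpha$ has large order $a>n$, $a\notin\{1,2,3,4,5,6,8,10\}$, and $\beta\notin\{-\alpha^i\}$ is used: it guarantees that all the contents $ct(\T:j)$ are distinct enough that the eigenvalue multiset of each $S_i$ plus that of $T$ rigidifies $\lambda$ up to the operations listed, leaving no room for an undetected twist. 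The equivalences $(\Leftarrow)$ in every item are the easy direction (exhibit the intertwiner $\mathcal{L}$ or $P$ explicitly, composed as needed), and the $(\Rightarrow)$ direction is where dimension-counting on $T$-eigenspaces plus the type-$A$ uniqueness does the work.
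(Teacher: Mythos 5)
Your proposal follows the paper's route closely: Lemma \ref{abel} plus eigenvalue-matching of $T$ and $S_1$ for part (a), the operator $\mathcal{L}$ of Proposition \ref{transpose} for part (b), the intertwiner $P$ of Proposition \ref{unitary} for (2c)/(2d), and an explicit Hoefsmit-matrix verification of the partial Galois twist for (3c)/(4c), with the remaining (d) parts obtained by composing with (b) and (c). The step you leave as a sketch — tracing $\epsilon(\alpha)=\alpha^{-1}$ (resp.\ $\epsilon(\beta)=\beta^{-1}$) through the contents to see a transpose of each component (resp.\ a swap of components) — is precisely the $2\times 2$ matrix computation the paper carries out via the intertwiners $Q$ and $U$, so there is no substantive difference in approach.
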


\begin{proof}
In all cases,  $(a)$ and $(b)$ are the same and the proof is identical.

(a) By the preceding lemmas, it is sufficient to show that if there exists $\eta : A_{B_n} \rightarrow \F_q^{\star}$ such that $R_\lambda \simeq R_\mu \otimes \eta$ then $\lambda = \mu$. Assume such a character exists, since the abelianization of $A_{B_n}$ is $<\overline{T},\overline{S_1}> \simeq\Z^2$, up to conjugation we have $R_\lambda(b) = R_\mu(b)u^{\ell_1(b)}v^{\ell_2(b)}$ for some $u,v\in \F_q^\star$. Taking the eigenvalues of $S_1$ and $T$ on both sides of the equality we get that $\{\alpha,-1\} = \{u\alpha,-u\}$ and $\{\beta,-1\}=\{v\beta,-v\}$. Since $\alpha^2\neq 1\neq \beta^2$, we have $u=v=1$ which implies that $R_\lambda$ and $R_\mu$ are isomorphic representations. By theorem \ref{mod}, this implies $\lambda=\mu$.

\bigskip

(b) The implication $\lambda=\mu' \Rightarrow R_{\lambda|\mathcal{A}_{B_n}} \simeq R_{\mu|\mathcal{A}_{B_n}}^{\star}$ follows from Proposition \ref{transpose}.

Assume now $R_{\lambda|\mathcal{A}_{B_n}}\simeq R_{\mu|\mathcal{A}_{B_n}}^{\star}$, we then have $R_{\lambda'|\mathcal{A}_{B_n}}\simeq R_{\lambda|\mathcal{A}_{B_n}}^\star \simeq (R_{\mu|\mathcal{A}_{B_n}}^{\star})^{\star} = R_{\mu|\mathcal{A}_{B_n}}$. The result follow from $(a)$. 

In the same way, it is enough to show the converse implication in the remainder of the proof.

2. d) This result follows directly from Proposition \ref{unitary}.

2. c) By  2. d) and 2. b), we have 
$$\epsilon \circ R_{\lambda'|\mathcal{A}_{B_n}} \simeq \epsilon \circ (\epsilon \circ R_{\lambda'|\mathcal{A}_{B_n}}^{\star}) = R_{\lambda'|\mathcal{A}_{B_n}}^{\star} \simeq R_{\lambda|\mathcal{A}_{B_n}}.$$

3) In this case $\epsilon(\alpha) = \alpha^{-1}$ and $\epsilon(\beta) = \beta$.

3. c) For every standard double-tableau $\T = (\T_1,\T_2)$, we define $\tilde{\T}$ by $\tilde{\T}=(\T_1',\T_2')$. Let $\eta : A_{B_n} \rightarrow \F_q^{\star}$ be the character of $A_{B_n}$ defined by $\eta(T) =1$ and $\eta(S_r) =-\alpha$ for all $r$.

Let $Q : V_{(\lambda_1,\lambda_2)} \rightarrow V_{(\lambda_1',\lambda_2')},(\T_1,\T_2)\mapsto (\T_1',\T_2'), U : V_\lambda \rightarrow V_\lambda, \T \mapsto \omega(\T)\T$.

Using the same notations as in Proposition \ref{unitary}, we will show that for all $r\in [\![1,n-1]\!]$ :
$$Q^{-1}(-\alpha)\epsilon(R_{\lambda_1',\lambda_2'}(S_r))Q = U^{-1}PR_{\lambda}(S_r)P^{-1}U, Q^{-1}\epsilon(R_{\lambda_1',\lambda_2'}(T))Q = U^{-1}PR_{\lambda}(T)P^{-1}U.$$

Let $\T= (\T_1,\T_2)$ be a standard double-tableau. The second equality follows from $PR_{\lambda}(T)P^{-1} =R_{\lambda}(T)$ and $\epsilon(\beta) = \beta$. If $\T_{r\leftrightarrow r+1}$ is non-standard, the first equality is verified by $S_r$. Assume $\T_{r\leftrightarrow r+1}$  is standard, write $a= a_{r,r+1}$. If $\tau_{\T}(r)=\tau_{\T}(r+1)$ then in the basis $(\T,\T_{r\leftrightarrow r+1})$, we have :
$$R_{\lambda}(S_r) =\begin{pmatrix}
 \frac{\alpha-1}{1-\alpha^a} & \frac{\alpha-\alpha^{-a}}{1-\alpha^{-a}} \\
 \frac{\alpha+\alpha^a}{1-\alpha^a} & \frac{\alpha-1}{1-\alpha^{-a}}
\end{pmatrix}, 
 U^{-1}PR_{\lambda}(S_r)P^{-1}U = -\alpha\begin{pmatrix}
\frac{\alpha^{-1}-1}{1-\alpha^a} & \frac{\alpha^{-1}-\alpha^{-a}}{1-\alpha^{-a}}\\
\frac{\alpha^{-1}-\alpha^{a}}{1-\alpha^a} & \frac{\alpha^{-1}-1}{1-\alpha^{-a}}
\end{pmatrix},$$
$$-\alpha Q^{-1}\epsilon(R_{\lambda_1',\lambda_2'}(S_r))Q = -\alpha\epsilon(\begin{pmatrix} \frac{\alpha-1}{1-\alpha^{-a}} & \frac{\alpha-\alpha^a}{1-\alpha^a}\\
\frac{\alpha-\alpha^{-a}}{1-\alpha^{-a}} & \frac{\alpha-1}{1-\alpha^a}\end{pmatrix} = U^{-1}PR_{\lambda}(S_r)P^{-1}U.$$

If $\tau_{\T}(r)=1$ and $\tau_{\T}(r+1)=2$ then we have

$$R_\lambda(S_r) =\begin{pmatrix}
\frac{\alpha-1}{1+\beta\alpha^a} & \frac{\alpha+\beta^{-1}\alpha^{-a}}{1+\beta^{-1}\alpha^{-a}}\\
\frac{\alpha+\beta\alpha^a}{1+\beta\alpha^a} & \frac{\alpha-1}{1+\beta^{-1}\alpha^{-a}}
\end{pmatrix}, U^{-1}PR_{\lambda}(S_r)P^{-1}U = -\alpha\begin{pmatrix}
\frac{\alpha^{-1}-1}{1+\beta\alpha^a} & \frac{\beta^{-1}\alpha^{-a}+\alpha^{-1}}{1+\beta^{-1}\alpha^{-a}}\\
\frac{\beta\alpha^a+\alpha^{-1}}{1+\beta\alpha^a} & \frac{\alpha^{-1}-1}{1+\beta^{-1}\alpha^{-a}}
\end{pmatrix},$$
$$-\alpha Q^{-1}\epsilon(R_{\lambda_1',\lambda_2'})Q = -\alpha\epsilon(\begin{pmatrix} \frac{\alpha-1}{1+\beta\alpha^{-a}} & \frac{\alpha+\beta^{-1}\alpha^a}{1+\beta^{-1}\alpha^a}\\
\frac{\alpha+\beta\alpha^{-a}}{1+\beta\alpha^{-a}} & \frac{\alpha-1}{1+\beta^{-1}\alpha^a}\end{pmatrix} = U^{-1}PR_{\lambda}(S_r)P^{-1}U.$$
It follows that $ R_{(\lambda_1,\lambda_2)|\mathcal{A}_{B_n}} \simeq \epsilon \circ R_{(\lambda_1',\lambda_2')|\mathcal{A}_{B_n}}$.

3. d) This is a consequence of 3. c) and 3. b).

4. c, 4. d) The proof of these results is analogous to the ones of 3. d) and 3. c).
\end{proof}

For $r\in [\![1,n-1]\!]$, we define the double-partitions $\lambda_{(r)}=([1^{n-r}],[r])$ and $\lambda^{(r)} = ([r],[1^{n-r}])$. The following proposition is a generalization of Proposition $3.5$ of \cite{BMM}.

\begin{prop}\label{patate}
For $r\in [\![1,n-1]\!]$, there exists $\eta_{1,r},\eta_{2,r} : A_{B_n} \rightarrow \F_q^\star$ such that 

$R_{\lambda_{(r)}} \simeq (\Lambda^rR_{\lambda_{(1)}}) \otimes \eta_{1,r}$ and $R_{\lambda^{(r)}}\simeq (\Lambda^rR_{\lambda^{(1)}}) \otimes \eta_{2,r}$ .\end{prop}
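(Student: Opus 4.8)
The plan is to mirror the strategy of Proposition 3.5 in \cite{BMM}, reducing the statement to an identification of representations restricted to the derived subgroup, where by Lemma \ref{abel} we are free to twist by a linear character of $A_{B_n}$. I would treat the two isomorphisms symmetrically, so let me focus on the first one, $R_{\lambda_{(r)}} \simeq (\Lambda^r R_{\lambda_{(1)}}) \otimes \eta_{1,r}$; the second follows by swapping the roles of the two components, or equivalently by applying the automorphism exchanging the left and right parts (which sends $\lambda_{(1)}$ to $\lambda^{(1)}$ and commutes with $\Lambda^r$, up to a character). First I would describe $V_{\lambda_{(1)}}$ concretely: the double-partition $\lambda_{(1)} = ([1^{n-1}],[1])$ has $n$ standard double-tableaux $\T^{(k)}$, $k\in[\![1,n]\!]$, where $k$ sits alone in the right component and the remaining integers fill the single column of the left component in increasing order. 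This is an $n$-dimensional representation, and $\Lambda^r R_{\lambda_{(1)}}$ is $\binom{n}{r}$-dimensional, with basis the wedges $\T^{(k_1)}\wedge\cdots\wedge\T^{(k_r)}$ indexed by $r$-subsets $\{k_1<\cdots<k_r\}$ of $[\![1,n]\!]$.

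Next I would set up the linear isomorphism $\Phi\colon V_{\lambda_{(r)}} \to \Lambda^r V_{\lambda_{(1)}}$. The standard double-tableaux of $\lambda_{(r)} = ([1^{n-r}],[r])$ are likewise indexed by the $r$-subset $S\subseteq[\![1,n]\!]$ of integers that land in the (one-row) right component, the complement filling the (one-column) left component. So there is an obvious bijection of bases, $\T_S \mapsto \bigwedge_{k\in S}\T^{(k)}$, and $\Phi$ is defined by extending it linearly (with signs to be pinned down, see below). The core of the proof is then the verification that $\Phi$ intertwines the two actions up to the character $\eta_{1,r}$ — explicitly that $\Phi \circ R_{\lambda_{(r)}}(g) = (\eta_{1,r}(g))\,(\Lambda^r R_{\lambda_{(1)}})(g)\circ \Phi$ for $g = T$ and $g = S_i$. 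For $T$ this is immediate from the Hoefsmit model: $T$ acts diagonally on both sides, multiplying $\T^{(k)}$ by $\beta$ if $k=1$ and by $-1$ otherwise, hence on $\bigwedge_{k\in S}\T^{(k)}$ by $(-1)^{|S|}$ or $\beta(-1)^{|S|-1}$ according to whether $1\notin S$ or $1\in S$; comparing with the $T$-eigenvalue of $\T_S$ (which is $\beta$ iff $1\in S$, i.e. iff $1$ is in the right component) fixes the needed value $\eta_{1,r}(T) = (-1)^{r-1}$. For the $S_i$, one uses that the only non-trivial action of $S_i$ is the transposition $i\leftrightarrow i+1$ of the labels; one checks that the $2\times2$ block of $R_{\lambda_{(1)}}(S_i)$ on $\langle \T^{(i)},\T^{(i+1)}\rangle$ (computed from $m_i$ and the content formula) induces, on $\Lambda^r$, exactly the scalar $-\alpha$ on wedges containing both $i$ and $i+1$, and the same $2\times2$ block on the pair of wedges differing by swapping $i\leftrightarrow i+1$; matching this against $R_{\lambda_{(r)}}(S_i)$ forces $\eta_{1,r}(S_i) = -\alpha$ for all $i$ (this is where the sign normalization in the definition of $\Phi$ is needed to make the off-diagonal entries agree rather than merely agree up to sign).

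The main obstacle, as in \cite{BMM}, is the bookkeeping of signs: one must choose the signs in the definition of $\Phi$ (equivalently, pick an ordering convention on the wedge basis and a weight on the tableaux $\T_S$) so that the induced action on $\Lambda^r$ matches $R_{\lambda_{(r)}}$ \emph{on the nose} after the $\eta_{1,r}$-twist, not just up to a global scalar on each basis vector — because an ill-chosen sign convention would only give an isomorphism of the restrictions to $[A_{B_n},A_{B_n}]$ twisted by a character that fails to extend to $A_{B_n}$, or would force $\eta_{1,r}$ to be a genuine character rather than a sign. The cleanest way to handle this is to note that $\Lambda^r R_{\lambda_{(1)}}$ and $R_{\lambda_{(r)}}$ both restrict to absolutely irreducible representations of $\mathcal{A}_{B_n}$ (the latter by Lemma \ref{Lincoln}; the former because its restriction to the type-$A$ parabolic already contains the irreducible $\Lambda^r$ of the reflection representation, or again by a direct argument as in Lemma \ref{Lincoln}), so it suffices to exhibit \emph{any} $\mathcal{A}_{B_n}$-equivariant isomorphism; then Lemma \ref{abel} automatically produces the character $\eta_{1,r}\colon A_{B_n}\to\F_q^\star$ with $R_{\lambda_{(r)}} \simeq (\Lambda^r R_{\lambda_{(1)}})\otimes\eta_{1,r}$, and one reads off $\eta_{1,r}(T) = (-1)^{r-1}$ and $\eta_{1,r}(S_i) = (-\alpha)^{r-1}\cdot(\text{something})$ by comparing determinants or a single matrix entry, without ever tracking every sign. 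I would write the proof in this second style: first establish absolute irreducibility of $\Lambda^r R_{\lambda_{(1)}}|_{\mathcal{A}_{B_n}}$ and of $\Lambda^r R_{\lambda^{(1)}}|_{\mathcal{A}_{B_n}}$, then exhibit the base-change bijection as an $\mathcal{A}_{B_n}$-isomorphism by checking it commutes with each $R_{\lambda}(g)R_\lambda(g')^{-1}$ for $g,g'$ generators (or directly with generators of $\mathcal{A}_{B_n}$ such as $S_iS_j^{-1}$, $S_iTS_i^{-1}T^{-1}$), and finally invoke Lemma \ref{abel}, computing the two values of $\eta_{j,r}$ from the eigenvalues of $T$ and $S_1$ as in part (a) of the proof of Proposition \ref{isomorphisme}.
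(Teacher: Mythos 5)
Your strategy is the same as the paper's: index the standard double-tableaux of $\lambda_{(r)}$ by the $r$-subset $I$ of labels landing in the one-row component, map $\T_I$ to the corresponding wedge $\bigwedge_{k\in I}\T^{(k)}$, and compare the two actions generator by generator to extract the character. The paper carries this through by listing the six cases for the action of $T$ and each $S_k$ on both bases and reading off $\eta_{1,r}(h)=(-1)^{(r-1)\ell_1(h)}\beta^{(r-1)\ell_2(h)}$ and $\eta_{2,r}(h)=(-1)^{(r-1)\ell_1(h)}(-1)^{(r-1)\ell_2(h)}$.

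However, your computation has the Hoefsmit convention reversed. In Theorem \ref{mod}, $T\cdot\T=\beta\T$ when $1$ is in the \emph{left} component $\T_1$ and $T\cdot\T=-\T$ when $1$ is in the \emph{right} component $\T_2$. Since in $\T^{(k)}$ it is $k$ that sits in the right component, $T\cdot\T^{(1)}=-\T^{(1)}$ and $T\cdot\T^{(k)}=\beta\T^{(k)}$ for $k\neq1$, the opposite of what you wrote. Because you make the same swap on both sides, the ratio comes out $(-1)^{r-1}$, but the correct ratio involves $\beta$: $\eta_{1,r}(T)$ is $\beta^{\pm(r-1)}$, not $(-1)^{r-1}$. (You have in fact computed $\eta_{2,r}(T)$, the character for $\lambda^{(r)}=([r],[1^{n-r}])$, where the labeled cell is in the left component.) Similarly $\eta_{1,r}(S_i)$ is $(-1)^{r-1}$, not $-\alpha$: the determinant $-\alpha$ of the $2\times2$ block must still be multiplied by the scalars $m_i(\T^{(k)})=-1$ coming from the other $r-2$ wedge factors (each has $i,i+1$ adjacent in a column, so $m_i=-1$), and only then compared with the eigenvalue $\alpha$ of $R_{\lambda_{(r)}}(S_i)$ on $v_I$. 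The existence claim of the proposition survives these slips — some character does the job — so the gap is arithmetic rather than structural, but the stated values need to be corrected.

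Two smaller remarks. First, your ``cleaner'' route — establish absolute irreducibility of $\Lambda^rR_{\lambda_{(1)}}|_{\mathcal{A}_{B_n}}$ and then invoke Lemma \ref{abel} — does not really circumvent the sign-chasing: to apply Lemma \ref{abel} you still need an $\mathcal{A}_{B_n}$-equivariant isomorphism, and verifying that the basis bijection is equivariant is the same generator-by-generator check. Lemma \ref{abel} only repackages the conclusion once that check is done. Second, reducing the $\lambda^{(r)}$ case to the $\lambda_{(r)}$ case by ``applying the automorphism exchanging left and right parts'' is not available here: swapping components sends content $\alpha^{c-r}\beta$ to $-\alpha^{c-r}$, which is not an algebra automorphism when $\beta\neq1$, so the second isomorphism really does need its own (parallel) computation, as the paper does.
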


\begin{proof}
Every double-tableau associated with $\lambda$ can be mapped in a one-to-one way to a set $\{i_1,i_2,...,i_r\}\subset [\![1,n]\!]$ such that $i_1<i_2<...<i_r$ where $i_k$ is the number in the $k$-th box of the right component. We write $v_I$ the corresponding double-tableau and $v_i=v_{\{i\}}$.\\
After computation, we get the following for $k\in [\![1,n-1]\!]$.
\begin{enumerate}
\item If $1\in I$ then $R_{\lambda_{(r)}}(T)v_I = -v_I$.
\item If $1\notin I$ then $R_{\lambda_{(r)}}(T)v_I = \beta v_I.$
\item If $k,k+1\notin I$ then $R_{\lambda_{(r)}}(S_k)v_I = -v_I$.
\item If $k,k+1\in I$ then $R_{\lambda_{(r)}}(S_k)v_I=\alpha v_I$.
\item If $k\in I, k+1\notin I$ then $R_{\lambda_{(r)}}(S_k)v_I = \frac{\alpha-1}{1+\beta^{-1}\alpha^{k-1}}v_I+\frac{\alpha+\beta^{-1}\alpha^{k-1}}{1+\beta^{-1}\alpha^{k-1}}v_{I\Delta \{k,k+1\}}.$
\item If $k\notin I, k+1\in I$ then $R_{\lambda_{(r)}}(S_k)v_I = \frac{\alpha-1}{1+\beta\alpha^{1-k}}v_I +\frac{\alpha+\beta\alpha^{1-k}}{1+\beta\alpha^{1-k}}v_{I\Delta \{k,k+1\}}.$
\end{enumerate}
Above, $\Delta$ is the symmetric difference : $A\Delta B = (A\cup B)\setminus (A\cap B)$.

To each set $I=\{i_1,i_2,...,i_r\}$ can be given in a one-to-one way an element $u_I$ of $\Lambda^rR_{\lambda_{(1)}}$ writing\\ $u_I = v_{i_1} \wedge v_{i_2} \wedge ... \wedge v_{i_r}$ and these $u_I$ form a basis. For $k\in [\![1,n-1]\!]$, we have the following.

\begin{enumerate}
\item If $1\in I$ then $\Lambda^rR_{\lambda_{(1)}}(T)u_I = -\beta^{r-1}u_I$.
\item If $1\notin I$ then $\Lambda^rR_{\lambda_{(1)}}(T)u_I = \beta^r u_I.$
\item If $k,k+1\notin I$ then $\Lambda^rR_{\lambda_{(1)}}(S_k)u_I = (-1)^r u_I$.
\item If $k,k+1\in I$ then $\Lambda^rR_{\lambda_{(1)}}(S_k)u_I=(-1)^{r-1}\alpha u_I$.
\item If $k\in I, k+1\notin I$ then $\Lambda^rR_{\lambda_{(1)}}(S_k)u_I = (-1)^{r-1}\frac{\alpha-1}{1+\beta^{-1}\alpha^{k-1}}u_I+(-1)^{-r-1}\frac{\alpha+\beta^{-1}\alpha^{k-1}}{1+\beta^{-1}\alpha^{k-1}}u_{I\Delta \{k,k+1\}}.$
\item If $k\notin I, k+1\in I$ then $\Lambda^rR_{\lambda_{(1)}}(S_k)u_I = (-1)^{r-1}\frac{\alpha-1}{1+\beta\alpha^{1-k}}u_I +(-1)^{r-1}\frac{\alpha+\beta\alpha^{1-k}}{1+\beta\alpha^{1-k}}u_{I\Delta \{k,k+1\}}.$
\end{enumerate}

Looking at the basis change $v_I \mapsto u_I$ and the character $\eta_{1,r}(h) =(-1)^{(r-1)\ell_1(h)}\beta^{(r-1)\ell_2(h)}$, we have the first part of the proposition. In the same way, writing $\eta_{2,r}(h) =(-1)^{(r-1)\ell_1(h)}(-1)^{(r-1)\ell_2(h)}$, we have the second part of the proposition.
\end{proof}

\subsubsection{Factorization depending on the field}\label{lalala}

The result depends on the properties of the field extension $\F_q=\F_p(\alpha,\beta)$ of $\F_p(\alpha+\alpha^{-1},\beta+\beta^{-1})$ and the field extension $\F_{\tilde{q}}=\F_p(\alpha)$ of $\F_p(\alpha+\alpha^{-1})$. By elementary field theory, we have the following possibilities.
\begin{enumerate}
\item $\F_q=\F_p(\alpha,\beta)= \F_p(\alpha+\alpha^{-1},\beta+\beta^{-1})$ and $\F_p(\alpha)=\F_p(\alpha+\alpha^{-1})$.
\item $\F_q=\F_p(\alpha,\beta) = \F_p(\alpha+\alpha^{-1},\beta+\beta^{-1})$ and $\F_p(\alpha) \neq \F_p(\alpha+\alpha^{-1})$.
\item $\F_q=\F_p(\alpha,\beta)=\F_p(\alpha+\alpha^{-1},\beta)= \F_p(\alpha,\beta+\beta^{-1})\neq \F_p(\alpha+\alpha^{-1},\beta+\beta^{-1})$.
\item $\F_q = \F_p(\alpha,\beta) = \F_p(\alpha,\beta+\beta^{-1})\neq \F_p(\alpha+\alpha^{-1},\beta) = \F_p(\alpha+\alpha^{-1},\beta+\beta^{-1})$.
\item $\F_q = \F_p(\alpha,\beta) =\F_p(\alpha+\alpha^{-1},\beta) \neq \F_p(\alpha,\beta+\beta^{-1}) = \F_p(\alpha+\alpha^{-1},\beta+\beta^{-1})$ and $\F_p(\alpha) \neq \F_p(\alpha+\alpha^{-1})$.
\item $\F_q = \F_p(\alpha,\beta) = \F_p(\alpha+\alpha^{-1},\beta) \neq \F_p(\alpha,\beta+\beta^{-1})=\F_p(\alpha+\alpha^{-1},\beta+\beta^{-1})$ and $\F_p(\alpha)= \F_p(\alpha+\alpha^{-1})$.
\end{enumerate}

We remark that in the third and fourth cases, we have $\F_p(\alpha)\neq \F_p(\alpha+\alpha^{-1})$.

Before stating the main results for type $B$, we recall the two following lemmas, the first one is Lemma $2.4$ of \cite{BM} and the proof of the second one is included in the proof of Proposition $4.1$ of \cite{BMM}.

\begin{lemme}\label{Harinordoquy}
Let $\rho$ be an absolutely irreducible representation of a group $G$ in $GL_r(q)$ where $\F_q$ is a finite field such that there exists an automorphism $\epsilon$ of order $2$ of $\F_q$. If $\rho \simeq \epsilon \circ \rho^\star$, then there exists $S\in GL_r(q)$ such that $S^{-1}\rho(g)S \in GU_r(q^\frac{1}{2})$ for all $g \in G$.
 \end{lemme}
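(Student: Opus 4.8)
The plan is to manufacture a nondegenerate $\epsilon$-hermitian form on $\F_q^r$ that is preserved by $\rho(G)$, and then invoke the fact that over a finite field all such forms are isometric to the standard one. First I would unwind the hypothesis: here $\F_q$ is necessarily a square, $\epsilon$ is the Frobenius $x\mapsto x^{q^{1/2}}$ with fixed field $\F_{q^{1/2}}$, and the contragredient $\rho^\star$ is $g\mapsto {}^t\!\rho(g)^{-1}$. The isomorphism $\rho\simeq\epsilon\circ\rho^\star$ gives $P\in GL_r(q)$ with $P\rho(g)P^{-1}=\epsilon({}^t\!\rho(g)^{-1})$ for all $g\in G$, equivalently $\epsilon({}^t\!\rho(g))\,P\,\rho(g)=P$; that is, $\rho(g)$ preserves the $\epsilon$-sesquilinear form with Gram matrix $P$. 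Because $\rho$ is absolutely irreducible, $\mathrm{End}_{\F_q}(\rho)=\F_q$, so by Schur's lemma $P$ is the unique such matrix up to a scalar in $\F_q^\star$.

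The key step is to rescale $P$ so that it is genuinely $\epsilon$-hermitian. Applying $\epsilon$ entrywise and then transposing the relation $\epsilon({}^t\!\rho(g))\,P\,\rho(g)=P$ (using that $\epsilon$ commutes with matrix multiplication and transposition and that $\epsilon^2=\mathrm{id}$) shows that $\epsilon({}^t\!P)$ satisfies exactly the same system of equations as $P$. By the uniqueness up to scalar there is $c\in\F_q^\star$ with $\epsilon({}^t\!P)=cP$. Performing the same operation once more gives $P=\epsilon(c)c\,P$, hence $c\,\epsilon(c)=1$, i.e.\ $c$ has norm $1$ for the extension $\F_q/\F_{q^{1/2}}$. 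By Hilbert~90 for this cyclic extension (elementary for finite fields: the image of $y\mapsto y/\epsilon(y)$ is exactly the norm-$1$ subgroup) there is $\mu\in\F_q^\star$ with $c=\mu/\epsilon(\mu)$; replacing $P$ by $\mu P$ we obtain $\epsilon({}^t\!P)=P$, so $P$ is the Gram matrix of a nondegenerate $\epsilon$-hermitian form, still preserved by $\rho(G)$.

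Finally I would appeal to the classification of hermitian forms over finite fields: any nondegenerate $\epsilon$-hermitian form on $\F_q^r$ is isometric to the standard one, so there is $S\in GL_r(q)$ with $\epsilon({}^t\!S)\,P\,S=I_r$, equivalently $P=\epsilon({}^t\!S^{-1})\,S^{-1}$. Substituting this into $\epsilon({}^t\!\rho(g))\,P\,\rho(g)=P$ and simplifying yields $\epsilon\bigl({}^t(S^{-1}\rho(g)S)\bigr)\,(S^{-1}\rho(g)S)=I_r$ for all $g\in G$, which says precisely that $S^{-1}\rho(g)S\in GU_r(q^{1/2})$. The main obstacle is the middle paragraph — arranging that the intertwiner is exactly hermitian rather than hermitian only up to a scalar — and this is exactly where absolute irreducibility (through Schur) and Hilbert~90 are both used in an essential way; the first and last steps are then purely formal.
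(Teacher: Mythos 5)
Your argument is correct, and it is the standard proof of this fact. The paper itself does not supply a proof of Lemma \ref{Harinordoquy}; it cites Lemma 2.4 of \cite{BM}, and the mechanism there is the same as yours: the intertwiner $P$ gives an invariant $\epsilon$-sesquilinear form, Schur's lemma (via absolute irreducibility) pins $P$ down up to a scalar, applying the $\epsilon$-conjugate transpose once more shows $\epsilon({}^t\!P)=cP$ with $c\,\epsilon(c)=1$, Hilbert 90 for the degree-two extension $\F_q/\F_{q^{1/2}}$ lets you rescale $P$ to be genuinely $\epsilon$-hermitian, and the uniqueness of nondegenerate hermitian forms over a finite field then produces the conjugating matrix $S$. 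Every step you give checks out, including the final change-of-basis computation. One small stylistic point: you might note explicitly that you need $c\neq 0$ for Hilbert 90, which follows since $P$ is invertible, and that the map $\mu\mapsto\mu/\epsilon(\mu)$ hits the full norm-one subgroup by a counting argument (kernel $\F_{q^{1/2}}^\star$, hence image of order $q^{1/2}+1$, which equals the order of $\ker(\mathrm{Nm})$); you gesture at this but it is worth stating cleanly if the proof were to be included.
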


\begin{lemme}\label{Ngwenya}
Let $\rho$ and $G$ be as in the previous lemma. If $\rho \simeq \epsilon \circ \rho$, then there exists $S\in GL_r(q)$ such that $S^{-1}\rho(g)S \in GL_r(q^\frac{1}{2})$ for all $g \in G$.
 \end{lemme}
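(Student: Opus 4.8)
\textbf{Proof plan for Lemma \ref{Ngwenya}.}

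The statement is the classical descent criterion for a finite group representation over a finite field: if $\rho\simeq\epsilon\circ\rho$ where $\epsilon$ generates $\Gal(\F_q/\F_{q^{1/2}})$, then $\rho$ can be realized over the fixed field $\F_{q^{1/2}}$. The plan is to reduce this to Hilbert's Theorem 90 (or rather its multiplicative form, the vanishing of $H^1(\Gal(\F_q/\F_{q^{1/2}}),GL_r)$, which for finite cyclic Galois groups is concrete). First I would unwind the hypothesis $\rho\simeq\epsilon\circ\rho$ into an explicit intertwiner: there exists $S\in GL_r(q)$ such that $S\rho(g)S^{-1}=\epsilon(\rho(g))$ for all $g\in G$, where $\epsilon$ acts entrywise on matrices. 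Applying $\epsilon$ to this identity and using $\epsilon^2=\mathrm{id}$ gives $\epsilon(S)\,\epsilon(\rho(g))\,\epsilon(S)^{-1}=\rho(g)$, hence $\epsilon(S)S$ centralizes $\rho(G)$. Since $\rho$ is absolutely irreducible, Schur's lemma forces $\epsilon(S)S=\mu I_r$ for some $\mu\in\F_q^\star$; moreover applying $\epsilon$ to $\epsilon(S)S=\mu I_r$ shows $\mu=\epsilon(\mu)$, so in fact $\mu\in\F_{q^{1/2}}^\star$.

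The next step is to adjust $S$ so that $\epsilon(S)S=I_r$, i.e. so that the $1$-cocycle it defines is trivial. Because $\F_{q^{1/2}}^\star$ is the full set of norms from $\F_q^\star$ (the norm map $N:\F_q^\star\to\F_{q^{1/2}}^\star$, $x\mapsto x\,\epsilon(x)=x^{1+q^{1/2}}$, is surjective for finite fields), I can write $\mu=N(\delta)=\delta\epsilon(\delta)$ for some $\delta\in\F_q^\star$. Replacing $S$ by $S'=\delta^{-1}S$ leaves the intertwining relation $S'\rho(g)S'^{-1}=\epsilon(\rho(g))$ intact (scalars are $\epsilon$-equivariant up to the scalar, and one checks the relation is unchanged since $\epsilon(\delta^{-1})/\delta^{-1}$ cancels appropriately — more carefully, $\epsilon(S')S'=\epsilon(\delta^{-1})\epsilon(S)\delta^{-1}S=\epsilon(\delta)^{-1}\delta^{-1}\mu I_r=I_r$). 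So WLOG $\epsilon(S)S=I_r$, i.e. $\epsilon(S)=S^{-1}$.

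Now I invoke the finite-field version of Hilbert 90 for $GL_r$: since $\epsilon(S)=S^{-1}$, the matrix $S$ is a coboundary, meaning there exists $P\in GL_r(q)$ with $S=\epsilon(P)P^{-1}$. (Concretely one can produce such a $P$ by an averaging/Poincaré-series argument: for generic $X\in M_r(\F_q)$ the matrix $P=X+S\,\epsilon(X)$ satisfies $\epsilon(P)=\epsilon(X)+\epsilon(S)X=\epsilon(X)+S^{-1}X=S^{-1}(SX+\epsilon(X))=S^{-1}P\cdot$ — wait, $S^{-1}(S\epsilon(X)+X)$ — so $SP = S\epsilon(X)+X$... let me just say: a standard argument shows the additive map $X\mapsto X+S\epsilon(X)$ has image containing an invertible element, giving $P$ with $\epsilon(P)=S^{-1}P$, i.e. $S=P\epsilon(P)^{-1}$.) Then set $\tilde\rho(g)=P^{-1}\rho(g)P$. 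Applying $\epsilon$ and using $\epsilon(P)=S^{-1}P$ together with the intertwining relation, one computes $\epsilon(\tilde\rho(g))=\epsilon(P)^{-1}\epsilon(\rho(g))\epsilon(P)=P^{-1}S\cdot S\rho(g)S^{-1}\cdot S^{-1}P=P^{-1}\rho(g)P=\tilde\rho(g)$, so every $\tilde\rho(g)$ is fixed by $\epsilon$, hence lies in $GL_r(q^{1/2})$. Taking $S$ in the statement to be $P$ (the conjugating matrix) finishes the proof.

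\textbf{Main obstacle.} The only non-formal ingredient is producing the matrix $P$ with $S=P\,\epsilon(P)^{-1}$ from the relation $\epsilon(S)=S^{-1}$ — this is the $GL_r$ case of Hilbert's Theorem 90, and for a finite field one must give the explicit averaging argument rather than cite the cohomological statement abstractly (one shows $\{X+S\epsilon(X):X\in M_r(\F_q)\}$ is an $\F_{q^{1/2}}$-subspace meeting $GL_r$, using that the field is infinite-dimensional enough / a Zariski-density or counting argument). Everything else — Schur's lemma for the scalar $\mu$, surjectivity of the norm on finite fields, and the final verification — is routine. Since the excerpt notes this lemma is lifted from the proof of Proposition $4.1$ of \cite{BMM}, in the paper itself it suffices to cite that source; the above is the self-contained argument behind it.
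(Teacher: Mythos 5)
Your route --- Galois descent via (noncommutative) Hilbert 90, after extracting an intertwiner and normalizing it using Schur's lemma and surjectivity of the norm --- is the right one and is surely what the cited proof in \cite{BMM} reduces to; everything through ``WLOG $\epsilon(S)S=I_r$'' is correct. But there is a sign mismatch in the descent step that makes your final verification fail as written. With the intertwining relation fixed as $\epsilon(\rho(g))=S\rho(g)S^{-1}$, set $\tilde\rho(g)=P^{-1}\rho(g)P$; unwinding, $\epsilon(\tilde\rho(g))=\tilde\rho(g)$ holds iff $P\epsilon(P)^{-1}S$ centralizes $\rho(G)$, so (taking the Schur scalar to be $1$) you need $\epsilon(P)=SP$, i.e.\ $S=\epsilon(P)P^{-1}$. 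You actually write this formula once, but your parenthetical averaging construction $P=X+S\epsilon(X)$ instead produces $\epsilon(P)=S^{-1}P$, i.e.\ $S=P\epsilon(P)^{-1}$, and that is what you plug into the last line: $\epsilon(P)^{-1}\epsilon(\rho(g))\epsilon(P)=(P^{-1}S)(S\rho(g)S^{-1})(S^{-1}P)=P^{-1}S^2\rho(g)S^{-2}P$, which equals $P^{-1}\rho(g)P$ only if $S^2$ centralizes $\rho(G)$ --- not generally true (the normalization gives $\epsilon(S)=S^{-1}$, not $S^2$ scalar).

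The fix is a one-character change: use $P=X+S^{-1}\epsilon(X)$, so that $\epsilon(P)=\epsilon(X)+\epsilon(S)^{-1}X=\epsilon(X)+SX=SP$, and then $\epsilon(P)^{-1}\epsilon(\rho(g))\epsilon(P)=(P^{-1}S^{-1})(S\rho(g)S^{-1})(SP)=P^{-1}\rho(g)P$ closes cleanly. (Equivalently, keep your averaging formula but write the intertwiner relation the other way round, $S^{-1}\rho(g)S=\epsilon(\rho(g))$.) The remaining point you flag --- that the $\F_{q^{1/2}}$-linear image of the averaging map meets $GL_r(q)$ --- is indeed the one nontrivial ingredient; it is exactly noncommutative Hilbert 90 (Speiser's theorem), which holds for any finite Galois extension including the finite-field case, so citing it is legitimate, though a counting argument rather than Zariski density is what makes it go over $\F_q$.
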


\bigskip

In certain cases, $(\lambda_1,\lambda_2)$ factorizes through $(\lambda_2,\lambda_1)$ or $(\lambda_1',\lambda_2')$ so we need a good order on double-partitions of $n$. We first choose for $r\leq n$ an order on partitions of $r$ such that if $r$ has $2l$ partitions different from their transpose $\{a_i,a_{i'}\}_{i\in [\![ 1,l]\!]}$ and $s$ partitions $\{a_{l+i}\}_{i\in [\![1,s]\!]}$ equal to their transpose then $a_1<a_1'<a_2<a_2'<...<a_l<a_l'<a_{l+1}=a_{l+1}'<...<a_{l+s}=a_{l+s}'$.  We also require $a_1=[r]$. This gives us that $\lambda<\mu$ implies that $\lambda'<\mu'$ whenever $\lambda \neq \mu'$. If $\lambda \vdash\!\vdash n_1$ and $\mu \vdash\!\vdash n_2$ then we say $\lambda >\mu$ if $n_1>n_2$ or $n_1=n_2$ and $\lambda>\mu$. We then define the order $<$ on double-partitions of $n$ in the following way where $\lambda_1$ is a partition of $r_\lambda$ : $(\lambda_1,\lambda_2)<(\mu_1,\mu_2)$ if $r_\lambda< r_\mu$ or ($r_\lambda = r_\mu$ and $\lambda_1< \mu_1$) or ($r_\lambda =r_\mu, \lambda_1=\mu_1$ and $\lambda_2 < \mu_2$).

\begin{lemme}
If $\lambda=(\lambda_1,\lambda_2)$ is a double-partition such that $\lambda \neq \lambda', \lambda \neq (\lambda_2,\lambda_1)$ and $\lambda \neq (\lambda_1',\lambda_2')$ then exactly one of those double-partitions verifies the property :

 $$ (*)  \lambda < \lambda' \mbox{ and } \lambda < (\lambda_1',\lambda_2').$$
\end{lemme}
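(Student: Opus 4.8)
The plan is to analyze the natural action of the group $\Z/2\times\Z/2$ generated by the two involutions $\sigma:(\lambda_1,\lambda_2)\mapsto(\lambda_2,\lambda_1)$ and $\tau:(\lambda_1,\lambda_2)\mapsto(\lambda_1',\lambda_2')$ on the set of double-partitions of $n$. Note that $\sigma\tau=\tau\sigma$ sends $(\lambda_1,\lambda_2)$ to $(\lambda_2',\lambda_1')$, which is precisely the transpose $\lambda'$ in the paper's notation. So the four elements of the orbit of $\lambda$ under this group are exactly $\lambda$, $\sigma\lambda=(\lambda_2,\lambda_1)$, $\tau\lambda=(\lambda_1',\lambda_2')$ and $\sigma\tau\lambda=\lambda'$. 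The hypothesis that $\lambda$ differs from $\sigma\lambda$, $\tau\lambda$ and $\lambda'$ says precisely that the stabilizer of $\lambda$ in this group is trivial, so the orbit $\{\lambda,\sigma\lambda,\tau\lambda,\lambda'\}$ has exactly four distinct elements.

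First I would observe that, because the chosen order on partitions of $r$ satisfies $a_1=[r]$ and ``$\lambda<\mu$ implies $\lambda'<\mu'$ whenever $\lambda\neq\mu'$'', the induced order on double-partitions is a total order on the orbit (all four elements have the same underlying integer $n$, but one must check the component-sizes and the partition order interact correctly — since $r_{\tau\lambda}=r_\lambda$ while $r_{\sigma\lambda}=r_{\lambda'}=n-r_\lambda$, the four orbit elements are genuinely comparable and pairwise distinct under $<$). Then the statement to prove, namely that exactly one of the four double-partitions $\mu$ in the orbit satisfies $(*)$: $\mu<\mu'$ and $\mu<(\mu_1',\mu_2')$, is equivalent to saying: exactly one element $\mu$ of the orbit satisfies $\mu<\sigma\tau\mu$ and $\mu<\tau\mu$.

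The key step is then the following combinatorial reduction. For each of the four orbit elements I would determine, using the two order-compatibility properties of $<$, the relative order of $\mu$ versus $\tau\mu$ and versus $\sigma\tau\mu$. Write $\mu_0=\min_<\{\lambda,\sigma\lambda,\tau\lambda,\lambda'\}$ for the smallest orbit element; I claim $\mu_0$ is the unique one satisfying $(*)$. Certainly $\mu_0<\tau\mu_0$ and $\mu_0<\sigma\tau\mu_0$ since $\mu_0$ is the minimum, so $\mu_0$ satisfies $(*)$. Conversely, suppose $\mu$ is an orbit element satisfying $(*)$, i.e. $\mu<\tau\mu$ and $\mu<\sigma\tau\mu$; I must show $\mu=\mu_0$, i.e. $\mu<\sigma\mu$ as well (the fourth orbit element being $\sigma\mu$). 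The crucial point is to deduce $\mu<\sigma\mu$ from the two given inequalities. From $\mu<\sigma\tau\mu$ and the property ``$x<y\Rightarrow x'<y'$ when $x\neq y'$'' applied with $x=\mu$, $y=\sigma\tau\mu=\mu'$ — here $x\neq y'$ because $y'=\mu''=\mu$ would force $\mu=\mu'$... so one must instead use it in the form that transposing reverses: since $\mu<\mu'$, applying transposition would give $\mu'<\mu$ unless... — this is where care is needed, and I would instead argue directly: apply $\tau$ (which is order-reversing on non-antitranspose-related pairs, or rather use that $\tau$ composed with the order behaves predictably) to $\mu<\tau\mu$ to get $\tau\mu$ compared to $\mu$ consistently, and combine with $\mu<\sigma\tau\mu$ to pin down where $\sigma\mu$ sits.

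The main obstacle I anticipate is exactly this last deduction: the order $<$ on double-partitions is only ``half-compatible'' with transposition (the implication $\lambda<\mu\Rightarrow\lambda'<\mu'$ has the exception $\lambda=\mu'$, and moreover the component-transposition $\tau$ is not the same as the full transpose $'{}$), so one cannot simply say $\tau$ is order-reversing. I would handle this by a careful case analysis on the two ``coordinates'' of the order — first compare the sizes $r_\mu$ versus $n-r_\mu$ of the two components (this is a $\Z$-comparison, unaffected by transposing individual components, and it already determines the $\mu$ versus $\sigma$-type inequalities), and when $r_\mu=n/2$ fall back to the partition order on the components, using $a_1=[r]$ to anchor the extreme cases and the half-compatibility to propagate. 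Breaking into the subcases $r_\mu<n/2$, $r_\mu>n/2$, $r_\mu=n/2$ should make each comparison forced, and the count ``exactly one'' then follows because ``being the $<$-minimum of a totally ordered four-element set'' is satisfied by precisely one element.
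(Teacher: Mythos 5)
Your framing---that condition $(*)$ simply picks out the $<$-minimum of the $(\Z/2)^2$-orbit $\{\mu,\sigma\mu,\tau\mu,\sigma\tau\mu\}$---is a genuine conceptual simplification that the paper does not make; the paper instead runs through the four sign patterns of $(\lambda\lessgtr\lambda')$ and $(\lambda\lessgtr(\lambda_1',\lambda_2'))$ and identifies the orbit element satisfying $(*)$ in each case by direct computation. Your observation that the minimum trivially satisfies $(*)$ (being smaller than everything in its orbit) is correct, and it reduces the lemma to proving the converse implication: $\mu<\tau\mu$ and $\mu<\sigma\tau\mu$ together force $\mu<\sigma\mu$.

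That implication, however, is exactly where the proposal stops. You flag it as the crux and correctly propose to split on the component size $r_\mu$: when $r_\mu<n/2$ the inequality $\mu<\sigma\mu$ is automatic from sizes, and when $r_\mu>n/2$ the hypothesis $\mu<\sigma\tau\mu$ already fails since $\sigma\tau\mu$ has strictly smaller first component size. But the critical case $r_\mu=n/2$---where all four orbit elements have equal component sizes and one must compare partitions of $n/2$ directly---is left open, and this is where all the combinatorial content lives. What is actually needed there is not the anchor $a_1=[r]$, but the structural fact that in the chosen linear order the transpose-pairs $\{a_i,a_i'\}$ occur as consecutive blocks, with the self-transpose partitions sitting above all non-self-transpose ones. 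From $\mu_1<\mu_1'$ and $\mu_1<\mu_2'$, adjacency of the pair $\{\mu_1,\mu_1'\}$ forbids $\mu_2$ or $\mu_2'$ from falling strictly between them, which forces $\mu_1<\mu_2$; and the remaining subcase $\mu_1=\mu_1'$ is outright incompatible with $\mu_1<\mu_2'$ since a self-transpose partition cannot lie below a non-self-transpose one. Your passing remark that ``applying transposition would give $\mu'<\mu$'' also suggests a sign confusion: the compatibility rule ($x<y$ implies $x'<y'$ when $x\neq y'$) is order-preserving, not order-reversing.
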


\begin{proof}
Let $\lambda=(\lambda_1,\lambda_2)$ be a double-partition verifying the conditions in the lemma.
 Assume $\lambda > \lambda'$ and $\lambda < (\lambda_1',\lambda_2')$. Since $\lambda > \lambda'$, we have $r_\lambda \geq r_{\lambda'}$ and since $r_\lambda +r_\lambda' = n$, we get $r_\lambda \geq \frac{n}{2}$. Let's show that either ($\lambda' < \lambda$ and $\lambda' < (\lambda_2,\lambda_1)$) or ($(\lambda_2,\lambda_1) < (\lambda_1',\lambda_2')$ and $(\lambda_2,\lambda_1) < (\lambda_2',\lambda_1')$), i.e. either $\lambda'$ verifies $(*)$ or $(\lambda_2,\lambda_1)$ verifies $(*)$.
 Those two cases are indeed distinct because either $\lambda' < (\lambda_2,\lambda_1)$ or $(\lambda_2,\lambda_1) < \lambda'$. If $\lambda' < (\lambda_2,\lambda_1)$ then we are in the first case because we assumed $\lambda > \lambda'$. Let's now assume $\lambda' > (\lambda_2,\lambda_1)$, we must show $(\lambda_2,\lambda_1)< (\lambda_1',\lambda_2')$. This is obvious if $r_\lambda > \frac{n}{2}$. If $r_\lambda=\frac{n}{2}$ then $(\lambda_1,\lambda_2) =\lambda > \lambda'=(\lambda_2',\lambda_1')$ implies that $\lambda_1> \lambda_2'$ or ($\lambda_1=\lambda_2'$ and $\lambda_2 > \lambda_1'$) which is a contradiction so $\lambda_1>  \lambda_2'$ and since $\lambda_1 \neq \lambda_2$, this implies $\lambda_1'>\lambda_2$ by definition of our order on partitions of $r_\lambda$. This shows that $(\lambda_2,\lambda_1)< (\lambda_1',\lambda_2')$.

Assume $\lambda> \lambda'$ and $\lambda> (\lambda_1',\lambda_2')$. We then have that either $\lambda'$ verifies $(*)$ or $(\lambda_2,\lambda_1)$ verifies $(*)$ in exactly the same way as in the previous case.

Assume $\lambda < \lambda'$ and $\lambda > (\lambda_1',\lambda_2')$, let us show that $(\lambda_1',\lambda_2') < (\lambda_2,\lambda_1)$ and $(\lambda_1',\lambda_2')< (\lambda_1,\lambda_2)$, i.e. $(\lambda_1',\lambda_2')$ verifies $(*)$. It is enough to show the second inequality since we have the first one by assumption. This is obvious if $r_\lambda < \frac{n}{2}$. If $r_\lambda = \frac{n}{2}$ then $\lambda_1< \lambda_2'$ because $\lambda_1\neq \lambda_2'$ so $\lambda_1' < \lambda_2$ because $\lambda_1\neq \lambda_2$ and $(\lambda_1',\lambda_2') < (\lambda_2,\lambda_1)$.

Assume $\lambda < \lambda'$ and $\lambda < (\lambda_1',\lambda_2')$. To conclude the proof, it is enough to show that not one of $\lambda', (\lambda_1',\lambda_2')$ and $(\lambda_2,\lambda_1)$ verifies $(*)$ in this case. It is obvious for $\lambda'$ and $(\lambda_1',\lambda_2)$. If $r_\lambda < \frac{n}{2}$, it is also obvious for $(\lambda_2,\lambda_1)$ since $(\lambda_2,\lambda_1) > (\lambda_1',\lambda_2')$. If $r_\lambda = \frac{n}{2}$ then since $\lambda_1 < \lambda_2'$ and $\lambda_1 \neq \lambda_2'$, we have that $\lambda_2 > \lambda_1'$ so $(\lambda_2,\lambda_1) > (\lambda_1',\lambda_2')$.
\end{proof}

\bigskip

We are now able to state the main results for type $B$ which are a generalization of Theorem 1.1 of \cite{BMM}, the end of the proof will be in the next section. The main difference arises from the additional factorizations in the last cases for the field extensions.

We write $A_{1,n}=\{(\lambda_1,\emptyset),\lambda_1 \vdash n\}, A_{2,n} =\{(\emptyset,\lambda_2),\lambda_2 \vdash n\}, A_n = A_{1,n} \cup A_{2,n}$. $A\epsilon_n= \{(\lambda_1,\emptyset)\in A_{1,n},~\lambda_1 ~\mbox{not a hook}\}$, $ \epsilon_n=\{\lambda \vdash\vdash n, \lambda \notin A_n, \lambda~\mbox{not a hook}\}, \F_{\tilde{q}}=\F_p(\alpha)$. 
\begin{theo}\label{result1}
If $\F_q=\F_p(\alpha,\beta)=\F_p(\alpha+\alpha^{-1}, \beta+\beta^{-1})$ and $\F_p(\alpha)=\F_p(\alpha+\alpha^{-1})$, then the morphism : $\mathcal{A}_{B_n} \rightarrow H_{B_n,\alpha,\beta}^\times \simeq \underset{\lambda \vdash\vdash n}\prod GL(\lambda)$ factors through the epimorphism 
$$\Phi_n : \mathcal{A}_{B_n} \rightarrow SL_{n-1}(\tilde{q})\times \underset{(\lambda_1,\emptyset)\in A\epsilon_n, \lambda_1<\lambda_1'}\prod SL_{n_\lambda}(\tilde{q}) \times \underset{(\lambda_1,\emptyset)\in A\epsilon_n,\lambda_1=\lambda_1'}\prod OSP(\lambda)'\times$$
 $$SL_n(q)^2 \times \underset{\lambda\in \epsilon_n, \lambda < \lambda'}\prod SL_{n_\lambda}(q) \times \underset{\lambda\in \epsilon_n, \lambda=\lambda'}\prod OSP(\lambda)'.$$
where $OSP(\lambda)$ is the group of isometries of the bilinear form defined in 2.
\end{theo}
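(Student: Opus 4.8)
The plan is to follow the strategy of \cite[Theorem 1.1 and Proposition 4.1]{BMM} adapted to type $B$, combining the representation-theoretic input already assembled (Theorem \ref{mod}, Proposition \ref{bilin}, Lemma \ref{Lincoln}, Proposition \ref{isomorphisme}, Proposition \ref{patate}) with Goursat's Lemma and the Guralnick--Saxl theorem \cite{GS}. The first step is to identify, for each individual double-partition $\lambda\vdash\!\vdash n$, the image of $\mathcal{A}_{B_n}$ under $R_\lambda$. By Lemma \ref{Lincoln} this restriction is absolutely irreducible; by Proposition \ref{bilin}(1)--(2) the action of $\mathcal{A}_{B_n}$ (which kills $\ell_1$ and $\ell_2$, hence $S_i$ and $T$ up to scalars of absolute value matching $-\alpha$, $-\beta$) preserves the bilinear form $(.|.)$ when $\lambda=\lambda'$, so the image lies in $OSP(\lambda)$; and in the current case $\F_q=\F_p(\alpha+\alpha^{-1},\beta+\beta^{-1})$ there is no field automorphism $\epsilon$, so Proposition \ref{isomorphisme}(1) says the $R_{\lambda|\mathcal{A}_{B_n}}$ for distinct $\lambda$ with $\lambda\neq\mu'$ are pairwise non-isomorphic and non-dual. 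One then argues, exactly as in \cite{BMM}, that the image is the full derived subgroup of the relevant classical group: $SL_{n_\lambda}(q)$ when $\lambda\neq\lambda'$ (the $\lambda$ and $\lambda'$ summands being glued into one $SL$ via $\mathcal{L}$ from Proposition \ref{transpose}), and $OSP(\lambda)'$ when $\lambda=\lambda'$; the switch from $q$ to $\tilde q=\F_p(\alpha)=\F_p(\alpha+\alpha^{-1})$ on the summands coming from $A\epsilon_n$ (where $\beta$ does not appear, only $\alpha$) follows from tracking which subfield the matrix entries live in via Lemma \ref{Ngwenya}-type arguments, here trivial since $\F_p(\alpha)=\F_p(\alpha+\alpha^{-1})$.

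The second step is the reduction to small $n$ and the induction. The base cases (small $n$) must be checked by hand, using the explicit Hoefsmit matrices, the factorizations of Propositions \ref{patate} (for the hook/defining summands $\lambda_{(1)},\lambda^{(1)}$, which give the $SL_{n-1}(\tilde q)$ and $SL_n(q)^2$ factors after quotienting by the characters $\eta_{i,r}$) and \ref{isomorphisme}, and the classification of maximal subgroups of low-dimensional classical groups from \cite{BHRC} to rule out all proper overgroups of the image. For the inductive step, one restricts to the parabolic subgroup $A_{B_{n-1}}\subset A_{B_n}$ and uses the branching rule: $R_{\lambda|A_{B_{n-1}}}=\bigoplus_\mu R_\mu$ over double-partitions $\mu\vdash\!\vdash n-1$ obtained by removing one box. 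Knowing by induction that the image of $\mathcal{A}_{B_{n-1}}$ in $\prod_\mu GL(\mu)$ is as claimed, and that the constituents are big enough, one applies the Guralnick--Saxl theorem \cite{GS} to conclude $R_\lambda(\mathcal{A}_{B_n})$ contains (the derived subgroup of) the full classical group it is constrained to lie in — the form-preservation and the non-isomorphism data from Propositions \ref{bilin} and \ref{isomorphisme} pin down exactly which group. Finally, Goursat's Lemma \cite{Gour} assembles the product: two summands get identified precisely when the corresponding restrictions to $\mathcal{A}_{B_n}$ are isomorphic or dual, which by Proposition \ref{isomorphisme}(1) happens only for the pairs $\{\lambda,\lambda'\}$ (already merged into a single $SL$) and never otherwise, so the image is the full direct product $\Phi_n$ as stated.

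The main obstacle, as the introduction itself flags, will be the small-$n$ base cases: proving that the image is not contained in any maximal subgroup of the target classical group. Here the generic branching/Guralnick--Saxl machine does not yet apply, and one must instead combine the explicit matrix entries from Theorem \ref{mod}, the eigenvalue data ($\alpha$ of large order, $\beta$ avoiding $-\alpha^i$), the preservation or non-preservation of the forms $(.|.)$ and $\langle .,.\rangle$ — which in this case rules out the unitary alternative since $\epsilon$ does not exist — and a careful pass through the list in \cite{BHRC} to eliminate subfield subgroups, imprimitive subgroups, tensor-decomposable subgroups, classical subgroups, and the finitely many almost-simple exceptions. The order hypotheses on $\alpha$ (order $>n$ and $\notin\{1,\dots,6,8,10\}$) are exactly what is needed to kill the sporadic cases. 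A secondary technical point is verifying that the characters $\eta_{1,r},\eta_{2,r}$ of Proposition \ref{patate} and the twist by $\ell_1,\ell_2$ become trivial on $\mathcal{A}_{B_n}=\ker\ell_1\cap\ker\ell_2$, so that on the derived subgroup the exterior-power representations $\Lambda^r R_{\lambda_{(1)}}$ and $\Lambda^r R_{\lambda^{(1)}}$ genuinely coincide with $R_{\lambda_{(r)}}$ and $R_{\lambda^{(r)}}$; this forces the collapse of all the "defining-representation-derived" summands onto the single factors $SL_{n-1}(\tilde q)$ and $SL_n(q)^2$.
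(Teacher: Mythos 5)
Your outline matches the paper's strategy: establish surjectivity of the composition onto each quasi-simple factor for small $n$ (the paper does $n\leq 4$ in Lemma \ref{platypus}), show that component-wise surjectivity forces overall surjectivity via Goursat's Lemma together with the non-isomorphism data of Proposition \ref{isomorphisme} and the ordering on double-partitions (Lemma \ref{gougou}), and then prove component-wise surjectivity for all $n\geq 5$ by induction using the branching rule, Guralnick--Saxl, and the form constraints of Propositions \ref{bilin} and \ref{transpose} (Theorem \ref{hortillonage}). You also correctly flag the two technical checkpoints: the collapse of the hook summands via Proposition \ref{patate} once the characters $\eta_{i,r}$ vanish on $\mathcal{A}_{B_n}$, and the fact that the absence of $\epsilon$ when $\F_q=\F_p(\alpha+\alpha^{-1},\beta+\beta^{-1})$ kills the unitary alternative.

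One point where your route differs in the details: for the base cases of \emph{this} field-extension configuration, the paper does not actually reach for the \cite{BHRC} maximal-subgroup tables. Instead it uses Wagner's theorem (Theorem \ref{wag}, applied to the group generated by the pseudo-reflections $\beta^{-1}R(T),-R(S_i)$) to pin down the hook images as $SL_n(q)$; Dickson's theorem plus a Schur-multiplier/universal-coefficients argument for $n=2$; explicit Goursat computations with traces of braid words for the gluing at $n=3,4$; and the transvection theorems of Zalesskii--Serezkin (odd characteristic), Pollatsek and Kantor (characteristic $2$) for the $8$-dimensional summand $([2,1],[1])$. The \cite{BHRC} tables are reserved for the later cases 4--6, where the proliferation of unitary/orthogonal/symplectic alternatives under the field automorphism $\epsilon$ makes the lighter tools insufficient. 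Your BHRC-based alternative for the base cases would presumably also work, but is heavier than needed here; the trade-off is that the transvection/pseudo-reflection theorems give clean case-free arguments in the generated-by-transvections setup, while the BHRC tables require a case-by-case sweep of the geometric and sporadic maximal subgroups but apply uniformly once the more delicate form constraints of cases 4--6 enter.
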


\bigskip

If $\F_q=\F_p(\alpha,\beta)=\F_p(\alpha+\alpha^{-1}, \beta+\beta^{-1})$  and $\F_p(\alpha)\neq\F_p(\alpha+\alpha^{-1})$ then we have the corresponding theorem where the first row of groups corresponds to the unitary case in \cite{BMM}.

\begin{theo}\label{result2}
If  $\F_q=\F_p(\alpha,\beta)=\F_p(\alpha+\alpha^{-1},\beta)=\F_p(\alpha,\beta+\beta^{-1})\neq \F_p(\alpha+\alpha^{-1}, \beta+\beta^{-1})$, then the morphism $\mathcal{A}_{B_n} \rightarrow H_{B_n,\alpha,\beta}^\times \simeq \underset{\lambda \vdash\vdash n}\prod GL(\lambda)$ factors through the epimorphism
$$\Phi_n : \mathcal{A}_{B_n} \rightarrow SU_{n-1}({\tilde{q}^{\frac{1}{2}}})\times \underset{(\lambda_1,\emptyset)\in A\epsilon_n, \lambda_1<\lambda_1'}\prod SU(\lambda) \times \underset{(\lambda_1,\emptyset)\in A\epsilon_n,\lambda_1=\lambda_1'}\prod \widetilde{OSP}(\lambda)'\times $$
 $$SU_n(q^{\frac{1}{2}})^2 \times \underset{\lambda\in \epsilon_n, \lambda < \lambda'}\prod SU(\lambda) \times \underset{\lambda\in \epsilon_n, \lambda=\lambda'}\prod \widetilde{OSP}(\lambda)'.$$
where $\widetilde{OSP}(\lambda)$ is the group of isometries of a bilinear form of the same type as the one in 2 but defined over $\F_{q^\frac{1}{2}}$.
\end{theo}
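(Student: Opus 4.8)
The plan is to follow the same strategy as for Theorem \ref{result1}, but with the field extension data changed: here $\F_q$ is a quadratic extension of $\F_p(\alpha+\alpha^{-1},\beta+\beta^{-1})$ with Galois automorphism $\epsilon$ satisfying $\epsilon(\alpha)=\alpha^{-1}$ and $\epsilon(\beta)=\beta^{-1}$, while $\F_{\tilde q}=\F_p(\alpha)$ is a quadratic extension of $\F_p(\alpha+\alpha^{-1})$ (this is forced: $\F_q\neq\F_p(\alpha+\alpha^{-1},\beta+\beta^{-1})$ together with $\F_q=\F_p(\alpha+\alpha^{-1},\beta)$ implies $\F_p(\alpha)\neq\F_p(\alpha+\alpha^{-1})$). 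First I would record, for each double-partition $\lambda$ with no empty components, the precise image of $R_{\lambda|\mathcal{A}_{B_n}}$. By Lemma \ref{Lincoln} this restriction is absolutely irreducible; by Proposition \ref{unitary} we have $R_{\lambda}\simeq\epsilon\circ R_{\lambda}^{\star}$, so Lemma \ref{Harinordoquy} puts the image inside $GU_{n_\lambda}(q^{1/2})$ after conjugation. When $\lambda=\lambda'$, Proposition \ref{bilin}(3) additionally gives an $\mathcal{A}_{B_n}$-invariant nondegenerate bilinear form (symmetric or skew depending on $\tilde\nu(\lambda)$), defined over $\F_q$; combining the two invariant forms one checks the image actually lies in the isometry group $\widetilde{OSP}(\lambda)$ of a bilinear form of the same type defined over $\F_{q^{1/2}}$ (the hermitian and bilinear structures being compatible via $\epsilon$), and one takes derived subgroups. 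For the hook double-partitions $\lambda_{(r)},\lambda^{(r)}$ one uses Proposition \ref{patate} to reduce to $r=1$, i.e.\ to the $(n-1)$-dimensional reflection representation, whose image on $\mathcal{A}_{B_n}$ is $SU_{n-1}(\tilde q^{1/2})$ — note it is the smaller field $\F_{\tilde q}$ that appears here because, $\beta$ being conjugated to $\beta^{-1}$ and the reflection representation ``not seeing'' $\beta$ in an essential way (the relevant matrix entries only involve $\alpha$ on the $S_i$ and the scalar $\beta$ on $T$, and on the derived subgroup the $\beta$-part contributes trivially to the field generated). For the one-row/one-column partitions $(\lambda_1,\emptyset),(\emptyset,\lambda_1)$ one has $R_{(\lambda_1,\emptyset)|\mathcal{A}_{B_n}}\simeq R_{(\emptyset,\lambda_1)|\mathcal{A}_{B_n}}$ and these reduce, as in type $A$, to the corresponding type-$A$ picture over $\F_{\tilde q}$, which after $\epsilon$-twisting gives $SU(\lambda)$ or $\widetilde{OSP}(\lambda)'$ according to $\lambda_1\neq\lambda_1'$ or $\lambda_1=\lambda_1'$; the hook ones among these are absorbed into the $SU_{n-1}(\tilde q^{1/2})$ factor via Proposition \ref{patate}, which is why the product over $A\epsilon_n$ excludes hooks.

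Next I would determine exactly which pairs of these irreducible constituents are ``linked'' — isomorphic after applying some combination of transpose-dual and $\epsilon$ — so that Goursat's Lemma can be applied to glue the per-$\lambda$ images into the stated product. This is precisely the content of Proposition \ref{isomorphisme}(2): for double-partitions with no empty components the only coincidences are $R_\lambda\simeq R_\mu\Leftrightarrow\lambda=\mu$, $R_\lambda\simeq R_\mu^\star\Leftrightarrow\lambda=\mu'$, $R_\lambda\simeq\epsilon\circ R_\mu\Leftrightarrow\lambda=\mu'$ and $R_\lambda\simeq\epsilon\circ R_\mu^\star\Leftrightarrow\lambda=\mu$. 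Thus over $\F_{q^{1/2}}$ (the base field of the unitary groups) a full $GU$-factor is contributed by each pair $\{\lambda,\lambda'\}$ with $\lambda\neq\lambda'$ — so one keeps one representative, the one with $\lambda<\lambda'$ — and a single $\widetilde{OSP}(\lambda)$-factor when $\lambda=\lambda'$; there are no further identifications among distinct such classes, so Goursat gives a direct product up to the scalar/abelian discrepancies. The same bookkeeping applied to the one-empty-component partitions, together with the Proposition \ref{patate} identifications that fold all hooks (from both $A_n$ and the $n$-dimensional and $(n-1)$-dimensional ends) into the single $SU_{n-1}(\tilde q^{1/2})$ factor, produces exactly the index sets $A\epsilon_n$, $\epsilon_n$ and the two copies $SU_n(q^{1/2})^2$ (the two copies coming from $(\lambda_1,\emptyset)$ vs $(\emptyset,\lambda_1)$ for $\lambda_1=[n]$, which are isomorphic on $\mathcal{A}_{B_n}$ but glued nontrivially into the product via the $\Z^2$ abelianization). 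Passing from $GU$, $GL$, etc.\ to the $SU$, $SL$ derived-type groups is handled exactly as in \cite{BMM}: one shows the image is contained in the relevant quasisimple group and then that it is all of it, using that $\mathcal{A}_{B_n}$ is perfect-modulo-small-data and that the per-factor image is already shown to be the full classical group in the base cases.

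For the base cases (small $n$), rather than the reduction-to-type-$A$ arguments of \cite{BMM}, I would identify each $R_{\lambda|\mathcal{A}_{B_n}}$-image as a subgroup of a low-dimensional classical group and rule out all proper containments using the classification of maximal subgroups of low-dimensional finite classical groups from \cite{BHRC}, exactly as announced in the introduction; the presence of genuine braid generators satisfying the braid relations, together with the order constraint $a\notin\{1,\dots,10\}$ and $a>n$, excludes the relevant Aschbacher classes (imprimitive, field-extension, tensor, and the finite ``class $\mathcal S$'' subgroups), leaving only the full classical group. The induction step to all $n$ then uses the branching rule for restriction from $\mathcal{A}_{B_n}$ to $\mathcal{A}_{B_{n-1}}$ together with the Guralnick–Saxl theorem \cite{GS} on the minimal degrees of nontrivial representations, in the form: if the restriction of $R_\lambda$ to $\mathcal{A}_{B_{n-1}}$ has the full expected image on each constituent and the degrees are large enough, then $R_\lambda(\mathcal{A}_{B_n})$ contains the classical group. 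The main obstacle, as in \cite{BMM}, will be the small-$n$ base cases: verifying by hand (with the aid of \cite{BHRC}) that no exceptional subgroup arises for each of the finitely many small double-partitions, in particular keeping careful track of which ground field ($\F_{q^{1/2}}$, $\F_{\tilde q^{1/2}}$, or $\F_q$) each factor lives over and that the invariant hermitian and bilinear forms have the asserted type (unitary, and symmetric vs.\ skew according to $\tilde\nu(\lambda)$), since an error in the form type changes the isometry group. The epimorphism property of $\Phi_n$ then follows by assembling the surjectivity on each factor with the Goursat analysis, the abelian discrepancies from $A_{B_n}/\mathcal{A}_{B_n}\simeq\Z^2$ being absorbed once one passes to $\mathcal{A}_{B_n}$.
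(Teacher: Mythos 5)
Your high-level architecture is the right one and matches the paper's: Prop.~\ref{unitary} gives $R_\lambda\simeq\epsilon\circ R_\lambda^\star$, Lemma~\ref{Harinordoquy} conjugates into a unitary group, Prop.~\ref{coolprop} handles the $\lambda=\lambda'$ case by intersecting hermitian and bilinear structures, and Prop.~\ref{isomorphisme}(2) is exactly the factorization data fed into Goursat. However, there is a concrete combinatorial error in how you assign factors to representations. The hook double-partitions $\lambda_{(1)}=([1^{n-1}],[1])$ and $\lambda^{(1)}=([1],[1^{n-1}])$ have dimension $n$, not $n-1$; they are not the reflection representation, and by (the case-3 analogue of) Prop.~\ref{lesgourgues} they map $\mathcal{A}_{B_n}$ onto $SU_n(q^{1/2})$, over the \emph{large} field — these two, after the $\lambda_{(r)},\lambda^{(r)}\leadsto\lambda_{(1)},\lambda^{(1)}$ reduction of Prop.~\ref{patate}, are exactly the source of the $SU_n(q^{1/2})^2$ factor. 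Your statement that $SU_n(q^{1/2})^2$ ``comes from $(\lambda_1,\emptyset)$ vs $(\emptyset,\lambda_1)$ for $\lambda_1=[n]$'' cannot be right: those representations are one-dimensional. The $SU_{n-1}(\tilde q^{1/2})$ factor is a different object: it comes from the representation $R_{([n-1,1],\emptyset)}$, on which $T$ acts by the scalar $\beta$ (so the derived subgroup only sees $\alpha$, hence $\tilde q$), and it absorbs the hooks among the partitions $(\lambda_1,\emptyset)$ via the type-$A$ theory of \cite{BMM}, not via Prop.~\ref{patate}. As written, your bookkeeping swaps the roles of these two classes of hooks and assigns the wrong ground field.

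A second genuine gap is the treatment of the small-$n$ Goursat gluing. For $n\le 4$, $\mathcal{A}_{B_n}$ is not perfect, and the Goursat argument of Lemma~\ref{platypus} produces a possible field automorphism $\Phi$ that one must show is the identity; in case~1 this is deduced from $\Phi(\alpha+\alpha^{-1})=\alpha+\alpha^{-1}$, $\Phi(\beta+\beta^{-1})=\beta+\beta^{-1}$ and $\F_q=\F_p(\alpha+\alpha^{-1},\beta+\beta^{-1})$. In case~3 that last equality fails precisely because $\epsilon$ exists, so the same eigenvalue comparison only gives $\Phi\in\{\mathrm{Id},\epsilon\}$, and ruling out $\Phi=\epsilon$ requires a separate explicit trace computation (e.g.\ the equality
$\alpha^2\beta^2-2\alpha^2\beta-\alpha\beta^2+5\alpha\beta-\alpha-2\beta+1 = -2\alpha^2\beta-\alpha\beta^2+\alpha^2+5\alpha\beta+\beta^2-\alpha-2\beta$
must be shown impossible). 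Your proposal silently assumes the case-1 Goursat argument transfers, which is exactly where it does not. Your suggestion to instead attack the small cases via \cite{BHRC} maximal-subgroup lists is a legitimately different route — the paper reserves that machinery for the harder cases~4--6, where extra factorizations $(\lambda_1',\lambda_2')$ and $(\lambda_2,\lambda_1)$ appear — but if you go that way you still need to keep the ground fields straight ($\F_{q^{1/2}}$ vs.\ $\F_{\tilde q^{1/2}}$), which the current write-up does not.
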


\bigskip

\bigskip

Assume that $\F_q=\F_p(\alpha,\beta)=\F_p(\alpha,\beta+\beta^{-1}) \neq \F_p(\alpha+\alpha^{-1},\beta) = \F_p(\alpha+\alpha^{-1},\beta+\beta^{-1})$, we then have by Proposition \ref{isomorphisme} and Lemmas \ref{Harinordoquy} and \ref{Ngwenya} the following theorem :

\begin{theo}\label{result3}
If $\F_q=\F_p(\alpha,\beta)=\F_p(\alpha,\beta+\beta^{-1}) \neq \F_p(\alpha+\alpha^{-1},\beta) = \F_p(\alpha+\alpha^{-1},\beta+\beta^{-1})$, then the morphism $\mathcal{A}_{B_n} \rightarrow H_{n,\alpha,\beta}^\times \simeq \underset{\lambda \vdash\vdash n}\prod GL(\lambda)$ factors through the epimorphism
$$\Phi_n : \mathcal{A}_{B_n} \rightarrow SU_{n-1}({\tilde{q}^{\frac{1}{2}}})\times \underset{(\lambda_1,\emptyset)\in A\epsilon_n, \lambda_1<\lambda_1'}\prod SU(\lambda) \times \underset{(\lambda_1,\emptyset)\in A\epsilon_n,\lambda_1=\lambda_1'}\prod \widetilde{OSP}(\lambda)'\times$$
 $$ SL_n(q) \times \underset{\lambda\in \epsilon_n, \lambda < \lambda', \lambda < (\lambda_1',\lambda_2'), \lambda\neq (\lambda_2,\lambda_1)}\prod SL_{n_\lambda}(q) \times \underset{\lambda \in \epsilon_n, \lambda < \lambda', \lambda=(\lambda_2,\lambda_1)}\prod SU(\lambda) \times \underset{\lambda \in \epsilon_n, \lambda< \lambda',\lambda=(\lambda_1',\lambda_2')}\prod SL_{n_\lambda}(q^\frac{1}{2})\times $$
 $$  \underset{\lambda\in \epsilon_n, \lambda=\lambda', \lambda<(\lambda_1',\lambda_2')}\prod OSP(\lambda)' \times \underset{\lambda \in \epsilon_n, \lambda=\lambda',\lambda=(\lambda_1',\lambda_2')}\prod \widetilde{OSP}(\lambda)' .$$
 \end{theo}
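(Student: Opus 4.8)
The proof follows the pattern of \cite{BMM}. The three ingredients are: (i) the factorizations already established, which reduce the target to a product over a set of representatives of the double-partitions of $n$ modulo the equivalences of Proposition \ref{isomorphisme}(3); (ii) a proof, for each representative $\lambda$, that $R_\lambda(\mathcal{A}_{B_n})$ is the full classical group appearing in the statement, by induction on $n$ with low-dimensional base cases; and (iii) Goursat's Lemma (\cite{Gour}) to pass from surjectivity of each projection to surjectivity of $\Phi_n$.

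\textbf{Reduction of the target.} In this case $\epsilon(\alpha)=\alpha^{-1}$, $\epsilon(\beta)=\beta$, and in particular $\F_p(\alpha)\neq\F_p(\alpha+\alpha^{-1})$. The representations $R_{(\lambda_1,\emptyset)}$ have all matrix coefficients in $\F_{\tilde q}=\F_p(\alpha)$ (the operator $T$ acts as the scalar $\beta$, killed on $\mathcal{A}_{B_n}$) and restrict to $\mathcal{A}_{B_n}$ as the type-$A$ Hoefsmit representations; using the proposition that $R_{(\lambda_1,\emptyset)|\mathcal{A}_{B_n}}\simeq R_{(\emptyset,\lambda_1)|\mathcal{A}_{B_n}}$ together with Proposition \ref{patate}, the hook ones factor through $R_{\lambda_{(1)}}$ via exterior powers and characters, and since $\F_p(\alpha)\neq\F_p(\alpha+\alpha^{-1})$ the type-$A$ theorem of \cite{BMM} shows the first row of the product, $SU_{n-1}(\tilde q^{1/2})\times\prod SU(\lambda)\times\prod\widetilde{OSP}(\lambda)'$, is exactly the image of $\mathcal{A}_{B_n}$ on this block. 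For $\lambda\in\epsilon_n$ with no empty component the coefficients lie in $\F_q=\F_p(\alpha,\beta)$, and Proposition \ref{isomorphisme}(3) says $R_\lambda$ is equivalent on $\mathcal{A}_{B_n}$ to $R_\mu$, $R_\mu^\star$, $\epsilon\circ R_\mu$, $\epsilon\circ R_\mu^\star$ precisely when $\mu$ equals $\lambda$, $\lambda'$, $(\lambda_1',\lambda_2')$, $(\lambda_2,\lambda_1)$ respectively, so the orbit of $\lambda$ under these operations has size dividing $4$. The combinatorial lemma preceding the theorems selects from each generic orbit the unique double-partition with $\lambda<\lambda'$ and $\lambda<(\lambda_1',\lambda_2')$, which yields the index sets of the second block; the fixed-point sub-cases are pinned down by Proposition \ref{bilin} (if $\lambda=\lambda'$ the image preserves the bilinear form, hence lies in $OSP(\lambda)'$), by Lemma \ref{Ngwenya} (if $\lambda=(\lambda_1',\lambda_2')$, i.e. $R_\lambda\simeq\epsilon\circ R_\lambda$, the image is conjugate into $GL_{n_\lambda}(q^{1/2})$) and by Lemma \ref{Harinordoquy} (if $\lambda=(\lambda_2,\lambda_1)$, i.e. $R_\lambda\simeq\epsilon\circ R_\lambda^\star$, the image is conjugate into $GU_{n_\lambda}(q^{1/2})$), the remaining case giving $SL_{n_\lambda}(q)$. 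This matches the group in the statement termwise.

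\textbf{Fullness of each factor.} The inclusion of $R_\lambda(\mathcal{A}_{B_n})$ in the stated classical group is the containment just described together with triviality of $\det\circ R_\lambda$ on $\mathcal{A}_{B_n}$. For the reverse inclusion one argues by induction on $n$. In the base range (small $n$, where each $n_\lambda$ is small), Lemma \ref{Lincoln} gives absolute irreducibility of the restriction, and one excludes every maximal subgroup of the relevant low-dimensional classical group using the tables of \cite{BHRC}, reading off orders, eigenvalues and traces of $R_\lambda(T)$ and $R_\lambda(S_i)$ from the Hoefsmit model to rule out the imprimitive, field-extension, tensor-induced, subfield and almost-simple classes. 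For the inductive step one restricts to $\mathcal{A}_{B_{n-1}}\subseteq\mathcal{A}_{B_n}$: by the branching rule $R_\lambda|_{A_{B_{n-1}}}$ is the sum of the $R_\mu$ over $\mu$ obtained by deleting a removable box of $\lambda$, and by induction the image of $\mathcal{A}_{B_{n-1}}$ in each summand is the full classical group; the theorem of Guralnick--Saxl \cite{GS} on subgroups of classical groups generated by such subgroups then forces $R_\lambda(\mathcal{A}_{B_n})$ to be the whole group, once one checks the numerical hypotheses of \cite{GS} hold for $n$ past the base range.

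\textbf{Goursat and the main obstacle.} To conclude, apply Goursat's Lemma repeatedly: two of the quasi-simple classical groups arising as $R_\lambda(\mathcal{A}_{B_n})$ and $R_\mu(\mathcal{A}_{B_n})$ for inequivalent representatives can be linked only through a common simple quotient, which by the classification of finite simple groups would produce a projective equivalence between $R_\lambda$ and one of $R_\mu,R_\mu^\star,\epsilon\circ R_\mu,\epsilon\circ R_\mu^\star$, contradicting the choice of representatives; the abelian parts match because $\mathcal{A}_{B_n}$ is a derived subgroup, so all the $\det\circ R_\lambda$ are trivial. Hence $\Phi_n$ is an epimorphism, and the Hecke-algebra morphism $\mathcal{A}_{B_n}\to H_{n,\alpha,\beta}^\times$ factors through it via the identifications $\Lambda^r$, $\eta_{1,r}$, $\eta_{2,r}$, $\mathcal{L}$ and the conjugating matrices of Propositions \ref{transpose} and \ref{unitary}, all realized inside $H_{n,\alpha,\beta}^\times$. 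The hard part is the base range: because of the second parameter and the four field configurations, many more low-dimensional classical groups occur than in type $A$, and for each one the exclusion of maximal subgroups via \cite{BHRC} and the explicit Hoefsmit matrices is the genuinely new and delicate part of the argument.
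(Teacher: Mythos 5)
Your proposal identifies the paper's approach exactly: Proposition \ref{isomorphisme}(3) together with the combinatorial lemma on the order to choose representatives, Proposition \ref{bilin}, Lemmas \ref{Harinordoquy} and \ref{Ngwenya} (with Proposition \ref{coolprop} needed implicitly for the simultaneous $\lambda=\lambda'=(\lambda_1',\lambda_2')$ case giving $\widetilde{OSP}(\lambda)'$) to pin down the group types, the type-$A$ theorem of \cite{BMM} for the empty-component block, the \cite{BHRC} maximal-subgroup tables for the low-dimensional base cases, the branching rule plus the Guralnick--Saxl Theorem \ref{CGFS} for the inductive step, and Goursat's Lemma to assemble the product. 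The one place your outline elides a genuine subtlety is that for $n\le 4$, where $\mathcal{A}_{B_n}$ is not perfect, a nontrivial character $z$ can survive the Goursat argument and must be excluded by explicit trace computations in the Hoefsmit model (as in the paper's Lemma \ref{platypus} and its Cases 4--6 analogue), but you correctly flag the base range as the hard part, so the route is the same as the paper's.
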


The corresponding statements for the cases $\F_q=\F_p(\alpha,\beta)=\F_p(\alpha+\alpha^{-1},\beta) \neq \F_p(\alpha,\beta+\beta^{-1}) = \F_p(\alpha+\alpha^{-1},\beta+\beta^{-1})$ and ($\F_p(\alpha) = \F_p(\alpha+\alpha^{-1})$ or $\F_p(\alpha) \neq \F_p(\alpha+\alpha^{-1})$) should be clear and are left to the reader. Their proof is deduced from the same propositions and lemmas.

 \subsection{Surjectivity of the morphism $\Phi_n$}
 
 In this section, we conclude the proof of the theorems in the previous section by showing that the morphism $\Phi_n$ is surjective.
 
 \subsubsection{First case : $\F_q=\F_p(\alpha,\beta)= \F_p(\alpha+\alpha^{-1},\beta+\beta^{-1}),\F_p(\alpha)=\F_p(\alpha+\alpha^{-1})$}
 
 In this subsection, we prove the surjectivity of the morphism in the easiest case and establish groundwork for the other cases. We first prove the result for $n\leq 4$ and then use induction to get the result for all $n$.

We recall Goursat's Lemma also used in \cite{BM} and \cite{BMM} :

\begin{lemme}[Goursat's Lemma]\label{Goursat}
Let $G_1$ and $G_2$ be two groups, $K\leq G_1 \times G_2$, and write $\pi_i: K \longrightarrow G_i$ the projection. Let $K_i=\pi_i(K)$ and $K^i = ker(\pi_{i'})$ or $(i,i')=(1,2)$. There exists an isomorphism $\varphi : K_1/K^1 \rightarrow K_2/K^2$ such that $K=\{(k_1,k_2) \in K_1 \times K_2, \varphi(k_1K^1) =k_2K^2\}$.
\end{lemme}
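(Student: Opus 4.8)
This is the classical Goursat correspondence, and the plan is simply to build the isomorphism $\varphi$ by hand. First I would make the standard identification of the normal subgroups $K^1=\ker(\pi_2)$ and $K^2=\ker(\pi_1)$ with subgroups of $G_1$ and $G_2$ respectively: since $K^1=\{(k_1,e)\in K\}$ we regard it as $\{k_1\in G_1:(k_1,e)\in K\}$, a subgroup of $K_1$, and symmetrically $K^2$ becomes a subgroup of $K_2$. I would then check $K^1\trianglelefteq K_1$: for $k_1\in K_1$ choose $k_2$ with $(k_1,k_2)\in K$, and for $h\in K^1$ observe $(k_1,k_2)(h,e)(k_1,k_2)^{-1}=(k_1hk_1^{-1},e)\in K$, so $k_1hk_1^{-1}\in K^1$; the argument for $K^2\trianglelefteq K_2$ is symmetric. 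Thus the quotients $K_1/K^1$ and $K_2/K^2$ are well defined groups.

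Next I would define $\varphi\colon K_1/K^1\to K_2/K^2$ by: for $k_1\in K_1$, pick any $k_2\in K_2$ with $(k_1,k_2)\in K$ and set $\varphi(k_1K^1)=k_2K^2$. The one point that needs care is that this is well defined in both the choice of $k_2$ and the coset representative of $k_1$. If $(k_1,k_2),(k_1',k_2')\in K$, then $(k_1^{-1}k_1',k_2^{-1}k_2')\in K$; assuming $k_1^{-1}k_1'\in K^1$ gives $(k_1^{-1}k_1',e)\in K$, so $(e,k_2^{-1}k_2')=(k_1^{-1}k_1',e)^{-1}(k_1^{-1}k_1',k_2^{-1}k_2')\in K$, whence $k_2^{-1}k_2'\in K^2$ and $k_2K^2=k_2'K^2$. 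Taking $k_1=k_1'$ in particular handles the choice of $k_2$. Then $\varphi$ is a homomorphism since $(k_1,k_2),(l_1,l_2)\in K$ implies $(k_1l_1,k_2l_2)\in K$; it is surjective because every $k_2\in K_2$ has a preimage $(k_1,k_2)\in K$; and it is injective because $\varphi(k_1K^1)=K^2$ means some $(k_1,k_2)\in K$ with $k_2\in K^2$, hence $(e,k_2)\in K$ and $(k_1,e)=(k_1,k_2)(e,k_2)^{-1}\in K$, i.e. $k_1\in K^1$. So $\varphi$ is an isomorphism.

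Finally I would verify the description of $K$. If $(k_1,k_2)\in K$ then $k_1\in K_1$, $k_2\in K_2$, and $\varphi(k_1K^1)=k_2K^2$ by construction. Conversely, if $k_1\in K_1$, $k_2\in K_2$ satisfy $\varphi(k_1K^1)=k_2K^2$, pick $(k_1,k_2')\in K$; then $k_2'K^2=\varphi(k_1K^1)=k_2K^2$, so $k_2'^{-1}k_2\in K^2$, giving $(e,k_2'^{-1}k_2)\in K$ and therefore $(k_1,k_2)=(k_1,k_2')(e,k_2'^{-1}k_2)\in K$. There is no real obstacle here; the only delicate step is the twofold well-definedness check for $\varphi$, and the only thing to keep straight is the identification of $K^i$ as a subgroup of $G_i$ versus as a subgroup of $K$. (Since this is entirely classical, one may alternatively just cite \cite{Gour}.)
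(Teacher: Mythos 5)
Your proof is correct and is the standard argument for Goursat's Lemma; the paper itself gives no proof and simply cites \cite{Gour}, which is also the fallback you note at the end. All the delicate points are handled properly: the identification of $K^i=\ker(\pi_{i'})$ as a normal subgroup of $K_i$ (rather than of $K$), the two-sided well-definedness of $\varphi$, and the reconstruction of $K$ from the graph of $\varphi$ on cosets.
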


We first prove that if for any $\lambda\vdash\!\vdash n$, the composition of $R_\lambda$ with the projection on its corresponding quasi-simple factor is surjective then $\Phi_n$ is surjective and then prove by induction that each composition is indeed surjective. In order to get the image of the hook partitions it is enough to get the images inside the representations associated with the partitions $([1^{n-1}],[1])$ and $([1],[1^{n-1}])$ . We recall Wagner's theorem which can be found for example in \cite[II, Theorem 2.3]{MM}.
\begin{theo}\label{wag}
Let $\F_r$ be a finite field, $n\in \N, n\geq 3$ and $G \subset GL_n(r)$ a primitive group generated by pseudo-reflections of order greater than or equal to $3$. Then one of the following is true.
\begin{enumerate}
\item $SL_n(\tilde{r}) \subset G \subset GL_n(\tilde{r})$ for some $\tilde{r}$ dividing $r$.
\item $SU_n(\tilde{r}^\frac{1}{2}) \subset G \subset GU_n(\tilde{r}^\frac{1}{2})$ for some $\tilde{r}$ dividing $r$.
\item $n\leq 4$, the pseudo-reflections are of order $3$ and $G \simeq GU_n(2)$.
\end{enumerate}
\end{theo}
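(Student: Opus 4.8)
The plan is to treat Theorem \ref{wag} as a recalled classical fact: it is Wagner's determination of the finite primitive reflection groups over a finite field, in the normalisation given in \cite[II, Theorem 2.3]{MM}, so the honest proof here is the citation. For completeness I sketch the strategy behind it. First one reduces to the essential configuration: replace $G$ by the subgroup generated by its pseudo-reflections --- still irreducible and still primitive --- and record the eigenvalue data of a pseudo-reflection $s$, namely a single eigenvalue $\zeta_s \neq 1$ of multiplicative order $\geq 3$ with all other eigenvalues equal to $1$; in particular $\det s = \zeta_s \neq 1$. Let $\tilde r$ be the subfield of $\F_r$ generated by the matrix entries of $G$ in a well-chosen basis; the target is that, up to conjugacy, either $SL_n(\tilde r) \leq G \leq GL_n(\tilde r)$, or $G$ is the isometry group of a nondegenerate Hermitian form over $\F_{\tilde r}$ with $SU_n(\tilde r^{1/2}) \leq G \leq GU_n(\tilde r^{1/2})$, the only other option being the sporadic family with $G \simeq GU_n(2)$ in dimension $n \leq 4$ arising from the reduction mod $p$ of an exceptional complex reflection group with order-$3$ generators.

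The group-theoretic core is to show that an irreducible primitive $G$ generated by such pseudo-reflections is as large as possible inside the appropriate classical group. I would run this through the Aschbacher-type subgroup analysis of $GL_n(\tilde r)$: primitivity rules out the reducible and imprimitive families; the presence of pseudo-reflections --- each with fixed subspace of codimension $1$ --- rules out the tensor-product, tensor-induced and field-extension (semilinear) families once $n \geq 3$, since no nontrivial element of such a decomposition group can act with so large a fixed space; the normaliser-of-extraspecial family survives only in tiny dimension. What is left is that $G$ contains $SL_n(\tilde r)$, or preserves a classical form and contains the corresponding quasi-simple classical group, or is almost simple in finitely many low-dimensional configurations. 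The symplectic and orthogonal forms are excluded because a pseudo-reflection of order $\geq 3$ cannot be an isometry of an alternating or symmetric bilinear form (this is exactly where the hypothesis ``order $\geq 3$'', as opposed to a transvection or an order-$2$ reflection, is used), so only the Hermitian form can occur, giving the unitary alternative. Working out the almost simple and small-dimensional cases, and checking which primitive subgroups of $GL_n(\tilde r)$ with $n \leq 4$ are generated by order-$3$ pseudo-reflections without containing $SL_n$ or $SU_n$, isolates precisely the $GU_n(2)$ family.

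The step I expect to be the real obstacle is this last low-dimensional analysis: in dimensions $2$, $3$ and $4$ there is an abundance of exceptional small primitive linear groups (the reductions mod $p$ of the exceptional complex reflection groups, the Hessian-type groups in dimension $3$, and the dimension-$4$ exceptional reflection group), together with sporadic isomorphisms among low-rank classical groups, and one must verify case by case that, after descending to the correct subfield $\tilde r$, each such group is the full special linear or special unitary group over $\F_{\tilde r}$ except for the $GU_n(2)$ exception. As this is purely an external input in the present paper, I would not reproduce the argument and would simply refer to \cite[II, Theorem 2.3]{MM}.
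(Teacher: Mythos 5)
Your proposal matches the paper exactly: the paper states Theorem \ref{wag} as a recalled result and gives no proof, simply pointing to \cite[II, Theorem 2.3]{MM} (Wagner's theorem). You identify this correctly and, like the paper, rely on the citation, so the additional sketch you offer is supererogatory and does not affect correctness.
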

 
 \begin{prop}\label{lesgourgues}
Let $n\geq 3$ and $R_1$ (resp $R_2$)  be the representation  associated with the double-partition $([1^{n-1}],[1])$ (resp $([1],[1^{n-1}])$). We have $R_1(\mathcal{A}_{B_n})=R_2(\mathcal{A}_{B_n})=SL_n(q)$.
\end{prop}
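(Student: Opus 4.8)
The plan is to identify the representation $R_1$ associated with $\lambda_{(1)}=([1^{n-1}],[1])$ concretely using Proposition \ref{patate} (for $r=1$ it is essentially $R_{\lambda_{(1)}}$ itself, up to a character) and to recognize its image as a classical group via Wagner's theorem (Theorem \ref{wag}). First I would observe that $R_1$ has dimension $n$ and that the generators $S_1,\dots,S_{n-1}$ act by matrices of the form given in the proof of Proposition \ref{patate}; in particular $S_k$ acts (in the basis $v_I$, $\vert I\vert=1$) as a pseudo-reflection of order equal to the order of $\alpha$ on the two-dimensional subspace spanned by $v_{\{k\}},v_{\{k+1\}}$ and as a scalar elsewhere, while $T$ acts diagonally with eigenvalues $\beta$ and $-1$. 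Since $\alpha$ has order $a>n\geq 3$ and $a\notin\{1,2,3,4,5,6,8,10\}$, the pseudo-reflections are of order $\geq 3$ and case (3) of Theorem \ref{wag} (which forces $G\simeq GU_n(2)$) is excluded on order grounds. So I would only need to check that the image of $\mathcal{A}_{B_n}$ under $R_1$ is \emph{primitive} and \emph{generated by pseudo-reflections of order $\geq 3$}, and then pin down which of the two surviving alternatives occurs and descend from $GL$ to $SL$.

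The key steps, in order, would be: (1) compute explicitly $R_1$ on the generators of $A_{B_n}$ as above, and note that $\mathcal{A}_{B_n}$ is generated by elements such as $S_iS_j^{-1}$ and $TS_iTS_i^{-1}\cdots$ (commutators of the standard generators), each of which is a product of two pseudo-reflections; (2) show the group generated is irreducible and in fact primitive — irreducibility follows because the $S_k$'s link all the basis vectors $v_{\{k\}}$ together and $T$ distinguishes $v_{\{1\}}$ from the rest, and primitivity can be shown by ruling out an invariant block system, using that a conjugate of $S_1$ fixes a hyperplane pointwise and moves only a line, which is incompatible with a nontrivial block decomposition once $n\geq 3$; (3) reduce to $\mathcal{A}_{B_n}$ rather than $A_{B_n}$: since $A_{B_n}/\mathcal{A}_{B_n}\simeq\Z^2$ is abelian and $R_1$ restricted to $\mathcal{A}_{B_n}$ is absolutely irreducible by Lemma \ref{Lincoln}, the image of $\mathcal{A}_{B_n}$ is normal in the image of $A_{B_n}$ with abelian quotient, hence contains the derived subgroup of $R_1(A_{B_n})$, which by Wagner is $SL_n(\tilde r)$ or $SU_n(\tilde r^{1/2})$; (4) determine the field and type: since $\F_q=\F_p(\alpha+\alpha^{-1},\beta+\beta^{-1})$ and $\F_p(\alpha)=\F_p(\alpha+\alpha^{-1})$, no order-$2$ automorphism $\epsilon$ is available, so Proposition \ref{isomorphisme}(1) shows $R_1\not\simeq\epsilon\circ R_1^\star$; combined with the fact that the traces of the generators generate $\F_q$ (the entries involve $\alpha,\beta$), this forces alternative (1) of Theorem \ref{wag} with $\tilde r=q$, i.e. $SL_n(q)\subseteq R_1(\mathcal{A}_{B_n})$; (5) get the upper bound $R_1(\mathcal{A}_{B_n})\subseteq SL_n(q)$: each commutator of standard generators has determinant $1$ (the standard generators $S_i$, $T$ all have the same nonzero determinant on $R_1$ up to the abelianization), so $R_1(\mathcal{A}_{B_n})\subseteq SL_n(q)$, giving equality; (6) finally, identify $R_2$ with $R_1$ on $\mathcal{A}_{B_n}$ using the Proposition preceding Proposition \ref{isomorphisme} ($R_{(\lambda_1,\emptyset)}\simeq R_{(\emptyset,\lambda_1)}$ on $\mathcal{A}_{B_n}$) applied after the exterior-power identification, or simply rerun the same argument for $R_2=R_{\lambda^{(1)}}$ verbatim since its matrix model is entirely analogous with $\beta$ replaced by $-1$ in the relevant slots.

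I expect the main obstacle to be step (2), establishing \textbf{primitivity}, together with the small-$n$ bookkeeping needed to make Wagner's theorem applicable — one has to be careful that the image is not imprimitive (a monomial-type subgroup) nor contained in a smaller-field or unitary subgroup for spurious reasons coming from the specific values of $\alpha,\beta$. The cleanest route is probably to argue that $R_1(\mathcal{A}_{B_n})$ is normalized by $R_1(S_1)$, a genuine pseudo-reflection of order $a\geq 7$, and that a primitive-or-not dichotomy combined with Clifford theory and the constraint $a\notin\{1,2,3,4,5,6,8,10\}$ leaves only the reflection-group alternatives of Theorem \ref{wag}; the exclusion of the imprimitive case reduces to noting that the reflection hyperplanes of the conjugates of $S_1$ span generic-position configurations in $\F_q^n$, which cannot happen inside the normalizer of a torus. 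Everything else is either a direct matrix computation already essentially carried out in Proposition \ref{patate} or a formal consequence of Lemmas \ref{Lincoln}, \ref{abel} and Proposition \ref{isomorphisme}.
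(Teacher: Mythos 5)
Your overall route is the same as the paper's: scale the generators to pseudo-reflections, apply Wagner's theorem (Theorem \ref{wag}), rule out the unitary alternative, pin down the field, and descend to $\mathcal{A}_{B_n}$ by taking derived subgroups. That much is sound and matches the paper step for step. Where your sketch goes wrong is the primitivity argument in step (2), which as stated does not work and cannot be repaired along the lines you indicate.

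You claim that ``a conjugate of $S_1$ fixes a hyperplane pointwise and moves only a line, which is incompatible with a nontrivial block decomposition once $n\geq 3$,'' and your fallback (``reflection hyperplanes in generic position cannot live inside the normalizer of a torus'') has the same flaw: pseudo-reflections are entirely compatible with a nontrivial block system, and indeed the imprimitive complex reflection groups $G(de,e,n)$ are generated by them and normalize the diagonal torus, so fixing a hyperplane pointwise imposes \emph{no} obstruction by itself. The actual argument, which the paper carries out, has to use that the non-identity eigenvalue $\xi$ of the (scaled) pseudo-reflection satisfies $\xi^2\neq 1$: one first shows that if some pseudo-reflection $g$ moves a block $V_1$ to a distinct block $V_2$, then the fixed hyperplane forces $\dim V_1=1$; writing $x=x_1+x_2$ with $x_1$ in the $\xi$-eigenline and $x_2$ in the fixed hyperplane, one computes $g^2x=\xi^2x_1+x_2=\xi\,x+(1-\xi)\,gx$, which has nonzero components in both $V_1$ and $V_2$ because $\xi\neq 0,1$, and so $g^2V_1$ cannot be any block. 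This second-iterate computation is the whole content of the primitivity proof and is missing from your sketch. Likewise, in step (4), the statement ``no order-$2$ automorphism $\epsilon$ is available'' is too coarse: Wagner's theorem only puts $G$ inside $GU_n(\tilde r^{1/2})$ for some $\tilde r\mid q$, so you must first show (as the paper does, via the determinants $\det(\beta^{-1}T)=-\beta^{-1}$, $\det(-S_1)=-\alpha$) that the putative involution fixes $\alpha+\alpha^{-1}$ and $\beta+\beta^{-1}$, hence all of $\F_q=\F_p(\alpha+\alpha^{-1},\beta+\beta^{-1})$, and only then reach a contradiction. Your trace observation does correctly force $\tilde r=q$, but it does not by itself rule out the unitary case.
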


\begin{proof}
Let $n\geq 3$, we use theorem \ref{wag}. The eigenvalues of $R_1(T)$ are $\beta$ with multiplicity $n-1$ and $-1$ with multiplicity $1$ and the eigenvalues of $R_1(S_i)$ are $\alpha$ with multiplicity $1$ and $-1$ with multiplicity $n-1$. The group $G=\langle\beta^{-1}R_1(T),-R_1(S_1),...,-R_1(S_{n-1})\rangle$ is generated by pseudo-reflections. To apply Wagner's Theorem (Theorem \ref{wag}), we must show our group is primitive. If $G$ was imprimitive, we could write $\F_q^n=V_1\oplus V_2 \oplus ... \oplus V_r$, where for all $i$ and for all $g\in G$, there exists a $j$ such that $g.V_i =V_j$. Since $R_1$ is irreducible, either $\beta^{-1}R_1(T) .V_1 \neq V_1$ or there exists $i\leq n-1$ such that $-R_1(S_i) .V_1 \neq V_1$. Assume there exists $i$ such that $-R_1(S_i).V_1 \neq V_1$. Up to reordering, we have $V_2=-R(S_i).V_1$. If $\dim(V_1)\geq 2$ then $H_{-R_1(S_i)}$ (the hyperplane fixed by $-R_1(S_i)$) has a non-empty intersection with $V_1$ so $V_1\cap V_2 \neq \emptyset$ which is a contradiction so $\dim(V_1)=1$. This reasoning is valid for any $V_i$ so they are all one-dimensional. Let $x\in V_1$ be a non-zero vector, it can be written in a unique way as $x =x_1+x_2$ with $x_1\in Ker(R_1(S_i)+\alpha)$ and $x_2\in H_{-R_1(S_i)}$. We then have that $-R_1(S_i)x = -\alpha x_1+x_2$ and $-R(S_i)(-R(S_i)x)= \alpha^2 x_1 +x_2=\alpha(x_1+x_2)+(1-\alpha)(-\alpha x_1+x_2)\in V_1\oplus V_2$. Since $\alpha\notin \{0,1\}$ this contradicts the fact that there exists $j$ such that $-R(S_i).V_2 =V_j$.
If $V_1\neq \beta^{-1}R_1(T) V_1 =V_2$, then if $x=x_1+x_2\neq 0$ with $x\in V_1,x_1\in Ker(R_1(T)+\beta) x_2\in H_{\beta^{-1}R_1(T)}$, we have that $\beta^{-1}R_1(T)x = -\beta^{-1}x_1+x_2$ and $\beta^{-1}R_1(T)(\beta^{-1}R_1(T)x) = \beta^{-2}x_1+x_2=\beta^{-1}(x_1+x_2)+(1-\beta^{-1})(-\beta^{-1}x_1+x_2)\in V_1\oplus V_2$. This is absurd because $\beta^{-1}\notin \{0,1\}$. This shows that $G$ is primitive and in the same way, $\tilde{G}=\langle -R_2(T),-R_2(S_1),...,-R_2(S_{n-1})\rangle$ is primitive and generated by pseudo-reflections of order greater than or equal to 3. By Theorem \ref{wag}, we have $SL_n(\tilde{q}) \subset G,\tilde{G} \subset GL_n(\tilde{q})$ or $SU_n(\tilde{q}^\frac{1}{2}) \subset G,\tilde{G} \subset GU_n(\tilde{q}^\frac{1}{2})$ for some $\tilde{q}$ dividing $q$. If we were in the unitary case then there would exist an automorphism $\epsilon$ of order 2 of $\F_{\tilde{q}^2}$ such that $\det(M)=\epsilon(\det(M))^{-1}$ for all $M$ in $G$ or $\tilde{G}$. We also have $\det(\beta^{-1}R_1(T)) = -\beta^{-1}, \det(-R_1(S_1))= \det(-R_2(S_1)) = -\alpha$ and $\det(-R_2(T))=  -\beta$ so if $G$ or $\tilde{G}$ is unitary then $\epsilon(\beta)=\beta^{-1}$ and $\epsilon(\alpha)=\alpha^{-1}$ so $\alpha+\alpha^{-1}$ and $\beta+\beta^{-1}$ would be in $\F_{\tilde{q}}$ which is a contradiction because $\tilde{q}^2$ divides $q$ and $\F_q=\F_p(\alpha+\alpha^{-1},\beta+\beta^{-1})$. This proves we have $SL_n(\tilde{q}) \subset G,\tilde{G} \subset GL_n(\tilde{q})$ for some $\tilde{q}$ dividing $q$ so using again the determinants, we have $\alpha$ and $\beta$ in $\F_{\tilde{q}}$ and so $\tilde{q}=q$. We have $SL_n(q)=[G,G]=[R_1(A_{B_n}),R_1(A_{B_n})]=R_1(\mathcal{A}_{B_n})$ and $SL_n(q)=[\tilde{G},\tilde{G}]=[R_2(A_{B_n}),R_2(A_{B_n})]=R_2(\mathcal{A}_{B_n})$ which concludes the proof.
\end{proof}

By \cite{MR}, $\mathcal{A}_{B_n}$ is perfect for $n\geq 5$ but not for $n\leq 4$ so those cases must be treated separately.

\begin{lemme}\label{platypus}
If $n\leq 4$ then $\Phi_n$ is surjective.
\end{lemme}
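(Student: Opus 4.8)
The plan is to prove surjectivity of $\Phi_n$ for $n\in\{2,3,4\}$ by a direct, case-by-case analysis, exploiting that these are the cases where $\mathcal{A}_{B_n}$ fails to be perfect so that the clean inductive machinery does not apply. First I would recall the list of double-partitions of $n$ for each such $n$ and the corresponding factor groups appearing in the target of $\Phi_n$ as given in Theorem \ref{result1}; the $1$-dimensional and $2$-dimensional representations, together with the exterior-power representations built from $\lambda_{(1)}$ and $\lambda^{(1)}$ via Proposition \ref{patate}, account for most factors, and Proposition \ref{lesgourgues} already handles the hook factor $SL_n(q)$ coming from $([1^{n-1}],[1])$ and $([1],[1^{n-1}])$ once $n\geq 3$. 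The remaining work is to identify the image in each non-hook factor $OSP(\lambda)'$ or $SL_{n_\lambda}(\tilde q)$ and then assemble the pieces using Goursat's Lemma (Lemma \ref{Goursat}) together with the isomorphism classification in Proposition \ref{isomorphisme}, which guarantees that distinct factors are genuinely non-isomorphic as $\mathcal{A}_{B_n}$-representations so that no unexpected diagonal identifications survive.

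For $n=2$ the group $\mathcal{A}_{B_2}$ is small and the irreducible representations have dimension at most $2$; here I would simply compute the image directly from the Hoefsmit matrices of Theorem \ref{mod}, checking that the $2$-dimensional representation(s) surject onto the relevant $SL_2$ or orthogonal/symplectic group, using that $\alpha$ has large order so the generated subgroup of $SL_2$ is not contained in any of the small exceptional subgroups. For $n=3$ and $n=4$ the key extra inputs are: (i) the two-column (equivalently two-row) representations, for which absolute irreducibility of the restriction to $\mathcal{A}_{B_n}$ is Lemma \ref{Lincoln}, and whose image I would pin down by Wagner-type arguments as in Proposition \ref{lesgourgues} or, when the dimension is too small for Wagner's theorem, by appealing to the explicit lists of maximal subgroups of low-dimensional classical groups from \cite{BHRC}; (ii) for self-transpose $\lambda=\lambda'$, the invariant form of Proposition \ref{bilin} forces the image into $OSP(\lambda)$, and one must show it fills out the derived subgroup $OSP(\lambda)'$ — again this is where the maximal-subgroup classification of \cite{BHRC} does the real work, since for $n\leq 4$ these orthogonal/symplectic groups are of rank $\leq 2$.

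The main obstacle will be exactly point (ii) for the small self-transpose double-partitions: ruling out that the image of $\mathcal{A}_{B_n}$ lands in a proper (and possibly quasi-simple) maximal subgroup of the relevant low-rank classical group. For rank $\geq 2$ Wagner's theorem (Theorem \ref{wag}) together with a determinant/field-of-definition argument as in Proposition \ref{lesgourgues} handles the primitive reflection case, but when reflections are unavailable or the dimension is $2$ or $3$ one genuinely needs the \cite{BHRC} tables to exclude the remaining candidates; the order condition on $\alpha$ (order $>n$ and not in $\{1,2,3,4,5,6,8,10\}$) together with the eigenvalue data of the Hoefsmit model is what lets one exclude every such candidate. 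Once every projection $R_\lambda$ composed with the projection to its quasi-simple factor is shown to be surjective, a finite number of applications of Goursat's Lemma — organized along the order $<$ on double-partitions, and using Lemma \ref{abel} plus Proposition \ref{isomorphisme} to see that the abelianized obstructions vanish on $\mathcal{A}_{B_n}$ — yields surjectivity of $\Phi_n$, completing the cases $n\leq 4$.
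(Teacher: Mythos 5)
Your outline captures the general shape of the paper's argument — determine the image in each irreducible factor, then assemble with Goursat — but it misidentifies where the real work is and omits the specific tools the paper actually needs. Two concrete gaps.

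First, you overstate the role of the self-transpose/orthogonal-symplectic case and of the \cite{BHRC} tables, and understate the non-hook $SL$ factors. For $n\leq 4$ the only self-transpose double-partition in the decomposition is $([2,2],\emptyset)$, which is $2$-dimensional (so $OSP'=SL_2$) and is already covered by the type $A$ results of \cite{BMM}. The hard factors are $SL_6(q)$ from $([1^2],[1^2])$ and $SL_8(q)$ from $([2,1],[1])$ at $n=4$: these are neither reflection representations (so Wagner does not apply directly) nor handled by \cite{BHRC} in the paper. The paper instead uses the branching rule to get a large known subgroup (e.g.\ $SL_3(q)\times SL_3(q)$ inside the $6$-dimensional image) and then invokes Theorem~3 of \cite{BM} for $SL_6(q)$, and for $SL_8(q)$ the fact that the image is generated by transvections together with the Zalesskii--Serezkin classification (for $p>3$) and Kantor's theorem after a Pollatsek/Clifford reduction (for $p=2$). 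None of this appears in your proposal, and it is not a routine application of Wagner or of a maximal-subgroup table.

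Second, the Goursat assembly is genuinely nontrivial precisely because $\mathcal{A}_{B_n}$ is \emph{not} perfect for $n\leq 4$, which is the very reason Lemma~\ref{gougou} cannot be applied. You claim ``Lemma~\ref{abel} plus Proposition~\ref{isomorphisme}'' lets one see the abelianized obstructions vanish, but Proposition~\ref{isomorphisme} only rules out identifications up to the identity field automorphism; the Goursat isomorphism of $PSL_k$-quotients is of the form $S^{\Phi}$ with $\Phi$ an a priori arbitrary field automorphism and $S$ possibly inverse-transpose, twisted by a character $z$. The paper must run explicit trace computations on words such as $U=S_1S_2^{-1}$, $V=TS_1T^{-1}S_2^{-1}$, $W=S_2S_1S_2^{-2}$, $X=S_2TS_1T^{-1}S_2^{-2}$ to pin down $\Phi=\mathrm{Id}$ and then derive a contradiction like $(1-\beta^2)(1+\alpha)=0$. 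Similarly, the $n=2$ case is not the quick ``$\alpha$ has large order so we avoid small exceptional subgroups'' that you sketch: after applying Dickson's theorem one must exclude the abelian-by-abelian case with a matrix commutator calculation, then lift a projective representation of $SL_2(\tilde q)$ using a Schur-multiplier/Universal-Coefficient argument, and finally show the trace field is all of $\F_q$. Without these ingredients the proof does not close.
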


\begin{proof}
Double-partitions of $n=2$ are all one-dimensional except for $([1],[1])$ so we only need to show that $R_{[1],[1]}(\mathcal{A}_{B_2})=SL_2(q)$. We write $t=R_{[1],[1]}(T)=\begin{pmatrix} \beta & 0 \\ 0 & -1 \end{pmatrix}$ and $s=R_{[1],[1]}(S_1)= \frac{1}{\beta+1} \begin{pmatrix} \alpha -1 & \alpha +\beta\\ \alpha \beta+1 & \alpha \beta -\beta \end{pmatrix}$. First note that if $P = \begin{pmatrix} 1 & 1\\ \frac{\alpha\beta+1}{\alpha+\beta} & -1. \end{pmatrix}$ then $P^{-1}tP = \frac{1}{\alpha+1}\begin{pmatrix} \beta -1 & \alpha+\beta\\
\alpha\beta+1 & \beta\alpha -\alpha \end{pmatrix}$ and $P^{-1}sP = \begin{pmatrix}
\alpha & 0\\
0 & -1.
\end{pmatrix}$. This proves that the roles of $\alpha$ and $\beta$ are completely symmetrical in this case so up to conjugating by $P$, we can exchange the conditions on $\alpha$ and the conditions on $\beta$. We write $G=<t,s>$. We have $\det(t) = -\beta$ and $\det(s) = -\alpha$. Let $(u,v)\in \overline{\F_p}^2$ such that $u^2 = -\beta^{-1}$ and $v^2 = -\alpha^{-1}$, we set $\F_{q'} = \F_q(u,v)$. We then have $\overline{G}=\langle \overline{t},\overline{s}\rangle = \langle\overline{ut},\overline{vs}\rangle \subset PSL_2(q')$. We write $\mathfrak{S}_n$ the permutation group of $n$ elements and $\mathfrak{A}_n$ its derived subgroup. By Dickson's Theorem \cite[Chapter II, HauptSatz 8.27]{HUP}, we have that $\overline{G}$ is either abelain by abelian or isomorphic to $\mathfrak{S}_3, \mathfrak{A}_4, \mathfrak{A}_5, \mathfrak{S}_4, PSL_2(\tilde{q})$ or $PGL_2(\tilde{q})$ for a given $\tilde{q}$ greater than or equal to $4$ and dividing $q'$. 

If $\overline{ut}^r = 1$ in $PSL_2(q')$ then $((-u)^r)^2 = 1$ so $\beta^r = (-1)^r$ and by the condition on the order of $\alpha$, $\overline{G}$ cannot be isomorphic to $\mathfrak{S}_3, \mathfrak{S}_4, \mathfrak{A}_4$ or $\mathfrak{A}_5$.

We now exclude the case $\overline{G}$ abelian by abelian. If $\overline{G}$ is abelian by abelian, then $[\overline{G},\overline{G}]$ is abelian, i.e. $\overline{ab}= \overline{ba}$ for all $a,b \in [G,G]$ or equivalently $ab=\pm ba$ for all $a,b \in [G,G]$. We have that $(tst^{-1}s^{-1}) (s^{-1}tst^{-1})-(s^{-1}tst^{-1})(tst^{-1}s^{-1}) = $
$$ \begin{pmatrix}
-\frac{(\beta-1)(\alpha-1)^2(\alpha\beta+1)(\alpha+\beta)}{\beta\alpha^2(\beta+1)} & -\frac{(\alpha^2\beta^2+\alpha\beta^3-\alpha\beta^2-\alpha^2\beta+\alpha\beta+\beta^2-\alpha-\beta)(\alpha-1)(\alpha\beta+1)}{\beta\alpha^2(\beta+1)}\\
\frac{(\alpha^2\beta^2+\alpha\beta^3-\alpha\beta^2-\alpha^2\beta+\alpha\beta+\beta^2-\alpha-\beta)(\alpha-1)(\alpha+\beta)}{\beta^2\alpha^2(\beta+1)}
& \frac{(\beta-1)(\alpha-1)^2(\alpha\beta+1)(\alpha+\beta)}{\beta\alpha^2(\beta+1)}
\end{pmatrix}.$$
This matrix is non-zero because the diagonal coefficients are non-zero by the conditions on $\beta$. This means that if $[\overline{G},\overline{G}]$ is abelian than we have $(tst^{-1}s^{-1}) (s^{-1}tst^{-1})+(s^{-1}tst^{-1})(tst^{-1}s^{-1}) = 0$ but this matrix equals
$$\begin{pmatrix}
 \frac{{\alpha}^{4}\beta+{\alpha}^{3}{\beta}^{2}-2{\alpha}^{3}\beta-2{\alpha}^{2}{\beta}^{2}+{\alpha}^{3}+4{\alpha}^{2}\beta+\alpha{\beta}^{2}-2{\alpha}^{2}-2\alpha\beta+\alpha+\beta}{{\alpha}^{2}\beta} & - {\frac { \left( {\alpha}^{2}\beta+\alpha\,{\beta}^{2}-2\,\alpha\,\beta+\alpha+\beta \right)  \left( \alpha-1 \right)  \left( \alpha\,\beta+1 \right) }{{\alpha}^{2}\beta}}\\ 
-{\frac { ( {\alpha}^{2}\beta+\alpha\,{\beta}^{2}-2\,\alpha\,\beta+\alpha+\beta ) ( \alpha+\beta )  ( \alpha-1) }{{\alpha}^{2}{\beta}^{2}}} & \frac {{\alpha}^{4}\beta+{\alpha}^{3}{\beta}^{2}-2\,{\alpha}^{3}\beta-2\,{\alpha}^{2}{\beta}^{2}+{\alpha}^{3}+4\,{\alpha}^{2}\beta+\alpha\,{\beta}^{2}-2\,{\alpha}^{2}-2\,\alpha\,\beta+\alpha+\beta}{{\alpha}^{2}\beta }
\end{pmatrix}.$$
The non-diagonal coefficients are non-zero if $A= {\alpha}^{2}\beta+\alpha\,{\beta}^{2}-2\,\alpha\,\beta+\alpha+\beta $ is non-zero. If $A=0$ then, the bottom right coefficient of $(tst^{-1}s^{-1})(st^{-1}s^{-1}t)+(st^{-1}s^{-1}t)(tst^{-1}s^{-1})$ is equal to $-\frac{1}{(\beta+1)\alpha^2\beta^2}$ multiplied by
$${\alpha}^{4}{\beta}^{3}+{\alpha}^{3}{\beta}^{4}-{\alpha}^{4}{\beta}^{2}-3\,{\alpha}^{3}{\beta}^{3}-2\,{\alpha}^{2}{\beta}^{4}+5\,{\alpha}^{3}{\beta}^{2}+4\,{\alpha}^{2}{\beta}^{3}+$$
$$\alpha\,{\beta}^{4}-3\,{\alpha}^{3}\beta-8\,{\alpha}^{2}{\beta}^{2}-3\,\alpha\,{\beta}^{3}+4\,{\alpha}^{2}\beta+5\,\alpha\,{\beta}^{2}+{\beta}^{3}-2\,{\alpha}^{2}-3\,\alpha\,\beta-{\beta}^{2}=$$
$$ (\alpha^2\beta^2-\alpha^2\beta+2\alpha\beta-2\alpha)A+\beta((\beta-1)A-2\alpha^2(\beta^3+1)) = -2\beta\alpha^2(\beta^3+1).$$ This is non-zero by the condition on $\beta$. The diagonal coefficients of the difference of these two commutators are identical to the ones of the difference of the previous commutators so they're non-zero. This proves $\overline{G}$ isn't abelian by abelian and there exists $\tilde{q}$ greater than or equal to $4$ such that $[\overline{G},\overline{G}] \simeq PSL_2(\F_{\tilde{q}})$.

For $H$ a group and $A$ an $H$-module, we write $Z^2(H,A) = \{f :H\times H \rightarrow A, \forall x,y,z \in H, z.f(x,y)f(xy,z)=f(y,z)f(xy,z)\}$ the group of cocyles and  $B^2(H,A) = \{f :H\times H \rightarrow A,\exists t : H \rightarrow A, \forall x,y \in H, f(x,y)= t(y)t(xy)^{-1}y.t(x)\}$ the group of coboundaries. We write $M(H) = H^2(H,\C^\star) = H_2(H,\Z)$ its Schur multiplier. We have $[\overline{G},\overline{G}] = \overline{[G,G]} \subset PSL_2(q)$, this inclusion gives a projective representation of $SL_2(\tilde{q})$ with associated cocyle $c\in Z^2(SL_2(\F_{\tilde{q}}),\F_q^\star)$. We show that $H^2(SL_2(\F_{\tilde{q}}),\F_q^\star)$ is trivial which is equivalent to $Z^2(SL_2(\F_{\tilde{q}}),\F_q^\star) =B^2(SL_2(\F_{\tilde{q}}),\F_q^\star)$ so this cocycle is a coboundary. We write $H =SL_2(\tilde{q})$. By the Universal Coefficients Theorem  \cite[Theorem 3.2]{HAT}, we have the following exact sequence 
$$ 1 \rightarrow Ext(H_1(H,\Z),\F_q^\star)  \rightarrow H^2(H,\F_q^\star) \rightarrow Hom(H_2(H,\Z),\F_q^\star)\rightarrow 1.$$
We have $H_1(H,\Z) = H/[H,H]$ \cite{KAR} and $H_2(H,\Z) = M(H)$, since $\tilde{q} \geq 4$, $SL_2(\F_{\tilde{q}})$ is perfect and the exact sequence becomes
$$ 1 \rightarrow 1 \rightarrow H^2(H,\F_q^\star) \rightarrow Hom(M(H),\F_q^\star)\rightarrow 1.$$
By \cite[Theorem 7.1.1]{KAR}, if $\tilde{q}\notin \{4,9\}$ then the Schur multiplier $M(H)$ is trivial so this reduces to $ H^2(H,\F_q^\star)\simeq \{1\}$.

It remains to take care of the cases $\tilde{q}= 4$ and $\tilde{q}=9$. If $\tilde{q} =4$, we have $M(H) = \Z/2\Z$ and $char(\F_q) = 2$ so  $Hom(M(H),\F_q^\star) =1$. Indeed, every morphism $\varphi$ from $M(H)$ to $\F_q^\star$ verifies $1 = \varphi(2x)=\varphi(x)^2$ for all $x\in M(H)$ so $0=\varphi(x)^2 -1= (\varphi(x)-1)^2$  for all $x\in M(H)$ so $\varphi$ is trivial. If $\tilde{q} = 9$ then we have $M(H) = \Z/3\Z$ and $H^2(H,\F_q^\star)$ is trivial by the same reasoning as for $\tilde{q}=4$. In all cases, we can define a representation  $\rho$ of $SL_2(\tilde{q})$ in $SL_2(q)$.

By \cite{BN}, any representation $\sigma$ of $SL_2(q)$ in $GL_2(q)$ is up to conjugation of the form $\sigma(M)=\psi(M)$ where $\psi(M)$ is the matrix obtained from $M$ by applying $\psi \in Aut(\F_q))$ to all its coefficients. We have $\F_q = \F_{\tilde{q}}(w)$ for any $w$ generating the cyclic group $\F_q^{\star}$. There exists a homomorphism from $\F_q$ to $\F_{\tilde{q}}$ sending $1$ to $w$ and stabilizing $\F_{\tilde{q}}$. We define a representation $\tilde{\rho}$ of $SL_2(q)$ in $SL_2(q)$ such that $\tilde{\rho}(M) = \rho(M)$ for all $M$ in $SL_2(\tilde{q})$. We have $\rho(M) = \tilde{\rho}(M) = \psi(M)$ for all $M$ in $SL_2(\tilde{q})$. We have $[\overline{G},\overline{G}] \simeq PSL_2(\tilde{q})$ so $\psi([\overline{G},\overline{G}]) \simeq PSL_2(\tilde{q})$ is conjugate to $PSL_2(\tilde{q})$ in $GL_2(q')$. We have $\psi(tst^{-1}s^{-1}) \in \psi([\overline{G},\overline{G}])$ so its trace $2 - (\alpha + \alpha^{-1} + \beta + \beta^{-1})$ belongs to $\F_{\tilde{q}}$. This shows that $\alpha + \alpha^{-1} + \beta + \beta^{-1} \in \F_{\tilde{q}}$. We also have that the trace $T_1$ of $s^2t^{-1}s^{-2}t$ and the trace $T_2$ of $st^{-2}s^{-1}t^2$ are in $\F_{\tilde{q}}$. We have $T_1 = \frac{\alpha^4\beta+\alpha^3\beta^2-2\alpha^3\beta-2\alpha^2\beta^2+\alpha^3+4\alpha^2\beta+\alpha\beta^2-2\alpha^2-2\alpha\beta+\alpha+\beta}{\alpha^2\beta}$. We write $B = \alpha+\alpha^{-1}+\beta+\beta^{-1}$. We then have $T_1 = (\alpha+\alpha^{-1})B-2B+2$. $T_2$ has the expression of $T_1$ with $\alpha$ and $\beta$ switched so $T_2 = (\beta+\beta^{-1})B-2B+2$. Since $B, T_1$ and $T_2$ are in $\F_{\tilde{q}}$, we have $(\alpha+\alpha^{-1})B$ and $(\beta+\beta^{-1})B$ are in $\F_{\tilde{q}}$. We have $B = \alpha+\alpha^{-1}+\beta+ \beta^{-1}$ so $B =0$ would imply $\alpha\in \{-\beta,-\beta^{-1}\}$ which contradicts the assumptions on $\alpha$ and $\beta$. $B$ is non-zero so $\alpha+\alpha^{-1}$ and $\beta+\beta^{-1}$ are in $\F_{\tilde{q}}$ and so $\F_{\tilde{q}}=\F_q$. We conclude using  Lemma 2.1 of \cite{BM}.

\bigskip

\bigskip

The double-partitions of $n=3$ to consider are $([2,1],\emptyset),([1],[1^2])$ and $([1^2],[1])$. We want to show the image of $\Phi_3$ is equal to $SL_2(\tilde{q}) \times SL_3(q) \times SL_3(q)$. If we restrict ourselves to the image inside $SL_3(\F_q) \times SL_3(\F_q)$, we show that it is $SL_3(q)\times SL_3(q)$. By Proposition \ref{lesgourgues}, $SL_3(q)=R_{[1^2],[1]}(\mathcal{A}_{B_3})$ and $R_{[1],[1^2]}(\mathcal{A}_{B_3})=SL_3(q)$. We now use Goursat's Lemma : we write as in the lemma $K=R(\mathcal{A}_{B_3}), K_1=R_{[1^2],[1]}(\mathcal{A}_{B_3}), K_2= R_{[1],[1^2]}(\mathcal{A}_{B_3}) $ $\pi_1$ (resp $\pi_2$) the projection unto $SL_{3}(q)$(corresponding to $([1^2],[1])$) (resp $SL_{3}(q))$ (corresponding to $([1],[1^2])$)), $K^1=ker(\pi_2)$, $K^2=ker(\pi_1)$ and $\varphi$ the isomorphism given by Goursat's Lemma. We have $K=\{(x,y)=\in K_1\times K_2, \varphi(xK^1)=yK^2\}$. By the same reasoning as the one in Proposition 3.1. of \cite{BM}, either  $SL_3(\F_q)\times SL_3(\F_q) \subset K$ or $K_1/K^1$ is non-abelian and $\varphi$ is an isomorphism of $PSL_3(\F_q)$ and using the same notations, up to conjugation $R_2(b) = S^{\phi}(R_1(b))z(b)$ for all $b\in \mathcal{A}_{B_3}$ ($S=Id$ or $S=M\mapsto {}^t\!M^{-1}$).

Let us show that the second possibility is absurd by choosing the right elements in $\mathcal{A}_{B_3}$. For any element $b$ of $\mathcal{A}_{B_3}, \tr(R_{[1],[1^2]}(b)) = z(b)\tr(S^{\phi}(R_{[1^2],[1]}(b))$. We write $U=S_1S_2^{-1}, V=TS_1T^{-1}S_2^{-1}, W=S_2S_1S_2^{-2}$ and $X=S_2TS_1T^{-1}S_2^{-2}$, they are all elements of $\mathcal{A}_{B_3}$. By explicit computation, for both choices of $S$, we have :
$$\tr(R_{[1],[1^2]}(U))=\tr(R_{[1],[1^2]}(V))=\tr(R_{[1],[1^2]}(W))=\tr(R_{[1],[1^2]}(X))=-\frac{(\alpha-1)^2}{\alpha},$$
$$\tr(S(R_{[1^2],[1]}(U)))=\tr(S(R_{[1^2],[1]}(V)))=\tr(S(R_{[1^2],[1]}(W)))=\tr(S(R_{[1^2],[1]}(X)))=-\frac{(\alpha-1)^2}{\alpha}.$$
This shows that $z(U)=z(V)=z(W)=z(X)$ and
$$1=z(U)z(W)^{-1}=z(UW^{-1})=\frac{-\frac{(\alpha-1)^2}{\alpha}}{\phi(-\frac{(\alpha-1)^2}{\alpha})}.$$
This proves $\phi(\alpha+\alpha^{-1})=\alpha+\alpha^{-1}$. We also have
$$1=z(UV^{-1})= \frac{3-\alpha-\alpha^{-1}-\beta-\beta^{-1}}{\phi(3-\alpha-\alpha^{-1}-\beta-\beta^{-1})}.$$
Using $\phi(\alpha+\alpha^{-1})=\alpha+\alpha^{-1}$, we have $\phi(\beta+\beta^{-1})=\beta+\beta^{-1}$ so $\phi=1$. We deduce that
$$1=z(UX^{-1})=\frac{(\alpha-1)(\alpha\beta^2-2\alpha\beta+2\beta-1)}{\alpha\beta}\frac{-\alpha\beta}{(\alpha-1)(2\alpha\beta+\beta^2-\alpha-2\beta)}= \frac{1+2\alpha\beta-2\beta-\alpha\beta^2}{\beta^2+2\alpha\beta-2\beta-\alpha},$$
$$1-\alpha\beta^2=\beta^2-\alpha, (1-\beta^2)(1+\alpha)=0.$$
Since $\beta^2\neq 1$ and $\alpha^2\neq 1$, we get a contradiction. This shows that $SL_3(q)\times SL_3(q) = R(\mathcal{A}_{B_3})$.

We now set $G_1= SL_3(q) \times SL_3(q)$ and $G_2 = SL_2(\tilde{q})$, the image of $\Phi_3$ is a subgroup of $G_1 \times G_2$ for which the projections onto $G_1$ and $G_2$ are surjective. Using again Goursat's Lemma and the notation there, we have $K_1/K^1 \simeq K_2/K^2$. We have a surjective morphism $\psi$ from $K_1=G_1=SL_3(q) \times SL_3(q)$ to $K_2/K^2$ where $K_2=SL_2(\tilde{q})$. If $K_2/K^2$ was non-abelian then we would have $K_2/K^2 \simeq PSL_2(\tilde{q})$. If the restriction $\psi_1$ (resp $\psi_2$) of $\psi$ to $SL([1^2],[1])$ (resp $SL([1],[1^2])$ was not trivial then $\psi_1$ (resp $\psi_2$) would factor into an isomorphism from  $PSL_3(q)$ unto $PSL_2(\F_q)$ since the center of $SL_3(\F_q)$ would again be in the center of $\psi_1$ and $\psi_2$.This would lead to a contradiction so their image is trivial and $\psi$ is not surjective so the quotients are abelian. This shows that $K_1=[K_1,K_1]\subset K^1$ and $K_2=[K_2,K_2]\subset K^2$ then using Goursat's Lemma we conclude that the image of $\Phi_3$ is equal to $G_1 \times G_2$. This shows that $\Phi_3$ is  surjective.

\bigskip

\bigskip

The double-partitions of $4$ in our decomposition are $([1^4],\emptyset)$, $([2^2],\emptyset)$, $([2,1,1],\emptyset)$, $([1],[1^3])$, $([1^3],[1])$, $([1^2],[1^2])$ and $([2,1],[1])$ of respective dimensions $1$, $2$, $3$, $4$, $4$, $6$ and $8$ (We removed the hooks, $([3],[1])$ and $([1],[3])$). We know the restriction to the first five is surjective by \cite{BMM} and Proposition \ref{lesgourgues} so we only need to show that $R_{[1^2],[1^2]}(\mathcal{A}_{B_4})=SL_6(q)$ and $R_{([2,1],[1])}(\mathcal{A}_{B_4})=SL_8(q)$.

Let us first consider the double-partition $([1^2],[1^2])$. By the branching rule and the case $n=3$ above, we have 
$$SL_3(q)\times SL_3(q)=R_{[1^2],[1]}(\mathcal{A}_{B_3})\times R_{[1],[1^2]}(\mathcal{A}_{B_3})=R_{[1^2],[1^2]}(\mathcal{A}_{B_3}) \subset R_{[1^2],[1^2]}(\mathcal{A}_{B_4})\subset SL_6(q).$$

We can now use Theorem 3 from \cite{BM}.
\begin{theo}\label{LBJ}
Let $\F_r$ be a finite field and $\Gamma < GL_N(r)$ with $N\geq 5$ and $q>3$ such that
\begin{enumerate}
\item $\Gamma$ is absolutely irreducible,
\item $\Gamma$ contains $SL_a(r)$ in a natural representation with $a\geq \frac{N}{2}$.
\end{enumerate}
 If $N\neq 2a$ then $\Gamma$ contains $SL_N(r)$. Otherwise, either $\Gamma$ contains $SL_N(r)$, or $\Gamma$ is a subgroup of $GL_{\frac{N}{2}}(r) \wr \mathfrak{S}_2$.
\end{theo} 

We use this theorem on $R_{[1^2],[1^2]}(A_{B_4})$. To get the desired result, we only need to show that $R_{[1^2],[1^2]}(A_{B_4})$ cannot be a subgroup of  $GL_{3}(r) \wr \mathfrak{S}_2$. If it were true, then we would have $R_{[1^2],[1^2]}(\mathcal{A}_{B_4})\subset SL_3(q)\times SL_3(q)$ which would contradict the irreducibility shown in Lemma \ref{Lincoln}. This shows that we have $R_{[1^2],[1^2]}(\mathcal{A}_{B_4})=SL_6(q)$.

\bigskip

We now consider the double-partition $([2,1],[1])$. Again by the branching rule and the case $n=3$, we have that the restriction to $\mathcal{A}_{B_3}$ is $ SL_3(\F_q) \times SL_3(\F_q)  \times SL_2(\F_{\tilde{q}})$. We now use the fact each of these groups is generated by transvections and the fact that $\mathcal{A}_{B_4}$ is normally generated by $\mathcal{A}_{B_3}$. When the characteristic is different from $2$, we can use the following theorem (first theorem of \cite{SZ}).
\begin{theo}\label{transvections}
If $G$ is an irreducible subgroup of $GL_n(q)$ generated by transvections with $q=p^r, p> 3, n> 2$ then $G$ is conjugate inside $GL_n(q)$ to $SL_n(\tilde{q}), Sp_n(\tilde{q})$ or $SU_n(\tilde{q}^{\frac{1}{2}})$ for some $\tilde{q}$ dividing $q$.
\end{theo}

We write $G= R_{([2,1],[1])}(\mathcal{A}_{B_4})$, $H = R_{([2,1],[1])}(\mathcal{A}_{B_3}) = SL_3(q) \times SL_3(q) \times SL_2(\tilde{q})$ and we pick $t_1$ (resp $t_2$ (resp $t_3$)) a transvection of $SL_3(q) \times \{I_5\}$ (resp $\{I_3\} \times SL_3(q) \times \{I_2\}$ (resp $\{I_6\} \times SL_2(\tilde{q})$). We then have $H = \langle ht_ih^{-1}, h\in H, i \in \{1,2,3\}\rangle$ and so $G = \langle ghg^{-1}, h\in H, g\in G\rangle = \langle gt_ig^{-1}, g\in G, i\in \{1,2,3\}\rangle$ is generated by transvections and we can apply the theorem.  We also recall the following lemma \cite[Lemma 5.6]{BMM}.
\begin{lemme}\label{field}
For any prime $p$ and $m\geq 2$, the field generated over $\F_p$ by $\{\tr(g), g \in SL_m(q)\}$ is $\F_q$ and for all $m\geq 3$, the field generated over $\F_p$ by $\{\tr(g), g\in SU_m(q^{\frac{1}{2}})\}$ is $\F_q$.
\end{lemme}

By Proposition \ref{isomorphisme}, we know that  $R_{[2,1],[1]}(\mathcal{A}_{B_4})$ preserves no non-degenerate bilinear form. It also shows that it can preserve no non-degenerate hermitian form. Indeed, if it were to preserve a hermitian form then we would have $\tr(M)=\epsilon(\tr({}^t\!(M)^{-1}))$ for any $M$ in $G$ and we have $\diag([\alpha,\alpha^{-1},1,1,1,1,1,1])$ and $\diag([\beta,\beta^{-1},1,1,1,1,1,1])$ in $H \subset G$, so we would have $\epsilon(\alpha+\alpha^{-1})= \alpha+\alpha^{-1}$ and $\epsilon(\beta+\beta^{-1})=\beta+\beta^{-1}$. Since $\F_q=\F_p(\alpha,\beta) =\F_p(\alpha+\alpha^{-1},\beta+\beta^{-1})$, the automorphism $\epsilon$ of order $2$ would be trivial which is a contradiction. This proves $G$ is conjugated in $GL_8(q)$ to $SL_8(\tilde{q})$ for some $\tilde{q}$ dividing $q$. By Lemma \ref{field}, the field generated over $\F_p$ by the traces of the elements of $G$ is $\F_{\tilde{q}}$ so $\tilde{q}= q$ because $G$ contains $SL_3(q)$ in a natural representation so the field generated by its elements contains $\F_q$. This shows that when $p\neq 2$, $G = R_{([2,1],[1])}(\mathcal{A}_{B_4}) = SL_8(q)$.

Assume now $p=2$, we can use the following theorem \cite[Theorem 1]{P}.
\begin{theo}
Let $V$ be a $\F_q$-vector space of dimension $n\geq 4$ with $q$ even. If $G$ is an irreductible proper subgroup of $SL(V)=SL_n(q)$ generated by a set $D$ of transvections of $G$, then $D$ is a conjugacy class of odd transpositions of $G$.
\end{theo}

Assume $G = R_{([2,1],[1])}(\mathcal{A}_{B_4})$ is different from $SL_8(q)$. We again have that $G$ is generated by transvections and by applying the above theorem, those transvections are in a single conjugacy class of $G$. Since $O_p(G)$ is normal in $G$ and $V=\F_q^8$ is an irreducible $\F_qG$-module, we apply Clifford's Theorem \cite[Theorem 11.1]{C-R} and get that $Res^G_{O_p(G)}(V)$ is semisimple. Since $O_p(G)$ is a $p$-group, the unique irreducible $\F_qO_p(G)$-module is the trivial module so $O_p(G)$ acts trivially on $V$. It follows that $O_p(G)$ is trivial. We can thus apply Kantor's Theorem \cite[Theorem II]{K} :
\begin{theo}
Let $p$ be a prime and $q=p^l$ for some $l\in \N$. Assume $G$ is an irreducible subgroup of $SL_N(q)$ generated by a conjugacy class of transvections, such that $O_p(G) \leq [G,G] \cap Z(G)$. Then $G$ is one of the following subgroups.
\begin{enumerate}
\item $G=SL_n(q')$ or $G=Sp_N(q')$ in $SL_N(q')$ or $G = SU_N(q'^{\frac{1}{2}})$ in $SL_N(q')$, $q' | q$.
\item $G= O_N^{\pm}(q') < SL_n(q'), q'|q$.
\item $G = \mathfrak{S}_n < SL_N(2)$ where $N = n-d$ and $d = gcd(n,2)$.
\item $G = \mathfrak{S}_{2n}$ in $SL_{2n-1}(2)$ or in $SL_{2n}(2)$.
\item $G = SL_2(5) < SL_2(9)$.
\item $G = 3.P\Omega_6^{-,\pi} < SL_6(4)$.
\item $G = SU_4(2) < SL_5(4)$.
\item $G = A \rtimes S_N$ in $SL_N(2^i)$ where $A$ is a subgroup of diagonal matrices.
\end{enumerate}
\end{theo}

Since $\alpha$ is of order greater than $4$, we have $q \geq \tilde{q}=2^r> 8$. The group $G$ contains $H = SL_3(q) \times SL_3(q) \times SL_2(\tilde{q})$, so cases $3$ to $7$ are excluded. If we were in Case $8$ then $G$ would have at mose $(q-1)\frac{10\times 9}{2}=45(q-1)$ transvections (see proof of Theorem 1.3. page 661 of \cite{BM}). $SL_3(q)$ has $\frac{(q^3-1)(q^2-1)}{q-1} = (q-1)(q^2+q+1)(q+1)$ transvections and  $(q^2+q+1)(q+1) \geq  847(q-1)> 45(q-1)$. For the same reasons as when $p\neq 2$, $G$ is neither unitary nor symplectic nor orthogonal. The only remaining possibility is $G = SL_8(q)$ which is a contradiction since we assumed $G \neq SL_8(q)$. This proves that $G=SL_8(q)$.

\bigskip

The restriction to each double-partition of $4$ is thus surjective, it remains to show $\Phi_4$ is surjective using Goursat's Lemma (Lemma \ref{Goursat}). This means we have to show the image is $SL_2(\tilde{q}) \times SL_3(\tilde{q}) \times SL_4(q) \times SL_4(q) \times SL_8(q)$.

By Theorem 1.2. of \cite{BM}, the restriction to $SL_2(\tilde{q}) \times SL_3(\tilde{q})$ is surjective. We write $G_1$ the image of this restriction and $G_2 = R_{([1],[1^3])}(\mathcal{A}_{B_4}) = SL_4(q)$. Let $K$ be the image $\mathcal{A}_{B_4}$ in $G_1 \times G_2$, using the corresponding notations in Goursat's Lemma, we have $K_1 = G_1$, $K_2 = G_2$ and $K_1/K^1 \simeq K_2/K^2$. If these quotients are abelian then the proof of $K = G_1 \times G_2$ is straightforward using Goursat's Lemma. Since the only non-abelian decomposition factor of $G_2$ is $PSL_4(q)$ and the only non-abelian decomposition factors of $G_1$ are $PSL_2(\tilde{q})$ and $PSL_3(\tilde{q})$, we have a contradiction if these quotients are non-abelian. Write now $\tilde{K} = R_{([1],[3])}(\mathcal{A}_{B_4})=SL_4(q)$ and let us consider the image $J$ of $\mathcal{A}_{B_4}$ inside $K\times \tilde{K}$. 
Using again Goursat's Lemma, this time with $K_1 = K, K_2=\tilde{K}$, we have $K_1/K^1\simeq K_2/K^2$. If the quotients are abelian then $J=K\times \tilde{K}$. If the quotients are non-abelian then there is an isomorphism $S^\phi$ from $PSL_4(q)$ to $PSL_4(q)$, where the first one corresponds to $\overline{R_{([1],[1^3])}(\mathcal{A}_{B_4})}$ and the second one to $\overline{R_{([1],[3])}(\mathcal{A}_{B_4})}$. This implies that there exists a character $z$ from $\mathcal{A}_{B_4}$ to $\F_q^{\star}$ such that up to conjugation, for every $h\in \mathcal{H}_4$, we have $R_{[1],[1^3]}(h) = S^{\phi}(R_{([1],[3])}(h))z(h)$. The isomorphism $S^\phi$ is of the form \cite[Section 3.3.4]{W} $M\mapsto \phi(M)$ or $M\mapsto \phi({}^t\!(M^{-1}))$ where $\phi$ is a field automorphism of $\F_q$. We would then have  that for all $h\in \mathcal{A}_{B_4}, \tr(R_{([1^3],[1])}(h)) = \phi(\tr(S(R_{([1],[3])}(h)))z(h)$.

Writing $U = S_1S_2^{-1}, V = TS_1T^{-1}S_2^{-1}, X= S_2TS_1T^{-1}S_2^{-2},P=S_3S_2S_3^{-2}, Q= TS_1T^{-1}S_3^{-1}$, $R_1=R_{([1^3],[1])}$ and $R_2=R_{([1],[3])}$, we have $3-\alpha-\alpha^{-1}=\tr(R_1(P)) =\tr(R_1(PQ^{-1})) = \tr(R_2(P)) = \tr(R_2(PQ^{-1}) = \tr({}^t\!(R_2(PQ^{-1})^{-1}))=\tr({}^t\!(R_2(PQ^{-1})^{-1})
$ so $z(PQ^{-1})=z(P) = \frac{3-\alpha-\alpha^{-1}}{\Phi(3-\alpha-\alpha^{-1})}$.
This shows that $z(Q) = z(P)z(PQ^{-1})^{-1}=1$. We also have $\tr(R_1(Q)) = \tr(R_2(Q))=\tr({}^t\!(R_2(Q)^{-1}))= 2-\alpha-\alpha^{-1}$ so $1=z(Q)= \frac{2-\alpha-\alpha^{-1}}{\Phi(2-\alpha-\alpha^{-1})}$ and $\Phi(\alpha+\alpha^{-1}) = \alpha+\alpha^{-1}$. We have $\tr(R_1(U)) = \tr(R_1(V))= \tr(R_1(X)) = \tr(R_2(U)) = \tr(R_2(V))=\tr(R_2(X)) = \tr({}^t\!(R_2(X)^{-1}))= \tr({}^t\!(R_2(V)^{-1}))=\tr({}^t\!(R_2(U)^{-1}) = 3-\alpha-\alpha^{-1}$ so $z(U) = z(V) = z(X) =1$. $\tr(R_1(UV^{-1})) = \tr(R_2(UV^{-1})) = \tr({}^t\!(R_2(UV^{-1})^{-1})) = 4-\alpha-\alpha^{-1}-\beta-\beta^{-1}$. It follows that $z(UV^{-1}) = z(U)z(V)^{-1}= 1 = \frac{4-(\alpha+\alpha^{-1})-(\beta+\beta^{-1})}{\Phi(4-(\alpha+\alpha^{-1})-(\beta+\beta^{-1}))}$ so $\Phi(\beta+\beta^{-1}) =\beta+\beta^{-1}$. Since $\F_q=\F_p(\alpha+\alpha^{-1},\beta+\beta^{-1})$, we have $\Phi=I_d$. $\tr(R_1(UX^{-1})) = -\frac{2\alpha^2\beta+\alpha\beta^2-\alpha^2-5\alpha\beta-\beta^2+\alpha+2\beta}{\alpha\beta}$, $\tr(R_2(UX^{-1}))=\tr({}^t\!(R_2(UX^{-1})^{-1}))) = \frac{\alpha^2\beta^2-2\alpha^2\beta-\alpha\beta^2+5\alpha\beta-\alpha-2\beta+1}{\alpha\beta}$. Since $z(UX^{-1}) = z(U)z(X)^{-1}=1$ and $\Phi = I_d$, it follows that
$$\alpha^2\beta^2-2\alpha^2\beta-\alpha\beta^2+5\alpha\beta-\alpha-2\beta+1=-2\alpha^2\beta-\alpha\beta^2+\alpha^2+5\alpha\beta+\beta^2-\alpha-2\beta.$$
This shows that $\alpha^2\beta^2+1=\alpha^2+\beta^2$ so $(\alpha^2-1)(\beta^2-1)=0$. This contradicts the conditions on $\alpha$ and $\beta$. This contradiction shows that $J=K\times \tilde{K}$.

We conclude using Goursat's Lemma with $R_{([1^2],[1^2])}(\mathcal{A}_{B_4})=SL_6(q)$ then with $R_{[2,1],[1]}(\mathcal{A}_{B_4}) =SL_8(q)$. 
\end{proof}

We now show that if the representation associated with each double-partition is surjective, then $\Phi_n$ is surjective.

\begin{lemme}\label{gougou}
If $n\geq 5, \F_p(\alpha,\beta)=\F_p(\alpha+\alpha^{-1},\beta+\beta^{-1})$ and the composition of $\Phi_n$ and the projection upon each quasi-simple group associated with each double-partition is surjective, then $\Phi_n$ is surjective.
\end{lemme}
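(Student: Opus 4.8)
The plan is to deduce surjectivity of $\Phi_n$ from surjectivity onto each quasi-simple factor by an iterated Goursat argument, exactly as in \cite{BM} and \cite{BMM}, the non-coincidence input being Proposition \ref{isomorphisme} and the ``rigidity'' input being that $\mathcal{A}_{B_n}$ is perfect for $n\geq 5$ by \cite{MR}. Write $K=\Phi_n(\mathcal{A}_{B_n})\leq \prod_{i=1}^m G_i$, where $G_1,\dots,G_m$ are the factors appearing in the target of $\Phi_n$ (the various $SL_{n_\lambda}(q)$, $SL_{n_\lambda}(\tilde q)$, $SL_n(q)$ and $OSP(\lambda)'$). For $n\geq 5$ each $G_i$ is quasi-simple: the dimensions $n_\lambda$ that occur are large, and the forms of Proposition \ref{bilin} have positive Witt index, so the groups $OSP(\lambda)'$ are the corresponding quasi-simple symplectic or orthogonal groups. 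Since $\mathcal{A}_{B_n}$ is perfect, $K$ is perfect as well.

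I would first reduce to a statement about simple quotients. Put $S_i=G_i/Z(G_i)$ and let $\overline K\leq\prod_i S_i$ be the image of $K$; by hypothesis $\overline K$ surjects onto each $S_i$, so it is a subdirect product of the non-abelian finite simple groups $S_i$. The claim is then that $\overline K=\prod_i S_i$, and this suffices: if it holds, then $K\cdot\prod_i Z(G_i)=\prod_i G_i$, and since every $G_i$ is perfect and $K$ is perfect and the $Z(G_i)$ are central,
$$K=[K,K]=\Big[K\cdot\textstyle\prod_i Z(G_i),\,K\cdot\textstyle\prod_i Z(G_i)\Big]=\Big[\textstyle\prod_i G_i,\textstyle\prod_i G_i\Big]=\textstyle\prod_i G_i,$$
so $\Phi_n$ is onto. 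To prove $\overline K=\prod_i S_i$ I would use the structure of subdirect products of non-abelian finite simple groups (Goursat's Lemma \ref{Goursat} applied repeatedly): $\overline K$ is the full product as soon as, for every pair $i\neq j$, its image in $S_i\times S_j$ is all of $S_i\times S_j$. So it remains to show, for $i\neq j$, that there is no isomorphism $\phi\colon S_i\to S_j$ with $\phi(\overline{R_{\lambda_i}(g)})=\overline{R_{\lambda_j}(g)}$ for all $g\in\mathcal{A}_{B_n}$, where $\lambda_i$ indexes $G_i$.

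To rule this out, I would argue as follows. Since $\alpha$ has order greater than $n$ and not in $\{1,\dots,6,8,10\}$, the fields $\F_q$ and $\F_{\tilde q}$ are large, and a short inspection (using that no factor $OSP(\lambda)'$ occurs in a small dimension) shows that the finitely many exceptional isomorphisms of simple groups do not intervene; hence $S_i$ and $S_j$ are of the same type, and $\phi$ is, up to an inner automorphism, a field automorphism $\tau$, possibly composed with the transpose-inverse automorphism in the linear case. Lifting to the linear level, $g\mapsto R_{\lambda_j}(g)$ and $g\mapsto \tau(R_{\lambda_i}(g))$ (or its transpose-inverse) then agree modulo scalars; their ratio is a homomorphism $\mathcal{A}_{B_n}\to\F_q^\star$, hence trivial because $\mathcal{A}_{B_n}$ is perfect (using absolute irreducibility, Lemma \ref{Lincoln}). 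Thus $R_{\lambda_j|\mathcal{A}_{B_n}}\simeq \tau\circ R_{\lambda_i|\mathcal{A}_{B_n}}$ or $\simeq\tau\circ R_{\lambda_i|\mathcal{A}_{B_n}}^\star$. Applying Lemma \ref{abel} on $A_{B_n}$ and comparing the eigenvalues of $T$ and $S_1$ on the two sides forces $\tau(\alpha)\in\{\alpha,\alpha^{-1}\}$ and $\tau(\beta)\in\{\beta,\beta^{-1}\}$; under the hypothesis $\F_q=\F_p(\alpha+\alpha^{-1},\beta+\beta^{-1})$ (and, for Theorem \ref{result2}, the accompanying description of $\F_{\tilde q}$) this leaves only $\tau=\mathrm{id}$, possibly followed by the order-two automorphism $\epsilon$ when it exists. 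We are then precisely in the situation of Proposition \ref{isomorphisme}(1) (resp.\ (2)) --- for pairs in which one of $\lambda_i,\lambda_j$ has an empty component, one invokes instead the type-$A$ analysis of \cite{BMM}, the isomorphism $R_{(\mu_1,\emptyset)|\mathcal{A}_{B_n}}\simeq R_{(\emptyset,\mu_1)|\mathcal{A}_{B_n}}$, and the eigenvalue bookkeeping to compare the two types --- and we conclude $\lambda_i=\lambda_j$ or $\lambda_i=\lambda_j'$. But the index set of the target of $\Phi_n$ was designed exactly so that each pair $\{\lambda,\lambda'\}$ and each pair $\{(\mu_1,\emptyset),(\emptyset,\mu_1)\}$ contributes a single factor (an $SL$ when $\lambda<\lambda'$, and $OSP(\lambda)'$ when $\lambda=\lambda'$), so $i=j$, a contradiction. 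Hence no pair is linked, $\overline K=\prod_i S_i$, and $\Phi_n$ is onto.

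I expect the only non-formal step to be the exclusion of a nontrivial field automorphism $\tau$ in the pairwise analysis above: this is where the hypotheses on the field extensions are genuinely used, and it plays the role that the explicit trace computations play for $n\leq 4$ in Lemma \ref{platypus}. Everything else is the Goursat machinery together with Propositions \ref{bilin} and \ref{isomorphisme}, all of which are already available.
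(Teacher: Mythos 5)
Your proof is correct and has the same logical skeleton as the paper's: Goursat's Lemma, perfectness of $\mathcal{A}_{B_n}$ for $n\geq 5$ (\cite{MR}), Lemma~\ref{abel} together with the eigenvalue comparison for $T$ and $S_1$, and Proposition~\ref{isomorphisme} to exclude the forbidden isomorphisms. The difference is organizational rather than substantive. The paper builds up the image one factor at a time, following the order chosen on double-partitions, applying Goursat with $K_1$ the accumulated product and $K_2$ the next factor, and concluding at each step from abelianness of the common quotient and perfectness. You instead pass directly to the product of simple quotients $S_i=G_i/Z(G_i)$, invoke the standard description of subdirect products of non-abelian finite simple groups to reduce to the pairwise case, and lift the conclusion back via the commutator identity $K=[K\cdot Z,K\cdot Z]=[\prod G_i,\prod G_i]=\prod G_i$. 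That formulation is clean and avoids needing the explicit ordering, whereas the paper's inductive framing mirrors the structure of the later Theorem~\ref{hortillonage}. Two points you should state more carefully. First, Proposition~\ref{isomorphisme} is proved only for double-partitions with no empty component, so your pairwise argument genuinely splits into three cases: both $\lambda_i,\lambda_j$ have an empty component (delegate to \cite{BMM}, as the paper does once and for all by quoting Theorem 1.1 there for $G_{0,0}$), exactly one has an empty component (the $T$-eigenvalue-set argument the paper uses for $G_{0,0}\times G_{0,1}$, which works because $R_{(\lambda_1,\emptyset)}(T)$ is scalar while $R_\mu(T)$ has two eigenvalues), and neither has an empty component (Proposition~\ref{isomorphisme} proper). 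Your phrasing conflates the first two. Second, when you reduce the abstract isomorphism $\phi:S_i\to S_j$ to inner-diagonal-field-graph form, you should record, as the paper does, the dimension bound ruling out triality (any self-transpose $\lambda\in\epsilon_n$ has $n_\lambda\geq 80>8$), and the fact that the residual exceptional isomorphisms among small classical simple groups are excluded by the sizes of $q$, $\tilde q$ and of the $n_\lambda$ involved (\cite[Section 1.2]{W}).
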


\begin{proof}
Let $n\geq 5$, we know by \cite[Theorem 1.1]{BMM} that the restriction to double-partitions with an empty component is surjective. We first show that we can add the hook partitions. We then show by induction on the double-partitions using the order we picked that $\Phi_n$ is surjective.

\bigskip

We write $G_{0,0}=SL_{n-1}(\tilde{q})\times \underset{(\lambda_1,\emptyset)\in A\epsilon_n, \lambda_1<\lambda_1'}\prod SL_{n_\lambda}(\tilde{q}) \times \underset{(\lambda_1,\emptyset)\in A\epsilon_n,\lambda_1=\lambda_1'}\prod OSP'(\lambda)$ where $OSP'(\lambda)$ is the derived subgroup of the group of isometries of the $\F_{\tilde{q}}$-bilinear form defined in \cite{BMM} which identifies to the one defined in this article. We then have by Theorem 1.1. of \cite{BMM} that the image of $\mathcal{A}_{B_n}$ inside $G_{0,0}$ is $G_{0,0}$.  We have $G_{0,1}= R_{[1],[n-1]}(\mathcal{A}_{B_4}) = SL_n(q)$ by Proposition \ref{lesgourgues}. We use Goursat's Lemma to show that the image of $\mathcal{A}_{B_n}$ inside $G_{0,0}\times G_{0,1}$ is equal to $G_{0,0} \times G_{0,1}$. Using the notations in Goursat's Lemma, we have $K_1=G_{0,0}, K_2=G_{0,1}$ and $K_1/K^1\simeq K_2/K^2$. If the quotients are abelian then we are done since the groups we consider are perfect. We assume that they are non-abelian and show there is a contradiction. The only non-abelian decomposition factor of  $K_2$ is $PSL_n(q)$ so since the finite classical simple groups are non-isomorphic as long as $n\geq 4$ and $q\geq 4$ \cite[Section 1.2]{W} , there would exist a decomposition factor of $K_1$ corresponding to a double-partition $\lambda$ of $n$ with its right component empty such that $PG(\lambda)=\overline{R_{\lambda}(\mathcal{A}_{B_n})} \simeq PSL_n(q)=\overline{R_{([1],[n-1])}(\mathcal{A}_{B_n})}$. Therefore, up to conjugation \cite[Section 3.3.4]{W}, we have that $R_{\lambda}(h) = S^\Phi(R_{([1],[n-1])}(h))z(h)$ for all $h\in \mathcal{A}_{B_n}$ with $z :\mathcal{A}_{B_n}\rightarrow \F_q^\star$, $\Phi$ an automorphism of $\F_q$ and $S$  being either the identity or the transpose of the inverse. Since $n\geq 5$, we have $\mathcal{A}_{B_n}$ perfect \cite[Corollary]{MR} so $z$ is trivial. By Lemma \ref{abel} and since the abelianization of  $A_{B_n}$ is the group $\{\overline{T},\overline{S_1}\} \simeq \Z^2$, we have $R_{\lambda}(h) = S^\Phi(R_{([1],[n-1])}(h))u^{\ell_1(h)}v^{\ell_2(h)}$. Since $\lambda$ has its right component empty, the only eigenvalue of $R_{\lambda}(T)$ is $\beta$. On the other hand, the eigenvalues of $S^\phi(R_{([1],[n-1])}(T))v$ are equal to $\{v\Phi(\beta), -v\}$ or $\{v\Phi(\beta^{-1}),-v\}$, so we would have $-v=v\Phi(\beta)$ or $-v=v\Phi(\beta^{-1})$ which is not possible because we have $\beta\neq-1$. This contradiction shows that the image is equal to $G_{0,0}\times G_{0,1}$.

Assume now $G_{0,2} = G_{0,0}\times G_{0,1}$ and $G_{0,3}=R_{([1],[1^n-1])}(\mathcal{A}_{B_n}) =SL_n(q)$ and consider the image of $\mathcal{A}_{B_n}$ inside $G_0=G_{0,2}\times G_{0,3}$. We use Goursat's Lemma with $K_1= G_{0,2}$ and $K_2=G_{0,3}$. In the same way as before, it is sufficient to show that the quotients $K_1/K^1\simeq K_2/K^2$ are abelian. The sets of eigenvalues of $S^{\Phi}(R_{([1],[1^{n-1}])}(T)$ are again $\{\Phi(\beta),-1\}$ or $\{\Phi(\beta)^{-1},-1\}$. If the quotients were non-abelian, we would have $R_{([1],[n-1])}(h) =S^{\phi}(R_{([1],[1^{n-1}])}(h))z(h)$ for all $h\in \mathcal{A}_{B_n}$ with $S$, $
\phi$ and $z$ as before. We have $z$ trivial since $\mathcal{A}_{B_n}$ is perfect so $R_{([1],[n-1])}(h) =S^{\phi}(R_{([1],[1^{n-1}])}(h))u^{\ell_1(h)}v^{\ell_2(h)}$. Let us show $\Phi$ is trivial. We have $R_{([1],[n-1])}(T)$ and $R_{([1],[1^{n-1}])}(T)$ both have for eigenvalues $-1$ with multiplicity $n-1$ and $\beta$ with multiplicity $1$. This shows that either $\beta=v\Phi(\beta)$ and $-1=-v$ or  $\beta=v\Phi(\beta)^{-1}$ and $-1=-v$. In both cases, $v=1$ and $\Phi(\beta+\beta^{-1})=\beta+\beta^{-1}$. The eigenvalues of $R_{([1],[n-1])}(S_1)$ are $-1$ with multiplicity $1$ and $-\alpha$ with multiplicity $n-1$ and the eigenvalues of $R_{([1],[1^{n-1}])}(S_1)$ are $-1$ with multiplicity $n-1$ and $\alpha$ with multiplicity $1$ so we have either $-1=u\Phi(\alpha)$ and $\alpha=-u$ or $-1=u\Phi(\alpha^{-1})$ and $\alpha = -u$. In both cases $u=-\alpha$ and $\Phi(\alpha+\alpha^{-1})=\alpha+\alpha^{-1}$. We have $\Phi$ trivial so $\F_q=\F_p(\alpha+\alpha^{-1},\beta+\beta^{-1})$. This would imply $R_{([1],[n-1])|\mathcal{A}_{B_n}} \simeq S(R_{([1],[1^{n-1}])|\mathcal{A}_{B_n}})$ but $([1],[1^{n-1}])\notin \{([1],[n-1]),([1],[n-1])'\}$ when $n >2$. By Proposition \ref{isomorphisme}, this is absurd. This shows the image of $\mathcal{A}_{B_n}$ in $G_0$ is equal to $G_0$.

For $\lambda_0 \in \epsilon_n=\{\lambda \vdash\vdash n, \lambda \notin A_n, \lambda~\mbox{not a hook}\}$, we set
$$G_{\lambda_0} = SL_{n-1}(\F_{\tilde{q}})\times \underset{(\lambda_1,\emptyset)\in A\epsilon_n, \lambda_1<\lambda_1'}\prod SL_{n_\lambda}(\tilde{q}) \times \underset{(\lambda_1,\emptyset)\in A\epsilon_n,\lambda_1=\lambda_1'}\prod OSP'(\lambda)\times$$ $$SL_n(\F_q)^2\times \underset{\lambda\in \epsilon_n, \lambda < min(\lambda',\lambda_0)}\prod SL_{n_\lambda}(q) \times \underset{\lambda\in \epsilon_n, \lambda=\lambda'< \lambda_0}\prod OSP'(\lambda).$$
where $OSP(\lambda)$ is the group of isometries of the bilinear form defined before Proposition \ref{bilin}.

For the minimal element $\lambda_0$ of $\epsilon_n$, we just showed the composition $\Phi_n$ with the projection onto $G_{\lambda_0}=G_0$ is surjective. Let us show by induction (numbering the double-partitions of $n$ with the order defined previously) that for all $\lambda_0$, the composition of $\Phi_n$ with the projection onto $G_{\lambda_0}$ is surjective.

Let $\lambda_0\in \epsilon_n$. Assume the composition is surjective onto $G_{\lambda_0}$ and let us show that the composition onto $G_{\lambda_0+1}= G_{\lambda_0}\times G(\lambda_0)$ is surjective where $G(\lambda_0)= SL_N(q)$ if $\lambda_0\neq\lambda_0'$ and $G(\lambda_0) =OSP'(\lambda_0)\in \{SP_N(q),\Omega_N^{+}(q)\}$ if $\lambda_0=\lambda_0'$. We use Goursat's Lemma with $K_1=G_{\lambda_0}$ and $K_2=G(\lambda_0)$ on the image $\Phi_n$ in $K_1\times K_2$. As before, it is sufficient to show that the quotients $K_1/K^1 \simeq K_2/K^2$ are abelian. Assume they are non-abelian. The only non-abelian Jordan-Hölder factor of $G(\lambda_0)$ is $PG(\lambda_0)$, so there exists $\lambda$ less than $\lambda_0$ such that up to conjugation (see \cite{W} 3.3.4., 3.5.5. and 3.7.5) $\overline{R_{\lambda}(h)}=S^{\Phi}(R_{\lambda_0}(h)z(h)$ for all $h\in \mathcal{A}_{B_n}$ (There is no triality involved since if $n\geq 5$, $\lambda=\lambda'$ and $\lambda\in \epsilon_n$ then $\dim(V_\lambda) > \dim(V_{([2,1],[2,1])})=80>8$). By the same arguments as in the induction initialization, we have that $\lambda$ has no empty components. Since $n\geq 5$, $\mathcal{A}_{B_n}$ is perfect and $z$ is trivial. We then have $R_{\lambda_0|\mathcal{A}_{B_n}}\simeq S^{\Phi}(R_{\lambda|\mathcal{A}_{B_n}})$. Let us show that $\Phi$ is trivial. By Lemma \ref{abel}, there exists $u,v\in \F_q^\star$ such that up to conjugation, for all $h\in A_{B_n}$, we have $R_ {\lambda_0}(h)= S^\Phi(R_\lambda(h))u^{\ell_1(h)}v^{\ell_2(h)}$. Comparing eigenvalues of $T$, we get either $\{\beta,-1\}=\{v\Phi(\beta),-v\}$ or $\{\beta,-1\}=\{v\Phi(\beta^{-1}),-v\}$. In the first case, either  $v=1$ and $\Phi(\beta)=\beta$ or $v=-\beta$ and $-1=v\Phi(\beta)$ so $\Phi(\beta+\beta^{-1})=\beta+\beta^{-1}$.
In the second case either $v=1$ and $\Phi(\beta^{-1})=\beta$ or $v=-\beta$ and $v\Phi(\beta^{-1}) = -1$ so $\Phi(\beta+\beta^{-1})=\beta+\beta^{-1}$. In the same way using $S_1$, we show $\Phi(\alpha+\alpha^{-1})= \alpha+\alpha^{-1}$. This shows that $\Phi$ is trivial because $\F_q=\F_p(\alpha+\alpha^{-1},\beta+\beta^{-1})$. We then have $R_{\lambda_0|\mathcal{A}_{B_n}}\simeq S(R_{\lambda|\mathcal{A}_{B_n}})$ which contradicts Proposition \ref{isomorphisme} since $\lambda< \lambda_0\leq \lambda_0'$.
\end{proof}

\bigskip

To get that $\Phi_n$ is surjective, it now only remains to show that what we assumed in Lemma \ref{gougou} is true.

\begin{theo}\label{hortillonage}
If $n\geq 5$ then for all $\lambda\vdash\vdash n$ double-partitions in our decomposition, we have $R_\lambda(\mathcal{A}_{B_n})= G(\lambda)$ where $G(\lambda)$ is the group in the following list.
\begin{enumerate}
\item $SL_{n-1}(\tilde{q})$ if $\lambda =([n-1,1],\emptyset)$.
\item $SL_N(\tilde{q})$ if $\lambda = (\lambda_1,\emptyset), \lambda_1< \lambda_1'$.
\item $SP_N(\tilde{q})$ if $\lambda = (\lambda_1,\emptyset), \lambda_1=\lambda_1'$ and ( $p=2$ or ($p\geq 3$ and $\nu(\lambda_1) = -1$)).
\item $\Omega_N^{+}(\tilde{q})$ if $\lambda=(\lambda_1,\emptyset), \lambda_1=\lambda_1'$, $p\geq 3$ and $\nu(\lambda_1)=1$.
\item $SL_n(q)$ if $\lambda \in \{([1],[n-1]),([1],[1^{n-1}])\}$.
\item $SL_N(q)$ if $\lambda \in \epsilon_n, \lambda< \lambda'$.
\item $SP_N(q)$ if $\lambda=\lambda'$ and ($p=2$ or $\tilde{\nu}(\lambda)=-1)$.
\item $\Omega_N^{+}(q)$ if $\lambda = \lambda', p\geq 3$ and $\tilde{\nu}(\lambda)= 1$.
\end{enumerate}
\end{theo}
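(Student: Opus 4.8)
The plan is to prove Theorem~\ref{hortillonage} by induction on $n$, with base case $n\le 4$ supplied by Lemma~\ref{platypus}, treating each double-partition in the decomposition according to which of the eight families it belongs to. The families (1)--(4), where $\lambda=(\lambda_1,\emptyset)$, are literally the type $A$ statement: the subalgebra of $\mathcal{H}_{n,\alpha,\beta}$ generated by $S_1,\dots,S_{n-1}$ is the Iwahori--Hecke algebra of type $A_{n-1}$ with parameter $\alpha$, so $R_{(\lambda_1,\emptyset)}$ restricts on $\langle S_1,\dots,S_{n-1}\rangle$ to a Hoefsmit representation of the latter and $R_{(\lambda_1,\emptyset)}(\mathcal{A}_{B_n})$ is computed by \cite[Theorem~1.1]{BMM}; here the relevant field is $\F_{\tilde q}=\F_p(\alpha)=\F_p(\alpha+\alpha^{-1})$, so the unitary alternative of \cite{BMM} does not occur and we land in $SL_{n-1}(\tilde q)$, $SL_N(\tilde q)$, $SP_N(\tilde q)$ or $\Omega_N^+(\tilde q)$ as claimed. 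Family (5) is precisely Proposition~\ref{lesgourgues}. So from now on I would fix $\lambda=(\lambda_1,\lambda_2)\in\epsilon_n$, that is, both components non-empty and $\lambda$ not a hook, which is the genuinely new input corresponding to families (6)--(8); up to replacing $\lambda$ by $\lambda'$ (Proposition~\ref{transpose} conjugates $R_{\lambda'}\vert_{\mathcal{A}_{B_n}}$ to $R_\lambda^\star\vert_{\mathcal{A}_{B_n}}$, hence does not change the image up to transpose-inverse) I may assume $\lambda\le\lambda'$, and I write $N=n_\lambda$.

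First I would apply the branching rule: restricting $R_\lambda$ to $A_{B_{n-1}}=\langle T,S_1,\dots,S_{n-2}\rangle$ decomposes $V_\lambda$ as $\bigoplus_\mu V_\mu$, the sum running over the double-partitions $\mu$ of $n-1$ obtained from $\lambda$ by deleting a removable box from $\lambda_1$ or from $\lambda_2$. By the induction hypothesis (for those $\mu$ still in $\epsilon_{n-1}$), and by \cite{BMM}, Proposition~\ref{patate} and Proposition~\ref{lesgourgues} (for those $\mu$ which are hooks or have an empty component), each $R_\mu(\mathcal{A}_{B_{n-1}})$ is the quasi-simple classical group $G(\mu)$ attached to $\mu$ in the statement. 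Combining this with Lemma~\ref{gougou} for $n-1$ (or, when $n=5$, directly with Lemma~\ref{platypus} for $n=4$), the image of $\mathcal{A}_{B_{n-1}}$ inside $\prod_\mu G(\mu)$ is the full product, so for any chosen summand $\mu_0$ it contains a classical group $\cong G(\mu_0)$ acting on $V_\lambda$ with the natural $n_{\mu_0}$-dimensional module as a constituent (and, when $\mu_0$ is self-transpose, with $G(\mu_0)\times\{1\}$ acting naturally on $V_{\mu_0}$ and trivially on the complementary summands). Since $\mathcal{A}_{B_n}$ is normally generated by $\mathcal{A}_{B_{n-1}}$, the group $R_\lambda(\mathcal{A}_{B_n})$ is generated by conjugates of $R_\lambda(\mathcal{A}_{B_{n-1}})$ and hence also contains such a classical subgroup; it is absolutely irreducible on $V_\lambda$ by Lemma~\ref{Lincoln}, and perfect (as $n\ge 5$), so it sits inside $SL_N(q)$, and inside the derived isometry group of the form of Proposition~\ref{bilin} when $\lambda=\lambda'$.

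Next I would take $\mu_0$ of largest dimension among the branching summands and feed the resulting inclusion into the Guralnick--Saxl theorem \cite{GS}, which classifies the irreducible subgroups of a classical group on $V_\lambda$ that contain a classical group acting naturally on a large subspace (or section): such a subgroup either contains the quasi-simple classical group of $V_\lambda$, or lies in a member of an explicit short list --- imprimitive, tensor-decomposable, extension-field, subfield, or one of finitely many exceptions in small dimension. I would exclude the geometric types: if $\dim V_{\mu_0}$ exceeds $N/2$ then $\mu_0$ is forced to be self-transpose (otherwise $\mu_0'$ is a second summand of the same dimension, which is impossible), and $G(\mu_0)\times\{1\}$ acting naturally on $V_{\mu_0}$ and trivially on the complement is incompatible with an imprimitive, tensor, or extension-field structure; the subfield type is ruled out because $R_\lambda(\mathcal{A}_{B_n})$ contains $G(\mu_0)$, whose traces generate $\F_q$ by Lemma~\ref{field}, and $\tr_{V_\lambda}$ differs from $\tr_{V_{\mu_0}}$ by a constant (using a summand with non-empty components when $\mu_0$ has an empty one), so the trace field on $V_\lambda$ is all of $\F_q$. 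The precise classical group is then pinned down by invariant forms: by Proposition~\ref{isomorphisme}(1), $R_\lambda\simeq R_\lambda^\star$ if and only if $\lambda=\lambda'$, and $\epsilon$ is trivial in the present case, so no invariant bilinear or hermitian form exists when $\lambda\ne\lambda'$, forcing $R_\lambda(\mathcal{A}_{B_n})=SL_N(q)$ (family (6)); when $\lambda=\lambda'$, Proposition~\ref{bilin} produces a non-degenerate form which is symmetric if $\tilde{\nu}(\lambda)=1$ and alternating if $\tilde{\nu}(\lambda)=-1$, of maximal Witt index since the basis pairs up as $(\T,\T')$, so $R_\lambda(\mathcal{A}_{B_n})$, being perfect, is $\Omega_N^+(q)$ when $p\ge 3$ and $\tilde{\nu}(\lambda)=1$ (family (8)) and $SP_N(q)$ otherwise, the characteristic $2$ case always being symplectic (family (7)). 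With the inclusion $R_\lambda(\mathcal{A}_{B_n})\subseteq G(\lambda)$ already noted, this gives equality.

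The hard part will be the low-dimensional bookkeeping. First, the dimension comparison between $\dim V_{\mu_0}$ and $N=n_\lambda$ must be made precise enough to meet the hypotheses of \cite{GS} for all $\lambda\in\epsilon_n$ and all $n\ge 5$; for the finitely many pairs $(n,\lambda)$ where the estimate is too weak one is forced instead to invoke the classification of maximal subgroups of low-dimensional finite classical groups from \cite{BHRC} and dispose of each candidate by hand --- via trace fields, invariant forms, orders of transvection classes, and eigenvalues of $T$ and $S_1$ --- and this is exactly where the present argument departs most from \cite{BMM}. Second, the step ``the image of $\mathcal{A}_{B_{n-1}}$ is the full product $\prod_\mu G(\mu)$'' conceals a Goursat argument in which, for every transpose-pair $\{\mu,\mu'\}$ of summands and every self-transpose summand, a diagonal fusion through a field or graph automorphism has to be ruled out; as in the proof of Lemma~\ref{gougou} this comes down to comparing eigenvalues and a handful of explicit traces, using the standing hypotheses that $\alpha$ has large order and that $\beta\notin\{-\alpha^i\}\cup\{1\}$.
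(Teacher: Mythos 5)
The skeleton of your proposal tracks the paper's proof quite closely: induction on $n$ through the branching rule, recognition of a classical group via Guralnick--Saxl, and then the forms of Propositions \ref{bilin} and \ref{isomorphisme} to decide between $SL_N$, $SP_N$ and $\Omega_N^+$. Families (1)--(5) are dispatched exactly as the paper does, so the outline is sound up to that point.

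However, the central application of \cite{GS} is mischaracterized in a way that leaves a real gap. Theorem \ref{CGFS} does not say that an irreducible subgroup of a classical group containing a large natural classical subgroup is in a fixed list; its actual hypothesis is that $v_G(V) \le \max\bigl(2, \tfrac{\sqrt d}{2}\bigr)$, i.e.\ that $G$ contains a non-scalar element $\beta g$ whose commutator space $(\beta g - 1)V$ has dimension at most $\max(2,\sqrt d/2)$. To verify this, the paper extracts a transvection, or an element of Jordan type $\diag(I_2 + E_{1,2}, I_2 + E_{1,2}, I_{N-4})$, from a natural or twisted-diagonal $SL_2(q)$ sitting inside the image of $\mathcal{A}_{B_{n-1}}$; the relevant branching summand $\mu$ is chosen to be small (and genuinely not paired with its transpose), precisely so that a rank-$\le 2$ unipotent survives. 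Taking $\mu_0$ of \emph{largest} dimension, as you propose, does not serve this purpose and cannot verify the hypothesis of Theorem \ref{CGFS} as actually stated. Your geometric exclusions (imprimitive, tensor, subfield, extension field) and Lemmas \ref{tens1}, \ref{tens2} are the right ingredients, but they are all downstream of first securing a small-rank element, and that step is absent from the proposal. Separately, in the field configuration of Theorem \ref{hortillonage} (case 1), the low-dimensional exceptions at $n=5$ (the three dimension-$10$ double-partitions) are in the paper handled via Theorem \ref{LBJ}, not via the maximal-subgroup tables of \cite{BHRC}; those tables only enter in field cases 4--6, so the last paragraph attributes the wrong technique to this case.
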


\begin{proof}
Let $n\geq 5$. By \cite{BM} (Theorem 1.1.), it is sufficient to show it for $\lambda \in \epsilon_n$.

The result is true for $n=4$, so we can use induction and assume $\Phi_{n-1}$ is surjective.

The main result we use to show this theorem is a theorem by Guralnick and Saxl \cite{GS}. Recall that the proof of \cite{GS} uses the classification of finite simple groups.

\begin{theo}[Gulralnick-Saxl]\label{CGFS}
Let $V$ be a finite-dimensional vector space of dimension $d > 8, d\neq 10$ over an algebraically closed field $\overline{\F_p}$ of characteristic $p>0$. Let $G$ be a primitive tensor-indecomposable finite irreducible subgroup of $GL(V)$. We write $v_G(V)$ the minimal dimension of $[\beta g,V] = (\beta g -1)V$, for $g\in G$ and $\beta \in \overline{\F_p}$ such that $\beta g \neq 1$. We then have either $v_G(V) > max(2,\frac{\sqrt{d}}{2})$ or one of the following assertions.
\begin{enumerate}
\item G is a classical group in a natural representation. 
\item G is the alternating or the symmetric group of degree $c$ and $V$ is the permutation module of dimension $c-1$ or $c-2$.
\end{enumerate}
\end{theo}

The first thing to do is to take care of the double-partitions such that $n_\lambda>8$ and $n_\lambda\neq 10$. For $n=5$, the double partitions to consider are $([1^3],[1^2])$, $([1^2],[1^3])$, $([1],[2,2])$, $([1],[2,1^2])$, $([1],[3,1])$, $([2],[2,1])$ and $([1^2],[2,1])$ of respective dimensions $10$, $10$, $10$, $15$, $15$, $20$ and $20$. For $n= 6$, they are $([1^2],[1^4])$, $([1^4],[1^2])$, $([1^3],[1^3])$, $([1],[4,1])$, $([1],[2,1^3])$, $([1],[3,2])$, $([1],[2^2,1])$, $([1],[3,1,1])$, $([2],[2^2])$, $([1^2],[2^2])$, $([2],[3,1])$, $([1^2],[3,1])$, \break$([2],[2,1^2])$, $([1^2],[2,1^2])$, $([3],[2,1])$, $([1^3],[2,1])$ and $([2,1],[2,1])$ of respective dimensions $15$, $15$, $20$, $24$, $24$, $30$, $30$, $36$, $30$, $30$, $45$, $45$, $45$, $45$, $40$, $40$ and $80$. We can now note that if $n=6$, we have $n_\lambda \geq 15$ so by the branching rule, if $n\geq 6$ and $\lambda$ is a double-partition of $\epsilon_n$ then $n_\lambda \geq 15$. The only double-partitions $\lambda$ such that $n_\lambda\leq 8$ or $n_\lambda=10$ are $([1^3],[1^2]), ([1^2],[1^3])$ and $([1],[2,2])$ which are of dimension $10$. By Lemma \ref{platypus} and the branching rule, we have that $R_{[1],[2,2]}(\mathcal{A}_{B_4}) = SL_8(q) \times SL_2(\tilde{q})$, $R_{[1^3],[1^2]}(\mathcal{A}_{B_4})=SL_4(q)\times SL_6(q)$ and $R_{[1^3],[1^2]}(\mathcal{A}_{B_4})=SL_4(q)\times SL_6(q)$. By Theorem \ref{LBJ}, we have that $R_{([1],[2,2])}(\mathcal{A}_{B_5})=R_{[1^3],[1^2]}(\mathcal{A}_{B_5})=R_{[1^2],[1^3]}(\mathcal{A}_{B_5}) \simeq SL_{10}(q)$.

We now show in the same way as in \cite{BMM} (Part 5), that the other assumptions of Theorem \ref{CGFS} are verified. In order to do this, we use the following results shown in \cite{BMM}.

\begin{lemme}\label{tens1}
If $d\geq 6$ and $G\leq GL_d(q)$ contains an element conjugated to $\diag(\xi  ,\xi^{-1} ,1,1,...)$ with $\xi^2\neq 1$, then $G$ is tensor-indecomposable.
\end{lemme}

\begin{lemme}\label{tens2}
If $d\geq 16$ and $G\leq GL_d(q)$ contains an element of order prime to $p$ conjugated to a\\ $\diag(\xi,\xi,\xi^{-1}, \xi^{-1},1,1,..,1)$ with $\xi^2\neq 1$, then $G$ is tensor-indecomposable except possibly if  $G \leq G_1\otimes G_2$ with $G_1\leq GL_2(q)$.
\end{lemme}

For a block matrix with blocks $B_1,\dots,B_r$, we write $\diag(B_1,\dots,B_r)$.

\begin{lemme}
If $G$ contains a natural $SL_2(q)$ and $q \geq 8$ or $G$ contains a twisted diagonal embedding of  $SL_2(q)$ $(G \supset \{\diag(M,{}^t\!(M^{-1}),I_{N-4}), M\in SL_2(q)\})$, then case $2$ of Theorem \ref{CGFS} is excluded.
\end{lemme}

By the proof of the imprimitivity of $G$ in \cite{BMM}, it is sufficient to show that $\mathcal{A}_{B_n}$ is normally generated by $\mathcal{A}_{B_{n-1}}$ and that $G$ contains either a transvection or an element of Jordan form $\diag(I_2+E_{1,2},I_2+E_{1,2},I_{N-4})$ to get that $G$ is imprimitive.

In order to show that we are in case $1$ of Theorem \ref{CGFS}, we must show that for $n\geq 5$, that we have $q\geq 8$ and that for any double-partition $\lambda$ of $n$, $G=R_\lambda(\mathcal{A}_{B_n})$ contains either a natural $SL_2(q)$ and  $n_\lambda> 6$ or contains a twisted diagonal embedding of $SL_2(q)$ and $n_\lambda > 16$. We must also prove that $\mathcal{A}_{B_n}$ is normally generated $\mathcal{A}_{B_{n-1}}$ and the exceptional case of Lemma \ref{tens2} is impossible when $n_\lambda > 16$, $G$ contains a twisted diagonal embedding of $SL_2(q)$ but no natural $SL_2(q)$ in an obvious way.

Let $n\geq 5$, assume the lemma is true for all $m\leq n-1$. By Lemmas \ref{gougou} and \ref{platypus}, we have $\Phi_m$ surjective for all $m\leq n-1$.  By assumption, $\alpha$ is of order strictly greater than $5$ and not belonging to $\{1,2,3,4,5,6,8,10\}$. This implies that $\alpha$ is of order at least $7$ and that $q\geq 8$. If $\lambda$ has at most two columns then since $\lambda\in \epsilon_n, \lambda$ contains a natural $SL_2(q)$. Assume now $\lambda\in \epsilon_n$ has at least three rows or three columns.

Assume that for all $\mu \subset \lambda$ containing $([2,1],[1])$ or $([1],[2,1])$, we have $\mu' \subset \lambda$. We then have that $\lambda = \lambda'$ and $n$ is even. Since $n$ is even, we have $\mu \neq \mu'$ for any double-partition $\mu\subset \lambda$. Since $\lambda \in \epsilon_n$ and contains strictly more than two rows and two columns, there exists $\mu\subset \lambda$ containing $([1],[2,1])$ or $([2,1],[1])$. Since $\Phi_{m-1}$ is surjective and $\mu'\subset \lambda$, we have a twisted diagonal embedding of $SL_{n_\mu}(q)$ in $G= R_\lambda(\mathcal{A}_{B_n})$ and since $n_\mu \geq 8 \geq 2$, we have a twisted diagonal embedding of $SL_2(q)$. Otherwise there exists $\mu\subset \lambda$ containing $([2,1],[1])$ or $([1],[2,1])$ such that $\mu' \subset \lambda$. Since $\Phi_{m-1}$ is surjective, we get that $\lambda$ contains a natural $SL_{n_\mu}(q)$ and so contains a natural $SL_2(q)$. For double-partitions which are not of dimension strictly greater than $16$, i.e. $([1],[2,1^2])$ and $([1],[3,1])$, we are in the second case.

We now show that $\mathcal{A}_{B_n}$ is normally generated by $
\mathcal{A}_{B_{n-1}}$ for $n\geq 5$. By \cite[Lemma 2.1]{BMM}, we have that $\mathcal{A}_{A_n}$ is normally generated by $\mathcal{A}_{A_{n-1}}$ for $n\geq 4$. Since $T$ commutes with $S_i$ for all $i \geq 2$, we have the same result for $\mathcal{A}_{B_n}$ for $n\geq 4$.

It now only remains to show that the exception of Lemma \ref{tens2} is impossible when there is no obvious natural $SL_2(q)$ in $G$. In order to do this, we show a proposition analogous to Proposition 2.4. of \cite{BMM}.

\begin{prop}
Let $K$ be a field, if $n\geq 7$ and $\varphi : \mathcal{A}_{B_n} \rightarrow PSL_2(K)$ is a group morphism then $\varphi =1$.
\end{prop}

\begin{proof}
Let $K$ be a field, $n\geq 7$ and $\varphi$ such a morphism. The restriction $\varphi$ to $\mathcal{A}_{A_n} \leq \mathcal{A}_{B_n}$ is trivial by Proposition 2.4. of \cite{BMM}. By Theorem 3.9. of \cite{MR}, $\mathcal{A}_{B_n}$ is generated by $p_0=S_{n-2}S_{n-1}^{-1}, p_1=S_{n-1}S_{n-2}S_{n-1}^{-2}, q_3=S_{n-3}S_{n-1}^{-1}, b = S_{n-2}S_{n-1}^{-1}S_{n-3}S_{n-2}^{-1}, r_l=T^lS_1T^{-l}S_{n-1}, q_i = S_{n-i}S_{n-1}^{-1}, l\in \Z, 4\leq i \leq n-2$ and the following relations :
\begin{enumerate}
\item For $4\leq j\leq n-2, p_0q_j =q_jp_1$ and $p_1q_j=q_jp_0^{-1}p_1$.
\item For $l\in \Z, p_0r_l = r_lp_1$ and $p_1r_l=r_lp_0^{-1}p_1$.
\item For $3\leq i <j\leq n-2, \vert i-j\vert \geq 2, q_iq_j=q_jq_i$.
\item For $3\leq i \leq n-3, q_ir_l=r_lq_i$.
\item $p_0q_3p_0^{-1}=b, p_0bp_0^{-1}=b^2q_3^{-1}b$
\item $p_1q_3p_1^{-1} = q_3^{-1}b, p_1bp_1^{-1}=(q_3^{-1}b)^3q_3^{-2}b$.
\item For $3\leq i \leq n-3, q_iq_{i+1}q_i = q_{i+1}q_iq_{i+1}$.
\item For $l\in \Z, q_{n-2}r_lq_{n-2} = r_lq_{n-2}r_l$.
\item For $l\in \Z, r_lr_{l+1}=r_{l+1}r_{l+2}$.
\end{enumerate}

By \cite[Proposition 2.4]{BMM}, the images of all the generators except for $(r_l)_{l\in \Z}$ are trivial. By the eighth relation, we get that the images of the $r_l$ are also trivial and the desired result follows.
\end{proof}

This shows that if $n\geq 7$ and $G \leq G_1\otimes G_2$ with $G_1\leq GL_2(q)$, then $G \subset SL_{\frac{N}{2}}(q) \times SL_{\frac{N}{2}}(q)$. This contradicts the irreducibility. Since we need $n\geq 7$ to apply this reasoning, we must consider separately the cases where $n\in \{5,6\}$ and $G$ does not contain a natural $SL_2(q)$. Looking at all the cases enumerated previously, the only one to consider is $\lambda=([2,1],[2,1])$. Up to conjugation, we have $H=R_{[2,1],[2,1]}(\mathcal{A}_{B_5}) = \{\diag(M ,{}^t\!(M^{-1}), N ,{}^t\!(N^{-1})), M,N\in SL_{20}(q)\} \simeq SL_{20}(q) \times SL_{20}(q)$.

Assume that $G=R_{([2,1],[2,1])}(\mathcal{A}_{B_6})  \subset G_1\otimes G_2$ and that $G_1\subset GL_2(q)$. We then have a morphism $\theta$ from $G$ to $SL_2(q)$ since $\mathcal{A}_{B_n}$ is perfect for $n\geq 5$. If we consider the restriction of $\theta$ to $H$, its kernel is a subgroup of $H$ and its image is a subgroup of $SL_2(q)$. Since $PSL_{20}(q)$ is the only non-abelian composition factor of $H$, we have that if the image is non-abelian then there exists a subgroup of $SL_2(q)$ isomorphic to $PSL_{20}(q)$. This is absurd so the image is abelian and the kernel contains the derived subgroup of $H$ which is equal to $H$ since $H$ is perfect. In the same way, for all $g\in G$, the restriction of $\theta$ to $gHg^{-1}$ is trivial and since $H$ normally generates $G$, $\theta$ is trivial which contradicts the irreducibility of $G$ in the same way as in the proof of the previous proposition.

We have thus shown that we are in the first case of Theorem \ref{CGFS}. By the same reasoning as in \cite[page 16]{BMM}, we have in all cases that $q'=q$. If $\lambda=\lambda'$, we have $G \subset G(\lambda)$ by Proposition \ref{transpose} so $G =G(\lambda)$. If $\lambda \neq \lambda'$, $G$ preserves no bilinear form since $R_\lambda$ is not isomorphic to $R_\lambda^\star$.  If $G$ preserves a hermitian form then there exists an automorphism $\Phi$ of order $2$ of $\F_q$ such that $M$ is conjugated to $\Phi({}^t\!(M)^{-1})$ for all $M\in G$. Since $G$ contains a natural $SL_2(q)$, we then have $\tr(\diag(\alpha,\alpha^{-1},1,1,...,1)) = \Phi(\tr(\diag(\alpha^{-1}, \alpha,1,...,1)))$ and $\tr( \diag(\beta,\beta^{-1},1,1,...,1)) = \Phi(\tr(\diag(\beta^{-1}, \beta,1,...,1)))$ so $\Phi(\alpha+\alpha^{-1}) =\alpha+\alpha^{-1}$ and $\Phi(\beta+\beta^{-1})= \beta+\beta^{-1}$. This implies that $\Phi=I_d$ because $\F_q=\F_p(\alpha+\alpha^{-1}, \beta+\beta^{-1})$. This is absurd and we conclude that $G =SL_{n_\lambda}(q)$.
\end{proof}

By Theorem \ref{hortillonage}, Lemma \ref{platypus} and Lemma \ref{gougou}, we have that for all $n, \Phi_n$ is surjective.

\subsubsection{Cases 2-3}

We have shown the surjectivity of $\Phi_n$ for the first of the six possible field extension configurations described at the beginning of \ref{lalala}. The proof in cases $2$ and $3$ only requires small changes to the one in the first case, but the new factorizations appearing in cases $4$ to $6$ require more work, especially for the low dimensional representations. We treat in this subsection cases $2$ and $3$ emphasizing on the differences with the first case.

In case 2, i.e., $\F_q=\F_p(\alpha,\beta) = \F_p(\alpha+\alpha^{-1},\beta+\beta^{-1})$ and $\F_p(\alpha) \neq \F_p(\alpha+\alpha^{-1})$, the same arguments as the ones in case 1 work at every step of the proof. Indeed, $SU_2(\tilde{q}^{\frac{1}{2}})$ is also generated by a conjugacy class of transvections. Since $\tilde{q}$ is a square and $\alpha$ is order not diving $8$ by assumption, we have that $\tilde{q}\geq 16$ and $\tilde{q}^{\frac{1}{2}}> 3$. We also still have that $SL_8(q)\times SU_2(\tilde{q}^{\frac{1}{2}})$ contains $SL_8(q) \times \{I_2\}$ so all the arguments works in the same way. This shows that in case $2, \Phi_n$ is surjective for all $n$.

In case $3$, i.e., $\F_q=\F_p(\alpha,\beta)=\F_p(\alpha+\alpha^{-1},\beta)= \F_p(\alpha,\beta+\beta^{-1})\neq \F_p(\alpha+\alpha^{-1},\beta+\beta^{-1})$, all representations are unitary. The main differences occur in the proof that when $n=4$, the direct product of two $SU_4(q^\frac{1}{2})$ is in the image, and in the conclusion of the proof of this version of Theorem \ref{hortillonage}.
\begin{theo}
If $n\geq 5$, then for all $\lambda\vdash\vdash n$ in our decomposition, $R_\lambda(\mathcal{A}_{B_n})= G(\lambda)$ where $G(\lambda)$ is the corresponding group in the following list.
\begin{enumerate}
\item $SU_{n-1}(\tilde{q}^{\frac{1}{2}})$ if $\lambda =([n-1,1],\emptyset)$.
\item $SU_N(\tilde{q}^{\frac{1}{2}})$ if $\lambda = (\lambda_1,\emptyset), \lambda_1< \lambda_1'$.
\item $SP_N(\tilde{q}^{\frac{1}{2}})$ if $\lambda = (\lambda_1,\emptyset), \lambda_1=\lambda_1'$ and ( $p=2$ or ($p\geq 3$ and $\nu(\lambda_1) = -1$)).
\item $\Omega_N^{+}(\tilde{q}^\frac{1}{2})$ if $\lambda=(\lambda_1,\emptyset), \lambda_1=\lambda_1'$, $p\geq 3$ and $\nu(\lambda_1)=1$.
\item $SU_n(q^\frac{1}{2})$ if $\lambda \in \{([1],[n-1]),([1],[1^{n-1}])\}$.
\item $SU_N(q^\frac{1}{2})$ if $\lambda \in \epsilon_n, \lambda< \lambda'$.
\item $SP_N(q^\frac{1}{2})$ if $\lambda=\lambda'$ and ($p=2$ or ($p\geq 3$ and $\tilde{\nu}(\lambda)=-1$.
\item $\Omega_N^{+}(q^\frac{1}{2})$ if $\lambda = \lambda', p\geq 3$ and $\tilde{\nu}(\lambda)= 1$.
\end{enumerate}
\end{theo}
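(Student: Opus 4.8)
The plan is to transcribe the proof of Theorem \ref{hortillonage} with the systematic replacement of the linear groups over $\F_q$ by the corresponding unitary groups over $\F_{q^{\frac{1}{2}}}$, isolating the two places where case 3 really differs: the base case $n=4$ and the final identification of $G(\lambda)$. I would argue by induction on $n$. The crucial simplification is that in this case \emph{every} representation $R_\lambda$ is unitary by Proposition \ref{unitary}, so $R_\lambda\simeq\epsilon\circ R_\lambda^\star$ and, by Lemma \ref{Harinordoquy}, $G:=R_\lambda(\mathcal{A}_{B_n})$ is conjugate into $GU_{n_\lambda}(q^{\frac{1}{2}})$; moreover on an $SU$-group "transpose-inverse" is realized by the field automorphism $\epsilon$, so the only maps between factors of the image that one has to track are compositions with field automorphisms of $\F_q$.

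For the base case $n=4$ one follows Lemma \ref{platypus}. The representations with an empty component have the prescribed image by \cite[Theorem 1.1]{BMM}. The hook representations $([1],[3])$ and $([1],[1^3])$ have image $SU_4(q^{\frac{1}{2}})$ by the unitary analogue of Proposition \ref{lesgourgues}: one runs Wagner's theorem \ref{wag}, and this time, since $\F_q\neq\F_p(\alpha+\alpha^{-1},\beta+\beta^{-1})$, the determinants $-\beta^{-1}$ and $-\alpha$ force the unitary alternative with defining field exactly $\F_{q^{\frac{1}{2}}}$. The cases $R_{([1^2],[1^2])}(\mathcal{A}_{B_4})=SU_6(q^{\frac{1}{2}})$ and $R_{([2,1],[1])}(\mathcal{A}_{B_4})=SU_8(q^{\frac{1}{2}})$ follow, as in Lemma \ref{platypus}, from the branching rule, the unitary analogues of Theorems \ref{LBJ} and \ref{transvections} (and, in characteristic $2$, Kantor's theorem), Lemma \ref{field}, and the absence of an invariant bilinear form (Proposition \ref{isomorphisme}). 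Finally one shows that the image of $\mathcal{A}_{B_4}$ inside $R_{([1],[1^3])}\times R_{([1],[3])}$ is the full product $SU_4(q^{\frac{1}{2}})^2$ by the same Goursat's Lemma argument (Lemma \ref{Goursat}) with the elements $U,V,X,P,Q$ as in Lemma \ref{platypus}; a non-abelian Goursat quotient would now give $R_{([1],[1^3])|\mathcal{A}_{B_4}}\simeq\phi\circ R_{([1],[3])|\mathcal{A}_{B_4}}\otimes z$ for a field automorphism $\phi$ of $\F_q$ and a character $z$, and besides $\phi=\mathrm{id}$ one must exclude $\phi=\epsilon$; the latter is ruled out by comparing the multiplicities of the eigenvalues of $T$ (which are $(1,3)$ for $([1],[1^3])$ but $(3,1)$ for $([1^3],[1])=([1],[3])'$, so no twist by $\beta^{\pm1}$ can match), and the trace computations of Lemma \ref{platypus} then force $\phi=\mathrm{id}$ and $z$ trivial, contradicting Proposition \ref{isomorphisme}.

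For $n\geq5$, by the unitary part of \cite[Theorem 1.1]{BMM} it suffices to treat $\lambda\in\epsilon_n$, and one may assume $\Phi_{n-1}$ surjective. The low-dimensional exceptions to Theorem \ref{CGFS}, namely the double-partitions with $n_\lambda=10$ (which for $n\geq6$ do not occur since the branching rule gives $n_\lambda\geq15$ there, so only $([1^3],[1^2])$, $([1^2],[1^3])$, $([1],[2,2])$ at $n=5$), are dealt with using the unitary analogue of Theorem \ref{LBJ}, the branching rule and the $n=4$ base case, which places $SU_8(q^{\frac{1}{2}})\times SU_2(\tilde{q}^{\frac{1}{2}})$, respectively $SU_4(q^{\frac{1}{2}})\times SU_6(q^{\frac{1}{2}})$, inside the image. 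For every remaining $\lambda$ one applies Theorem \ref{CGFS}, checking its hypotheses exactly as in the proof of Theorem \ref{hortillonage}: absolute irreducibility (Lemma \ref{Lincoln}); primitivity, from the fact that $\mathcal{A}_{B_n}$ is normally generated by $\mathcal{A}_{B_{n-1}}$ together with the presence in $G$ of a transvection or an element of Jordan type $\diag(I_2+E_{1,2},I_2+E_{1,2},I_{N-4})$; tensor-indecomposability, from Lemmas \ref{tens1} and \ref{tens2} applied to $\diag(\alpha,\alpha^{-1},1,\dots,1)\in G$, the exceptional possibility $G\leq G_1\otimes G_2$ with $G_1\leq GL_2(q)$ being excluded by the triviality of every morphism $\mathcal{A}_{B_n}\to PSL_2(K)$ for $n\geq7$ (and, for $n\in\{5,6\}$ and $\lambda=([2,1],[2,1])$, by the fact that $R_\lambda(\mathcal{A}_{B_{n-1}})$ is a product of two copies of $SU_{20}(q^{\frac{1}{2}})$ with $PSU_{20}(q^{\frac{1}{2}})$ as its only non-abelian composition factor); and the alternating/symmetric case, from the presence in $G$ of a natural or twisted-diagonal $SU_2(q^{\frac{1}{2}})$, $\tilde{q}$ and hence $q^{\frac{1}{2}}$ being large by the hypothesis on the order of $\alpha$. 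As in case 1, such a natural or twisted copy of $SU_{n_\mu}(q^{\frac{1}{2}})$ ($\mu\subset\lambda$) in $G$ is produced by the branching rule, the surjectivity of $\Phi_{n-1}$, and the factorization $R_{\mu'|\mathcal{A}_{B_{n-1}}}\simeq\epsilon\circ R_{\mu|\mathcal{A}_{B_{n-1}}}^\star$.

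It then remains to conclude, which is the second point of departure. By Theorem \ref{CGFS} we are in its case 1: $G$ is a classical group in a natural representation over some subfield, and since $R_\lambda$ is unitary $G\subseteq GU_{n_\lambda}(q^{\frac{1}{2}})$, so $G$ is not linear over $\F_q$ and is one of $SU$, $SP$, $\Omega^\pm$ over a subfield. If $\lambda\neq\lambda'$, then $R_\lambda\not\simeq R_\lambda^\star$ by Proposition \ref{isomorphisme}, so $G$ preserves no non-degenerate bilinear form and hence $G=SU_N(r)$ for some $r\mid q^{\frac{1}{2}}$; since $G$ contains a block-diagonal $SU_{n_\mu}(q^{\frac{1}{2}})$ with $n_\mu\geq3$, Lemma \ref{field} shows the trace field of $G$ to be $\F_q$, which forces $r=q^{\frac{1}{2}}$ and $G=SU_N(q^{\frac{1}{2}})$. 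If $\lambda=\lambda'$, then $R_\lambda$ is both self-dual (Proposition \ref{isomorphisme}) and unitary, hence $R_\lambda\simeq\epsilon\circ R_\lambda$, so by Lemma \ref{Ngwenya} $G$ is conjugate into $GL_N(q^{\frac{1}{2}})$; $G$ preserves the bilinear form $(.|.)$ of Proposition \ref{bilin}, which descends to $\F_{q^{\frac{1}{2}}}$ because the space of invariant forms is one-dimensional and $\epsilon$-stable (Hilbert 90); the defining field is $\F_{q^{\frac{1}{2}}}$ since $\alpha+\alpha^{-1}$ and $\beta+\beta^{-1}$ lie in it and generate it; and this form is skew-symmetric when $p=2$ or $\tilde{\nu}(\lambda)=-1$ and symmetric of maximal, hence positive, Witt index when $p\geq3$ and $\tilde{\nu}(\lambda)=1$, so $G=\widetilde{OSP}(\lambda)'$ equals $SP_N(q^{\frac{1}{2}})$ in the first case and $\Omega_N^+(q^{\frac{1}{2}})$ in the second. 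I expect the main obstacle to be the base case $n=4$: re-running the Goursat/trace argument to get the full $SU_4(q^{\frac{1}{2}})^2$ requires carrying the extra automorphism $\epsilon$ through as a case to exclude and rechecking that the explicit trace values obstruct it, and the pinning of the ambient subfield down to $\F_{q^{\frac{1}{2}}}$ must be redone with care; the remainder of the argument is a largely mechanical transcription of the proof of Theorem \ref{hortillonage}.
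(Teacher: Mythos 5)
Your plan is essentially the paper's: transcribe the case-1 proof of Theorem~\ref{hortillonage}, then isolate the two genuine changes---the extra check at $n=4$ and the final identification of $G(\lambda)$---and that is exactly what the paper does, with the paper's proof being rather terser (it merely cites the case-1 argument and then spells out the two modifications). The one inessential deviation: for $\lambda=\lambda'$ you obtain the $\F_{q^{\frac12}}$-bilinear form via Lemma~\ref{Ngwenya} together with a one-dimensionality/Hilbert-90 descent argument, whereas the paper directly invokes Proposition~\ref{coolprop} (which \emph{is} that descent statement, pre-packaged); these are the same argument.

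There is one step in your $n=4$ treatment that does not hold up as stated. You claim that $\phi=\epsilon$ is ruled out ``by comparing the multiplicities of the eigenvalues of $T$.'' When the Goursat isomorphism involves transpose-inverse, the comparison is between $R_{([1],[1^3])}(T)$, with eigenvalue multiplicities $(\beta:1,-1:3)$, and a scalar-and-$\phi$-twist of $R_{([1^3],[1])}(T)$, with multiplicities $(\beta:3,-1:1)$. Taking the scalar to be $v=-\beta$ and $\phi(\beta)=\beta^{-1}$ makes the twisted multiset $\{v\phi(\beta),v\phi(\beta),v\phi(\beta),-v\}=\{-1,-1,-1,\beta\}$, which matches perfectly; since $\mathcal{A}_{B_4}$ is not perfect, the character is not a priori trivial, so this case cannot be dismissed by multiplicities alone. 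The correct route---and the one the paper takes---is to feed $\phi=\epsilon$ into the same explicit trace identity used in case~1, which then reduces to the same impossible equation $(\alpha^2-1)(\beta^2-1)=0$. Since you do fall back on ``the trace computations of Lemma~\ref{platypus},'' your conclusion stands, but the multiplicity argument should be dropped (or restricted to the $S=\mathrm{id}$ branch, where it does work).
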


\begin{proof}
 We recall Proposition $4.1.$ of \cite{BMM}.
\begin{prop}\label{coolprop}
Let $q=u^2$, $\varphi$ be a non-degenerate bilinear form over $\F_q^N$, $\psi$ a non-degenerate hermitian form over $\F_q^N$. If $G\subset OSP_N(\varphi) \cap U_N(\psi)$ is absolutely irreducible, then there exists $x\in GL_N(q)$ and a non-degenerate bilinear form $\varphi'$ over $\F_u^N$ such that $^x G \subset OSP(\varphi')$ and $\varphi'$ is of the same type as $\varphi$.
\end{prop}

When $n=4$, the proof that $\Phi_4$ is surjective is the same up to the point where we prove $\Phi$ is trivial using $\Phi(\alpha+\alpha^{-1})=\alpha+\alpha^{-1}$ and $\Phi(\beta+\beta^{-1})=\beta+\beta^{-1}$. In case $3$, $\Phi$ could also be equal to the automorphism $\epsilon$ of order $2$ of $\F_q$. It is thus necessary to show that the following is absurd :
$$\frac{\alpha^2\beta^2-2\alpha^2\beta-\alpha\beta^2+5\alpha\beta-\alpha-2\beta+1}{\alpha\beta}=\epsilon(\frac{-2\alpha^2\beta-\alpha\beta^2+\alpha^2+5\alpha\beta+\beta^2-\alpha-2\beta}{\alpha\beta}).$$
This would imply
\begin{eqnarray*}
\alpha^2\beta^2-2\alpha^2\beta-\alpha\beta^2+5\alpha\beta-\alpha-2\beta+1 &  = &\frac{-2\alpha^{-2}\beta^{-1}-\alpha^{-1}\beta^{-2}+\alpha^{-2}+5\alpha^{-1}\beta^{-1}+\beta^{-2}-\alpha^{-1}-2\beta^{-1}}{\alpha^{-2}\beta^{-2}}\\
& = & -2\beta-\alpha+\beta^2+5\alpha\beta+\alpha^2-\alpha\beta^2-2\alpha^2\beta.
\end{eqnarray*}
This is absurd because it is the same equality we proved to be impossible in the first case.

We now adapt the end of the proof of the corresponding version of Theorem \ref{hortillonage}.  By \cite[page 18]{BMM},  we are in case $1$ of Theorem \ref{CGFS}. If $\lambda \neq \lambda'$, $G$ contains a natural $SU_3(q^\frac{1}{2})$ so $q=q'$ by Lemma \ref{field}. Since $G\subset SU_{n_\lambda}(q^\frac{1}{2})$ and $G$ preserves no bilinear form by Proposition \ref{isomorphisme}, we have $G=SU_{n_\lambda}(q^{\frac{1}{2}})$. If $\lambda=\lambda'$, we use Proposition \ref{coolprop} to get that $G\subset OSP(q^\frac{1}{2})$. By Lemma \ref{field}, we have that $\F_{q'}$ contains $\{x+\epsilon(x), x\in \F_q\}$. This implies that $q'=q^{\frac{1}{2}}$ because $\F_{q'}$ contains $\alpha+\alpha^{-1}$ and $\beta+\beta^{-1}$ and $q'$ divides $q^\frac{1}{2}$. We conclude that $G=OSP'(q^\frac{1}{2})$.
\end{proof}

\subsubsection{Cases 4-5-6}

In this subsection, we finish the proof for type $B$ by considering the last three cases for the field extensions. In these cases more factorizations appear and this complicates greatly the proof for small $n$. We will use the tables of maximal subgroups of finite classical groups in low dimension from \cite{BHRC}. This gives interesting techniques to determine if a certain subgroup $G$ of a classical group is the group itself, when given information on the subgroups of $G$.

In these cases, we can still use various arguments from the first case, but except for Proposition \ref{lesgourgues} which remains true in all these cases, all the low-dimensional cases must be done again. It is not necessary to use new arguments for Lemma \ref{gougou}. We start by studying the case $n\leq 4$.

\begin{lemme}
In cases $4,5$ and $6$, we have $\Phi_n$ surjective for $n\leq 4$.
\end{lemme}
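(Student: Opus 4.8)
The statement concerns cases 4, 5, 6 — the field configurations in which the automorphism $\epsilon$ produces extra factorizations $(\lambda_1,\lambda_2)\leftrightarrow(\lambda_2,\lambda_1)$ or $(\lambda_1,\lambda_2)\leftrightarrow(\lambda_1',\lambda_2')$ between representations. Just as in Lemma \ref{platypus}, the proof will proceed double-partition by double-partition for $n\in\{2,3,4\}$, establishing surjectivity onto each quasi-simple factor, and then assembling the full image via Goursat's Lemma (Lemma \ref{Goursat}). The key structural difference from case 1 is that the target group $\Phi_n$ now contains, besides the $SL_N$ and $OSP$ factors, some $SU_N$ and $SL_N(q^{1/2})$ factors coming from Lemmas \ref{Harinordoquy} and \ref{Ngwenya} applied to the isomorphisms in Proposition \ref{isomorphisme}(3)-(4). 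So for each relevant $\lambda$ I must not only show $R_\lambda(\mathcal{A}_{B_n})$ is large inside $GL_{n_\lambda}$, but also pin down \emph{which} of $SL$, $SU$, $Sp$, $\Omega^+$ it is — and this is dictated precisely by which bilinear/hermitian forms $R_\lambda$ preserves, which in turn is read off from Proposition \ref{isomorphisme}.

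\textbf{Order of steps.} First, $n=2$: only $([1],[1])$ is nontrivial; the computation in Lemma \ref{platypus} showing $R_{[1],[1]}(\mathcal{A}_{B_2})=SL_2(q)$ used only $\alpha^2\neq1\neq\beta^2$ and the order hypothesis on $\alpha$, hence applies verbatim — except that now, when $\lambda=([1],[1])=\lambda'$ and $\lambda=(\lambda_2,\lambda_1)$, the extra factorizations may force the image into a unitary or smaller group; I would check via the trace computations (as in Lemma \ref{platypus}, the trace of $tst^{-1}s^{-1}$ is $2-(\alpha+\alpha^{-1}+\beta+\beta^{-1})$) that the relevant field-of-traces argument still gives $\F_{\tilde q}=\F_q$ or its appropriate subfield, then invoke Lemma 2.1 of \cite{BM}. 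Next $n=3$: the double-partitions are $([2,1],\emptyset)$, $([1],[1^2])$, $([1^2],[1])$. Proposition \ref{lesgourgues} gives $R_{[1],[1^2]}(\mathcal{A}_{B_3})=R_{[1^2],[1]}(\mathcal{A}_{B_3})=SL_3(q)$ in all cases (it is explicitly noted to remain true). The Goursat step linking these two $SL_3(q)$'s is the one needing care: in case 1 one ruled out a nontrivial gluing by exhibiting elements $U,V,W,X\in\mathcal{A}_{B_3}$ with equal traces and deducing a contradiction from $(1-\beta^2)(1+\alpha)=0$; here I must additionally allow the gluing isomorphism $S^\phi$ to involve $\phi=\epsilon$, and check the same elements still give a contradiction (the symmetry of the trace expressions under $\alpha\leftrightarrow\beta$ and under inversion should make this routine, exactly as in the case-3 adaptation of the $n=4$ argument already in the paper). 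Then glue in $SL_2(\tilde q)$ (or $SU_2(\tilde q^{1/2})$) by the composition-factor argument. Finally $n=4$: handle $([1^2],[1^2])$ (dimension $6$) via Theorem \ref{LBJ} plus irreducibility from Lemma \ref{Lincoln}, and $([2,1],[1])$ (dimension $8$) via the transvection theorems (Theorems \ref{transvections} and, for $p=2$, the Kantor/Pottier machinery) exactly as in Lemma \ref{platypus}; the branching rule feeds in $SL_3(q)\times SL_3(q)\times SL_2(\tilde q)$ (or its unitary analogues) from $n=3$. For each of these, Proposition \ref{isomorphisme}(3)-(4) tells me whether $R_\lambda$ preserves a bilinear or hermitian form, hence whether the conclusion is $SL$, $SU$, $SL(q^{1/2})$, $Sp$ or $\Omega^+$; then assemble with Goursat, ruling out nontrivial gluings by the composition-factor/eigenvalue arguments (the eigenvalues of $R_\lambda(T)$, $R_\lambda(S_1)$ combined with $\beta\neq-1$, $\alpha^2\neq1$).

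\textbf{Main obstacle.} The hard part is the small-dimensional identifications where the new factorizations genuinely change the answer: deciding, for a self-paired $\lambda$ (with $\lambda=\lambda'$, or $\lambda=(\lambda_2,\lambda_1)$, or $\lambda=(\lambda_1',\lambda_2')$), exactly which classical group $R_\lambda(\mathcal{A}_{B_n})$ equals, and ruling out the maximal-subgroup possibilities that are not excluded by composition-factor counting alone. This is where the tables of maximal subgroups of low-dimensional classical groups from \cite{BHRC} will be needed: given that $R_\lambda(\mathcal{A}_{B_n})$ contains a known large subgroup (from branching) and preserves a known form (from Proposition \ref{isomorphisme}) but no smaller one, one scans the maximal subgroups of the putative ambient group and checks each is too small or has the wrong composition factors or the wrong field of traces (Lemma \ref{field}). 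The Goursat-gluing steps, while numerous, are mechanical once one has the right list of test elements; the genuinely new work is the form-bookkeeping and the maximal-subgroup eliminations. I would organize the proof as: (i) $n=2$, (ii) $n=3$ with its single Goursat obstruction, (iii) $n=4$ treating $([1^2],[1^2])$ and $([2,1],[1])$ separately with attention to which form is preserved, (iv) the final Goursat assembly of $\Phi_4$, flagging at each stage precisely where case 4/5/6 diverges from case 1 and invoking \cite{BHRC} where composition-factor arguments are insufficient.
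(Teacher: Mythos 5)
Your plan follows the case-1 proof of Lemma~\ref{platypus} too closely and misses the central structural change in cases 4--6: the extra isomorphisms from Proposition~\ref{isomorphisme}(3)--(4) identify certain pairs of irreducible restrictions, so the image of $\mathcal{A}_{B_{n-1}}$ coming from the branching rule is genuinely \emph{smaller} than in case 1, and this breaks exactly the tools you propose. Concretely, for $([1^2],[1^2])$ the branching gives $V_{([1],[1^2])}\oplus V_{([1^2],[1])}$, and in cases 4--6 these two three-dimensional summands factor through one another (items (c)/(d) of Proposition~\ref{isomorphisme}), so $R_{([1^2],[1^2])}(\mathcal{A}_{B_3})$ is a single $\epsilon$-twisted copy of $SL_3(q)$ in $GL_6$, \emph{not} $SL_3(q)\times SL_3(q)$; in particular it is not a natural $SL_3$, so hypothesis (2) of Theorem~\ref{LBJ} is not met and that theorem cannot be invoked. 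Likewise for $([2,1],[1])$ the branching yields $SL_3(q)\times SU_2(\tilde q^{1/2})$ (or analogues), not $SL_3(q)\times SL_3(q)\times SL_2(\tilde q)$: the only transvections visible in the ambient $GL_8$ come from the $SU_2$/$SL_2$ factor, the twisted $SL_3(q)$ contributes only rank-two elements, and $G$ is no longer obviously generated by transvections, so the Theorem~\ref{transvections}/Kantor route of Lemma~\ref{platypus} does not launch. The paper instead first places $G$ inside $SL_6(q^{1/2})$, $SU_6(q^{1/2})$, $SL_8(q^{1/2})$ or $SU_8(q^{1/2})$ via Proposition~\ref{isomorphisme} with Lemmas~\ref{Harinordoquy}/\ref{Ngwenya}, and then grinds through the maximal-subgroup tables of \cite{BHRC} (8.24--8.27, 8.44--8.47), eliminating each candidate by cardinality, by composition-factor, or by a transvection-containment argument against the one remaining $\mathcal{C}^3$ subgroup. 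You flag \cite{BHRC} as a fallback in your ``main obstacle'' paragraph, but it is in fact the primary engine of the $n=4$ argument.

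There is also a smaller conceptual slip at $n=3$: you plan to run the trace computation and ``check the same elements still give a contradiction'' between the two $SL_3(q)$'s with the gluing automorphism now allowed to be $\epsilon$. But in cases 4--6 those two representations \emph{do} genuinely factor through each other, so the Goursat gluing is supposed to be nontrivial and no contradiction exists; the target group in Theorem~\ref{result3} (and its unstated siblings) has a single $SL_n(q)$ or $SU_n(q^{1/2})$ factor for this pair, not two. That is precisely why the paper disposes of $n=3$ in one line: after the factorizations collapse the target, componentwise surjectivity (Proposition~\ref{lesgourgues} and Theorem 1.1 of \cite{BMM}) together with a trivial Goursat step between factors of distinct dimension already gives $\Phi_3$ surjective.
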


\begin{proof}
For $n=2$, we have, using the same arguments as in the first case and  Lemma \ref{Ngwenya}, that $\mathrm{Im}(\Phi_2)=SL_2(q^\frac{1}{2})$.

For $n=3$, we have by the factorizations in Proposition \ref{isomorphisme} that in all cases $\Phi_3$ is surjective.

The only case left to consider is $n=4$ and the double-partitions $ ([1^2],[1^2])$ and $([1],[2,1])$ of respective dimensions $6$ and $8$. We have to prove that $R_{([1^2],[1^2])}(\mathcal{A}_{B_4})=SL_6(q^\frac{1}{2})$ or $SU_6(q^{\frac{1}{2}})$ and $R_{([2,1],[1])}(\mathcal{A}_{B_4}) =SL_8(q^\frac{1}{2})$ or $SU_8(q^\frac{1}{2})$ depending on the case.

We start by $G=R_{([1^2],[1^2])}(\mathcal{H}_4)$ in case $5$ or $6$, where we have $\F_q = \F_p(\alpha,\beta) = \F_p(\alpha+\alpha^{-1},\beta)\neq  \F_p(\alpha,\beta+\beta^{-1})= \F_p(\alpha+\alpha^{-1},\beta+\beta^{-1})$. We then have $H=R_{([1^2],[1^2])}(\mathcal{A}_{B_3}) \simeq SL_3(\F_q)$. Since $([1^2],[1^2])=([1^2],[1^2])$, by Proposition \ref{isomorphisme} (4.c) and Lemma \ref{Ngwenya}, up to conjugation, we have $G\subset SL_6(q^\frac{1}{2})$.

We use the classification of maximal subgroups $SL_6(q^{\frac{1}{2}})$ \cite[Tables 8.24 and 8.25]{BHRC} . Using the fact that $H$ is a subgroup of $G$, we exclude the possibility that $G$ is included in all but two of these groups, using the divisibility of the cardinals that would ensue.  We start by considering the sporadic maximal subgroups in table 8.25 and get the cardinals of these groups using the atlas \cite{CCNPW}. Since $q$ is a square and $\alpha$ is of order greater than $4$, we have $q\geq 9$. This implies that $\vert SL_3(q) \vert = q^3 (q^2-1)(q^3-1) \geq 9^3*(9^2-1)(9^3-1) = 42456960$ which is greater than all the cardinals in the list (the last one is of cardinal $6q^\frac{1}{2}(q-1)(q^3-1)$ and $6 < q^\frac{3}{2}(q+1)(q^\frac{3}{2}+1)$). We now  look at the list in table 8.24 of the 18 geometric maximal subgroups of $SL_6(q^{\frac{1}{2}})$. In cases $1,2,3,4,5,6,7$ and $13$, the cardinal of the maximal subgroup divides  $q^\frac{15}{2}(q^\frac{5}{2}-1)(q^2-1)(q^\frac{3}{2}-1)^2(q-1)^2(q^\frac{1}{2}-1)$. This implies that it is sufficient to show that $\vert SL_3(q)\vert=q^3(q^3-1)(q^2-1)$ does not divide this quantity to exclude these cases.  It can be true only if $q^3-1$ divides $(q^\frac{5}{2}-1)(q^\frac{3}{2}-1)^2(q-1)^2(q^\frac{1}{2}-1)$.  The Euclidean remainder of those two quantities seen as polynomials in $q^\frac{1}{2}$ is $4q^\frac{5}{2}+2q^2-2q^\frac{3}{2}-4q-2q^\frac{1}{2}+2$. Therefore, if $q^3-1$ divides the first quantity than it divides the remainder which is positive so it is less than or equal to it.  We have $\epsilon(\alpha)=\alpha^{-1}=\alpha^{q^\frac{1}{2}}$ so $\alpha^{q^\frac{1}{2}+1}=1$. Since $\alpha$ is of order strictly greater than $6$ by assumption, we have that $q^\frac{1}{2}\geq 6$ and $4q^\frac{5}{2}+2q^2-q-2q^\frac{3}{2}-4q-2q^\frac{1}{2}+2\leq 4q^\frac{5}{2}+2q^2+2\leq 4q^\frac{5}{2}+3q^2\leq 5q^\frac{5}{2}<q^3-1$. This gives us the desired contradiction.

Using the same type of arguments, we exclude all the remaining cases except for cases $11$ and $17$ which are $SL_3(q).(q^\frac{1}{2}+1).2$ and $(q^\frac{1}{2}-1,3)\times SP_6(q^\frac{1}{2})$. We start by excluding case $11$.

We know $H=R_{([1^2],[1^2])}(\mathcal{A}_{B_3})\simeq SL_3(q)$ normally generates $G=R_{([1^2],[1^2])}(\mathcal{A}_{B_4})\subset P^{-1}SL_6(\F_{q^\frac{1}{2}})P$ for a certain matrix $P$ in $GL_6(q)$. Assume $PGP^{-1}$ is a subgroup of  $M=SL_3(q).(q^\frac{1}{2}+1).2$, since $SL_3(q)$ is perfect, $PHP^{-1}$ is perfect and the image of $PHP^{-1}$ in the quotient $\Z/2\Z$ of $M$ is trivial. $H$ is thus included in $SL_3(q).(q^\frac{1}{2}+1)$. Using the same argument, we have that $PHP^{-1}$ is included in the $SL_3(q)$ appearing in the expression of $M$ so $PHP^{-1}$ is equal to that $SL_3(q)$. For all $g\in PGP^{-1}$, we can apply the same reasoning to $gPHP^{-1}g^{-1}= SL_3(q)=PHP^{-1}$ so $PGP^{-1}=SL_3(q)=PHP^{-1}$ because $H$ normally generates $G$ so $H=G$. This leads to a contradiction because $G$ is irreducible and $H$ is not.

It only remains to show $PGP^{-1}$ cannot be included in the maximal subgroup $M= (q^\frac{1}{2}-1,3)\times SP_6(q^\frac{1}{2})$ of $SL_6(q^\frac{1}{2})$. Assume it is the case. By the same arguments as in case $11$, we have that $PGP^{-1} \subset \{1\} \times SP_6(q^\frac{1}{2}) \simeq SP_6(q^\frac{1}{2})$. Looking at the cardinals in Tables $8.28$ and $8.29$ of \cite{BHRC}, we see than none of them is divisible by $\vert SL_3(q)\vert$. This implies that $SL_3(q)\simeq SP_6(q^\frac{1}{2})$ which is absurd. This excludes case $17$ and shows that $PGP^{-1}$ is equal to  $SL_6(q^{\frac{1}{2}})$. This concludes the study of double-partition $([1^2],[1^2])$ in the field cases $5$ and $6$.

Assume now we are in case $4$, i.e., $\F_q=\F_p(\alpha,\beta)= \F_p(\alpha,\beta+\beta^{-1}) \neq \F_p(\alpha+\alpha^{-1},\beta)= \F_p(\alpha+\alpha^{-1},\beta+\beta^{-1})$. We then have by Lemma \ref{Harinordoquy} and Proposition \ref{isomorphisme} that there exists a matrix $P$ such that $PGP^{-1} \subset SU_6(q^{\frac{1}{2}})$ and $H\simeq SL_3(q)$, writing again $G=R_{([1^2],[1^2])}(\mathcal{A}_{B_4})$ and $H = R_{([1^2],[1^2])}(\mathcal{A}_{B_3})$. The goal this time is to show that $PGP^{-1}=SU_6(q^{\frac{1}{2}})$. Using Tables $8.26$ and $8.27$ of \cite{BHRC}, cardinality arguments and the fact that groups in class $\mathcal{C}^1$ are not irreducible and so cannot contain $PGP^{-1}$ as a subgroup, we get that if $G\neq SU_6(q^\frac{1}{2})$, then $G$ is included in $SL_3(q).(q^\frac{1}{2}-1).2$. By the same argument as before, this is impossible. It follows that $PGP^{-1}=SU_6(q^\frac{1}{2})$.

The only double-partition remaining for $n\leq 4$ now is $\lambda=([2,1],[1])$ which affords a representation of dimension 8 and verifies $\lambda=(\lambda_1',\lambda_2')$.

We start by case $4$, i.e., $\F_q=\F_p(\alpha,\beta)=\F_p(\alpha,\beta+\beta^{-1})\neq \F_p(\alpha+\alpha^{-1},\beta)=\F_p(\alpha+\alpha^{-1},\beta+\beta^{-1})$ and so $\F_{\tilde{q}}=\F_p(\alpha+\alpha^{-1})\neq \F_p(\alpha)$. We then have by Goursat's Lemma and the result for $n=3$ that $H=R_{([2,1],[1])}(\mathcal{A}_{B_3})\simeq SL_3(q)\times SU_2(\tilde{q}^\frac{1}{2}) \subset G=R_{([2,1],[1])}(\mathcal{A}_{B_4})$. By Proposition \ref{isomorphisme}, we know that there exists $P\in GL_8(q)$ such that for all $h\in \mathcal{H}_4$,   $PR_{([2,1],[1])}(h)P^{-1}= \epsilon(R_{([2,1],[1])})(h)$. By Lemma \ref{Ngwenya}, this implies that there exists $S\in GL_8(q)$ such that $S^{-1}R_{([2,1],[1])}(\mathcal{A}_{B_4})S\subset GL_8(q^\frac{1}{2})$ with $\gamma^{-1} P=\epsilon(S)S^{-1}$ and $\epsilon(P)P=\epsilon(\gamma)\gamma$. 

We can use the arguments used previously to see that our group is primitive, tensor-indecomposable,  preserves no symmetric, skew-symmetric or hermitian form over $\F_q^{\frac{1}{2}}$ and cannot be included in $GL_8(q')$ for $q'<q^\frac{1}{2}$.  Using \cite[Tables 8.44 and 8.45]{BHRC} and the order of our subgroup $H$ of $G$, if $S^{-1}GS\neq SL_8(q^\frac{1}{2})$ then we have $S^{-1}GS \subset (((q^\frac{1}{2}-1,4)(q^\frac{1}{2}+1))\circ SL_4(q)).\frac{(q-1,4)}{(q^\frac{1}{2}-1,4)}.2$, which is in class $\mathcal{C}^3$ of $SL_8(q^\frac{1}{2})$. An element of class $\mathcal{C}^3$ cannot contain a transvection of $SL_8(q^\frac{1}{2})$. This contradicts the fact that $H$ is included in $G$ because $H$ contains the transvections of $SU_2(\tilde{q}^\frac{1}{2})$. This shows that the only possibility left is $S^{-1}GS=SL_8(q^\frac{1}{2})$.
 
We now consider cases $5$ and $6$ where our representation is now unitary by Proposition  \ref{isomorphisme}. In both cases, there exists a matrix $P$ such that $PGP^{-1}\subset SU_8(q^\frac{1}{2})$ with $G=R_{([2,1],[1])}(\mathcal{A}_{B_4})$ and we have $H=R_{([2,1],[1])}(\mathcal{A}_{B_3})\simeq SL_3(q)\times SU_2(\tilde{q})$ (resp $SL_2(\tilde{q}))$ in case $5$ (resp case $6$). We again have that $G$ is a primitive tensor-indecomposable group preserving no symmetric or skew-symmetric form over $\F_{q^\frac{1}{2}}$. This implies that $G$ is included in no maximal subgroup of type $\mathcal{C}^1$ or $\mathcal{C}^2$. Looking at the tables $8.46$ and $8.47$ of \cite{BHRC} and using the information above and cardinality arguments, we get that $PGP^{-1}$ must be equal to $SU_8(q^\frac{1}{2})$. This concludes the proof of the lemma.
\end{proof}
 
We must now show that we can use Theorem \ref{CGFS}. The factorizations of $\lambda=(\lambda_1,\lambda_2)$ by $(\lambda_1',\lambda_2')$ and by $(\lambda_2,\lambda_1)$ change the arguments for the natural $SL_2(q)$ and twisted diagonal embeddings of $SL_3(q)$. Let $\lambda = (\lambda_1,\lambda_2)$ be a double-partition of $n\geq 5$.
 
We then have five different cases.
 
\begin{enumerate}

 \item $\lambda\neq \lambda', \lambda\neq (\lambda_2,\lambda_1)$ and $\lambda\neq (\lambda_1',\lambda_2')$. Let us show $R_{\lambda}(\mathcal{A}_{B_n})$ contains a natural $SL_3(q)$. It is sufficient to show there exists $\mu \subset \lambda$ such that $\mu'\not\subset \lambda, (\mu_2,\mu_1) \not\subset \lambda$ and $(\mu_1',\mu_2') \not\subset \lambda$.
 
 We write $\lambda_1$ partition of $n_1$ and $\lambda_2$ partition of $n_2$ with $n=n_1+n_2\geq 5$. We only consider double-partitions with no empty component. This implies that $n_1$ and $n_2$ are greater than or equal to $1$. Since the roles of $\lambda_1$ and $\lambda_2$ are symmetrical for this, we can assume without loss of generality $n_1\geq n_2$.
 \begin{enumerate}
 \item $n_2=1$, we then have that $\lambda_2=\lambda_2'$, so $\lambda_1\neq \lambda_1'$. There exists $\mu_1\subset \lambda_1$ such that $\mu_1'\not\subset \lambda_1$. We then have that $\mu=(\mu_1,\lambda_2)\subset \lambda$, but $\mu'\not\subset \lambda$ and $(\lambda_2,\mu_1) \not\subset \lambda$ because $n_1-1\geq 4>1$ and $(\mu_1',\lambda_2') \not\subset \lambda$, because $\mu_1'\not\subset \lambda_1$.
 
 \item $n_1 > n_2= 2$ and $\lambda_1\neq \lambda_1'$. We set $\mu=(\lambda_1,[1])$, we have $\mu'$ and $([1],\lambda_1)\not\subset \lambda$ because $n_1>n_2$ and $(\lambda_1',[1])\not\subset\lambda$ because $\lambda_1'\neq \lambda_1$.
 
 \item $n_1>n_2=2$ and $\lambda_1=\lambda_1'$. If for all $\mu_1\subset \lambda_1$, $\mu_1\subset \lambda_2$ or $\mu_1'\subset \lambda_2$ then $n_1=3$ and $\lambda_1=[2,1]$ which implies that either $([2],[1^2])\subset \lambda$ or $([1^2],[2])\subset \lambda$. By Proposition \ref{patate}, $R_{\lambda}(\mathcal{A}_{B_n})$ contains a natural $SL_3(q)$.
 
 \item $n_1>n_2\geq 3$ and $\lambda_2\neq \lambda_2'$. There exists $\mu_2\subset \lambda_2$ such that $\mu_2'\not\subset\lambda_2$. We then set $\mu=(\lambda_1,\mu_2)$. We have that $(\mu_2,\lambda_1) \not\subset \lambda, (\mu_2',\lambda_1') \not\subset \lambda$ because $n_1>n_2$ and $(\lambda_1',\mu_2') \not\subset \lambda$ because $\mu_2'\not\subset \lambda_2$.
 
 \item $n_1>n_2\geq 3$ and $\lambda_2=\lambda_2'$, so $\lambda_1\neq \lambda_1'$. We know that there exists $\mu_1 \subset \lambda_1$ such that $\mu_1'\not\subset \lambda_1$. If $(\lambda_2,\mu_1)\subset \lambda$ or $(\lambda_2',\mu_1') \subset \lambda$ then $\mu_1=\lambda_2$ or $\mu_1'=\lambda_2$. We have that $\lambda_2=\lambda_2'$ so this contradicts $\mu_1'\not\subset \lambda_1$. This shows that $\mu_1\neq \mu_1'$. We have that $(\mu_1',\lambda_2') \not\subset \lambda$ because $\mu_1'\not\subset \lambda_1$.
 
 \item  $n_1=n_2\geq 3$. We then have that $\lambda_1\neq \lambda_1'$ or $\lambda_2\neq \lambda_2'$. If $\lambda_1\neq \lambda_1'$, we pick $\mu_1\subset \lambda_1$ such that $\mu_1'\not\subset \lambda_1$ and set $\mu=(\mu_1,\lambda_2)$, by the assumption on $\mu_1$,  $(\mu_1',\lambda_2)\not\subset \lambda, (\lambda_2,\mu_1)\not\subset \lambda$ and $(\lambda_2',\mu_1')\not\subset \lambda$ because $\lambda_2\neq \lambda_1$ and $\lambda_2'\neq \lambda_1$. If $\lambda_2\neq \lambda_2'$, we pick $\mu_2\subset \lambda_2$ such that $\mu_2'\not\subset \lambda_2$ and $\mu=(\lambda_1,\mu_2)$ verifies the required property.
 
 \end{enumerate} 
 
 \bigskip
 
 \item  $\lambda=(\lambda_1',\lambda_2'), \lambda\neq (\lambda_2,\lambda_1)$ and $\lambda\neq \lambda'$. We then have that $\mu_1'\subset \lambda$ for all $\mu_1\subset \lambda_1$ and that for all $\mu_2\subset \lambda_2$, $\mu_2'\subset \lambda_2$. We also have that $n_1+n_2\geq 5$.

\begin{enumerate}

\item $n_1\geq n_2=1$. Let $\mu_1\subset \lambda_1$, we set $\mu= (\mu_1, \lambda_2)$. We have that  $(\lambda_2,\mu_1)\not\subset \lambda$ and $\mu'\not\subset \lambda$ because $n_1-1\geq 3>1$ but $(\mu_1',\lambda_2') \subset \lambda$.
 
 \item $n_1>n_2\geq 2$. We pick $\mu_2\subset \lambda_2$ and set $\mu=(\lambda_1,\mu_2)$. We have that $(\mu_2,\lambda_1)\not\subset \lambda$ and $\mu'\not\subset \lambda$ because $n_1>n_2$ but $(\lambda_1',\mu_2') \subset \lambda$.
 
 \item $n_1=n_2\geq 2$. We pick $\mu_1\subset \lambda_1$ and set $\mu=(\mu_1,\lambda_2)$. We have that $(\lambda_2,\mu_1)\not\subset \lambda$ because  $\lambda_2\neq \lambda_1$ and $\mu'\not\subset \lambda$ because $\lambda_2'\neq \lambda_1$ but $(\mu_1',\mu_2')\subset \lambda$.
 
  In case $4$ for the fields, i.e., $\F_q=\F_p(\alpha,\beta)=\F_p(\alpha,\beta+\beta^{-1})\neq \F_p(\alpha+\alpha^{-1},\beta)$,  if $\mu\neq (\mu_1', \mu_2')$ then $R_{\lambda}(\mathcal{A}_{B_n})$ contains up to conjugation $\{\diag(
 M,\epsilon(M),I_{n_\lambda-6}), M\in SL_3(q)\}$, and a natural $SL_3(q^{\frac{1}{2}})$ if $\mu=(\mu_1',\mu_2')$ (it is possible this is the case for all $\mu\subset \lambda$ if we have square partitions).
 
 In cases $5$ and $6$ for the fields, i.e., $\F_q=\F_p(\alpha,\beta)=\F_p(\alpha+\alpha^{-1},\beta)\neq \F_p(\alpha,\beta+\beta^{-1})$,  if $\mu\neq (\mu_1', \mu_2')$ then $R_{\lambda}(\mathcal{A}_{B_n})$ contains up to conjugation $\{\diag(
 M,{}^t\!\epsilon(M^{-1}),I_{n_\lambda-6}), M\in SL_3(q)\}$, and a natural $SU_3(q^{\frac{1}{2}})$ if $\mu=(\mu_1',\mu_2')$.
 
 \end{enumerate}

 \item $\lambda=(\lambda_2,\lambda_1)\neq \lambda'$. We then have $n_1=n_2\geq 3$. $\lambda_1=\lambda_2\neq \lambda_1'$ because $\lambda\neq \lambda'$. We can then pick $\mu_1\subset \lambda_1$ such that $\mu_1'\not\subset \lambda_1'$ and $\mu=(\mu_1,\lambda_2)$, we have $(\mu_1',\lambda_2')\not\subset \lambda$ and $\mu'\not\subset \lambda$ because $\lambda_2'\neq \lambda_1=\lambda_2$ and $(\lambda_2,\mu_1) \subset \lambda$ but $\mu\neq (\lambda_2,\mu_1)$ because $n_1-1<n_1=n_2$.
 
 In case $4$ for the fields, $R_{\lambda}(\mathcal{A}_{B_n})$ contains up to conjugation
  $\{\diag(M,{}^t\!\epsilon(M^{-1}),I_{n_\lambda-6}),M\in SL_3(q)\}$.
                          
  In cases $5$ and $6$ for the fields, $R_{\lambda}(\mathcal{A}_{B_n})$ contains up to conjugation $\{\diag(M,\epsilon(M),I_{n_\lambda-6}), M\in SL_3(q)\}$.

\item $\lambda=\lambda'\neq (\lambda_1',\lambda_2')=(\lambda_2,\lambda_1)$, we have $n_1=n_2\geq 3$ and there exists $\mu_1\subset \lambda_1$ such that $\mu_1\not\subset \lambda_2$ because $\lambda_1\neq \lambda_2$. We have $\mu'\subset \lambda$ because $\mu_1'\subset\lambda_1'=\lambda_2$, $(\lambda_2,\mu_1) \not\subset \lambda$ car $\lambda_2\neq \lambda_1$ and $(\mu_1',\lambda_2') \not\subset \lambda$ because $\lambda_2'\neq \lambda_2=\lambda_1'$. We have $\mu\neq \mu'$ because $\lambda_2'\neq \mu_1$. $R_\lambda(\mathcal{A}_{B_n})$ contains up to conjugation 
 $\{\diag(M ,{}^t\!(M^{-1}),I_{n_\lambda-6}), M\in SL_3(q)\}$.

\item  $\lambda =\lambda'=(\lambda_2,\lambda_1)=(\lambda_1',\lambda_2')$. We then have $n_1=n_2\geq 3$. If $\lambda_1$ and $\lambda_2$ are square partitions, then for all $\mu \subset \lambda$, we have that $\mu=(\mu_1',\mu_2')\neq \mu'= (\mu_2,\mu_1)$, because $n_1=n_2> n_1-1=n_2-1$.
  
  In case $4$ for the fields, $R_\lambda(\mathcal{A}_{B_n})$ contains up to conjugation
  $\{\diag(M ,{}^t\!(M^{-1}), I_{n_\lambda-6} ), M\in SU_3(q^{\frac{1}{2}})\}$.
    
    In cases $5$ and $6$ for the fields, $R_\lambda(\mathcal{A}_{B_n})$ contains up to conjugation   $\{\diag(M,{}^t\!(M^{-1}),I_{n_\lambda-6}), M\in SL_3(q^{\frac{1}{2}})\}$.
    
    If $\lambda_1$ or $\lambda_2$ is a square partition, then there exists $\mu\subset \lambda$ such that $\mu\neq \mu'$, $\mu\neq (\mu_2,\mu_1)$ and $\mu\neq (\mu_1',\mu_2')$. This implies that $R_\lambda(\mathcal{A}_{B_n})$ contains up to conjugation $\{\diag(M,{}^t\!(M^{-1}),\epsilon(M),\epsilon({}^t\!(M^{-1})),I_{n_\lambda-12}), M\in SL_3(q)\}$.
      
\end{enumerate}

We use the notations in Theorem \ref{CGFS}. In all of the above cases except for the last one, there exists $g$ in $R_{\lambda}(\mathcal{A}_{B_n})$ such that $[g,V]=2$. This implies that $v_G(V)=2$ and $v_G(V) \leq \max(2,\frac{\sqrt{d}}{2})$. In the last case, we have in the same way an element $g$ such that $[g,V]=4$. We also have in that case that $\lambda=\lambda'=(\lambda_1,\lambda_2)$ and $n\geq 6$,.This implies that $\lambda$ contains $([2,1],[2,1])$, which is of dimension $\binom {6} {3}\times 2\times 2=80$. It follows that $d\geq 80$ and $\frac{\sqrt{d}}{2}\geq \frac{\sqrt{80}}{2}> 4$. This shows that we still have $v_G(V) \leq \max(2,\frac{\sqrt{d}}{2})$.

\bigskip

It remains to check that all the assumptions of the theorem are again verified and the classical group we get is the one we want.

The first step is to take care separately of double-partitions $\lambda$ such that $n_\lambda\leq 10$. If $n_\lambda=10$ then by the conditions in Theorem \ref{CGFS}, we can assume $p\neq 2$. The second step is to verify that the remaining double-partitions are tensor-indecomposable. The third step is to verify that they are imprimitive in the monomial case. The fourth step is to verify that they are imprimitive in the non-monomial case. The fifth step is to check that we are not in 2. of Theorem \ref{CGFS}. The sixth and last step is to verify that we have the desired classical groups in each of the above cases.

\bigskip

First step. For $n=5$, it is enough to consider $([2,2],[1])$, $([2,1,1],[1])$, $([2,1],[2])$ and $([1^3],[1^2])$, for which the respective $n_\lambda$ is $10$, $15$, $20$ and $10$. We must show that $R_{([1^3],[1^2])}(\mathcal{A}_{B_5})=SL_{10}(q)$ and $R_{([2,2],[1])}(\mathcal{A}_{B_5})=SL_{10}(q^\frac{1}{2})$ in case $4$ for the fields and $R_{([2,2],[1])}(\mathcal{A}_{B_5})=SU_{10}(q^\frac{1}{2})$ in cases $5$ and $6$ on the fields. The other double-partitions are of dimensions greater than $10$. We know that $G=R_{([1^3],[1^2])}(\mathcal{A}_{B_5})$ contains $R_{([1^3],[1^2])}(\mathcal{A}_{B_4}) =SL_4(q) \times SL_6(q^\frac{1}{2})$ and it is normally generated by this group, which is generated by transvections. Since $p\neq 2$, Theorem \ref{transvections} implies that $G$ is conjugated in $GL_{10}(q)$ to $SL_{10}(q'), SP_{10}(q')$ or $SU_{10}(q'^{\frac{1}{2}})$ for some $q'$ dividing $q$. Lemma \ref{field} implies that $q'=q$. The groups $SP_{10}(q)$ and $SU_{10}(q^\frac{1}{2})$ are excluded by Proposition \ref{isomorphisme}, because $R_{([1^3],[1^2])}$ is not isomorphic to its dual representation or its dual representation composed with the automorphism of order $2$ of $\F_q$. This shows that $G=SL_{10}(q)$. 
In case $4$, we know that $G=R_{([2,2],[1])}(\mathcal{A}_{B_5})$ is conjugated to a subgroup of $SL_{10}(q^{\frac{1}{2}})$ by Proposition \ref{isomorphisme} and Lemma \ref{Ngwenya} and that $G$ contains $R_{([2,2],[1])}(\mathcal{A}_{B_4}) = SL_8(q^\frac{1}{2})\times SL_2(\tilde{q})$. It follows that it contains a natural $SL_8(q^\frac{1}{2})$ and we can apply Theorem \ref{LBJ} to get that $G=SL_{10}(q^\frac{1}{2})$.  
In cases $5$ and $6$, $G=R_{([2,2],[1])}(\mathcal{A}_{B_5})$ is conjugated to a subgroup of $SU_{10}(q^\frac{1}{2})$ by Proposition \ref{isomorphisme} and Lemma \ref{Harinordoquy} and contains $R_{([2,2],[1])}(\mathcal{A}_{B_4})$, which contains a natural $SU_{8}(q^\frac{1}{2})$ in both cases. By Theorem $1.4$ of \cite{BM}, we have indeed $G=SU_{10}(q^\frac{1}{2})$.

\bigskip

Second step. We now show that those representations are tensor-indecomposable. Since $([2,1,1],[1])$ contains a natural $SL_3(q)$, doubles-partitions with at most two rows or at most two columns are tensor-indecomposable by Lemmas \ref{tens1} and \ref{tens2}. By the enumeration of the different cases, those lemmas cover all double-partitions of $n$ except if $\lambda=(\lambda_1,\lambda_2)=\lambda'=(\lambda_2,\lambda_1)=(\lambda_1',\lambda_2')$ and neither $\lambda_1$ nor $\lambda_2$ contains a sub-partition $\mu$ such that $\mu=\mu'$. In such a case,  $\lambda$ contains $([2,1],[2,1])$ which is of dimension $80$ and we can use the following lemma.

\begin{lemme}
If $d\geq 80$ and $G\subset GL_d(q)$ contains an element of order coprime to $p$ and conjugated in $GL_d(q)$ to the diagonal matrix $(\xi,\xi,\xi,\xi,\xi^{-1},\xi^{-1},\xi^{-1},\xi^{-1},1,...,1)$ with $\xi^2\neq 1$, then $G$ is tensor-indecomposable, except possibly if $G\subset G_1\otimes G_2$ with $G_1\subset GL_a(q)$, $a\in \{2,4\}$.
\end{lemme}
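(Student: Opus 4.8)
The plan is to mimic the proofs of Lemmas \ref{tens1} and \ref{tens2} (as in \cite{BMM}), with the block size $2$ there replaced by $4$ here. First I would suppose, for contradiction, that $G$ admits a genuine tensor decomposition $G\subset G_1\otimes G_2$ with $G_1\subset GL_a(q)$, $G_2\subset GL_b(q)$, $2\leq a\leq b$ and $ab=d\geq 80$, and aim to force $a\in\{2,4\}$. Let $g\in G$ be the given element: since its order is prime to $p$ it is semisimple, and its image in $PGL_d(\overline{\F_p})$ lies in $PGL_a\times PGL_b$, so $g$ is conjugate in $GL_d(\overline{\F_p})$ to a scalar multiple of $g_1\otimes g_2$ with $g_1\in GL_a(\overline{\F_p})$, $g_2\in GL_b(\overline{\F_p})$ semisimple; absorbing the scalar into $g_2$ I may take $g$ conjugate to $g_1\otimes g_2$ itself. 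The eigenvalue multiset of $g$ is $\xi$ four times, $\xi^{-1}$ four times, and $1$ with multiplicity $d-8\geq 72$, so it has exactly three distinct values, while the eigenvalues of $g_1\otimes g_2$ are the products $\mu\nu$ with $\mu$ an eigenvalue of $g_1$ and $\nu$ an eigenvalue of $g_2$.

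Next I would extract the basic constraints. Fixing an eigenvalue $\nu$ of $g_2$, the set $\nu\cdot\mathrm{Spec}(g_1)$ sits inside the three-element set $\{1,\xi,\xi^{-1}\}$, so $g_1$ has at most three distinct eigenvalues; symmetrically so does $g_2$; and the two numbers of distinct eigenvalues multiply to at least $3$. If $g_1$ is scalar, then every eigenvalue multiplicity of $g_1\otimes g_2$ is a multiple of $a$, so $a\mid 4$ and hence $a\in\{2,4\}$, which is the desired conclusion. If $g_2$ is scalar, then symmetrically $b\mid 4$, forcing $ab\leq 16<80$, which is impossible. So I may assume that $g_1$ and $g_2$ each have at least two distinct eigenvalues.

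It then remains to rule out the case where both have two or three distinct eigenvalues. The quotient of two distinct eigenvalues of $g_1$ is a quotient of two elements of $\{1,\xi,\xi^{-1}\}$, hence — using $\xi^2\neq 1$ — lies in $\{\xi^{\pm1},\xi^{\pm2}\}$; a short case check shows that $g_1$ cannot have three distinct eigenvalues without forcing $g_2$ to be scalar (a common multiplier of the whole set $\{1,\xi,\xi^{-1}\}$ must be trivial once $\xi^{2}\neq1$ and $\xi^{3}\neq1$), so $g_1$, and likewise $g_2$, has exactly two eigenvalues. After rescaling $g_1$ and $g_2$ by mutually inverse scalars I can then arrange $g_1=\diag(\xi I_{a_1},I_{a_2})$ and $g_2=\diag(I_{b_1},\xi^{-1}I_{b_2})$ with $a_1+a_2=a$, $b_1+b_2=b$; reading off the multiplicities of $\xi$ and of $\xi^{-1}$ in $g_1\otimes g_2$ gives $a_1b_1=4=a_2b_2$, so $a\leq 8$ and $b\leq 8$, whence $ab\leq 64<80$, the final contradiction. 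When $\xi$ has order $3$ the set $\{1,\xi,\xi^{-1}\}$ is the group $\mu_3$ and the quotients are less constrained; but then a direct count of the three ``convolution'' sums of the multiplicities still forces $a\leq 4$ and $b\leq 4$, hence $d\leq 16$, and in any case $\xi$ has large order in every application of the lemma. The main obstacle is exactly the bookkeeping in this last paragraph — tracking which products of eigenvalues land on which of $1,\xi,\xi^{-1}$ and with what multiplicity — together with the (harmless) separate treatment of small-order values of $\xi$; the bound $d\geq 80$, rather than $d\geq 16$ as in Lemma \ref{tens2}, is precisely what this count requires, the difference being that the two exceptional diagonal blocks now have sizes dividing $4$ instead of dividing $2$.
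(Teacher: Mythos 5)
Your proof takes a genuinely different route from the paper's, and it has a real gap when $\xi$ has order $3$. The paper's argument is a single uniform count: for two eigenvalues $\lambda_1,\lambda_2$ of $g_1$, any index $i$ with $\lambda_1\mu_i \neq \lambda_2\mu_i$ produces an ordered pair of distinct elements of $\{1,\xi,\xi^{-1}\}$, which necessarily involves $\xi$ or $\xi^{-1}$, and the multiplicities $4+4$ cap the number of such $i$ at $8$; since $b \geq \sqrt{d} \geq 9$, some $i$ has $\lambda_1\mu_i = \lambda_2\mu_i$, so $\lambda_1 = \lambda_2$, hence $g_1$ is scalar and $a \mid 4$. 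This uses nothing beyond $\xi \neq 1$, and so no separate treatment of small orders is ever needed.

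Your proof instead stratifies by the number of distinct eigenvalues of $g_1$ and $g_2$. The scalar and two-eigenvalue branches are correct, but two steps silently use $\xi^3 \neq 1$: ruling out three distinct eigenvalues for $g_1$ (a nontrivial multiplier of $\{1,\xi,\xi^{-1}\}$ exists exactly when $\xi$ has order $3$), and the normalisation $g_1 = \diag(\xi I_{a_1}, I_{a_2})$, $g_2 = \diag(I_{b_1}, \xi^{-1} I_{b_2})$ from which you read off $a_1b_1 = a_2b_2 = 4$ — this uses that the repeated ``diagonal'' product $\mu\rho = \nu\sigma$ equals $1$, and the alternative $\mu\rho \in \{\xi,\xi^{-1}\}$ is excluded precisely because it would force $\xi^3 = 1$. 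You flag the order-$3$ case, but the one-line fix you offer is not correct: the relations $a_1b_2 + a_2b_1 + a_3b_3 = 4$ and $a_1b_3 + a_3b_1 + a_2b_2 = 4$ do not force $a,b \leq 4$ (for instance $g_1 = \xi I_4$ with $b$ arbitrarily large satisfies them), and closing this case requires a further case analysis against $ab \geq 80$. Since the lemma as stated hypothesises only $\xi^2 \neq 1$, the remark that $\xi$ has large order in the applications does not discharge it. The paper's pair-counting step is exactly what makes this whole case distinction unnecessary, which is why that approach is the more economical one.
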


\begin{proof}

Let $g=P\diag(\xi I_4,\xi^{-1} I_4,I_{d-8})P^{-1}$. Assume that $g=g_1\otimes g_2$ with $g_1\in GL_a(\F_q)$, $g_2\in GL_b(\F_q)$ with $3\leq a\leq b$ and $ab=d$. We have that $b\geq \sqrt{d}$ so $b\geq 9$ because $d\geq 80$. We write $\lambda_1,...,\lambda_a$ the eigenvalues of $g_1$ and $\mu_1,...,\mu_b$ the eigenvalues of  $g_2$. We then have that $\forall i \in [\![ 1,a]\!]$, $\forall j\in [\![1,b]\!]$, $\lambda_i\mu_j\in \{1,\xi,\xi^{-1}\}$. The numbers $\xi$ and $\xi^{-1}$ only appear $4$ times each. This implies the number of couples $(\lambda_1\mu_i,\lambda_2\mu_i)\in \{(1,\xi),(\xi,1),(\xi,\xi^{-1})\}$ is less than or equal to $4$ as is the number of couples $(\lambda_1\mu_i,\lambda_2\mu_i)\in \{(1,\xi^{-1}),(\xi^{-1},1),(\xi^{-1},\xi)\}$. For any $i\in [\![1,a]\!]$, the inequality $\lambda_1\mu_i\neq \lambda_2\mu_i$ implies that $(\lambda_1\mu_i,\lambda_2\mu_i)\in \{(1,\xi),(\xi,1),(1,\xi^{-1}),(\xi^{-1},1),(\xi,\xi^{-1}),(\xi^{-1},\xi)\}$. It follows that there are at most $8$ couples $(\lambda_1\mu_i, \lambda_2\mu_i)$ such that $\lambda_1\mu_i\neq \lambda_2\mu_i$. Since $b\geq 9$, this implies that there exists $i\in [\![1,a]\!]$ such that $\lambda_1\mu_i=\lambda_2\mu_i$. It follows that $\lambda_1=\lambda_2$. In the same way, we have that $\lambda_1=\lambda_j$ for all $j \in [\![1,a]\!]$.  Up to reordering, we can assume $\lambda_1\mu_1=\xi$. We then have $\lambda_2\mu_1=\lambda_3\mu_1=\xi$. Since there are exactly $4$ ways $\xi$ appears as a $\lambda_i\mu_j$, we have that $a=4$.

 By the assumptions on $\lambda$, $H=R_{\lambda}(\mathcal{A}_{B_{n-1}})$ is a direct product of groups isomorphic to  some $SL_m(q)$ with $m\geq n_{([2,1],[2])}=20$. If $G=R_{\lambda}(\mathcal{A}_{B_n})$ is not tensor-indecomposable then $G\subset SL_2(q) \otimes SL_{\frac{d}{2}}(q)$ or $G\subset SL_4(q) \otimes SL_{\frac{d}{4}}(q)$. We then have a morphism from $G$ into $SL_2(q)$ or $SL_4(q)$. If we consider the restriction of this morphism to $H$, its kernel is a normal subgroup of $H$. The only non-abelian decomposition factors of $H$ are $PSL_m(q)$ with $m\geq 20$ so if the image is non-abelian, then there exists a subgroup of $SL_2(q)$ or a subgroup of $SL_4(q)$ isomorphic to some $PSL_m(q)$. This leads to a contradiction because $m\geq 20$. It follows that the image is abelian and since $H$ is perfect, the kernel is equal to $H$.  Since $H$ normally generates $G$, the morphism is trivial on $G$ which contradicts the irreducibility of $G$. 
 \end{proof}
 
 \bigskip
 
 Third step. In the monomial case, the only additional case to consider is the same one as in the second step. Looking at the corresponding proof in \cite[page 14]{BMM}, we get that $(p-1)r\leq 4$ with $q=p^r$. We know that $q$ is a square, $n\geq 6$, $\alpha$ is of order greater than $n$ and $\epsilon(\alpha)\in \{\alpha,\alpha^{-1}\}$ so $\alpha^{q^\frac{1}{2}-1}=1$ or $\alpha^{q^\frac{1}{2}+1}=1$. In both cases $q^\frac{1}{2}+1> 6$, and so $q^\frac{1}{2}\geq 6$ and $q\geq 36$. The condition $(p-1)r\leq 4$ implies that $q\leq max(5^1,4^1,3^2,2^4)= 16$ so we have a contradiction.
 
 \bigskip
 
 Fourth step. We know that there exists a matrix $t$ of order $p$ such as the one in \cite[page 14]{BMM} or with Jordan form $\diag(I_2+E_{1,2},I_2+E_{1,2},I_2+E_{1,2},I_2+E_{1,2},I_{n_\lambda-8})$.
 
 If $p\neq 2$, we can use the same arguments as in page $15$ of \cite{BMM} because we still have $(t-1)^2=0$.
 
 Assume now that $p=2$. Assume that $G\subset H \wr \mathfrak{S}_m = (H_1\times H_2 \times \dots \times H_m) \rtimes \mathfrak{S}_m$ with $H_1, \dots H_m$ the $m$-copies of $GL_{N/m}(q)$ permuted by $\mathfrak{S}_m$, that $V=U_1\oplus U_2 \oplus \dots \oplus U_m$ is the direct sum corresponding to the wreath product and that $t\notin H_1\times \dots \times H_m$.  Assume $t\notin H_1\times \dots \times H_m$. Up to reordering, we can assume $tU_1=U_2$.  If $\dim(U_i)\geq 5$ then we can consider linearly independent vectors $v_1,v_2,v_3,v_4,v_5$ in $U_1$ and by completing the family of vectors $(v_1,tv_1,v_2,tv_2,v_3,tv_3,v_4,tv_4,v_5,tv_5)$ which are linearly independent because $tU_1=U_2\neq U_1$, we get a basis upon which $t$ acts as a matrix of the form $M_2\oplus M_2\oplus M_2 \oplus M_2 \oplus M_2 \oplus X$ for a certain $X$ with $M_2=\begin{pmatrix}
 0 & 1\\
 1& 0
 \end{pmatrix}$. This implies that the rank of $t-1$ is greater than or equal than $5$ which is a contradiction.
 
 We can thus assume that $\dim(U_i) \leq 4$. Note that $G=R_\lambda(\mathcal{A}_{B_n})$ and $\mathcal{A}_{B_n}$ is perfect for $n\geq 5$ \cite{MR} so $G$ is perfect. If $G\subset (H_1\times H_2\times \dots \times H_m) \rtimes \mathfrak{S}_m$, we get $G\subset (H_1\times H_2 \times \dots \times H_m) \rtimes \mathfrak{A}_m$ because $[\mathfrak{S}_m,\mathfrak{S}_m]\subset \mathfrak{A}_m$. 
 
  If $t$ is a transvection then by the same reasoning as above on the dimensions of $U_i$, we are in the monomial case which was done in the third step. 
 
  If $t$ is of rank $2$, then either we are in the monomial case or $\dim(U_i)=2$. The monomial case is done, so it is sufficient to prove that $\dim(U_i)=2$ leads to a contradiction. We take $t_1$ and $t_2$ two such elements of rank $2$. Assume $\dim(U_i)=2$, since we have $t(U_1)=U_2$ and $t_1(U_2)=t_1^2(U_1)=U_1$. If $(u_a,u_b)$ are linearly independent then $(t_1u_a-u_a,t_1u_b-u_b)$ is a basis of $\mathrm{Im}(t_1-1)$,  which is of dimension $2$ and included in $U_1\oplus U_2$ for all $i\notin\{1,2\}$, $t_i(U_i)=U_i$. It follows that the projection of $t_1$ upon $\mathfrak{S}_m$ from the semi-direct product is a transposition. This is a contradiction because the projection of $G$ upon $\mathfrak{S}_m$ is included in $\mathfrak{A}_m$.
 
 If $t$ is of rank $4$ and $R_\lambda(\mathcal{A}_ {B_{n-1}})$ does not contain in an obvious way any transvections or elements $t$ of rank $2$, then $G$ contains up to conjugation $\{\diag(M,{}^t\!(M^{-1}),\epsilon(M),{}^t\!\epsilon(M^{-1}),I_{n_\lambda-8}), M\in SL_2(q)\}$.
We consider two elements $t_1$ and $t_2$ of rank $4$. If $\dim(U_1)=4$ then if $(u_1,u_2,u_3,u_4)$ is a basis of $U_1$, $(u_1-t_1u_1,u_2-t_1u_2,u_3-t_1u_3,u_4-t_1u_4)$ is a basis of $\mathrm{Im}(t_1-1)$, which is of dimension $4$. It follows that the projection of $t_1$ upon $S_m$ is a transposition, which is absurd.

 If $\dim(U_1)=3$, then if $(u_1,u_2,u_3)$ is a basis of $U_1$, we have that $Vect\{t_1u_1-u_1,t_1u_2-u_2,t_1u_3-u_3\} \subset \mathrm{Im}(t_1-1)$. If there exists $i\notin \{1,2\}$ such that $t_i(U_i)\neq U_i$ then in the same way as before, there would exist a subspace of dimension $6$ of $\mathrm{Im}(t_1-1)$, which is of dimension $4$. This shows that the projection of $t_1$ upon $\mathfrak{S}_m$ is a transposition, which is absurd.
 
 If $\dim(U_i)=2$, then we can take $4$ distinct non-zero elements $a_1,a_2,a_3,a_4$ of $\F_q$, which is possible because $q^{\frac{1}{2}}\geq 6$. We know that $G$ contains up to conjugation the elements $t_j$ for $j\in \{1,2,3,4\}$ with $t_j=\diag(I_2+a_j E_{1,2},I_2+a_j E_{1,2},I_2+\epsilon(a_j) E_{1,2}, I_2+\epsilon(a_j)E_{1,2},I_{n_\lambda-8})$. We have that
$\mathrm{Im}(t_j-1)$ is independent of $j$. We also have that $t_1(U_1)=U_2$ and $t_1(U_2)=t_1^2(U_1)=U_1$. Since $\mathrm{Im}(t_1-1)\cap U_1\oplus U_2$ is then of dimension $2$ and the projection of $t_1$ upon $\mathfrak{S}_m$ is not a transposition, there exists $i\notin\{1,2\}$ such that $t_1(U_i)\neq U_i$. Up to reordering, we can assume $t_1(U_3)=U_4$ and $t_1(U_4)=t_1^2(U_3)=U_3$. This shows that for all $j\in \{1,2,3,4\}, \mathrm{Im}(t_j-1)=\mathrm{Im}(t_1-1)\subset U_1\oplus U_2 \oplus U_3 \oplus U_4$. Since each $t_j$ is of order $2$, it follows writing $\pi$ the projection of $G$ upon $\mathfrak{S}_m$ that we have $\{\pi(t_1),\pi(t_2),\pi(t_3),\pi(t_4)\}\subset \{I_d,(12)(34),(13)(24),(14)(23)\}$.

Let us show that $\pi(t_j)=I_d$ for all $j$. They are all conjugated in $G$. Since $H_1\times H_2\times \dots \times H_m$ is a normal subgroup of $(H_1\times H_2\times \dots \times H_m)\rtimes \mathfrak{S}_m$, it is sufficient to show it for one of them. Assume it is wrong for all of them. We then have $\{\pi(t_1),\pi(t_2),\pi(t_3),\pi(t_4)\}\subset \{(12)(34),(13)(24),(14)(23)\}$. Therefore, there exists a pair $(i,j), i\neq j$ such that $\pi(t_i)=\pi(t_j)$ and so $\pi(t_it_j)=I_d$. But the matrix of $t_it_j$ in the basis we chose is $\diag(I_2+(a_i+a_j) E_{1,2},I_2+(a_i+a_j) E_{1,2},I_2+\epsilon(a_i+a_j) E_{1,2},I_2+\epsilon(a_i+a_j)E_{1,2},I_{n_\lambda-8})$.
We have $a_i+a_j\neq 0$ because $p=2$ and the elements $a_l$ are pairwise distinct. It follows that $t_it_j$ is conjugated each $t_l$ so we have a contradiction. This shows that for all $j\in \{1,4\}, \pi(t_j)=I_d$. It follows that $t_j\in H_1\times H_2\times \dots \times H_m$ which is normal in $(H_1\times H_2\times \dots \times H_m)\rtimes \mathfrak{S}_m$. Since $G$ is normally generated by $R_\lambda(\mathcal{A}_{B_{n-1}})$, which is normally generated by elements of the form $t_j$, we have that $G\subset H_1\times H_2\times \dots \times H_m$, which contradicts the irreducibility of $G$. This is absurd and it follows that $G$ is an imprimitive group.
 
 \bigskip
 
Fifth step. If $G$ contains a natural $SL_2(q^\frac{1}{2})$ or a natural $SU_2(q^\frac{1}{2})$ then we can apply the same arguments as in \cite[page 13]{BMM}. If $G$ contains a twisted diagonal embedding or a twisted diagonal embedding composed with the automorphism of order $2$ of $\F_q$ of $SL_3(q)$, then we can apply the arguments of \cite[page 14]{BMM}. If we are not in any of the above cases, then $\lambda=\lambda'=(\lambda_2,\lambda_1)$, so $n\geq 6$ and we are in one of the following cases.
 \begin{enumerate}
 \item $R_\lambda(\mathcal{A}_{B_n})$ contains up to conjugation   $\{\diag(M,{}^t\!(M^{-1}),I_{n_\lambda-6} ), M\in SU_3(q^{\frac{1}{2}})\}$.
 \item $R_\lambda(\mathcal{A}_{B_n})$ contains up to conjugation   $\{\diag(M,{}^t\!(M^{-1}),I_{n_\lambda-6}), M\in SL_3(q^{\frac{1}{2}})\}$.
 \item $R_\lambda(\mathcal{A}_{B_n})$ contains up to conjugation $\{\diag(M,{}^t\!(M^{-1}),\epsilon(M),{}^t\!\epsilon(M^{-1}),I_{n_\lambda-12}), M\in SL_3(q)\}$.
 \end{enumerate}
 
In the first two cases, we have an element $g$ conjugated to $\diag(\xi,\xi,\xi^{-1},\xi^{-1},1,\dots ,1)$ with $\xi$ of order $q^\frac{1}{2}-1$ but the order of $\alpha$ is less than or equal to $q^{\frac{1}{2}}+1$ in both cases. If $g$ is an element of $\mathfrak{S}_{n_\lambda}$ such that $[g,V]= 4$, then we have ($g=\sigma_1\sigma_2\sigma_3\sigma_4$ product of 4 disjoint transpositions and $g$ is of order $2$) or ($g$ is the product of 2 disjoint 3-cycles and $g$ is of order $3$) or ($g$ is a $5$-cycle and $g$ is of order $5$) or ($g$ is the disjoint product of  $2$ transpositions and a $3$-cycle and $g$ is of order $6$). Since $n_\lambda \geq 6$ and the order of $\alpha$ is greater than $n$, $q^\frac{1}{2}+1>7$ so $q^\frac{1}{2}-1>5$ which contradicts all the cases except for the last one. In the last case, we have that $n_\lambda\geq 7$ by the decomposition of $g$. Since $\lambda=\lambda'$, $n_\lambda$ is even and $q^\frac{1}{2}-1= q^\frac{1}{2}+1-2>n_\lambda-2> 6$, which contradicts the last case.

In the third case, we have an element $g$ conjugated to $\diag(\xi,\xi,\xi,\xi,\xi^{-1},\xi^{-1},\xi^{-1},\xi^{-1},1,\dots,1)$ which is of order $o(g)=q-1$ but $q^\frac{1}{2}+1>7$ so $q> 36$. Since $q$ is an even power of a prime number, it follows that $q>49$ and $q-1\geq 49$. We have $[g,V]= 8$ so by considering the decomposition into disjoint cycles of $g$ and using the fact that the rank of $\sigma-1$ of a cycle $\sigma$ is equal to the length of the cycle minus 1, we get $o(g)\in \{\lcm(\{n_i+1\}_{i\in I}), \underset{i\in I}\sum n_i=8, n_i\in \N^\star\}$. It follows that $o(g) \leq 30 <49\leq q-1=o(g)$ which is a contradiction. 

\bigskip

Sixth step. We have shown that $G=R_\lambda(\mathcal{A}_{B_n})$ is a classical group in a natural representation, the last step is to show that we have the following theorem.
\begin{theo}
If $n\geq 5$, then for all double-partition $\lambda\vdash\vdash n$ in our decomposition, $R_\lambda(\mathcal{A}_{B_n})=G(\lambda)$, where $G(\lambda)$ is given by the following list.
\begin{enumerate}
\item When $\F_q=\F_p(\alpha,\beta)=\F_p(\alpha,\beta+\beta^{-1})\neq\F_p(\alpha+\alpha^{-1},\beta)$, and $\F_{\tilde{q}}=\F_p(\alpha)\neq \F_p(\alpha+\alpha^{-1})$
\begin{enumerate}
\item $SU_{n-1}(\tilde{q}^\frac{1}{2})$ if $\lambda =([n-1,1],\emptyset)$.
\item $SU_{n_\lambda}(\tilde{q}^\frac{1}{2})$ if $\lambda = (\lambda_1,\emptyset), \lambda_1< \lambda_1'$.
\item $SP_{n_\lambda}(\tilde{q}^\frac{1}{2})$ if $\lambda = (\lambda_1,\emptyset), \lambda_1=\lambda_1'$ and ( $p=2$ or ($p\geq 3$ and $\nu(\lambda_1) = -1$)).
\item $\Omega_N^{+}(\tilde{q}^\frac{1}{2})$ if $\lambda=(\lambda_1,\emptyset), \lambda_1=\lambda_1'$, $p\geq 3$ and $\nu(\lambda_1)=1$.
\item $SL_n(\F_q)$ if $\lambda=([1],[n-1])$.
\item  $SL_{n_\lambda}(\F_q)$ if $\lambda\neq \lambda',\lambda\neq(\lambda_1',\lambda_2')$ and $\lambda\neq (\lambda_2,\lambda_1)$.
\item $SU_{n_\lambda}(q^\frac{1}{2})$, if $\lambda=(\lambda_2,\lambda_1) \neq \lambda'$.
\item $SL_{n_\lambda}(\F_{q^\frac{1}{2}})$,if $\lambda=(\lambda_1',\lambda_2') \neq \lambda'$.
\item $SP_{n_\lambda}(q)$, if $\lambda=\lambda'\neq (\lambda_2,\lambda_1)$ and ($p=2$ or $\nu(\lambda)=-1$).
\item $\Omega^+_{n_\lambda}(q)$,if $\lambda=\lambda'\neq (\lambda_2,\lambda_1)$, $p\neq 2$ and $\nu(\lambda)=1$.
\item $SP_{n_\lambda}(q^\frac{1}{2})$, if $\lambda=\lambda'=(\lambda_2,\lambda_1)$ and ($p=2$ or $\nu(\lambda)=-1$).
\item $\Omega_{n_\lambda}^+(q^\frac{1}{2})$ if $\lambda=\lambda'= (\lambda_2,\lambda_1)$, $p\neq 2$ and $\nu(\lambda)=1$.
\end{enumerate}
\item When $\F_q=\F_p(\alpha,\beta)=\F_p(\alpha+\alpha^{-1},\beta)\neq \F_p(\alpha,\beta+\beta^{-1})$,
\begin{enumerate}
\item when $\F_{\tilde{q}}=\F_p(\alpha)=\F_p(\alpha+\alpha^{-1})$,
\begin{enumerate}
\item $SL_{n-1}(\F_{\tilde{q}})$ if $\lambda =([n-1,1],\emptyset)$.
\item $SL_{n_\lambda}(\F_{\tilde{q}})$ if $\lambda = (\lambda_1,\emptyset), \lambda_1< \lambda_1'$.
\item $SP_{n_\lambda}(\tilde{q})$ if $\lambda = (\lambda_1,\emptyset), \lambda_1=\lambda_1'$ and ($p=2$ or ($p\geq 3$ and $\nu(\lambda_1) = -1$)).
\item $\Omega_N^{+}(\tilde{q})$ if $\lambda=(\lambda_1,\emptyset), \lambda_1=\lambda_1'$, $p\geq 3$ and $\nu(\lambda_1)=1$.
\end{enumerate}
\item when $\F_{\tilde{q}}=\F_p(\alpha)\neq \F_p(\alpha+\alpha^{-1})$,
\begin{enumerate}
\item $SU_{n-1}(\tilde{q}^\frac{1}{2})$ if $\lambda =([n-1,1],\emptyset)$.
\item $SU_{n_\lambda}(\tilde{q}^\frac{1}{2})$ if $\lambda = (\lambda_1,\emptyset), \lambda_1< \lambda_1'$.
\item $SP_{n_\lambda}(\tilde{q}^\frac{1}{2})$ if $\lambda = (\lambda_1,\emptyset), \lambda_1=\lambda_1'$ and ( $p=2$ or ($p\geq 3$ and $\nu(\lambda_1) = -1$)).
\item $\Omega_N^{+}(\tilde{q}^\frac{1}{2})$ if $\lambda=(\lambda_1,\emptyset), \lambda_1=\lambda_1'$, $p\geq 3$ and $\nu(\lambda_1)=1$.
\end{enumerate}
\item $SL_n(\F_q)$ if $\lambda=([1],[n-1])$,
\item $SL_{n_\lambda}(\F_q)$ if $\lambda\neq \lambda',\lambda\neq(\lambda_1',\lambda_2')$ and $\lambda\neq (\lambda_2,\lambda_1)$, 
\item $SL_{n_\lambda}(\F_{q^\frac{1}{2}})$ if $\lambda=(\lambda_2,\lambda_1) \neq \lambda'$,
\item  $SU_{n_\lambda}(q^\frac{1}{2})$ if $\lambda=(\lambda_1',\lambda_2') \neq \lambda'$,
\item  $SP_{n_\lambda}(q)$ if $\lambda=\lambda'\neq (\lambda_2,\lambda_1)$ and ($p=2$ or $\nu(\lambda)=-1$),
\item $\Omega^+_{n_\lambda}(q)$ if $\lambda=\lambda'\neq (\lambda_2,\lambda_1)$, $p\neq 2$ and $\nu(\lambda)=1$, 
\item  $SP_{n_\lambda}(q^\frac{1}{2})$ if $\lambda=\lambda'=(\lambda_2,\lambda_1)$ and ($p=2$ or $\nu(\lambda)=-1$),
\item  $\Omega_{n_\lambda}^+(q^\frac{1}{2})$ if $\lambda=\lambda'= (\lambda_2,\lambda_1)$, $p\neq 2$ and $\nu(\lambda)=1$.
\end{enumerate}
\end{enumerate}
\end{theo}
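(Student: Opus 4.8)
The plan is to argue by induction on $n$: the cases $n\le4$ are the previous lemma, and for $n\ge5$ I would assume $\Phi_{n-1}$ surjective, which through the branching rule yields the restriction of each $R_\lambda$ to $\mathcal{A}_{B_{n-1}}$ and hence the explicit subgroups of $G:=R_\lambda(\mathcal{A}_{B_n})$ used below. For a double-partition $\lambda$ with an empty component, $T$ acts as the scalar $\beta$ and $S_1,\dots,S_{n-1}$ act through the Iwahori--Hecke algebra of type $A_{n-1}$ with parameter $\alpha$, so I would simply quote \cite{BMM} and \cite{BM} for $R_\lambda(\mathcal{A}_{B_n})$, the defining field being $\F_{\tilde{q}}$ or $\F_{\tilde{q}^{1/2}}$ according to whether $\F_p(\alpha)=\F_p(\alpha+\alpha^{-1})$ or not; this produces all the entries of the list indexed by double-partitions with an empty component. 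After that I may assume $\lambda\in\epsilon_n$, and by the preceding five steps together with Theorem \ref{CGFS} I may assume $G$ is a classical group in a natural representation of $GL_{n_\lambda}$. It then remains to identify the defining field of $G$ and its type.

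For the field I would use the explicit subgroups produced in the case analysis just above: according to the shape of $\lambda$, $G$ contains a natural $SL_3(q)$, a natural $SU_3(q^{1/2})$, a twisted diagonal copy $\{\diag(M,{}^t\!(M^{-1}),I),\,M\in SL_3(q)\}$, a twisted diagonal copy $\{\diag(M,\epsilon(M),I),\,M\in SL_3(q)\}$, or the same constructions with $SU_3(q^{1/2})$ in place of $SL_3(q)$. By Lemma \ref{field} the subfield of $\F_q$ generated by the traces of such a subgroup is $\F_q$ in the non-descended cases and $\F_{q^{1/2}}$ in the descended ones, so the defining field of $G$ cannot be smaller than the one predicted by the list. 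For the matching upper bound I would read off from the relevant case of Proposition \ref{isomorphisme} exactly which of $R_\lambda\simeq R_\lambda^\star$, $R_\lambda\simeq\epsilon\circ R_\lambda$, $R_\lambda\simeq\epsilon\circ R_\lambda^\star$ hold, in terms of the comparisons of $\lambda$ with $\lambda'$, $(\lambda_2,\lambda_1)$ and $(\lambda_1',\lambda_2')$; Lemmas \ref{Ngwenya} and \ref{Harinordoquy} then conjugate $G$ into $GL_{n_\lambda}(q^{1/2})$, respectively $GU_{n_\lambda}(q^{1/2})$, precisely in the cases where the list predicts a group over $\F_{q^{1/2}}$. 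The numerical hypotheses on the order of $\alpha$ (order $>n$ and not in $\{1,2,3,4,5,6,8,10\}$), which force $q^{1/2}\ge6$, are what I would use to exclude sporadic coincidences between these two bounds.

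For the type I would run the usual form-preservation dichotomy. When $\lambda\ne\lambda'$, Proposition \ref{isomorphisme} gives that $R_\lambda$ is not self-dual, so $G$ preserves no non-degenerate bilinear form; it preserves a non-degenerate hermitian form over a subfield only when $R_\lambda\simeq\epsilon\circ R_\lambda^\star$, and feeding these facts into the classification of classical groups in a natural representation leaves only $SL_{n_\lambda}$ or $SU_{n_\lambda}$ over the field found above. When $\lambda=\lambda'$, Proposition \ref{transpose} places $G$ inside the isometry group of the bilinear form of Proposition \ref{bilin}, which by Proposition \ref{bilin}(3) is symmetric with positive Witt index when $\tilde{\nu}(\lambda)=1$ and skew-symmetric when $\tilde{\nu}(\lambda)=-1$; since $G$ is a classical group in a natural representation containing a large classical subgroup it must be the whole derived isometry group, that is $\Omega^{+}_{n_\lambda}$ when $p\ne2$ and $\tilde{\nu}(\lambda)=1$, and $SP_{n_\lambda}$ when $p=2$ or $\tilde{\nu}(\lambda)=-1$. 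In the further subcase $\lambda=\lambda'=(\lambda_2,\lambda_1)=(\lambda_1',\lambda_2')$ I would combine this with Proposition \ref{coolprop} (or the evident variant of it for an intersection of a unitary group with a symplectic or orthogonal group) to descend the form to $\F_{q^{1/2}}$, obtaining $SP_{n_\lambda}(q^{1/2})$ or $\Omega^{+}_{n_\lambda}(q^{1/2})$.

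The hardest part will be exactly this interaction of several simultaneous symmetries: when more than one of $\lambda=\lambda'$, $\lambda=(\lambda_2,\lambda_1)$, $\lambda=(\lambda_1',\lambda_2')$ holds, one must show not merely that $G$ descends to $GL_{n_\lambda}(q^{1/2})$ but that it descends together with a bilinear form of the correct type (symmetric versus skew, and $p=2$ versus $p\ne2$), which is what forces the bookkeeping behind Proposition \ref{coolprop} and its variants. A secondary difficulty, already visible above, is certifying that the subfield on which $G$ lives is genuinely $\F_{q^{1/2}}$ and no smaller; here the explicit $SL_3(q)$ or $SU_3(q^{1/2})$ subgroups coming from the case analysis, Lemma \ref{field}, and the order constraints on $\alpha$ are exactly the ingredients I would use. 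Once all these images are identified, the surjectivity of $\Phi_n$ follows from the analogue of Lemma \ref{gougou}, as in the first case.
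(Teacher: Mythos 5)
Your overall strategy matches the paper's: reduce to non-hook, non-empty-component double-partitions, invoke Theorem~\ref{CGFS} to see that $G=R_\lambda(\mathcal{A}_{B_n})$ is a classical group in a natural representation, pin down its defining field and its type, then conclude surjectivity of $\Phi_n$ via Goursat/Lemma~\ref{gougou}. The empty-component entries, the use of Propositions~\ref{isomorphisme}, \ref{transpose}, \ref{bilin}, \ref{coolprop}, and Lemmas~\ref{Ngwenya} and~\ref{Harinordoquy} all line up with what the paper does.

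However, your lower bound on the defining field has a genuine gap in the twisted-diagonal cases. You propose to read off the defining field by applying Lemma~\ref{field} to ``the traces of such a subgroup,'' where the subgroup may be a twisted diagonal copy $\{\diag(M,{}^t\!(M^{-1}),I)\}$ or $\{\diag(M,\epsilon(M),I)\}$ with $M\in SL_3(q)$ or $SU_3(q^{1/2})$. But Lemma~\ref{field} concerns the traces of the $3\times3$ matrices $M$ themselves; it says nothing directly about the traces of the $n_\lambda\times n_\lambda$ block-diagonal images, which have the form $\tr(M)+\tr(M^{-1})+\mathrm{const}$ or $2\tr(M)+\mathrm{const}$. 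The latter collapse at $p=2$, and even at odd $p$ the field generated by $\{\tr(M)+\tr(M^{-1})\}$ is not obviously all of $\F_q$. The paper circumvents this by a Galois-twist argument it applies in every twisted-diagonal case: since $G$ is a classical group over $\F_{q'}$, the representation satisfies $\rho\simeq\rho^\sigma$ for any $\sigma\in\Gal(\F_q/\F_{q'})$; writing $\rho\simeq\varphi\oplus\varphi\oplus\mathbf{1}^{n_\lambda-6}$ (or $\varphi\oplus\varphi^\star\oplus\mathbf{1}$), uniqueness of decomposition forces $\varphi\simeq\varphi^\sigma$ or $\varphi\simeq(\varphi^\star)^\sigma$, and only then does Lemma~\ref{field} apply (to $\varphi$, a genuine natural $3\times3$ representation) to kill $\sigma$. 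The ambiguity $\varphi\simeq(\varphi^\star)^\sigma$ in the symplectic/orthogonal case needs a further cardinality argument ($SU_3(q^{1/2})$ does not inject into $SU_3(q^{1/4})$), as does excluding $SU_{n_\lambda}(q^{1/4})$ in the $SL_{n_\lambda}(q^{1/2})$ case. Your proposal does not supply these steps, and without them the field determination is incomplete.
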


\begin{proof}

It is sufficient to prove the result for double-partitions with no empty components which are not hooks. We know by Theorem \ref{CGFS} and the previous steps that $G(\lambda)$ is a classical group in a natural representation. The proof uses Proposition \ref{isomorphisme} and the separation of the cases made before the enumeration of the six steps. We write $\F_{q'}$ the field over which our classical group is defined. In all cases $G(\lambda) \subset SL_n(q)$ so $q'$ divides $q$.

Assume $\F_q=\F_p(\alpha,\beta)=\F_p(\alpha,\beta+\beta^{-1})\neq \F_p(\alpha+\alpha^{-1},\beta).$
\begin{enumerate}
\item If $\lambda\neq  \lambda', \lambda\neq (\lambda_1',\lambda_2')$ and $\lambda\neq (\lambda_2,\lambda_1)$, then $G(\lambda)$ contains a natural $SL_2(q)$. By Lemma \ref{field}, we have that $q'=q$. By Proposition \ref{isomorphisme}, $G(\lambda)$ preserves no hermitian or bilinear form, so $G(\lambda)=SL_{n_\lambda}(q)$.
\item If $\lambda=(\lambda_2,\lambda_1)\neq \lambda'$, then by Proposition \ref{isomorphisme} and Lemma \ref{Harinordoquy}, we have up to conjugation $G(\lambda) \subset SU_{n_\lambda}(q^\frac{1}{2})$. Up to conjugation, $G(\lambda)$ contains $\{\diag(M ,{}^t\!\epsilon(M^{-1}),I_{n_\lambda-6}), M\in SL_3(q)\}$, so $G(\lambda)$ contains $\{\diag(M,M,I_{n_\lambda-6}),M\in SU_3(q^\frac{1}{2})\}$.
                          
Let $\varphi$ be the natural representation of $SU_3(q^\frac{1}{2})$ in $GL_3(\overline{\F_p})$ and $\rho$ the diagonal representation of $SU_3(q^\frac{1}{2})$ in $GL_{n_\lambda}(\overline{\F_p})$, given by the above subgroup of $G(\lambda)$. 

We have $\rho\simeq \varphi \oplus \varphi \oplus \mathbf{1}^{n_\lambda-6}$ with $\mathbf{1}$ the trivial representation. Let $\sigma$ be a generator of $\Gal(\F_q/\F_{q'})$. Since $G(\lambda)$ is a classical group over $\F_{q'}$, we have that $\rho\simeq \rho^\sigma$ so $\varphi \simeq \varphi^{\sigma}$. It follows that for every $M\in SU_3(q^\frac{1}{2})$, we have $\sigma(\tr(M))=\tr(M)$. By Lemma \ref{field}, we have that $\F_q=\F_q^{\Gal(\F_q/\F_{q'})}$ and so $q'=q$. By Proposition \ref{isomorphisme}, $G(\lambda)$ preserves no bilinear form so $G(\lambda)=SU_{n_\lambda}(q^\frac{1}{2})$.
\item If $\lambda=(\lambda_1',\lambda_2')\neq \lambda'$, then by Proposition \ref{isomorphisme} and Lemma \ref{Ngwenya}, up to conjugation, we have that $G(\lambda)\subset SL_{n_\lambda}(q^\frac{1}{2})$. The group $G(\lambda)$ contains either a natural $SL_3(q^\frac{1}{2})$ or a group of the form $\{\diag( M,\epsilon(M),I_{n_\lambda-6}), M\in SL_3(q)\}$.
 
 If $G(\lambda)$ contains a natural $SL_3(q^\frac{1}{2})$ then by Lemma \ref{field}, we have $q'=q^\frac{1}{2}$. We know by Proposition \ref{isomorphisme} that $G(\lambda)$ preserves no symmetric or skew-symmetric bilinear form . If we had $G(\lambda) \subset SU_{n_\lambda}(q^\frac{1}{4})$, then the natural $SL_3(q^\frac{1}{2})$ in $G(\lambda)$ would inject itself in some $SU_3(q^\frac{1}{4})$.  This is absurd because of their orders so we have $G(\lambda)=SL_{n_\lambda}(q^\frac{1}{2})$.
 
 If $G$ contains up to conjugation a group of the form  $\{\diag(M,\epsilon(M),I_{n_\lambda-6}), M\in SL_3(q)\}$ then it contains $\{\diag(M,M ,I_{n_\lambda-6}), M\in SL_3(q^{\frac{1}{2}})\}$.  Let $\varphi$ be the natural representation of $SL_3(q^\frac{1}{2})$ in $GL_3(\overline{\F_p})$ and $\rho$ the diagonal representation of $SL_3(q^\frac{1}{2})$ in $GL_{n_\lambda}(\overline{\F_p})$ given by the above subgroup of $G(\lambda)$. We then have $\rho \simeq \varphi \oplus \varphi \oplus \mathbf{1}^{n_\lambda-6}$. Let $\sigma$ be a generator of $\Gal(\F_{q^\frac{1}{2}}/\F_{q'})$. We have $\rho \simeq \rho^\sigma$ so $\varphi\simeq \varphi^\sigma$ and by Lemma \ref{field}, we have that $\F_{q^\frac{1}{2}}=\F_{q^\frac{1}{2}}^{\Gal(\F_{q^\frac{1}{2}}/\F_{q'})}$ so $q'=q^\frac{1}{2}$. We cannot have $G(\lambda)=SU_{n_\lambda}(q^\frac{1}{4})$ because $SL_3(q)$ would inject itself in $SU_6(q^\frac{1}{4})$ and we know that $\frac{\vert SU_{6}(q^\frac{1}{4})\vert}{q^\frac{15}{4}} < \frac{\vert SL_3(q)\vert}{q^3}$. By Proposition \ref{isomorphisme}, $G(\lambda)$ cannot preserve any symmetric or skew-symmetric bilinear form so $G(\lambda)=SL_{n_\lambda}(q^\frac{1}{2})$.
 
\item Case $4$ is analogous to Case $3$.
     
\item If $\lambda=\lambda'=(\lambda_2,\lambda_1)$, then by Proposition \ref{isomorphisme} and Proposition \ref{coolprop}, $G(\lambda)$ preserves a bilinear form of the type given by Proposition \ref{bilin} defined over $\F_{q^\frac{1}{2}}$. This shows that $q'\leq q^\frac{1}{2}$ and it is enough to show that $q'=q^\frac{1}{2}$ to conclude the proof.

If $\lambda_1$ and $\lambda_2$ are square partitions then $G(\lambda)$ contains up to conjugation the group\break $\{\diag(M,{}^t\!(M^{-1}),I_{n_\lambda-6}), M\in SU_3(q^{\frac{1}{2}})\}$.
    
Let $\varphi$ be the natural representation of $SU_3(q^\frac{1}{2})$ in $GL_3(\overline{\F_p})$, and $\rho$ the twisted diagonal representation of $SU_3(q^\frac{1}{2})$ in $GL_{n_\lambda}(\overline{\F_p})$ given by the above subgroup of $G(\lambda)$. We have $\rho\simeq \varphi \oplus \varphi^\star \oplus \mathbf{1}^{n_\lambda-6}$. Let $\sigma$ be a generator of $\Gal(\F_{q^\frac{1}{2}}/\F_{q'})$. Since $G(\lambda)$ is a classical group over $\F_{q'}$, we have $\rho\simeq \rho^\sigma$. It follows that $\varphi \simeq \varphi^{\sigma}$ or $\varphi\simeq (\varphi^\star)^\sigma$. The first possibility implies that $\F_{q^\frac{1}{2}}=\F_{q^\frac{1}{2}}^{\Gal(\F_{q^\frac{1}{2}}/\F_{q'})}$, so $q'=q^\frac{1}{2}$. The second possibility implies that $\varphi \simeq \varphi^{\sigma^2}$, so $q'=q^\frac{1}{2}$ or $\sigma$ is of order $2$ and $SU_3(q^\frac{1}{2})$ injects into $SU_3(q^\frac{1}{4})$, which is a contradiction. In both cases, we have $q'=q^\frac{1}{2}$ and the desired result follows.

If $\lambda_1$ or $\lambda_2$ is not a square partition, then $G(\lambda)$ contains up to conjugation the group \break$\{\diag(M, {}^t\!(M^{-1}),\epsilon(M),{}^t\!\epsilon(M^{-1}),I_{n_\lambda-12}), M\in SL_3(q)\}$, and so contains its subgroup \break$\{\diag(M,{}^t\!(M^{-1}),M ,{}^t\!(M^{-1}),I_{n_\lambda-12}), M\in SL_3(q^\frac{1}{2})\}$.  Let $\varphi$ be the natural representation of $SL_3(q^\frac{1}{2})$ in $GL_3(\overline{\F_p})$ and $\rho$ be the representation of $SL_3(q^\frac{1}{2})$ in $GL_{n_\lambda}(\overline{\F_p})$ given by the above subgroup $G(\lambda)$. We have $\rho\simeq \varphi \oplus \varphi \oplus \varphi^\star\oplus \varphi^\star \oplus \mathbf{1}^{n_\lambda-6}$ . Let $\sigma$ be a generator of $\Gal(\F_{q^\frac{1}{2}}/\F_{q'})$. Since $G(\lambda)$ is a classical group defined over $\F_{q'}$, we have that $\rho\simeq \rho^\sigma$. It follows that $\varphi \simeq \varphi^{\sigma}$ or $\varphi\simeq (\varphi^\star)^\sigma$. By the same arguments as before, we have $q'=q^\frac{1}{2}$ or $SL_3(q^\frac{1}{2})$ injects itself in $SU_3(q^\frac{1}{4})$, which is not possible. This proves $q'=q^\frac{1}{2}$ and concludes the case $\F_q=\F_p(\alpha,\beta)=\F_p(\alpha,\beta+\beta^{-1})\neq \F_p(\alpha+\alpha^{-1},\beta).$ 
\end{enumerate}
 
 If $\F_q=\F_p(\alpha,\beta)=\F_p(\alpha+\alpha^{-1},\beta)\neq \F_p(\alpha,\beta+\beta^{-1})$, then all the arguments are the same up to permutation of the different cases.
\end{proof}

\section{Type D}

In this section, we determine the image of the Artin group of type $D$ inside its asssociated finite Iwahori-Hecke algebra. In the first two subsections, in an analogous way to what we did in type $B$, we establish preliminary results which permit us to state Theorems \ref{result4} and \ref{result5} which are the main theorems of this section. In the last two subsections, we complete the proofs of these theorems.

\subsection{Definition of the model}

Let $n\geq 4$, $p$ a prime different from $2$, $\alpha\in \overline{\F_p}$ of order greater than $2n$. We set in this section $\F_q=\F_p(\alpha)$. As in \cite{G-P}, we take for the Iwahori-Hecke algebra of type $D$ $\mathcal{H}_{D_n,\alpha}$ the sub-algebra of $\mathcal{H}_{B_n,\alpha,1}$ generated by $U=TS_1T,S_1,\dots S_{n-1}$. In this case $\sigma: V_{\lambda_1,\lambda_2} \rightarrow V_{\lambda_2,\lambda_1}, (\T_1,\T_2)\mapsto (\T_2,\T_1)$ is an isomorphism of $\mathcal{H}_{D_n,\alpha}$-modules. It is a standard fact (see \cite{phD} for a proof) that under those conditions on $\alpha$, the simple modules of $\mathcal{H}_{D_n,\alpha}$ are indexed by double-partitions $(\lambda_1,\lambda_2), \lambda_1>\lambda_2$ and the modules $V_{\lambda,\lambda}$ split into two irreducible sub-modules $V_{\lambda,\lambda,+}$ and $V_{\lambda,\lambda,-}$ of the same dimension. The branching rule for type $D$ is more complicated so we recall it in the following proposition. (a proof in a more general setting can be found in \cite{Branch}):

\begin{prop}\label{branch}
Let $n\geq 5$ and $(\lambda,\mu)\vdash\!\vdash n, \lambda>\mu$. We then have:
\begin{enumerate}
\item If $n_\lambda > n_\mu+1$, then $V_{\lambda,\mu|\mathcal{H}_{D_{n-1},\alpha}}=\underset{(\tilde{\lambda},\tilde{\mu})\subset (\lambda,\mu)}\bigoplus V_{\tilde{\lambda},\tilde{\mu}}$.
\item If $n_\lambda=n_\mu+1$ and $\mu \not\subset \lambda$, then
$$V_{\lambda,\mu|\mathcal{H}_{D_{n-1},\alpha}}=(\underset{\tilde{\mu}\subset \mu}\bigoplus V_{\lambda,\tilde{\mu}}) \oplus (\underset{\tilde{\lambda}> \mu}{\underset{\tilde{\lambda}\subset \lambda}\bigoplus}V_{\tilde{\lambda},\mu})\oplus( \underset{\tilde{\lambda}< \mu}{\underset{\tilde{\lambda}\subset \lambda}\bigoplus}V_{\mu,\tilde{\lambda}}).$$
\item If $n_\lambda=n_\mu+1$ and $\mu \subset \lambda$, then
$$V_{\lambda,\mu|\mathcal{H}_{D_{n-1},\alpha}}=(\underset{\tilde{\mu}\subset \mu}\bigoplus  V_{\lambda,\tilde{\mu}}) \oplus (\underset{\tilde{\lambda}> \mu}{\underset{\tilde{\lambda}\subset \lambda}\bigoplus}V_{\tilde{\lambda},\mu})\oplus (\underset{\tilde{\lambda}< \mu}{\underset{\tilde{\lambda}\subset \lambda}\bigoplus}V_{\mu,\tilde{\lambda}})\oplus V_{\mu,\mu,+}\oplus V_{\mu,\mu,-}.$$
\item If $n_\lambda=n_\mu$ and $\lambda>\mu$, then $V_{\lambda,\mu|\mathcal{H}_{D_{n-1},\alpha}}=(\underset{\tilde{\mu}\subset \mu}\bigoplus V_{\lambda,\tilde{\mu}})\oplus (\underset{\tilde{\lambda}\subset \lambda}\bigoplus V_{\mu,\tilde{\lambda}}).$
\item If $\lambda=\mu$, then $V_{\lambda,\lambda,+|\mathcal{H}_{D_{n-1},\alpha}}=V_{\lambda,\lambda,-|\mathcal{H}_{D_{n-1},\alpha}}=\underset{\tilde{\mu} \subset \mu}\bigoplus V_{\lambda,\tilde{\mu}}.$
\end{enumerate}
\end{prop}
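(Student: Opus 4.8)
The plan is to deduce the type $D$ branching rule from the well-understood type $B$ branching rule via the inclusion $\mathcal{H}_{D_n,\alpha}\subset \mathcal{H}_{B_n,\alpha,1}$. First I would recall that the restriction of a simple $\mathcal{H}_{B_n,\alpha,1}$-module $V_{\nu,\rho}$ to $\mathcal{H}_{B_{n-1},\alpha,1}$ is the multiplicity-free sum $\bigoplus_{(\tilde\nu,\tilde\rho)\subset(\nu,\rho)}V_{\tilde\nu,\tilde\rho}$, where $(\tilde\nu,\tilde\rho)\subset(\nu,\rho)$ means one box is removed from one of the two components. The key structural facts I would use are: (i) for $\lambda\neq\mu$, $V_{\lambda,\mu}$ remains simple on restriction to $\mathcal{H}_{D_n,\alpha}$ and $V_{\lambda,\mu}\cong V_{\mu,\lambda}$ as $\mathcal{H}_{D_n,\alpha}$-modules via $\sigma$, so the type $D$ simple modules are indexed by unordered pairs; (ii) for $\lambda=\mu$, $V_{\lambda,\lambda}=V_{\lambda,\lambda,+}\oplus V_{\lambda,\lambda,-}$ with the two summands swapped by an outer automorphism, and they have equal dimension; (iii) $\mathcal{H}_{D_{n-1},\alpha}\subset\mathcal{H}_{D_n,\alpha}\cap\mathcal{H}_{B_{n-1},\alpha,1}$, so restriction of scalars is compatible along the square.

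The main computation is then bookkeeping with $\sigma$-orbits. Given $V_{\lambda,\mu}$ with $\lambda>\mu$, restrict it to $\mathcal{H}_{B_{n-1},\alpha,1}$ to get $\bigoplus_{\tilde\mu\subset\mu}V_{\lambda,\tilde\mu}\oplus\bigoplus_{\tilde\lambda\subset\lambda}V_{\tilde\lambda,\mu}$, then further restrict to $\mathcal{H}_{D_{n-1},\alpha}$ and group the $\mathcal{H}_{B_{n-1},\alpha,1}$-constituents into $\sigma$-orbits: a pair $\{V_{a,b},V_{b,a}\}$ with $a\neq b$ glues to one simple $\mathcal{H}_{D_{n-1},\alpha}$-module $V_{\max(a,b),\min(a,b)}$, while a single $V_{c,c}$ splits as $V_{c,c,+}\oplus V_{c,c,-}$. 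I would then case-split exactly as in the statement. In case (1), $n_\lambda>n_\mu+1$ forces every $\tilde\lambda\subset\lambda$ to still be strictly larger than $\mu$ and every $\tilde\mu\subset\mu$ strictly smaller, so no two of the $\mathcal{H}_{B_{n-1},\alpha,1}$-constituents are $\sigma$-related and no diagonal pair $(c,c)$ occurs; the restriction is just the obvious sum. In cases (2) and (3), $n_\lambda=n_\mu+1$: now removing a box from $\lambda$ can produce $\tilde\lambda$ with $\tilde\lambda<\mu$ (these get paired, under $\sigma$, with $V_{\mu,\tilde\lambda}$ coming from removing a box from $\mu$ — but here one has to be careful that $V_{\mu,\tilde\lambda}$ as an $\mathcal{H}_{B_{n-1}}$-constituent of $V_{\lambda,\mu}$ is the one coming from the $\mu$-component), and if moreover $\mu\subset\lambda$ then $\tilde\lambda=\mu$ is possible, giving a diagonal constituent $V_{\mu,\mu}$ that splits — this is precisely the extra $V_{\mu,\mu,+}\oplus V_{\mu,\mu,-}$ in (3). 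In case (4), $n_\lambda=n_\mu$ and $\lambda>\mu$: removing a box from either component drops the size by one, so all constituents $V_{\lambda,\tilde\mu}$ and $V_{\mu,\tilde\lambda}$ have distinct component sizes and none is diagonal, giving the stated sum. In case (5), $\lambda=\mu$: $\mathrm{Res}^{\mathcal{H}_{D_n}}_{\mathcal{H}_{D_{n-1}}}$ of each of $V_{\lambda,\lambda,\pm}$ must be computed; since $V_{\lambda,\lambda,+}\oplus V_{\lambda,\lambda,-}$ restricts (via the $B$-picture) to $\bigoplus_{\tilde\mu\subset\lambda}(V_{\lambda,\tilde\mu}\oplus V_{\tilde\mu,\lambda})$ which $\sigma$-collapses to $\bigoplus_{\tilde\mu\subset\lambda}V_{\lambda,\tilde\mu}$ (with $\lambda>\tilde\mu$), and the outer automorphism exchanging $\pm$ must fix this restriction by a symmetry/dimension-count argument, each of $V_{\lambda,\lambda,\pm}$ restricts to exactly $\bigoplus_{\tilde\mu\subset\lambda}V_{\lambda,\tilde\mu}$.

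The hard part will be the splitting bookkeeping in case (5) and the placement of the split summand in case (3): one must argue that the automorphism swapping $V_{\lambda,\lambda,+}$ and $V_{\lambda,\lambda,-}$ acts compatibly with the $\mathcal{H}_{D_{n-1},\alpha}$-action so that it cannot mix the $\mathcal{H}_{D_{n-1}}$-isotypic components in an unbalanced way — the cleanest route is to invoke that the two summands have equal dimension and that $\mathrm{Res}(V_{\lambda,\lambda,+})\oplus\mathrm{Res}(V_{\lambda,\lambda,-})$ is the $\sigma$-collapse computed above, which is multiplicity-free as a sum of $V_{\lambda,\tilde\mu}$'s, hence forces each $\mathrm{Res}(V_{\lambda,\lambda,\pm})$ to be the full multiplicity-free sum. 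Similarly, in case (3) one checks that the diagonal constituent $V_{\mu,\mu}$ appearing in $\mathrm{Res}_{\mathcal{H}_{B_{n-1}}}V_{\lambda,\mu}$ genuinely survives to $\mathcal{H}_{D_{n-1},\alpha}$ as a split pair rather than being absorbed into a $\sigma$-orbit with another constituent, which follows because it is the unique diagonal constituent and $\sigma$ has no other constituent to pair it with. I would also remark that, for completeness, one should verify these identities directly on Hoefsmit matrices for the smallest open case to anchor the induction, but the general argument is the orbit-counting above; alternatively one simply cites \cite{Branch} for the general statement and only spells out the translation to the present normalization.
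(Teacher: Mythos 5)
Your proposal supplies an actual argument, whereas the paper gives none: it simply states the proposition and refers the reader to \cite{Branch} for a proof in a more general setting. Your route — restrict along the compatible square $\mathcal{H}_{D_{n-1},\alpha}\subset\mathcal{H}_{B_{n-1},\alpha,1}\subset\mathcal{H}_{B_n,\alpha,1}$, use the known multiplicity-free type~$B$ branching, and then organize the $\mathcal{H}_{B_{n-1}}$-constituents into $\sigma$-orbits — is the standard Clifford-theoretic derivation and the bookkeeping you carry out is correct. The disjointness checks you make (no two constituents $\sigma$-related, diagonal constituents only when $\mu\subset\lambda$ with $n_\lambda=n_\mu+1$, or when $\lambda=\mu$) do match the five cases of the statement, so the orbit count closes.

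The one spot you flag as ``the hard part'' does need a slightly sharper argument than a bare dimension count. In case (5) you get, as $\mathcal{H}_{D_{n-1}}$-modules, $\mathrm{Res}(V_{\lambda,\lambda,+})\oplus\mathrm{Res}(V_{\lambda,\lambda,-})\cong\bigoplus_{\tilde\mu\subset\lambda}V_{\lambda,\tilde\mu}^{\oplus 2}$, and you must rule out, say, one summand absorbing both copies of some $V_{\lambda,\tilde\mu}$ and none of another. Equal dimension alone does not rule that out when the $V_{\lambda,\tilde\mu}$ have varying dimensions. The clean fix is the automorphism argument you gesture at: with $\beta=1$ one has $T^2=1$, and conjugation by $T$ gives an automorphism $\theta$ of $\mathcal{H}_{D_n,\alpha}$ (and of $\mathcal{H}_{D_{n-1},\alpha}$) that swaps $V_{\nu,\nu,+}\leftrightarrow V_{\nu,\nu,-}$ and fixes each $V_{\nu,\rho}$ with $\nu\neq\rho$ up to isomorphism. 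Hence $\mathrm{Res}(V_{\lambda,\lambda,-})\cong\mathrm{Res}(V_{\lambda,\lambda,+})^\theta\cong\mathrm{Res}(V_{\lambda,\lambda,+})$, since all constituents have $\lambda\neq\tilde\mu$ and are therefore $\theta$-stable; comparing multiplicities on both sides of the displayed isomorphism then forces each side to be the full multiplicity-free sum. With that detail spelled out your proof is complete; the paper's choice to cite \cite{Branch} is more economical but proves nothing in situ.
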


We keep the same weight on double-tableaux as for type B. Let $\lambda=(\lambda_1,\lambda_2)\vdash\!\vdash n$ and $\T=(\T_1,\T_2)\in \lambda$. We define $\varphi(\T)$ to be $\T'$ if $\mu'>\lambda'$ and $\sigma(\T')$ otherwise. We define a new $\tilde{\nu}(\lambda)$ to be $\nu(\lambda_1)\nu(\lambda_2)(-1)^{n_{\lambda_1}(n-n_{\lambda_1})}$ if $\lambda_2' \geq \lambda_1'$ and $\tilde{\nu}(\lambda)=\nu(\lambda_1)\nu(\lambda_2)$ otherwise. We define the bilinear form $(\T|\tilde{\T})=\omega(\T)\delta_{\varphi(\T),\tilde{\T}}$.

\begin{prop}\label{bilin2}
For any pair of standard double-tableaux $(\T, \tilde{\T})$, we have the following properties.
\begin{enumerate}
\item $(S_i.\T|S_i.\tilde{\T}) = (-\alpha)(\T|\tilde{\T})$ and $(U.\T|U.\tilde{\T}) = (-\alpha)(\T|\tilde{\T})$.

\item
For all $d\in \mathcal{D}_n$, we have that $(d.\T|d.\tilde{\T}) = (\T|\tilde{\T})$.

Those relations stay true if we substitute one or two of the standard double-tableaux by the elements $\sigma(\T)-\T$ and $\sigma(\T)+\T$, which form bases for $V_{\lambda,+}$ and $V_{\lambda,-}$ for double-partitions $\lambda$ of the form $\lambda=(\lambda_1,\lambda_1)$.
\item
The restriction of $(.,.)$ to $V_\lambda$ if $\lambda =\varphi(\lambda)\neq (\lambda_2,\lambda_1)$ and to $V_\lambda\oplus V_{\varphi(\lambda)}$ if $\lambda\notin \{\varphi(\lambda),(\lambda_2,\lambda_1)\}$ is non-degenerate with $\varphi(\lambda)=\lambda'$ if $n_\lambda=n_\mu$ and $\mu'> \lambda'$, and $\varphi(\lambda)=(\lambda_1',\lambda_2')$ otherwise.

If $\lambda=\varphi(\lambda)\neq (\lambda_2,\lambda_1)$ then $(.,.)$ is symmetric on $V_\lambda$ if $\tilde{\nu}(\lambda)=1$ and skew-symmetric otherwise.

Moreover, its Witt index is positive.
\item If $n\equiv 0 ~(\bmod 4)$ and $\lambda=(\lambda_1,\lambda_1)$, then the restriction of $(.,.)$ to $V_{\lambda,+}$ and $V_{\lambda,-}$ if $\lambda=\lambda'$ and to $V_{\lambda,+}\oplus V_{\lambda,-}$ if $\lambda\neq \lambda'$, is non-degenerate. 

If $\lambda=\lambda'$ then $(.,.)$ is symmetric on $V_{\lambda,+}$ and $V_{\lambda,-}$ if $\tilde{\mu}(\lambda)=1$ and skew-symmetric otherwise. 
Moreover, its Witt index is positive.

\item If $n\equiv 2 ~(\bmod 4)$ and $\lambda=(\lambda_1,\lambda_1)$ then the restriction of $(.,.)$ to $V_{\lambda,+}\oplus V_{\lambda',-}$ is non-degenerate.
\end{enumerate}
\end{prop}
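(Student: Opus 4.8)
The plan is to mimic the proof of Proposition~\ref{bilin} for type $B$, transporting the computations through the inclusion $\mathcal{H}_{D_n,\alpha}\subset \mathcal{H}_{B_n,\alpha,1}$ and keeping careful track of the isomorphism $\sigma$ and the involution $\varphi$. Parts (1) and (2): since the generators $S_i$ of $\mathcal{H}_{D_n,\alpha}$ are literally generators of $\mathcal{H}_{B_n,\alpha,1}$, the identity $(S_i.\T|S_i.\tilde\T)=(-\alpha)(\T|\tilde\T)$ follows verbatim from the corresponding case of Proposition~\ref{bilin}, once one checks that replacing the type-$B$ pairing $\delta_{\T',\tilde\T}$ by $\delta_{\varphi(\T),\tilde\T}$ changes nothing in those local $2\times 2$ computations (the only point is that $\varphi$ still sends $i$ from one component to the other exactly when $\T'$ does, and commutes with $i\leftrightarrow i+1$, so the same case analysis $\tau_\T(i)=\tau_\T(i+1)$ versus $\tau_\T(i)\neq\tau_\T(i+1)$ applies). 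For $U=TS_1T$ one writes $U.\T$ explicitly using Theorem~\ref{mod} with $\beta=1$; I expect $(U.\T|U.\tilde\T)=(-\alpha)(\T|\tilde\T)$ to reduce, after conjugating by $T$, to the $S_1$-computation again, since $T$ acts by $\pm 1$ (as $\beta=1$) hence is an isometry up to the scalar $-\beta=-1$. Statement (2) for $d\in\mathcal{D}_n$ then follows because $\mathcal{D}_n$ is generated by the $S_i$ and $U$ and each acts as an isometry (the scalars $-\alpha$ cancel in the derived subgroup / on the length-zero elements). The remark that the relations persist for $\sigma(\T)\pm\T$ is immediate from bilinearity once one knows how $(\;|\;)$ pairs $\sigma(\T)$ with $\sigma(\tilde\T)$ and with $\tilde\T$, which is controlled by the identity $\varphi\circ\sigma=\sigma\circ\varphi$ (or the relevant compatibility) and by $\omega(\sigma(\T))$ versus $\omega(\T)$.

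For part (3) the key computational input is again the formula $\omega(\T)\omega(\T')=\tilde\nu(\lambda)$ proved after Proposition~\ref{bilin}, now combined with the behaviour of $\omega$ under $\sigma$. One distinguishes the two branches in the definition of $\varphi$: when $n_\lambda=n_\mu$ and $\mu'>\lambda'$ we have $\varphi(\T)=\T'$ and the analysis is essentially identical to type $B$ (non-degeneracy because $\varphi$ is an involution pairing basis vectors, symmetry/skew-symmetry governed by the new $\tilde\nu(\lambda)$, positive Witt index from the pairing into $(\T,\varphi(\T))$ blocks); when $\varphi(\lambda)=(\lambda_1',\lambda_2')$ one instead needs the value of $\omega(\T)\omega(\varphi(\T))$, which I would compute by factoring $\varphi=\,'\circ\,(\text{component-wise transpose})$ appropriately and using $\nu(\lambda_i)$, the sign $(-1)^{n_{\lambda_1}(n-n_{\lambda_1})}$, and the count of cross-pairs exactly as in the proof of Proposition~\ref{bilin}(3). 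The symmetry statement $(\tilde\T|\T)=\tilde\nu(\lambda)(\T|\tilde\T)$ then comes out of the same two-line manipulation $(\tilde\T|\T)=\omega(\tilde\T)\delta_{\T,\varphi(\tilde\T)}=\tilde\nu(\lambda)\omega(\varphi(\tilde\T))\delta_{\T,\varphi(\tilde\T)}=\tilde\nu(\lambda)(\T|\tilde\T)$, using $\varphi$ an involution and $\omega(\T)\omega(\varphi(\T))=\tilde\nu(\lambda)$.

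For parts (4) and (5) one restricts to $\lambda=(\lambda_1,\lambda_1)$ and works in the bases $\{\sigma(\T)-\T\}$ of $V_{\lambda,+}$ and $\{\sigma(\T)+\T\}$ of $V_{\lambda,-}$. The essential computation is to evaluate $(\sigma(\T)\pm\T\,|\,\sigma(\tilde\T)\pm\tilde\T)$; expanding, this is a sum of four terms of the form $(\sigma(\T)|\sigma(\tilde\T))$, $(\sigma(\T)|\tilde\T)$, $(\T|\sigma(\tilde\T))$, $(\T|\tilde\T)$, each of which is $\omega(\cdot)$ times a Kronecker delta testing $\varphi$. Whether the cross-terms survive or cancel depends on the parity of $n/2$, i.e.\ on $n\bmod 4$ — this is exactly why the statement splits into the $n\equiv0$ and $n\equiv2$ cases and why in the latter the form pairs $V_{\lambda,+}$ with $V_{\lambda',-}$ rather than with itself. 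I would pin this down by computing the sign $\omega(\sigma(\T))/\omega(\T)$: $\sigma$ is the product of $(n/2)^2$ transpositions of values across components, so $\omega(\sigma(\T))=(-1)^{(n/2)^2}\omega(\T)=(-1)^{n/2}\omega(\T)$, and also track how $\varphi$ interacts with $\sigma$. Non-degeneracy, the symmetry/skew dichotomy via $\tilde\mu(\lambda)$, and positivity of the Witt index then follow by the same block-pairing argument as in (3), now on $V_{\lambda,\pm}$.

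The main obstacle I anticipate is bookkeeping of signs in parts (4) and (5): correctly computing $\omega(\sigma(\T))$, $\omega(\varphi(\T))$ and their interplay, and thereby justifying precisely the $n\bmod 4$ split and the asymmetric pairing $V_{\lambda,+}\oplus V_{\lambda',-}$ in (5). Everything else is a faithful transcription of the type-$B$ argument (Proposition~\ref{bilin} and the remark following it) through the algebra inclusion, specialising $\beta=1$; the genuinely new content is isolating the combinatorial sign $(-1)^{(n/2)^2}$ and checking it forces the stated case distinction.
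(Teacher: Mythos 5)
Your plan matches the paper's proof in structure and content: the same reduction of parts (1)--(2) to Proposition~\ref{bilin}, the same identity $\omega(\T)\omega(\varphi(\T))=\tilde\nu(\lambda)$ driving part (3), and the same sign $\omega(\sigma(\T))=(-1)^{(n/2)^2}\omega(\T)$ (equivalently $(-1)^{n/2}$) producing the $n\bmod 4$ dichotomy in parts (4) and (5) via the four-term expansion of $(\T\pm\sigma(\T)\,|\,\tilde\T\pm\sigma(\tilde\T))$. One detail to repair in your justification of (1)--(2): when $\varphi(\T)=\sigma(\T')$, $\varphi$ does \emph{not} move $i$ between components (unlike $\T'$, which always does), so the claim that ``$\varphi$ sends $i$ across exactly when $\T'$ does'' fails in that branch; what actually makes the $2\times 2$ computations from Proposition~\ref{bilin} carry over is the identity $m_i(\sigma(\T))=m_i(\T)$ --- since $\beta=1$, the contents $ct(\sigma\T:j)=-ct(\T:j)$ each pick up a sign under $\sigma$, so the ratio in the definition of $m_i$ is unchanged --- which is exactly the observation the paper makes.
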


\begin{proof}

For $1.$ and $2.$, the proof is exactly the same as for Proposition \ref{bilin} by noting $m_i(\sigma(\T)) =m_i(\T)$. The extension to elements of the bases of $V_{\lambda,+}$ and $V_{\lambda,-}$ follows from the bilinearity of $(.,.)$.

For $3.$, the proof also remains the same because $\tilde{\nu}(\lambda)=\omega(\T)\omega(\varphi(\T))$. This is true because when $\varphi(\T)=\T'$, $\tilde{\nu}(\lambda)$ does not change from the one in type $B$ and when $\varphi(\T)=\sigma(\T')$, $\tilde{\nu}(\lambda)$ is multiplied by $(-1)^{n_{\lambda_1}(n-n_{\lambda_1})}=\omega(\T)\omega(\sigma(\T))$.

$4.$ We assume $n \equiv 0 ~(\bmod 4)$. If $\lambda=(\lambda_1,\lambda_1)\vdash\!\vdash n$ and $\T\in \lambda$ then $\omega(\sigma(\T))=(-1)^{n_{\lambda_1}(n-n_{\lambda_1)})}\omega(\T)=(-1)^{(\frac{n}{2})^2}\omega(\T)=\omega(\T).$ 

For any standard double-tableaux $\T,\tilde{\T}$, we have that $(\T|\tilde{\T})=\omega(\T)\delta_{\T,\varphi(\T)}$. Since $\lambda=(\lambda_1,\lambda_1)$, we have $\varphi(\lambda)=\lambda'$ and for all $\T\in \lambda$ and all $\T\in \lambda'$, we have $\varphi(\T)=\T'$.

Let $\lambda=(\lambda_1,\lambda_1)$ and $\tilde{\lambda}=(\tilde{\lambda}_1,\tilde{\lambda}_1)$ be double-partitions of $n$. If $\T\in \lambda$ and $\tilde{\T}\in \tilde{\lambda}$ then we have :

\begin{eqnarray*}
(\T+\sigma(\T)|\tilde{\T}+\sigma(\tilde{\T})) & = & (\T|\tilde{\T}) + (\T|\sigma(\tilde{\T}))+ (\sigma(\T)|\tilde{\T})) + (\sigma(\T)|\sigma(\tilde{\T}))  \\
 & = & \omega(\T)(\delta_{\T,\tilde{\T}'}+\delta_{\T,\sigma(\tilde{\T})'})+\omega(\sigma(\T))(\delta_{\sigma(\T),\tilde{\T}'}+\delta_{\sigma(\T),\sigma(\tilde{\T})'})\\
 & = & (\delta_{\T,\tilde{\T}'}+\delta_{\T,\sigma(\tilde{\T})'})(\omega(\T)+\omega(\sigma(\T))\\
 & = & \delta_{\T+\sigma(\T),\tilde{\T}'+\sigma(\tilde{\T}')}(\omega(\T)+\omega(\sigma(\T)))\\
 & = & 2\omega(\T)\delta_{\T'+\sigma(\T)',\tilde{\T}+\sigma(\tilde{\T})}.
\end{eqnarray*}
In the same way, we have that $(\T+\sigma(\T)|\tilde{\T}-\sigma(\tilde{\T}))=(\T-\sigma(\T)|\tilde{\T}+\sigma(\tilde{\T}))=0$ and\\ $(\T-\sigma(\T)|\tilde{\T}-\sigma(\tilde{\T}))=2\omega(\T)\delta_{\T'-\sigma(\T)',\tilde{\T}-\sigma(\tilde{\T})}$. The result follows.

$5.$ Assume $n\equiv 2 ~(\bmod ~ 4)$. If $\lambda=(\lambda_1,\lambda_1)\vdash\!\vdash n$ and $\T\in \lambda$, then $\omega(\sigma(\T))=(-1)^{n_{\lambda_1}(n-n_{\lambda_1)})}\omega(\T)=(-1)^{(\frac{n}{2})^2}\omega(\T)=-\omega(\T)$. It follows that if $\tilde{\lambda}=(\tilde{\lambda}_1,\tilde{\lambda}_1)\vdash\!\vdash n$ and $\tilde{\T}\in \tilde{\lambda}$, then $(\T+\sigma(\T)|\tilde{\T}+\sigma(\tilde{\T}))=(\T-\sigma(\T)|\tilde{\T}-\sigma(\tilde{\T}))=0$, $(\T+\sigma(\T)|\tilde{\T}-\sigma(\tilde{\T}))=2\omega(\T)\delta_{\T'-\sigma(\T)',\tilde{\T}-\sigma(\tilde{\T})}$ and $(\T-\sigma(\T)|\tilde{\T}+\sigma(\tilde{\T}))=2\omega(\T)\delta_{\T'+\sigma(\T)',\tilde{\T}+\sigma(\tilde{\T})}$. The result follows. 
\end{proof}

\subsection{Factorization of the image of the Artin group inside the finite Hecke algebra}

In this subsection, we find the different factorizations between the irreducible representations of $\mathcal{A}_{D_n}$. Most of the factorization results are summarized in Proposition \ref{isomorphisme2}. We then state the main results for type $D$ in Theorems \ref{result4} and \ref{result5}.

We define the linear map $\mathcal{L}$ from $V$ to $V$ which sends $\T$ to $\mathcal{L}(\T)=\omega(\T)\varphi(\T)$.

\begin{prop}\label{transpose2}
Let $r\in [\![1,n-1]\!]$ and $\T$ a standard double-tableau, we then have
$$\mathcal{L}S_r\mathcal{L}^{-1}(\T)=(-\alpha){}^t\!(S_r^{-1})(\T), \mathcal{L}U \mathcal{L}^{-1}=(-\alpha){}^t\!(U^{-1})(\T).$$
Let $\lambda=(\lambda_1,\lambda_2)\vdash\!\vdash n$. We have the following propositions.
\begin{enumerate}
\item If $\lambda\notin\{\varphi(\lambda),(\lambda_2,\lambda_1)\}$, then $\mathcal{L}$ stabilizes $V_{\lambda}\oplus V_{\varphi(\lambda)}$ and switches $V_{\lambda}$ and $V_{\varphi(\lambda)}$.
\item If $\lambda=\varphi(\lambda)\neq (\lambda_2,\lambda_1)$, then $\mathcal{L}$ stabilizes $V_\lambda$.
\item If $n \equiv 0~ (\bmod 4)$ and $\lambda=(\lambda_1,\lambda_1)\neq (\lambda_1',\lambda_1')$, then $\mathcal{L}$ stabilizes $V_{\lambda,+}\oplus V_{\lambda',+}$ (resp $V_{\lambda,-} \oplus V_{\lambda',-}$) and switches $V_{\lambda,+}$ and $V_{\lambda',+}$ (resp $V_{\lambda,-}$ and $V_{\lambda',-}$).
\item If $n \equiv 0~ (\bmod4)$ and $\lambda=(\lambda_1,\lambda_1)=(\lambda_1',\lambda_1')$, then $\mathcal{L}$ stabilizes $V_{\lambda,+}$ and $V_{\lambda,-}$.
\item If $n\equiv 2~ (\bmod4)$ and $\lambda=(\lambda_1,\lambda_1)$, then $\mathcal{L}$ stabilizes $V_{\lambda,+}\oplus V_{\lambda',-}$ and switches $V_{\lambda,+}$ and $V_{\lambda',-}$.
\end{enumerate}
\end{prop}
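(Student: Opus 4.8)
The plan is to mimic the proof of Proposition \ref{transpose} in type $B$, using the bilinear form and the map $\mathcal{L}$ introduced just before, and then to organize the verification according to which of the five structural cases the double-partition $\lambda$ falls into. First I would establish the two conjugation identities $\mathcal{L}S_r\mathcal{L}^{-1} = (-\alpha)\,{}^t\!(S_r^{-1})$ and $\mathcal{L}U\mathcal{L}^{-1} = (-\alpha)\,{}^t\!(U^{-1})$. This is a formal consequence of Proposition \ref{bilin2}(1)--(2): writing $B$ for the matrix of the bilinear form $(.|.)$ in the standard double-tableau basis, the relation $(S_r.\T|S_r.\tilde\T) = (-\alpha)(\T|\tilde\T)$ says ${}^t\!S_r\, B\, S_r = (-\alpha) B$, i.e. $B\,S_r\,B^{-1} = (-\alpha)\,{}^t\!(S_r^{-1})$; and by construction $\mathcal{L}$ is, up to the diagonal sign rescaling $\T\mapsto\omega(\T)\T$ composed with the permutation $\T\mapsto\varphi(\T)$, exactly the matrix $B$ (since $(\T|\tilde\T)=\omega(\T)\delta_{\varphi(\T),\tilde\T}$ means $B$ has entry $\omega(\T)$ in position $(\T,\varphi(\T))$, so $B$ and $\mathcal{L}$ have the same matrix). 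The same computation with $U$ in place of $S_r$ uses the second identity in Proposition \ref{bilin2}(1). So this part is essentially bookkeeping and I would keep it short, citing Proposition \ref{bilin2} exactly as Proposition \ref{transpose} cites Proposition \ref{bilin}.

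Next I would prove the five stabilization statements. Each follows from tracking where $\mathcal{L}$ sends a standard double-tableau $\T\in\lambda$: by definition $\mathcal{L}(\T) = \omega(\T)\varphi(\T)$, and $\varphi(\T)$ is $\T'$ or $\sigma(\T')$ according to whether $\mu'>\lambda'$, where $\varphi(\T)$ lies in $\varphi(\lambda)$. For (1), when $\lambda\notin\{\varphi(\lambda),(\lambda_2,\lambda_1)\}$, the tableaux of $\lambda$ map into $\varphi(\lambda)$ and vice versa (since $\varphi$ is an involution on diagrams, $\varphi(\varphi(\lambda))=\lambda$), so $\mathcal{L}$ swaps $V_\lambda$ and $V_{\varphi(\lambda)}$ and stabilizes the sum; since $\mathcal{L}$ intertwines the $\mathcal{H}_{D_n,\alpha}$-action with a twisted action (by the conjugation identities just proved), it descends to the Artin/Hecke module structure. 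For (2), when $\lambda=\varphi(\lambda)$, the tableaux of $\lambda$ map back into $\lambda$, so $V_\lambda$ is stable. For (3), (4), (5) — the cases $\lambda=(\lambda_1,\lambda_1)$ where $V_\lambda$ splits — I would use the observation, already made in the proof of Proposition \ref{bilin2}(4)--(5), that $\omega(\sigma(\T)) = (-1)^{(n/2)^2}\omega(\T)$, which equals $+\omega(\T)$ when $n\equiv 0\pmod 4$ and $-\omega(\T)$ when $n\equiv 2\pmod 4$. Combined with the fact that $\sigma$ commutes or anticommutes appropriately with $\varphi$ and that the $\pm$ eigenspaces are spanned by $\T\pm\sigma(\T)$, a direct computation of $\mathcal{L}(\T\pm\sigma(\T)) = \omega(\T)\big(\varphi(\T)\pm\varphi(\sigma(\T))\big)$ shows $\mathcal{L}$ sends $V_{\lambda,\pm}$ into $V_{\varphi(\lambda),\pm}$ when $n\equiv 0\pmod 4$ (giving (3) and (4) depending on whether $\lambda'=\lambda$) and into $V_{\varphi(\lambda),\mp}$ when $n\equiv 2\pmod 4$ (giving (5)), matching exactly the non-degeneracy pairings recorded in Proposition \ref{bilin2}(4)--(5).

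The main obstacle I anticipate is getting the signs and the $\sigma$/$\varphi$ interaction exactly right in cases (3)--(5): one must be careful about the precise definition of $\varphi(\T)$ (it involves $\mu$, the partition such that $\T\in\lambda$, compared via $\mu'$ versus $\lambda'$), about whether $\varphi$ on a self-paired $\lambda=(\lambda_1,\lambda_1)$ acts as transposition or as transposition-then-swap, and about how $\sigma$ (which swaps the two components) interacts with transposition. The cleanest way to handle this is to first record the identity $\varphi\circ\sigma = \sigma\circ\varphi$ on tableaux (both amount to ``transpose and/or swap components'') together with $\omega(\sigma(\T))\omega(\T)^{-1} = (-1)^{n_{\lambda_1}(n-n_{\lambda_1})}$, and then let the $\pm$-eigenspace computation be a two-line consequence. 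I would also note that in cases (3)--(5) the underlying $\mathcal{H}_{D_{n}}$-isomorphism type being tracked is precisely the one dictated by which blocks Proposition \ref{bilin2}(4)--(5) pairs up, so consistency with that proposition is the built-in check that the signs are correct.
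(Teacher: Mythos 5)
Your proposal is correct and follows exactly the route the paper indicates: the paper's proof of Proposition~\ref{transpose2} is the single line ``this follows directly from Proposition~\ref{bilin2} by writing the matrix of the bilinear form and the matrix of $\mathcal{L}$,'' and what you have done is carry out precisely that bookkeeping. Your identification of $\mathcal{L}$ with the Gram matrix $B$ of $(.|.)$ in the standard double-tableau basis, and the derivation $\,{}^t\!S_r\,B\,S_r=(-\alpha)B\Rightarrow B\,S_r\,B^{-1}=(-\alpha)\,{}^t\!(S_r^{-1})$, is the intended content of ``writing the matrices.'' Your case-by-case tracking of $\mathcal{L}(\T)=\omega(\T)\varphi(\T)$ and of $\mathcal{L}(\T\pm\sigma(\T))$ for $\lambda=(\lambda_1,\lambda_1)$, using $\omega(\sigma(\T))=(-1)^{(n/2)^2}\omega(\T)$ and $\varphi(\sigma(\T))=\T'$ (so that $\varphi(\T)\pm\varphi(\sigma(\T))=\sigma(\T')\pm\T'$), correctly produces the five stabilization statements, matching the non-degeneracy pairings in Proposition~\ref{bilin2}(4)--(5). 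One small slip: the displayed formula $\mathcal{L}(\T\pm\sigma(\T))=\omega(\T)\bigl(\varphi(\T)\pm\varphi(\sigma(\T))\bigr)$ is valid only when $n\equiv 0\pmod 4$; when $n\equiv 2\pmod 4$ the factor $\omega(\sigma(\T))=-\omega(\T)$ flips the inner sign, giving $\mathcal{L}(\T\pm\sigma(\T))=\omega(\T)\bigl(\varphi(\T)\mp\varphi(\sigma(\T))\bigr)$. You already incorporate this correction in the conclusion you draw for case (5), so the argument stands, but the formula as written should carry the case distinction.
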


This follows directly from Proposition \ref{bilin2} by writing the matrix of the bilinear form and the matrix of $\mathcal{L}$. \hfill$\square$.

\begin{prop}\label{chaise}
For $r\in [\![1,n-1]\!]$, we write $\lambda_r$ the double-partition of $n$ defined by $\lambda_r=([r],[1^{n-r}])$ if $r\geq \frac{n}{2}$ and $\lambda_r=([1^{n-r}],r)$ if $r< \frac{n}{2}$. Taking the same notations as in Proposition \ref{patate} of type $B$, for all $d\in A_{D_n}$, we have $\eta_{1,r}(d)=\eta_{2,r}(d)$ because the length in $T$ of such an element is even and we have $\beta=1$. We define the character $\eta_r$ of $A_{D_n}$ by $\eta_r(d)=\eta_{1,r}(d)=\eta_{2,r}(d)$.

We then have by Proposition \ref{patate} that $R_{\lambda_r}\simeq (\Lambda^rR_{\lambda_1})\otimes \eta_{1,r}$ for all $r\in [\![1,n-1]\!]$.
\end{prop}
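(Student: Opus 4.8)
The plan is to deduce this proposition directly from Proposition \ref{patate} of type $B$ together with the definition of $\mathcal{H}_{D_n,\alpha}$ as the subalgebra of $\mathcal{H}_{B_n,\alpha,1}$ generated by $U=TS_1T, S_1,\dots,S_{n-1}$. First I would recall that the restriction of an $\mathcal{H}_{B_n,\alpha,1}$-module to $\mathcal{H}_{D_n,\alpha}$ is obtained simply by discarding the action of $T$ and keeping the action of the $S_i$ (and recording the action of $U=TS_1T$, which is determined). In particular, for the double-partition $\lambda_r$ the $\mathcal{H}_{B_n,\alpha,1}$-module $V_{\lambda_r}$ of Theorem \ref{mod} (specialized at $\beta=1$) restricts to the $\mathcal{H}_{D_n,\alpha}$-module $R_{\lambda_r}$ appearing in the statement.

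Next I would invoke Proposition \ref{patate}: there exist characters $\eta_{1,r},\eta_{2,r}:A_{B_n}\to\F_q^\star$ with $R_{\lambda_{(r)}}\simeq(\Lambda^rR_{\lambda_{(1)}})\otimes\eta_{1,r}$ and $R_{\lambda^{(r)}}\simeq(\Lambda^rR_{\lambda^{(1)}})\otimes\eta_{2,r}$, where the explicit formulas from the proof of that proposition are $\eta_{1,r}(h)=(-1)^{(r-1)\ell_1(h)}\beta^{(r-1)\ell_2(h)}$ and $\eta_{2,r}(h)=(-1)^{(r-1)\ell_1(h)}(-1)^{(r-1)\ell_2(h)}$. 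The key observation is that for any $d\in A_{D_n}$, viewed inside $A_{B_n}$ via $U=TS_1T$, the length $\ell_2(d)$ in the generator $T$ is even: indeed each generator of $A_{D_n}$ is a word in $S_1,\dots,S_{n-1}$ and $U=TS_1T$, and the latter contributes $2$ to $\ell_2$. Since in type $D$ we are specializing $\beta=1$, both $\beta^{(r-1)\ell_2(d)}=1$ and $(-1)^{(r-1)\ell_2(d)}=1$, so $\eta_{1,r}(d)=\eta_{2,r}(d)=(-1)^{(r-1)\ell_1(d)}$ for every $d\in A_{D_n}$. This is exactly the character $\eta_r$ defined in the statement, and it shows the two type-$B$ characters agree on $A_{D_n}$.

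Finally I would assemble the conclusion: restricting the isomorphism $R_{\lambda_{(r)}}\simeq(\Lambda^rR_{\lambda_{(1)}})\otimes\eta_{1,r}$ of $\mathcal{H}_{B_n,\alpha,1}$-modules (equivalently, of $A_{B_n}$-representations) to the subalgebra $\mathcal{H}_{D_n,\alpha}$ (equivalently, to $A_{D_n}$), and using that $\lambda_r$ equals $\lambda_{(r)}$ or $\lambda^{(r)}$ according to whether $r<\tfrac n2$ or $r\geq\tfrac n2$, together with the observation that exterior powers commute with restriction, gives $R_{\lambda_r}\simeq(\Lambda^rR_{\lambda_1})\otimes\eta_{1,r}$ as $A_{D_n}$-representations, as claimed. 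The only mildly delicate point — and the one I would state carefully rather than the one requiring real work — is the verification that $\ell_2$ is even on all of $A_{D_n}$; this is immediate once one uses the explicit generating set $U=TS_1T, S_1,\dots,S_{n-1}$, so there is no serious obstacle, and indeed the proposition is stated with only a one-line justification. \hfill$\square$
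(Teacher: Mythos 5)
Your proof is correct and follows essentially the same route as the paper, which gives no proof beyond invoking Proposition \ref{patate}: you correctly isolate the two facts making $\eta_{1,r}$ and $\eta_{2,r}$ agree on $A_{D_n}$ (the specialization $\beta=1$ and the observation that $\ell_2$ is a homomorphism with $\ell_2(U)=2$, hence takes even values), and the assembly by restriction together with $\Lambda^r$ commuting with restriction is right. The one detail worth stating explicitly is the case $r\geq \tfrac n2$: the isomorphism you restrict is $R_{\lambda_{(r)}}\simeq(\Lambda^r R_{\lambda_{(1)}})\otimes\eta_{1,r}$ with $\lambda_{(r)}=([1^{n-r}],[r])$, whereas $\lambda_r=([r],[1^{n-r}])$; that both restrict to the same $\mathcal{H}_{D_n,\alpha}$-module $V_{\lambda_r}$ is exactly the isomorphism $\sigma\colon V_{\mu_1,\mu_2}\to V_{\mu_2,\mu_1}$ recorded at the start of the section, and once you invoke it you can in fact restrict the first isomorphism of Proposition \ref{patate} uniformly and dispense with the case split on $r$ altogether.
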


Assume $\F_q=\F_p(\alpha)\neq \F_p(\alpha+\alpha^{-1})$. We write $\epsilon$ the unique automorphism of $\F_q$ of order $2$. We have that $\epsilon(\alpha)=\alpha^{-1}$. We then define for every standard double-tableau the same way as for type $B$, \break$d(\T)=\tilde{d}(\T_1)\tilde{d}(\T_2)\underset{i<j}{\underset{i\in \T_1,j\in \T_2}\prod}\frac{2+\alpha^{a_{i,j}-1}+\alpha^{1-a_{i,j}}}{\alpha+\alpha^{-1}+\alpha^{a_{i,j}}+\alpha^{-a_{i,j}}}$ and the associated hermitian form $\langle .,.\rangle$ defined by \break$\langle \T,\tilde{\T}\rangle = d(\T)\delta_{\T,\tilde{\T}}$. We write $\Lambda$ for the set of all double-partitions $\lambda=(\lambda_1,\lambda_2)$ of $n$ such that $\lambda_1\geq \lambda_2$. 

\begin{prop}\label{unitary2}
For all $d\in A_{D_n}$, $\lambda\in \Lambda$ and $\T\in \lambda$, we have $\langle d.\T,d.\tilde{\T}\rangle=\langle \T,\tilde{\T}\rangle$. This shows that $A_{D_n}$ acts in a unitary way on those irreducible modules.  
\end{prop}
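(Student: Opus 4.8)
The final statement, Proposition \ref{unitary2}, asserts that the Artin group $A_{D_n}$ acts unitarily on each irreducible module $V_\lambda$ (for $\lambda\in\Lambda$) with respect to the hermitian form $\langle.,.\rangle$ built from the weights $d(\T)$. The plan is to reduce this to the corresponding statement in type $B$, namely Proposition \ref{unitary}, by exploiting the fact that $\mathcal{H}_{D_n,\alpha}$ is a subalgebra of $\mathcal{H}_{B_n,\alpha,1}$. Concretely, I would first observe that the generators $S_1,\dots,S_{n-1}$ of $A_{D_n}$ are among the generators of $A_{B_n}$, and that $U=TS_1T$; since $\beta=1$ in the type $B$ specialization used here, the hermitian form $d(\T)$ defined above is literally the restriction of the type $B$ form to the case $\beta=1$ (the quantities $2+\beta\alpha^{a-1}+\beta^{-1}\alpha^{1-a}$ and $\alpha+\alpha^{-1}+\beta\alpha^a+\beta^{-1}\alpha^{-a}$ become $2+\alpha^{a-1}+\alpha^{1-a}$ and $\alpha+\alpha^{-1}+\alpha^a+\alpha^{-a}$). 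Hence unitarity of the action of $S_r$ is immediate from Proposition \ref{unitary}: $\langle S_r.\T,S_r.\tilde\T\rangle=\langle\T,\tilde\T\rangle$ for all $r\in[\![1,n-1]\!]$ and all standard double-tableaux.

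The remaining point is to check that $U=TS_1T$ also acts unitarily. For this I would compute $\langle U.\T,U.\tilde\T\rangle$ directly, using that $T$ acts diagonally with $T.\T=\pm\T$ (eigenvalues $\beta=1$ and $-1$), so $T$ is unitary for $\langle.,.\rangle$ precisely because $\beta\epsilon(\beta)=1\cdot 1=1$ and $(-1)\epsilon(-1)=1$ — this is the same verification as the first line of the proof of Proposition \ref{unitary}. Since $T$ and $S_1$ are each unitary with respect to $\langle.,.\rangle$, their product $U=TS_1T$ is unitary as a composition of unitary operators: $\langle U.\T,U.\tilde\T\rangle=\langle TS_1T.\T,TS_1T.\tilde\T\rangle=\langle S_1T.\T,S_1T.\tilde\T\rangle=\langle T.\T,T.\tilde\T\rangle=\langle\T,\tilde\T\rangle$. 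Here one must be slightly careful that the type $B$ computation of $\langle S_1.\T,S_1.\tilde\T\rangle=\langle\T,\tilde\T\rangle$ in Proposition \ref{unitary} is valid for the vector $T.\T$, which is just $\pm\T$, so this reduces to the already-established identity up to signs that cancel.

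Once $S_1,\dots,S_{n-1}$ and $U$ are all shown to act unitarily, every element $d\in A_{D_n}$, being a word in these generators, acts unitarily, so $\langle d.\T,d.\tilde\T\rangle=\langle\T,\tilde\T\rangle$ for all standard double-tableaux $\T,\tilde\T$. By bilinearity (sesquilinearity) this extends to all vectors, hence to all of $V_\lambda$; and for $\lambda=(\lambda_1,\lambda_1)$ one notes $\sigma$ commutes appropriately so the statement passes to the summands $V_{\lambda,\pm}$ as well, exactly as in the parenthetical remark after Proposition \ref{bilin2}. Finally, non-degeneracy of $\langle.,.\rangle$ on each $V_\lambda$ follows from the fact that $d(\T)\neq 0$ for every standard double-tableau (which is checked, exactly as in type $B$, using $\alpha\notin\{-\alpha^i:1-n\le i\le n-1\}$, automatic here since $\beta=1$ and $\alpha$ has order $>2n$), so the form is diagonal with nonzero diagonal entries in the standard basis.

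The only mildly delicate point — and the main obstacle, though it is not a serious one — is making sure that the specialization $\beta=1$ does not introduce any degeneracy or division-by-zero in the type $B$ formulas: one must verify that the denominators $\alpha+\alpha^{-1}+\alpha^a+\alpha^{-a}=\alpha(1+\alpha^{a-1})(1+\alpha^{-a-1})$ and the numerators $2+\alpha^{a-1}+\alpha^{1-a}=(1+\alpha^{a-1})(1+\alpha^{1-a})$ are nonzero for $2-n\le a\le n-2$, which holds because $\alpha$ has order greater than $2n$ and hence $\alpha^{a\pm 1}\neq -1$ in this range. With that checked, the whole proof is a routine transfer of Proposition \ref{unitary} through the algebra inclusion $\mathcal{H}_{D_n,\alpha}\subset\mathcal{H}_{B_n,\alpha,1}$, and I would simply state it as such, referring the reader to the type $B$ argument for the one substantive computation.
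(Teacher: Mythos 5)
Your proof is correct and takes essentially the same approach as the paper: reduce to Proposition \ref{unitary} via the specialization $\beta=1$ of the type $B$ hermitian form, then extend to the split summands $V_{\lambda,\pm}$ by additivity of $\langle.,.\rangle$. The separate unitarity check for $U=TS_1T$ is superfluous, since Proposition \ref{unitary} already shows that all of $A_{B_n}$ (in particular $T$ and every $S_r$) acts unitarily, so the subgroup $A_{D_n}$ automatically does.
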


\begin{proof}
The proof of the first statement follows from Proposition \ref{unitary} and the second follows from the expression of the bases of $V_{\lambda,\pm}$ and the $\Z$-bilinearity of the hermitian form. 
\end{proof}

The following proposition summarizes the results in this section :

\begin{prop}\label{isomorphisme2}
Let $\lambda$, $\mu$, $\gamma$ and $\delta$ be doubles-partitions of $n$ such that $\dim(V_\lambda)>1$, $\lambda_1>\lambda_2, \mu_1>\mu_2$, $\gamma_1=\gamma_2$ and $\delta_1=\delta_2$. We have the following properties.
\begin{enumerate}
\item The restrictions of $R_\lambda$, $R_{\gamma,+}$ and $R_{\gamma,-}$ to $\mathcal{A}_{D_n}$ are absolutely irreducible.
\item $R_{\lambda|\mathcal{A}_{D_n}}\simeq R_{\mu|\mathcal{A}_{D_n}} \Leftrightarrow \lambda=\mu$.
\item $R_{\lambda|\mathcal{A}_{D_n}}\not\simeq R_{\gamma,\pm|\mathcal{A}_{D_n}}$.
\item $R_{\gamma,\pm|\mathcal{A}_{D_n}} \simeq R_{\delta,\pm|\mathcal{A}_{D_n}} \Leftrightarrow \gamma= \delta$.
\item $R_{\gamma,\pm|\mathcal{A}_{D_n}} \not\simeq R_{\delta,\mp|\mathcal{A}_{D_n}}$.
\item $R_{\lambda|\mathcal{A}_{D_n}}\simeq R_{\mu|\mathcal{A}_{D_n}}^\star \Leftrightarrow \mu =\varphi(\lambda)$.
\item $R_{\lambda|\mathcal{A}_{D_n}}\not\simeq R_{\gamma,\pm}^\star$.
\item If $n \equiv 0~(\bmod 4)$, then 
\begin{enumerate}
\item $R_{\gamma,\pm|\mathcal{A}_{D_n}} \simeq R_{\delta,\pm|\mathcal{A}_{D_n}}^\star \Leftrightarrow \gamma= \varphi(\delta)$.
\item $R_{\gamma,\pm|\mathcal{A}_{D_n}} \not\simeq R_{\delta,\mp|\mathcal{A}_{D_n}}^\star$.
\end{enumerate}
\item If $n\equiv 2~(\bmod 4)$, then
\begin{enumerate}
\item $R_{\gamma,\pm|\mathcal{A}_{D_n}} \not\simeq R_{\delta,\pm|\mathcal{A}_{D_n}}^\star$.
\item $R_{\gamma,\pm|\mathcal{A}_{D_n}} \simeq R_{\delta,\mp|\mathcal{A}_{D_n}}^\star \Leftrightarrow \gamma=\varphi(\delta)$.
\end{enumerate}
\item If $\F_q=\F_p(\alpha)\neq \F_p(\alpha+\alpha^{-1})$, then 
\begin{enumerate}
\item $R_{\lambda|\mathcal{A}_{D_n}} \simeq \epsilon\circ R_{\mu|\mathcal{A}_{D_n}}^\star \Leftrightarrow \lambda=\mu$.
\item $R_{\lambda|\mathcal{A}_{D_n}}\not\simeq \epsilon \circ R_{\gamma,\pm}$.
\item $R_{\gamma,\pm|\mathcal{A}_{D_n}} \simeq \epsilon \circ R_{\delta,\pm|\mathcal{A}_{D_n}}^\star \Leftrightarrow \gamma= \delta$.
\item $R_{\gamma,\pm|\mathcal{A}_{D_n}} \not\simeq \epsilon\circ R_{\delta,\mp|\mathcal{A}_{D_n}}^\star$.
\end{enumerate}
\end{enumerate}
\end{prop}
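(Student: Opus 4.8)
The plan is to reduce every clause of the proposition to four structural inputs: absolute irreducibility of the restrictions (clause~$(1)$), the isomorphism $A_{D_n}/\mathcal{A}_{D_n}\simeq\Z$ together with the eigenvalues of $S_1$, the intertwiner $\mathcal{L}$ of Proposition~\ref{transpose2}, and the unitarity furnished by Proposition~\ref{unitary2}. I would first prove $(1)$ along the lines of Lemma~\ref{Lincoln}: for small $n$ the restrictions of the representations with at most two columns are absolutely irreducible by \cite{M}, and in general one inducts, using that $A_{D_n}$ is generated by $A_{D_{n-1}}$ and $\mathcal{A}_{D_n}$ and that, by the branching rule (Proposition~\ref{branch}), the restriction of $V_\lambda$ (respectively of $V_{\gamma,\pm}$) to $\mathcal{H}_{D_{n-1},\alpha}$ is multiplicity-free with pairwise non-isomorphic constituents, so that the argument of \cite[Lemma 3.4(i)]{BMM} applies; the one new point is to check that the split constituents occurring in cases~$(3)$ and~$(5)$ of Proposition~\ref{branch} still appear once each and remain pairwise non-isomorphic on restriction.

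Clauses~$(2)$, $(3)$, $(4)$ and~$(5)$ all follow from a single mechanism. If two of the representations in these families have isomorphic restrictions to $\mathcal{A}_{D_n}$, then after conjugating one of them we may assume these restrictions are literally equal, and Lemma~\ref{abel} (applicable thanks to~$(1)$) yields a character $\eta\colon A_{D_n}\to\F_q^\star$ relating the two $A_{D_n}$-representations. Since the Coxeter diagram of $D_n$ is connected, $A_{D_n}/\mathcal{A}_{D_n}\simeq\Z$ is generated by the common image of the Artin generators, so $\eta$ is determined by $u=\eta(S_1)$; comparing eigenvalues of $S_1$ gives $\{\alpha,-1\}=\{u\alpha,-u\}$, where both eigenvalues genuinely occur because $\dim V>1$ forces a standard double-tableau moved by the transposition $1\leftrightarrow2$, and since $\alpha^{2}\neq1$ this forces $u=1$. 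Hence $\eta$ is trivial, $A_{D_n}$ generates $\mathcal{H}_{D_n,\alpha}$ as an algebra, and the two simple $\mathcal{H}_{D_n,\alpha}$-modules coincide; by the classification of these modules this gives $\lambda=\mu$ in~$(2)$, $\gamma=\delta$ with matching sign in~$(4)$, and the non-isomorphisms of~$(3)$ and~$(5)$ (the case $\gamma=\delta$ of~$(5)$ being the statement $R_{\gamma,+|\mathcal{A}_{D_n}}\not\simeq R_{\gamma,-|\mathcal{A}_{D_n}}$).

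The duality clauses come from Proposition~\ref{transpose2}: for $b\in\mathcal{A}_{D_n}$ both length functions vanish, so $\mathcal{L}R(b)\mathcal{L}^{-1}={}^{t}(R(b)^{-1})$, and since $\mathcal{L}$ carries $V_\lambda$ onto $V_{\varphi(\lambda)}$ we obtain $R_{\varphi(\lambda)|\mathcal{A}_{D_n}}\simeq R_{\lambda|\mathcal{A}_{D_n}}^{\star}$. As $\varphi$ is an involution preserving the normalization $\lambda_1\geq\lambda_2$, combining this with~$(2)$ gives~$(6)$, and then~$(7)$ follows by applying the duality to one side and invoking~$(3)$; the split analogues use items~$(3)$--$(5)$ of Proposition~\ref{transpose2}, in which the residue of $n$ modulo~$4$ governs whether $\mathcal{L}$ preserves or exchanges the two signs, which is exactly the difference between~$(8)$ and~$(9)$, while the negative parts~$(8)(b)$ and~$(9)(a)$ reduce to~$(5)$. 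Finally, by Proposition~\ref{unitary2} the action of $A_{D_n}$ is unitary, so $R_\lambda\simeq\epsilon\circ R_\lambda^{\star}$ over $A_{D_n}$, whence $\epsilon\circ R_\lambda\simeq R_\lambda^{\star}$ over $\mathcal{A}_{D_n}$ (and similarly for $V_{\gamma,\pm}$); composing this with~$(2)$, $(4)$ and~$(6)$--$(9)$ turns every clause of~$(10)$ into one already established.

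The step I expect to be the main obstacle is~$(1)$, specifically making the inductive argument survive cases~$(3)$ and~$(5)$ of the branching rule and treating the base cases $n=4,5$ directly; a secondary, error-prone point is keeping the combinatorics of $\varphi$ and of the signs in Proposition~\ref{transpose2} coherent across $n\equiv0$ and $n\equiv2\pmod 4$.
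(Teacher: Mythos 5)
Your proposal follows essentially the same route as the paper's own proof: clause $(1)$ via the inductive argument of \cite[Lemma 3.4]{BMM} using that $A_{D_n}=\langle \mathcal{A}_{D_n},A_{D_{n-1}}\rangle$, clauses $(2)$--$(5)$ via Lemma~\ref{abel} together with the eigenvalue constraint $\{\alpha,-1\}=\{u\alpha,-u\}$ on $S_1$ forcing $u=1$, and clauses $(6)$--$(10)$ by composing with Propositions~\ref{transpose2} and~\ref{unitary2}. One small inaccuracy in your justification: when one component of $\lambda$ is empty, $\T_{1\leftrightarrow 2}$ is never standard (since $1$ sits at $(1,1)$ and $2$ at $(1,2)$ or $(2,1)$), but $R_\lambda(S_1)$ still has both eigenvalues $\alpha$ and $-1$ because the scalar $m_1(\T)$ already depends on the position of $2$; so the eigenvalue comparison goes through, just not for the reason you state.
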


\begin{proof}
$1.$ is shown in the same way as Lemma $3.4.$ of  \cite{BMM}, because $A_{D_n}$ is generated by $\mathcal{A}_{D_n}$ ans $A_{D_{n-1}}$.

Using Propositions \ref{unitary2} and \ref{transpose2}, it is sufficient to show $2,3,4$ and $5$ to conclude the proof. In the same way as for type $B$, we need to use Lemma \ref{abel}. If $R_{\lambda|\mathcal{A}_{D_n}}\simeq R_{\mu|\mathcal{A}_{D_n}}$ then there exists a character $\eta : A_{D_n} \rightarrow \F_q^\star$ such that $R_\lambda \simeq R_\mu \otimes \eta$. Since $A_{D_n}/\mathcal{A}_{D_n}=<\overline{S_1}>$, there exists $u\in \F_q^\star$ such that for all $d\in A_{D_n}, \eta(d)=u^{l(d)}$. We have $R_\lambda(S_1)=uR_\mu(S_1)$. By considering the eigenvalues, we have that $\{\alpha,-1\}=\{u\alpha,-u\}$. Therefore $u=1$ or $\alpha^2=1$. By the conditions on $\alpha$,  $u=1$ and $R_\lambda\simeq R_\mu$ so $\lambda=\mu$. Since the set of eigenvalues is of $R_{\gamma,\pm}(S_1)$ is also $\{\alpha,-1\}$, the rest of the proof follows.
\end{proof}

We now give a theorem for double-partitions with an empty component and then results for hook partitions.

\begin{theo}\label{empty}
Let $\lambda=(\lambda_1,\emptyset)\vdash\!\vdash n$ with $\lambda_1$ not a hook and $G=R_\lambda(\mathcal{A}_{D_n})$. We then have the following.
\begin{enumerate}
\item If $\F_q=\F_p(\alpha)=\F_p(\alpha+\alpha^{-1})$, then
\begin{enumerate}
\item if $\lambda_1\neq \lambda_1'$, then $G=SL_{n_{\lambda}}(q)$,
\item if $\lambda_1=\lambda_1'$ and ($p=2$ or $p\geq 3$ and $\tilde{\nu}(\lambda)=-1$), then $G=SP_{n_\lambda}(q)$,
\item if $\lambda_1=\lambda_1'$, $p\geq3$ and $\tilde{\nu}(\lambda)=1$, then $G=\Omega^+_{n_\lambda}(q)$.
\end{enumerate}
\item If $\F_q=\F_p(\alpha)\neq\F_p(\alpha+\alpha^{-1})$, then
\begin{enumerate}
\item if $\lambda_1\neq \lambda_1'$ then $G=SU_{n_{\lambda}}(q^\frac{1}{2})$,
\item if $\lambda_1=\lambda_1'$ and $\tilde{\nu}(\lambda)=-1$ then $G=SP_{n_\lambda}(q^\frac{1}{2})$,
\item if $\lambda_1=\lambda_1'$ and $\tilde{\nu}(\lambda)=1$ then $G=\Omega^+_{n_\lambda}(q^\frac{1}{2})$.
\end{enumerate}
\end{enumerate}
\end{theo}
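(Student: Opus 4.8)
The plan is to reduce Theorem \ref{empty} to the type $A$ results of \cite{BMM}. The key observation is that on a module $V_{(\lambda_1,\emptyset)}$ the (specialized) first parameter plays no role. Indeed $\mathcal{H}_{D_n,\alpha}$ is by definition the subalgebra of $\mathcal{H}_{B_n,\alpha,1}$ generated by $U=TS_1T, S_1,\dots,S_{n-1}$, and by Theorem \ref{mod} with $\beta=1$ every standard double-tableau $\T=(\T_1,\emptyset)$ associated with $(\lambda_1,\emptyset)$ has $1\in\T_1$, hence $T.\T=\T$; so $T$ acts as the identity on $V_{(\lambda_1,\emptyset)}$ and $R_{(\lambda_1,\emptyset)}(U)=R_{(\lambda_1,\emptyset)}(S_1)$. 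Moreover $ct(\T:j)=\alpha^{c_j(\T)-r_j(\T)}$ for every $j$, so the matrices of $S_1,\dots,S_{n-1}$ on $V_{(\lambda_1,\emptyset)}$ are exactly those of the Hoefsmit representation $\rho_{\lambda_1}$ of the type $A$ Iwahori--Hecke algebra $\mathcal{H}_{A_{n-1},\alpha}$ attached to the partition $\lambda_1$ of $n$. Therefore $R_{(\lambda_1,\emptyset)}(A_{D_n})=\langle\rho_{\lambda_1}(S_1),\dots,\rho_{\lambda_1}(S_{n-1})\rangle=\rho_{\lambda_1}(A_{A_{n-1}})$, where $A_{A_{n-1}}$ denotes the Artin group of type $A_{n-1}$, i.e. the braid group on $n$ strands.

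Next I would pass to derived subgroups. Since the image of a derived subgroup under a homomorphism is the derived subgroup of the image, and $\mathcal{A}_{A_{n-1}}=[A_{A_{n-1}},A_{A_{n-1}}]$, one obtains
$$R_{(\lambda_1,\emptyset)}(\mathcal{A}_{D_n})=[\rho_{\lambda_1}(A_{A_{n-1}}),\rho_{\lambda_1}(A_{A_{n-1}})]=\rho_{\lambda_1}(\mathcal{A}_{A_{n-1}}).$$
Now $\lambda_1\vdash n$ is not a hook and $\alpha$ has order $>2n$, so Theorem 1.1 of \cite{BMM}, together with its analysis of small $n$ (which covers e.g. $n=4$, $\lambda_1=[2,2]$, where $SP_2(q)=SL_2(q)$), identifies $\rho_{\lambda_1}(\mathcal{A}_{A_{n-1}})$ completely. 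When $\F_p(\alpha)=\F_p(\alpha+\alpha^{-1})$ it is $SL_{n_{\lambda_1}}(q)$ if $\lambda_1\neq\lambda_1'$ and the derived isometry group of a non-degenerate symmetric or alternating $\F_q$-bilinear form if $\lambda_1=\lambda_1'$; when $\F_p(\alpha)\neq\F_p(\alpha+\alpha^{-1})$ one gets the corresponding groups over $\F_{q^{\frac{1}{2}}}$, namely $SU$ in the non-self-conjugate case and $SP$ or $\Omega^+$ in the self-conjugate case (using Proposition 4.1 of \cite{BMM} to descend from a group that is simultaneously unitary and orthogonal).

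Finally I would match these invariants with the statement. For $\lambda=(\lambda_1,\emptyset)$ we have $\lambda_2=\lambda_2'=\emptyset$, hence $\lambda_1'>\lambda_2'$ in the chosen order on partitions, so the type $D$ quantity $\tilde{\nu}(\lambda)$ equals $\nu(\lambda_1)\nu(\lambda_2)=\nu(\lambda_1)$; this is exactly the sign distinguishing the symplectic case ($\nu(\lambda_1)=-1$, also forced when $p=2$) from the orthogonal case ($\nu(\lambda_1)=1$, $p\geq 3$) in \cite{BMM}, and $n_\lambda=n_{\lambda_1}$. By Propositions \ref{transpose2} and \ref{bilin2}, when $\lambda_1=\lambda_1'$ the form preserved by $R_{(\lambda_1,\emptyset)}$ is symmetric exactly when $\tilde{\nu}(\lambda)=1$ and alternating otherwise, which is consistent with the outcome above; this yields all three sub-cases of both 1. and 2. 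The only genuinely non-formal step is the identification of $R_{(\lambda_1,\emptyset)}$ restricted to $\langle S_1,\dots,S_{n-1}\rangle$ with the type $A$ Hoefsmit representation $\rho_{\lambda_1}$; after that the proof is pure bookkeeping on top of \cite{BMM}, and I do not anticipate any serious obstacle.
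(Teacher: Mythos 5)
Your proposal is correct and takes essentially the same route as the paper: both reduce the statement to Theorem 1.1 of \cite{BMM} by observing that on $V_{(\lambda_1,\emptyset)}$ the specialization $\beta=1$ makes $T$ act trivially, so the module restricts to the type $A$ Hoefsmit representation $\rho_{\lambda_1}$, and both use $\tilde{\nu}(\lambda)=\nu(\lambda_1)$ to match the symplectic/orthogonal dichotomy. The only difference is organizational: you note $R_\lambda(U)=R_\lambda(S_1)$ and conclude directly that $R_\lambda(\mathcal{A}_{D_n})=\rho_{\lambda_1}(\mathcal{A}_{A_{n-1}})$ by taking derived subgroups of equal images, while the paper instead sandwiches $R_\lambda(\mathcal{A}_{D_n})$ between $R_\lambda(\mathcal{A}_{A_{n-1}})$ (lower bound) and the isometry group supplied by Proposition \ref{bilin2} (upper bound); your phrasing is slightly more economical but proves the same thing.
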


\begin{proof}
The restriction of $R_\lambda$ to $\mathcal{A}_{D_n}$ is the same as the representation $R_{\lambda_1}$ in type $A$. Since $\tilde{\nu}(\lambda)=\nu(\lambda_1)$, the result follows directly from \cite[Theorem 1.1]{BMM} after noting that $R_\lambda(\mathcal{A}_{A_n})\subset R_{\lambda}(\mathcal{A}_{D_n})$ and that we have the corresponding inclusions by Proposition \ref{bilin2}.
\end{proof}

\begin{prop}\label{timberwolves}
If $\F_q=\F_p(\alpha)=\F_p(\alpha+\alpha^{-1})$, then $R_{([1^{n-1}],[1])}(\mathcal{A}_{D_n})= SL_n(q)$ and if $\F_q=\F_p(\alpha)\neq \F_p(\alpha+\alpha^{-1})$ then $R_{([1^{n-1}],[1])}(\mathcal{A}_{D_n})=SU_n(q^\frac{1}{2})$.
\end{prop}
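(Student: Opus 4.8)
The plan is to adapt the proof of Proposition \ref{lesgourgues} to type $D$, the only new ingredient being the treatment of the generator $U$. Write $R=R_{([1^{n-1}],[1])}$; since the $\mathcal{H}_{D_n,\alpha}$-module is the restriction of the $\mathcal{H}_{B_n,\alpha,1}$-module of the same name, the eigenvalues of $R(S_i)$ are $\alpha$ with multiplicity $1$ and $-1$ with multiplicity $n-1$. As $\beta=1$ we have $T^2=\mathrm{Id}$ on this module, so $U=TS_1T=TS_1T^{-1}$ is conjugate to $S_1$ and has the same eigenvalues. Hence $G:=\langle -R(U),-R(S_1),\dots,-R(S_{n-1})\rangle$ is generated by pseudo-reflections with non-trivial eigenvalue $-\alpha$; since $\alpha$ has order $>2n\geq 8$, the order of $-\alpha$ is at least $3$ (it cannot be $1$, $2$ or $3$, as any of these would force $\alpha^6=1$), so these are pseudo-reflections of order $\geq 3$. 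Moreover the scalar $-I$ relating $G$ to $R(A_{D_n})$ is central, so $[G,G]=[R(A_{D_n}),R(A_{D_n})]=R([A_{D_n},A_{D_n}])=R(\mathcal{A}_{D_n})$, and it suffices to identify $[G,G]$.

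Next I would prove that $G$ is primitive exactly as in Proposition \ref{lesgourgues}: $G$ is absolutely irreducible by Proposition \ref{isomorphisme2}(1) applied to $([1^{n-1}],[1])$, and if $G$ preserved a decomposition $\F_q^n=V_1\oplus\dots\oplus V_r$, then no block of dimension $\geq 2$ could be moved by a pseudo-reflection (its intersection with the reflecting hyperplane would force two blocks to meet), so all $V_j$ are lines; decomposing a vector of $V_1$ along the eigenspaces of one of the pseudo-reflections $-R(S_i)$ or $-R(U)$ that moves $V_1$, and applying that pseudo-reflection twice, produces a vector of $V_1\oplus V_2$ lying in a third block, contradicting $\alpha\notin\{0,1\}$. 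Wagner's theorem (Theorem \ref{wag}) then gives either $SL_n(\tilde q)\subset G\subset GL_n(\tilde q)$ or $SU_n(\tilde q^{\frac12})\subset G\subset GU_n(\tilde q^{\frac12})$ for some $\tilde q\mid q$, the exceptional $GU_n(2)$ case being excluded since $-\alpha$ has order $\neq 3$. In either case $\det(-R(S_1))=-\alpha\in\F_{\tilde q}^\times$, so $\alpha\in\F_{\tilde q}$, whence $\F_q=\F_p(\alpha)\subseteq\F_{\tilde q}\subseteq\F_q$ and $\tilde q=q$.

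Finally I would split according to the field hypothesis. If $\F_q=\F_p(\alpha)=\F_p(\alpha+\alpha^{-1})$ and $G$ were unitary, then $(-\alpha)^{q^{1/2}+1}=1$; as $p$ is odd, $q^{1/2}+1$ is even, so $\alpha^{q^{1/2}}=\alpha^{-1}$ and hence $\alpha+\alpha^{-1}\in\F_{q^{1/2}}$, contradicting $\F_p(\alpha+\alpha^{-1})=\F_q$. So $G$ is in the linear case, $SL_n(q)\subseteq G\subseteq GL_n(q)$, and $R(\mathcal{A}_{D_n})=[G,G]=SL_n(q)$. If instead $\F_q=\F_p(\alpha)\neq\F_p(\alpha+\alpha^{-1})$, then by Proposition \ref{unitary2} the group $R(A_{D_n})$ acts unitarily with respect to $\langle\cdot,\cdot\rangle$, hence is conjugate into $GU_n(q^{1/2})$; since $|GU_n(q^{1/2})|<|SL_n(q)|$ for $n\geq 4$, it cannot contain $SL_n(q)$, so the linear alternative is impossible. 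Thus $G$ is unitary, $SU_n(q^{1/2})\subseteq G\subseteq GU_n(q^{1/2})$, and $R(\mathcal{A}_{D_n})=[G,G]=SU_n(q^{1/2})$.

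I expect the main obstacle to be the primitivity argument, which is the technical heart here as it was in Proposition \ref{lesgourgues}: one must rule out imprimitivity carefully using the pseudo-reflection structure of $-R(U)$ and of each $-R(S_i)$. Everything else is a matter of transporting the eigenvalue and determinant computations and invoking Wagner's theorem together with Proposition \ref{unitary2}. Since $U$ is conjugate to $S_1$, it introduces no genuinely new difficulty, so this step is in fact slightly easier than its type $B$ counterpart.
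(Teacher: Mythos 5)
Your proof is correct and takes the same route as the paper: the paper's own proof of Proposition \ref{timberwolves} consists solely of the sentence ``The proof is the same one as the proof of Proposition \ref{lesgourgues},'' and you have carried out exactly that adaptation via Wagner's theorem on primitive pseudo-reflection groups. The adjustments you make explicit — replacing $\beta^{-1}R(T)$ by $-R(U)$, noting that $U=TS_1T^{-1}$ is conjugate to $S_1$ so $-R(U)$ is a pseudo-reflection of order $\geq 4$ since $\alpha$ has order $>2n$, and invoking Proposition \ref{unitary2} to eliminate the linear branch of Wagner's dichotomy when $\F_p(\alpha)\neq\F_p(\alpha+\alpha^{-1})$ — are precisely the details the paper leaves implicit in that one-line reference.
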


\begin{proof}
The proof is the same one as the proof of Proposition \ref{lesgourgues}. 
\end{proof}

We write again $A_{1,n}=\{(\lambda_1,\emptyset),\lambda_1 \vdash n\}, A_{2,n} =\{(\emptyset,\lambda_2),\lambda_2 \vdash n\}, A_n = A_{1,n} \cup A_{2,n}$ and\\
$\epsilon_n=\{\lambda \vdash\!\vdash n, \lambda~\mbox{not a hook}\}$

\begin{theo}\label{result4}
If $\F_q=\F_p(\alpha)=\F_p(\alpha+\alpha^{-1})$ and $n$ is odd, then the morphism from $\mathcal{A}_{D_n}$ to $\mathcal{H}_{D_n,\alpha}^\times \simeq \underset{\lambda_1>\lambda_2}{\underset{\lambda \vdash\vdash n}\prod}GL_{n_\lambda}(q)$ factorizes through the epimorphism
$$\Phi_n: \mathcal{A}_{D_n} \rightarrow SL_{n-1}(q) \times SL_n(q) \times \underset{\lambda_1>\lambda_2}{\underset{\lambda\in \epsilon_n,\lambda>\varphi(\lambda)}\prod} SL_{n_\lambda}(q)\times \underset{n_\lambda> n_\mu}{\underset{\lambda\in \epsilon_n,\lambda=\varphi(\lambda)}\prod}OSP(\lambda)'.$$ 
If $\F_q=\F_p(\alpha)=\F_p(\alpha+\alpha^{-1})$ and $n \equiv 0~ (\bmod4)$, then the morphism from $\mathcal{A}_{D_n}$ to $\mathcal{H}_{D_n,\alpha}^\times \simeq \underset{\lambda_1>\lambda_2}{\underset{\lambda \vdash\vdash n}\prod}GL_{n_\lambda}(\F_q) \times \underset{\lambda=(\lambda_1,\lambda_1)\vdash n}\prod GL_{n_{\lambda,+}}(q)\times GL_{n_{\lambda,-}}(q)$ factorizes through the epimorphism
$$\Phi_n: \mathcal{A}_{D_n} \rightarrow SL_{n-1}(q) \times SL_n(q) \times \underset{\lambda_1>\lambda_2}{\underset{\lambda\in \epsilon_n,\lambda>\varphi(\lambda)}\prod} SL_{n_\lambda}(q)\times \underset{\lambda_1> \lambda_2}{\underset{\lambda\in \epsilon_n,\lambda=\varphi(\lambda)}\prod}OSP(\lambda)'\times$$ $$\underset{\lambda>\varphi(\lambda)}{\underset{\lambda=(\lambda_1,\lambda_1)\in \epsilon_n}\prod}SL_{\frac{n_\lambda}{2}}(q)^2\times \underset{\lambda=\varphi(\lambda)}{\underset{\lambda=(\lambda_1,\lambda_1)\in \epsilon_n}\prod}OSP(\lambda,+)'^2.$$ 
If $\F_q=\F_p(\alpha)=\F_p(\alpha+\alpha^{-1})$ and $n \equiv 2~ (\bmod4)$ then the morphism from $\mathcal{A}_{D_n}$ to $\mathcal{H}_{D_n,\alpha}^\times \simeq \underset{\lambda_1>\lambda_2}{\underset{\lambda \vdash\vdash n}\prod}GL_{n_\lambda}(\F_q) \times \underset{\lambda=(\lambda_1,\lambda_1)\vdash n}\prod GL_{n_{\lambda,+}}(q)\times GL_{n_{\lambda,-}}(q)$ factorizes through the epimorphism
$$\Phi_n: \mathcal{A}_{D_n} \rightarrow SL_{n-1}(q) \times SL_n(q) \times \underset{\lambda_1>\lambda_2}{\underset{\lambda\in \epsilon_n,\lambda>\varphi(\lambda)}\prod} SL_{n_\lambda}(q)\times \underset{\lambda_1> \lambda_2}{\underset{\lambda\in \epsilon_n,\lambda=\varphi(\lambda)}\prod}OSP(\lambda)'\times$$ $$\underset{\lambda>\varphi(\lambda)}{\underset{\lambda=(\lambda_1,\lambda_1)\in \epsilon_n}\prod}SL_{\frac{n_\lambda}{2}}(q)^2\times \underset{\lambda=\varphi(\lambda)}{\underset{\lambda=(\lambda_1,\lambda_1)\in \epsilon_n}\prod}SL_{\frac{n_\lambda}{2}}(q).$$ 
In all of the above, $OSP(\lambda)$ is the group of isometries of the bilinear form defined in Proposition \ref{bilin2}.
\end{theo}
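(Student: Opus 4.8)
The plan is to mirror the structure of the type $B$ proof (Lemma \ref{gougou}, Theorem \ref{hortillonage} and the Goursat arguments), adapting to the new phenomena specific to type $D$: the splitting $V_{\lambda,\lambda}=V_{\lambda,\lambda,+}\oplus V_{\lambda,\lambda,-}$, the more intricate branching rule (Proposition \ref{branch}), and the fact that here the abelianization $A_{D_n}/\mathcal{A}_{D_n}$ is only $\Z$ (generated by $\overline{S_1}$), which actually simplifies the character-chasing. First I would dispose of the small cases $n\leq 7$ or so by hand: using Proposition \ref{timberwolves} for the hook-type representation $([1^{n-1}],[1])$, Theorem \ref{empty} for the empty-component pieces, and then, for each remaining double-partition, running Wagner's theorem (Theorem \ref{wag}), Theorem \ref{LBJ}, the transvection theorems (Theorem \ref{transvections} and the $p=2$ theorems of Kantor and Pocchiola) exactly as in Lemma \ref{platypus}, together with the branching rule to feed in the already-known images from $\mathcal{A}_{D_{n-1}}$ (noting that $\mathcal{A}_{D_n}$ is normally generated by $\mathcal{A}_{D_{n-1}}$ for $n$ large, and is perfect for $n\geq 5$ by \cite{MR}). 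The new ingredient compared with $B$ is that by item 5 of Proposition \ref{branch}, $V_{\lambda,\lambda,+}$ and $V_{\lambda,\lambda,-}$ have identical restriction to $\mathcal{H}_{D_{n-1},\alpha}$, so the induction must treat the pair $(V_{\lambda,\lambda,+},V_{\lambda,\lambda,-})$ simultaneously and then use Goursat's Lemma to show the image in the product of the two factors is the full product — here Proposition \ref{isomorphisme2}(4),(5),(8),(9) is exactly what forbids the two factors from being glued by an isomorphism or a duality-twist.

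Next, for $n\geq 8$ (with $n\neq 10$ in dimension, handled separately), I would verify the hypotheses of the Guralnick–Saxl theorem (Theorem \ref{CGFS}) for each $R_\lambda(\mathcal{A}_{D_n})$ and each $R_{\lambda,\lambda,\pm}(\mathcal{A}_{D_n})$: absolute irreducibility is Proposition \ref{isomorphisme2}(1); tensor-indecomposability follows from Lemmas \ref{tens1} and \ref{tens2} once one exhibits inside the image an element conjugate to $\diag(\xi,\xi^{-1},1,\dots)$ or $\diag(\xi,\xi,\xi^{-1},\xi^{-1},1,\dots)$, which one produces via the branching rule from a subdiagram $\mu$ containing $([2,1],[1])$ or $([1],[2,1])$, using that $\Phi_{n-1}$ is already surjective; primitivity is handled by the argument that the image contains a transvection or an element of Jordan type $\diag(I_2+E_{1,2},I_2+E_{1,2},I_{N-4})$ and that $\mathcal{A}_{D_n}$ is normally generated by $\mathcal{A}_{D_{n-1}}$; and the alternating/symmetric-group alternative is excluded because the image contains a natural $SL_2(q)$ with $q\geq 8$ (since $\alpha$ has order $>2n$) or a twisted diagonal $SL_2(q)$, which cannot sit inside $\mathfrak{S}_c$. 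Combined with $v_G(V)=2$ (or $4$ in the rare large-dimensional square-partition case, where $\dim V\geq 80$), Theorem \ref{CGFS} forces $G$ to be a classical group in its natural representation.

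Then I would identify which classical group: $G\subset SL_{n_\lambda}(q)$ so the defining field divides $q$; Lemma \ref{field} applied to the natural $SL_2(q)$ or $SL_3(q)$ inside $G$ pins the field down to $\F_q$; and the bilinear/hermitian form type is dictated by Proposition \ref{bilin2} together with Proposition \ref{isomorphisme2}(6),(7),(8),(9) — when $\lambda=\varphi(\lambda)$ the form of Proposition \ref{bilin2} is preserved and its symmetry is governed by $\tilde\nu(\lambda)$, giving $SP$ or $\Omega^+$; when $\lambda\neq\varphi(\lambda)$ no invariant form exists (since $R_\lambda\not\simeq R_\lambda^\star$ and, by the $\F_q=\F_p(\alpha+\alpha^{-1})$ hypothesis, there is no order-$2$ automorphism of $\F_q$ to make it unitary), giving $SL$. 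The $n\equiv 0 \pmod 4$ versus $n\equiv 2\pmod 4$ dichotomy enters precisely through items 4 and 5 of Proposition \ref{bilin2}: when $n\equiv 0\pmod 4$ each of $V_{\lambda,\lambda,\pm}$ carries a form of its own (so the target has $OSP(\lambda,+)'^2$), whereas when $n\equiv 2\pmod 4$ only the mixed pairing $V_{\lambda,+}\oplus V_{\lambda',-}$ is non-degenerate, so no individual $V_{\lambda,\lambda,\pm}$ is self-dual and the $\pm$-factors are plain $SL_{n_\lambda/2}(q)$.

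Finally, assembling the global factorization $\Phi_n$ is an iterated Goursat argument identical in spirit to Lemma \ref{gougou}: order the double-partitions, and at each step show the quotients $K_1/K^1\simeq K_2/K^2$ appearing in Goursat's Lemma must be abelian, because the only non-abelian composition factor of the new factor is its own $PG(\lambda)$ and any isomorphism with a previously-occurring $PG(\mu)$ would, after killing the (trivial, by perfectness) character and comparing eigenvalues of $S_1$, force $R_{\lambda|\mathcal{A}_{D_n}}\simeq S(R_{\mu|\mathcal{A}_{D_n}})$ with $\mu<\lambda$, contradicting Proposition \ref{isomorphisme2}; since all the factors are perfect, abelian quotients force the full product. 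I expect the main obstacle to be the bookkeeping in the small-$n$ base cases when several split representations $V_{\lambda,\lambda,\pm}$ of the same dimension occur together with non-split ones of that same dimension: one must carefully use Proposition \ref{isomorphisme2} (including parts 8 and 9, which depend on $n\bmod 4$) to rule out every spurious gluing in the Goursat steps, and, when $p=2$ or in low dimensions, invoke the maximal-subgroup tables of \cite{BHRC} exactly as in the type $B$ cases $4$–$6$ to close off the remaining possibilities.
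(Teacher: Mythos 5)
Your overall strategy — prove surjectivity component-by-component via the branching rule and Guralnick–Saxl, then glue with Goursat using Proposition~\ref{isomorphisme2} — matches the paper's approach (which reduces to Theorem~\ref{Sandburg} and handles the base case $n=4$ in Section~3.3). Your observations about item 5 of Proposition~\ref{branch} forcing the $\pm$-pair to be handled simultaneously, about the $\Z$-abelianization simplifying the character argument, and about items 4 and 5 of Proposition~\ref{bilin2} producing the $n\bmod 4$ dichotomy, are all correct and line up with the paper.

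However, there is a genuine gap, and it is precisely where you expect the work to be lightest. You locate the ``main obstacle'' in the Goursat bookkeeping for split representations; in fact that part goes through by exactly the argument you sketch. The hard case is when $\lambda=(\lambda_1,\lambda_2)=\varphi(\lambda)=(\lambda_1',\lambda_2')$ with \emph{both} $\lambda_1$ and $\lambda_2$ square partitions and $\tilde{\nu}(\lambda)=1$. In that situation every $\mu\subset\lambda$ in the branching rule is of the same self-paired type, so $R_\lambda(\mathcal{A}_{D_{n-1}})$ only yields $\Omega^+_{n_1}(q)\times\Omega^+_{n_2}(q)$ inside $G(\lambda)$: there is no natural or twisted-diagonal $SL_2(q)$ or $SL_3(q)$ and no transvection (the orthogonal factors produce long root elements, not transvections), so neither Lemma~\ref{field} plus a natural $SL_k$, nor the Zalesskii--Serezkin transvection theorem, applies. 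Your plan has no tool for this case. The paper handles it with Kantor's Theorem I (for $p\neq 2$: irreducible subgroups of $\Omega^+$ generated by a conjugacy class of long root elements), after verifying $O_p(G)=1$ via Clifford's theorem, and then eliminates the competing possibilities (unitary, minus-type orthogonal, smaller field, exceptional) by a nontrivial combinatorial estimate comparing Sylow $p$-subgroup orders with the hook length formula, showing the $q$-logarithm $A$ of the relevant quotient is positive. This computation is the genuine content of the inductive step and is absent from your proposal.

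A smaller issue: you repeatedly invoke ``the $p=2$ theorems of Kantor and Pocchiola'' and ``when $p=2$'' scenarios, but the type $D$ setup in Section~3.1 assumes $p\neq 2$ throughout, so the $p=2$ machinery (Kantor's Theorem II, Pollatsek/Pocchiola) is irrelevant here; it is Kantor's Theorem I that is needed, and only for the square-partition case above.
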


In the unitary case, we have an analogous result :

\begin{theo}\label{result5}
If $\F_q=\F_p(\alpha)\neq \F_p(\alpha+\alpha^{-1})$ and $n$ is odd, then the morphism from $\mathcal{A}_{D_n}$ to $\mathcal{H}_{D_n,\alpha}^\times \simeq \underset{\lambda_1>\lambda_2}{\underset{\lambda \vdash\vdash n}\prod}GL_{n_\lambda}(q)$ factorizes through the morphism
$$\Phi_n: \mathcal{A}_{D_n} \rightarrow SU_{n-1}(q^\frac{1}{2}) \times SU_n(q^\frac{1}{2}) \times \underset{\lambda_1>\lambda_2}{\underset{\lambda\in \epsilon_n,\lambda>\varphi(\lambda)}\prod} SU_{n_\lambda}(q^\frac{1}{2})\times \underset{n_\lambda> n_\mu}{\underset{\lambda\in \epsilon_n,\lambda=\varphi(\lambda)}\prod}\widetilde{OSP}(\lambda)'.$$ 
If $\F_q=\F_p(\alpha)=\F_p(\alpha+\alpha^{-1})$ and $n \equiv 0~ (\bmod4)$, then the morphism from $\mathcal{A}_{D_n}$ to $\mathcal{H}_{D_n,\alpha}^\times \simeq \underset{\lambda_1>\lambda_2}{\underset{\lambda \vdash\vdash n}\prod}GL_{n_\lambda}(\F_q) \times \underset{\lambda=(\lambda_1,\lambda_1)\vdash n}\prod GL_{n_{\lambda,+}}(q)\times GL_{n_{\lambda,-}}(q)$ factorizes through the morphism
$$\Phi_n: \mathcal{A}_{D_n} \rightarrow SU_{n-1}(q^\frac{1}{2}) \times SU_n(q^\frac{1}{2}) \times \underset{\lambda_1>\lambda_2}{\underset{\lambda\in \epsilon_n,\lambda>\varphi(\lambda)}\prod} SU_{n_\lambda}(q^\frac{1}{2})\times \underset{\lambda_1> \lambda_2}{\underset{\lambda\in \epsilon_n,\lambda=\varphi(\lambda)}\prod}\widetilde{OSP}(\lambda)'\times$$ $$\underset{\lambda>\varphi(\lambda)}{\underset{\lambda=(\lambda_1,\lambda_1)\in \epsilon_n}\prod}SU_{\frac{n_\lambda}{2}}(q^\frac{1}{2})^2\times \underset{\lambda=\varphi(\lambda)}{\underset{\lambda=(\lambda_1,\lambda_1)\in \epsilon_n}\prod}\widetilde{OSP}(\lambda,+)'^2.$$ 
If $\F_q=\F_p(\alpha)=\F_p(\alpha+\alpha^{-1})$ and $n \equiv 2~ (\bmod4)$, then the morphism from $\mathcal{A}_{D_n}$ to $\mathcal{H}_{D_n,\alpha}^\times \simeq \underset{\lambda_1>\lambda_2}{\underset{\lambda \vdash\vdash n}\prod}GL_{n_\lambda}(\F_q) \times \underset{\lambda=(\lambda_1,\lambda_1)\vdash n}\prod GL_{n_{\lambda,+}}(q)\times GL_{n_{\lambda,-}}(q)$ factorizes through the morphism
$$\Phi_n: \mathcal{A}_{D_n} \rightarrow SU_{n-1}(q^\frac{1}{2}) \times SU_n(q^\frac{1}{2}) \times \underset{\lambda_1>\lambda_2}{\underset{\lambda\in \epsilon_n,\lambda>\varphi(\lambda)}\prod} SU_{n_\lambda}(q^\frac{1}{2})\times \underset{\lambda_1> \lambda_2}{\underset{\lambda\in \epsilon_n,\lambda=\varphi(\lambda)}\prod}\widetilde{OSP}(\lambda)'\times$$ $$\underset{\lambda>\varphi(\lambda)}{\underset{\lambda=(\lambda_1,\lambda_1)\in \epsilon_n}\prod}SU_{\frac{n_\lambda}{2}}(q^\frac{1}{2})^2\times \underset{\lambda=\varphi(\lambda)}{\underset{\lambda=(\lambda_1,\lambda_1)\in \epsilon_n}\prod}SU_{\frac{n_\lambda}{2}}(q^\frac{1}{2}).$$ 
In all of the above, $\widetilde{OSP}(\lambda)$ is the group of isometries associated with the bilinear form over $\F_{q^\frac{1}{2}}$ obtained from the one in Proposition \ref{bilin2} using Proposition \ref{coolprop}.
\end{theo}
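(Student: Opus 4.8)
The final statement is Theorem \ref{result5}, the unitary analogue of Theorem \ref{result4}, establishing that the image of $\mathcal{A}_{D_n}$ in the finite Hecke algebra of type $D$ factors through the displayed epimorphism $\Phi_n$ when $\F_q = \F_p(\alpha)\neq \F_p(\alpha+\alpha^{-1})$. The plan is to follow exactly the same architecture as the proof of Theorem \ref{result4}, replacing every linear group by its unitary counterpart and tracking the new field of definition, which becomes $\F_{q^{1/2}}$ throughout because of Proposition \ref{unitary2}. First I would explain how the factorization itself (the passage from $\prod GL_{n_\lambda}$ to the product of $\Phi_n$) is forced by the isomorphism data: by Proposition \ref{isomorphisme2}, two irreducible constituents of $\mathcal{H}_{D_n,\alpha}$ restrict to the same representation of $\mathcal{A}_{D_n}$ (up to duality, Galois twist by $\epsilon$, and the sign ambiguity for the $V_{\lambda,\pm}$) precisely in the combinatorially described situations, and Goursat's Lemma (Lemma \ref{Goursat}) together with Lemma \ref{abel} then collapses the product of general linear groups onto the stated product of quasi-simple groups; the unitary structure on each factor comes from Lemma \ref{Harinordoquy} applied to the constituents satisfying $R_\lambda \simeq \epsilon\circ R_\lambda^\star$, which in the present field case is every $\lambda$ by Proposition \ref{unitary2}.

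Next I would carry out the surjectivity of each individual projection $R_\lambda(\mathcal{A}_{D_n}) = G(\lambda)$. For double-partitions with an empty component this is exactly Theorem \ref{empty}(2), and for the two hook families it is Proposition \ref{timberwolves}. For the remaining $\lambda\in\epsilon_n$ one proceeds by induction on $n$, using the branching rule (Proposition \ref{branch}) to place, inside $R_\lambda(\mathcal{A}_{D_n})$, a sufficiently large subgroup coming from $R_\mu(\mathcal{A}_{D_{n-1}})$ for suitable $\mu\subset\lambda$ — a natural $SU_2(q^{1/2})$ or $SU_3(q^{1/2})$, or a (twisted) diagonal embedding of such — and then invoking the Guralnick--Saxl theorem (Theorem \ref{CGFS}) to rule out all primitive, tensor-indecomposable possibilities except a classical group in a natural representation, after which Proposition \ref{isomorphisme2} pins down the form preserved and hence which classical group occurs. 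The small cases $n\leq 4$ (or $n\leq 5$ where $n_\lambda\leq 10$) have to be handled by hand with the low-dimensional maximal-subgroup tables of \cite{BHRC}, exactly as in the type $B$ argument, and then Proposition \ref{coolprop} is used to descend the $\lambda=\varphi(\lambda)$ constituents to a bilinear form over $\F_{q^{1/2}}$, giving the groups $\widetilde{OSP}(\lambda)'$.

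Finally I would assemble the pieces with Goursat's Lemma inductively over the chosen order on double-partitions, showing that the image of $\mathcal{A}_{D_n}$ in each partial product is the full product: since $\mathcal{A}_{D_n}$ is perfect for $n\geq 5$, each Goursat quotient is either abelian (done) or would force an isomorphism of projective classical groups of incompatible dimensions or, after a twist by a character trivial on the derived subgroup, an isomorphism $R_{\lambda|\mathcal{A}_{D_n}}\simeq S^\Phi(R_{\mu|\mathcal{A}_{D_n}})$ contradicting Proposition \ref{isomorphisme2}; the eigenvalue comparison on $S_1$ (whose spectrum is $\{\alpha,-1\}$ on every constituent) kills the possible Galois twist $\Phi$, just as in Lemma \ref{gougou}. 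The genuinely new obstacle compared with type $B$, and the part I expect to be hardest, is the interaction of the split modules $V_{\lambda,\pm}$ with the Galois descent: one must keep careful track of whether $n\equiv 0$ or $2 \pmod 4$ (since by Proposition \ref{bilin2} and Proposition \ref{transpose2} the duality swaps $V_{\lambda,+}\leftrightarrow V_{\lambda',-}$ in the $n\equiv 2$ case but preserves the sign in the $n\equiv 0$ case), and correspondingly whether the $\epsilon$-twisted self-duality gives a unitary form on each $V_{\lambda,\pm}$ separately or only on $V_{\lambda,+}\oplus V_{\lambda',-}$; this is what produces the asymmetry between the $\widetilde{OSP}(\lambda,+)'^2$ term for $n\equiv 0$ and the $SU_{n_\lambda/2}(q^{1/2})$ term for $n\equiv 2$ in the statement, and verifying that Goursat's Lemma then behaves correctly on these paired factors requires the refined branching rule and a careful bookkeeping of which pairs $(\lambda,\varphi(\lambda))$ are identified.
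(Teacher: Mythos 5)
Your proposal is correct and follows exactly the same architecture the paper uses: factorization via Propositions \ref{bilin2}--\ref{isomorphisme2} and Lemmas \ref{Harinordoquy}/\ref{abel}, surjectivity per constituent by induction via the branching rule and Guralnick--Saxl with \cite{BHRC} tables for the small cases, descent to $\F_{q^{1/2}}$ via Proposition \ref{coolprop}, and reassembly by Goursat with perfection of $\mathcal{A}_{D_n}$; the paper itself dismisses the unitary case with the single remark that ``all the arguments are analogous,'' so your outline is if anything more explicit than the source. You also correctly single out the $n\equiv 0$ versus $n\equiv 2 \pmod 4$ behaviour of the split modules $V_{\lambda,\pm}$ as the genuinely new feature relative to type $B$, which is precisely what drives the case distinction in the statement.
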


Those two theorems (except for the surjectivity) follow Propositions \ref{bilin2}, \ref{transpose2}, \ref{chaise}, \ref{unitary2}, \ref{timberwolves}, Theorem \ref{empty} and Proposition \ref{isomorphisme2}. It now remains to check that $\Phi_n$ is surjective in all cases.

\subsection{The case $n=4$}

In this subsection, we prove the result for $n=4$.

The double-partitions to consider for $n=4$ are $([4],\emptyset)$, $([3,1],\emptyset)$, $([2,2],\emptyset)$, $([2,1,1],\emptyset)$, $([1^4],\emptyset)$, $([3],[1])$, $([2,1],[1])$, $([1^3],[1])$, $([2],[2])$, $([2],[1^2])$ and $([1^2],[1^2])$.

By Proposition \ref{transpose2}, if we know the image for $\lambda$, we know the image for $\varphi(\lambda)$. By Proposition \ref{empty}, we know the image for doubles-partitions with an empty component. By Proposition \ref{timberwolves}, we know the image for $([1^3],[1])$ and by Proposition \ref{chaise}, we know the image for $([2],[1^2])$ using the image of $([1^3],[1])$. The only double-partitions left to consider are $([1^2],[1^2])$ and $([2,1],[1])$. 

\begin{lemme}\label{Berkeley}
If $\F_q=\F_p(\alpha)=\F_p(\alpha+\alpha^{-1})$, then $R_{[2,1],[1]}(\mathcal{A}_{D_4}) =SP_8(q)$.

If $\F_q=\F_p(\alpha)\neq \F_p(\alpha+\alpha^{-1})$, then $R_{[2,1],[1]}(\mathcal{A}_{D_4}) =SP_8(q^\frac{1}{2})$.
\end{lemme}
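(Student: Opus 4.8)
The plan is to follow the same strategy used in type $B$ for the double-partition $([2,1],[1])$ (Lemma \ref{platypus}), but adapted to the constraints forced by type $D$. First I would identify the restriction of $R_{([2,1],[1])}$ to $\mathcal{A}_{D_3}$. Using Proposition \ref{branch} (the branching rule for type $D$) together with the already-established images for $n=3$, the restriction to $\mathcal{A}_{D_3}$ contains a product of copies of small special linear groups, each generated by transvections. Since $\mathcal{A}_{D_n}$ is normally generated by $\mathcal{A}_{D_{n-1}}$ (the argument from \cite[Lemma 2.1]{BMM} applies here, exactly as in the type $B$ proof, using that $U=TS_1T$ commutes with $S_i$ for $i\geq 3$), the group $G=R_{([2,1],[1])}(\mathcal{A}_{D_4})$ is generated by transvections as well.

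Next I would determine which geometry $G$ preserves. By Proposition \ref{bilin2}(3) and Proposition \ref{transpose2}, since $([2,1],[1])=\varphi(([2,1],[1]))$ (one checks $\lambda=\lambda'$ here, so $\varphi(\lambda)=\lambda'=\lambda$) and $\lambda\neq(\lambda_2,\lambda_1)$, the representation preserves a non-degenerate bilinear form on $V_\lambda$, which is symmetric or skew-symmetric according to $\tilde\nu(\lambda)$. One computes $\tilde\nu(([2,1],[1]))=-1$ (here $p\neq 2$ in type $D$ by hypothesis), so the form is skew-symmetric and $G\subset SP_8(q)$ (respectively $G\subset SP_8(q^{1/2})$ in the unitary case, after applying Proposition \ref{isomorphisme2}(10) and Lemma \ref{Harinordoquy} to conjugate into $SU_8(q^{1/2})$ and then Proposition \ref{coolprop} to descend the symplectic form to $\F_{q^{1/2}}$). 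So the containment "$\subseteq$" is free; the content is the reverse inclusion.

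For the reverse inclusion I would invoke the classification of irreducible subgroups of $GL_n(q)$ generated by transvections — Theorem \ref{transvections} when $p>3$ — which forces $G$ conjugate to $SL_8(q')$, $SP_8(q')$ or $SU_8(q'^{1/2})$ for some $q'\mid q$. The $SL$ and $SU$ possibilities are excluded by Proposition \ref{isomorphisme2} (the representation is not isomorphic to its dual, so it cannot preserve the zero-form needed for $SL$; and the hermitian-form obstruction is ruled out by comparing $\epsilon(\alpha+\alpha^{-1})$ with $\alpha+\alpha^{-1}$ using that $\F_q=\F_p(\alpha)$, exactly as in the type $B$ argument — one exhibits $\diag(\alpha,\alpha^{-1},1,\dots,1)$ inside $G$ coming from $H$). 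The field is pinned down to $q'=q$ (resp. $q'=q^{1/2}$) by Lemma \ref{field} applied to the natural $SL_3(q)$ (resp. $SL_3(q^{1/2})$) sitting inside $G$ via the branching rule, since the field generated by its traces is $\F_q$ (resp. $\F_{q^{1/2}}$). This yields $G=SP_8(q)$ (resp. $SP_8(q^{1/2})$).

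The main obstacle is the characteristic-small cases: Theorem \ref{transvections} requires $p>3$, so $p=3$ must be handled separately, presumably with the same chain of theorems used in type $B$ for $p=2$ — Kantor's theorem and the classification of groups generated by a conjugacy class of transvections — combined with transvection-counting to rule out the exotic possibilities (here the relevant count is that $SL_3(q)\subset G$ contributes far more transvections than any exceptional configuration allows, and $SP_8$ is confirmed by the bilinear-form obstruction and $SU/SL$ excluded as above). I expect this sub-case to be the one requiring the most care, together with checking that the $n=3$ input used by the branching rule is precisely what is claimed; everything else is a faithful transcription of the type $B$ template with the symplectic conclusion in place of the special linear one.
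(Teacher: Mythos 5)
Your containment step ($G\subset SP_8(q)$ via Proposition~\ref{bilin2}, and the descent to $SP_8(q^{1/2})$ in the unitary case via Propositions~\ref{isomorphisme2} and~\ref{coolprop}) is correct, including the computation $\tilde\nu(([2,1],[1]))=-1$. But the core of your reverse-inclusion argument has a genuine gap.

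You claim that $H=R_{([2,1],[1])}(\mathcal{A}_{D_3})$ is a product of special linear groups each contributing transvections of $GL_8$, and hence (by normal generation) $G$ is generated by transvections, putting you in a position to apply Theorem~\ref{transvections}. This fails in type $D$. The branching gives $V_{([2,1],[1])}\big|_{\mathcal{H}_{D_3}}\cong V_{([2],[1])}\oplus V_{([1^2],[1])}\oplus V_{([2,1],\emptyset)}$, and by Proposition~\ref{isomorphisme2}(6) one has $\varphi(([2],[1]))=([1^2],[1])$, so $R_{([1^2],[1])}\big|_{\mathcal{A}_{D_3}}\simeq R_{([2],[1])}^\star\big|_{\mathcal{A}_{D_3}}$. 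Goursat's lemma therefore glues these two $3$-dimensional blocks together, and $H\cong SL_3(q)\times SL_2(q)$ with the $SL_3(q)$ factor sitting in a \emph{twisted diagonal} embedding $M\mapsto\diag(M,{}^t\!M^{-1},I_2)$. A transvection $s\in SL_3(q)$ then maps to a rank-$2$ unipotent, not a transvection. So the only transvections of $GL_8(q)$ inside $H$ come from the natural $SL_2(q)$ block, whose normal closure in $H$ is just $\{1\}\times SL_2(q)$, not all of $H$. The type-$B$ argument you are transcribing genuinely relies on all three blocks being natural $SL$'s (so that $H$ is normally generated by $GL_8$-transvections), and that structural fact breaks here. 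Consequently you cannot conclude that $G$ is generated by transvections, and Theorem~\ref{transvections} (or its Kantor replacement for $p=3$) does not apply. You would be trying to prove transvection-generation essentially by assuming the conclusion $G=SP_8(q)$.

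The paper avoids transvection classification entirely at this step: after establishing $G\subset SP_8(q)$ (resp.\ $SP_8(q^{1/2})$), it shows $G$ is primitive, tensor-indecomposable, irreducible, perfect and not realizable over a proper subfield (using Lemma~\ref{field} on the \emph{natural} $SL_2(q)$ block only), then rules out every maximal subgroup of $SP_8$ in Tables~8.48--8.49 of \cite{BHRC} by cardinality; in the unitary case one additional possibility $GU_4(q^{1/2}).2$ survives the coarse bounds and is excluded by a second pass through the maximal subgroups of $SU_4(q^{1/2})$. That route only needs the \emph{order} of $H$, not its transvection content, which is exactly what sidesteps the twisted-diagonal obstruction.
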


\begin{proof}
Assume first that $\F_q=\F_p(\alpha)=\F_p(\alpha+\alpha^{-1})$. Using Proposition \ref{bilin2}, there exists $P\in GL_8(q)$ such that $G=PR_{[2,1],[1]}(\mathcal{A}_{D_4})P^{-1}\subset SP_8(q)$. Using Proposition \ref{branch}, we have that $R_{[2,1],[1]}(\mathcal{A}_{D_3})=R_{[2],[1]}\times R_{[1^2],[1]}\times R_{[2,1],\emptyset}(\mathcal{A}_{D_3})\simeq SL_3(q)\times SL_2(q)$, where $SL_3(q)$ is in a twisted diagonal embedding and $SL_2(q)$ is in a natural representation using Goursat's Lemma and the previous arguments. Using the same arguments as before and Lemma \ref{field} with the natural representation of $SL_2(q)$, we know $G$ is primitive, tensor-indecomposable, irreducible, perfect and cannot be realized in a natural representation over a proper subfield of $\F_q$. This implies that $G$ cannot be included in a maximal subgroup of class $\mathcal{C}_1, \mathcal{C}_2,\mathcal{C}_4$ or $\mathcal{C}_5$. Using the fact that the order of $SL_3(q)\times SL_2(q)$ is $q^4(q^2-1)^2(q^3-1)$ and the fact that $\alpha$ is of order greater than 16, we have that $q>17$ so $q>19$ and $\vert G\vert \geq 19^4(19^2-1)(19^3-1)$. Looking at the Tables 8.48. and 8.49. in \cite{BHRC}, we have by cardinality arguments that $G$ can be included in no maximal subgroup of $SP_8(q)$, so $G=SP_8(q)$.

Assume now $\F_q=\F_p(\alpha)\neq \F_p(\alpha+\alpha^{-1})$. There exists $P\in GL_8(q)$ such that $G=PR_{[2,1],[1]}(\mathcal{A}_{D_4})P^{-1}\subset SP_8(q^\frac{1}{2})$ and $G$ contains $SU_3(q^\frac{1}{2})\times SU_2(q^\frac{1}{2})$, where $SU_3(q^\frac{1}{2})$ is in a twisted diagonal embedding and $SU_2(q^\frac{1}{2})$ is in a natural representation. We can no longer use Lemma \ref{field} in this case, but since $\epsilon(\alpha)=\alpha^{-1}$, we have up to conjugation that $\diag(I_6,\begin{pmatrix}
\alpha & 0\\
0 & \alpha^{-1}
\end{pmatrix})\in G$. It follows that $\alpha+
\alpha^{-1}$ belongs to the field generated by the traces of the elements of $G$. This shows that any field over which $G$ is realized in a natural representation contains $\F_{q^\frac{1}{2}}$. By the above, in this case, we have that $G$ is primitive, tensor-indecomposable, irreducible, perfect and cannot be realized in a natural representation over a proper subfield of $\F_{q^\frac{1}{2}}$. This implies that $G$ cannot be included in a maximal subgroup of $SP_{8}(q^\frac{1}{2})$ of class $\mathcal{C}_1, \mathcal{C}_2,\mathcal{C}_4$ or $\mathcal{C}_5$. Since $\alpha^{q^\frac{1}{2}}=\epsilon(\alpha)=\alpha^{-1}$, we have that $q^\frac{1}{2}+1>16$ and so $q^\frac{1}{2}\geq 17$ because $p\neq 2$. Looking again at the tables 8.48. and 8.49. in \cite{BHRC} we get that $G=SP_8(q^\frac{1}{2})$ or $G\subset GU_4(q^\frac{1}{2}).2$. Since $G$ is perfect, the latter would imply that $G\subset SU_4(q^\frac{1}{2})$ and that there is some copy of $SU_3(q^\frac{1}{2})\times SU_2(q^\frac{1}{2})$ inside $SU_4(q^\frac{1}{2})$. Looking at all the maximal subgroups in tables 8.10. and 8.11., we see using only cardinality arguments, that such a copy cannot be included in any maximal subgroup of $SU_4(q^\frac{1}{2})$ and so such a copy is equal to $SU_4(q^\frac{1}{2})$. This leads to a contradiction. This proves that $G=SP_8(q^\frac{1}{2})$ and concludes the proof of the lemma. 
\end{proof}

\begin{lemme}
If $\F_q=\F_p(\alpha)=\F_p(\alpha+\alpha^{-1})$, we have $R_{([1^2],[1^2]),+}(\mathcal{D}_4)=R_{([1^2],[1^2]),-}(\mathcal{D}_4)=SL_3(q)$.

If $\F_q=\F_p(\alpha)\neq \F_p(\alpha+\alpha^{-1})$, we have $R_{([1^2],[1^2]),+}(\mathcal{D}_4)=R_{([1^2],[1^2]),-}(\mathcal{D}_4)=SU_3(q^\frac{1}{2})$.
\end{lemme}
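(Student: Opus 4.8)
The representation $R_{([1^2],[1^2])}$ of $\mathcal{H}_{D_4,\alpha}$ has dimension $6$, and since $([1^2],[1^2])$ has equal components, it splits as $V_{([1^2],[1^2]),+}\oplus V_{([1^2],[1^2]),-}$ with each summand of dimension $3$. By Proposition \ref{isomorphisme2}(1) each restriction to $\mathcal{A}_{D_4}$ is absolutely irreducible, and the two summands are swapped by $\sigma$, so it is enough to identify $G=R_{([1^2],[1^2]),+}(\mathcal{A}_{D_4})\subset GL_3(\overline{\F_p})$. First I would use the branching rule (Proposition \ref{branch}, case $\lambda=\mu$) to compute $R_{([1^2],[1^2]),+|\mathcal{H}_{D_3,\alpha}}=\bigoplus_{\tilde\mu\subset[1^2]}V_{[1^2],\tilde\mu}=V_{[1^2],[1]}$, which is the $3$-dimensional representation of $\mathcal{A}_{D_3}$. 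By Proposition \ref{timberwolves} applied with $n=3$, $R_{([1^2],[1])}(\mathcal{A}_{D_3})$ equals $SL_3(q)$ in the case $\F_q=\F_p(\alpha)=\F_p(\alpha+\alpha^{-1})$ and $SU_3(q^\frac12)$ in the case $\F_q=\F_p(\alpha)\neq\F_p(\alpha+\alpha^{-1})$.

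Next I would observe that $\mathcal{A}_{D_4}$ is normally generated by $\mathcal{A}_{D_3}$: indeed by \cite[Lemma 2.1]{BMM} $\mathcal{A}_{A_n}$ is normally generated by $\mathcal{A}_{A_{n-1}}$ for $n\geq 4$, and the same argument applies here (the extra generators of $A_{D_n}$ over $A_{A_{n-1}}$ behave as in type $B$). Hence $G$ contains a copy $H$ of $SL_3(q)$ (resp.\ $SU_3(q^\frac12)$) and is generated by the $\mathcal{A}_{D_4}$-conjugates of $H$. Since $H$ is already an absolutely irreducible subgroup of $GL_3$, the group $G$ is also absolutely irreducible (already known from Proposition \ref{isomorphisme2}(1)). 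The point is now to show $G=H$: a finite irreducible subgroup of $GL_3$ containing $SL_3(q)$ (resp.\ $SU_3(q^\frac12)$) as a normal-closure generating piece must, by the description of subgroups of $GL_3$ containing a classical group in its natural representation, either equal a classical group over a field dividing $q$ (resp.\ $q^\frac12$) or lie in its normalizer. I would pin down the field using Lemma \ref{field}: the traces of elements of $H$ already generate $\F_q$ (resp.\ $\F_{q^\frac12}$), so no larger field can occur and no proper subfield either; and I would exclude $G$ preserving any bilinear or unitary (resp.\ bilinear) form by quoting Proposition \ref{isomorphisme2}(6),(7), (10) — i.e.\ $R_{([1^2],[1^2]),\pm}$ is not isomorphic to its dual, nor (in the unitary field case) to $\epsilon$ composed with its dual — which rules out $G$ being symplectic, orthogonal or unitary in the first case and being orthogonal or symplectic (or defined over $\F_{q^\frac14}$) in the second. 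This forces $G=SL_3(q)$ (resp.\ $SU_3(q^\frac12)$), and then $G=H$.

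The cleanest way to run the last paragraph without invoking heavy classification is simply: $H\subseteq G$, $H$ acts irreducibly on the $3$-dimensional space, and for every $g\in\mathcal{A}_{D_4}$ the conjugate $gHg^{-1}$ is again a copy of $SL_3(q)$ (resp.\ $SU_3(q^\frac12)$) inside $G$; since $\mathrm{SL}_3$ has no proper subgroup isomorphic to $\mathrm{SL}_3(q)$ other than itself once the field is fixed, and $G$ is generated by these conjugates, it remains to show they all coincide. For that I would use that $G$ preserves, or fails to preserve, the appropriate forms exactly as dictated by Proposition \ref{isomorphisme2}, together with Lemma \ref{field} to fix the field of definition; once $G$ is known to be a classical group in a natural $3$-dimensional representation over $\F_q$ (resp.\ $\F_{q^\frac12}$) containing $SL_3(q)$ (resp.\ $SU_3(q^\frac12)$), it must equal it. The main obstacle I anticipate is the bookkeeping in this last step — making sure the exclusion of unitary/orthogonal/symplectic alternatives and the field-of-definition argument are watertight in the two field cases simultaneously — but all the needed inputs (irreducibility, non-self-duality, trace field) are already available from the propositions and lemmas above, so no genuinely new computation is required.
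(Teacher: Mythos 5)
Your proposal reaches the correct conclusion, but it takes a considerably longer path than the paper's one-line argument, and in doing so it obscures the reason the statement is easy. After the branching computation $R_{([1^2],[1^2]),\pm|\mathcal{H}_{D_3,\alpha}}=V_{[1^2],[1]}$, you immediately have $SL_3(q)=R_{([1^2],[1])}(\mathcal{A}_{D_3})\subseteq R_{([1^2],[1^2]),\pm}(\mathcal{A}_{D_4})\subseteq SL_3(q)$ in the split case (the upper bound is free: the Hoefsmit model is $\F_q$-rational, and $\mathcal{A}_{D_4}$ is a derived subgroup hence maps into determinant-one matrices), and in the unitary case the upper bound $\subseteq SU_3(q^{1/2})$ (up to conjugation) is supplied by Proposition~\ref{unitary2}. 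At that point the lemma is finished with no further work. Everything in your second and third paragraphs — normal generation of $\mathcal{A}_{D_4}$ by $\mathcal{A}_{D_3}$, the discussion of $\mathcal{A}_{D_4}$-conjugates of $H$, exclusion of alternative classical types, and the Lemma~\ref{field} argument to pin down the field — is unnecessary machinery, and the phrase \emph{``the point is now to show $G=H$''} misidentifies where the difficulty would lie: there is no room between $H$ and the ambient group. One small inaccuracy worth fixing as well: $\sigma$ does not swap $V_{\lambda,\lambda,+}$ and $V_{\lambda,\lambda,-}$ — they are its $+1$- and $-1$-eigenspaces, so $\sigma$ preserves each. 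The reduction to treating only the $+$ summand is harmless anyway, since the branching argument applies verbatim to both summands, but the stated justification is wrong.
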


\begin{proof}
The result follows from Proposition \ref{branch} and the fact that $R_{([1^2],[1])}(\mathcal{D}_3)$ is equal to the group we want in both cases. 
\end{proof}

\subsection{Surjectivity of $\Phi_n$ for $n\geq 5$}

In this subsection, we use the previous subsections to prove by induction on $n$ the main results for type $D$.

Assume first that $\F_q=\F_p(\alpha)=\F_p(\alpha+\alpha^{-1})$. Using Proposition \ref{isomorphisme2}, by the same kind of arguments as for type $B$, we can use Goursat's Lemma to show the morphism is surjective upon each component. This means it is sufficient to show the following theorem.

\begin{theo}\label{Sandburg}
Let $\lambda=(\lambda_1,\lambda_2)\vdash\!\vdash n$ not a hook, such that $\lambda_1\geq \lambda_2$. We write $G(\lambda)=R_{\lambda}(\mathcal{A}_{D_n})$ if $\lambda_1> \lambda_2$, $G(\lambda,+)=R_{\lambda,+}(\mathcal{A}_{D_n})$ and $G(\lambda,-)=R_{\lambda,-}(\mathcal{A}_{D_n})$ otherwise. We then have the following possibilities.
\begin{enumerate}
\item If $\lambda=([2,1^{n-2}],\emptyset)$, then $G(\lambda)=SL_{n-1}(q)$.
\item If $\lambda=([1^{n-1}],[1])$, then $G(\lambda)=SL_n(q),$
\item If $\lambda \in \epsilon_n, \lambda_1>\lambda_2$ and $\lambda>\varphi(\lambda)$, then $G(\lambda)=SL_{n_\lambda}(q)$,
\item If $\lambda \in \epsilon_n, \lambda_1>\lambda_2$ and $\lambda=\varphi(\lambda)$, then we have the following possibilities.
\begin{enumerate}
\item If $\tilde{\nu}(\lambda)=-1$, then $G(\lambda)=SP_{n_\lambda}(q)$.
\item If $\tilde{\nu}(\lambda)=1$, then $G(\lambda)=\Omega_{n_\lambda}^+(q)$.
\end{enumerate}
\item If $\lambda=(\lambda_1,\lambda_1)\in \epsilon_n$, then we have the following possibilities.
\begin{enumerate}
\item If $\varphi(\lambda)>\lambda$, then $G(\lambda,+)=G(\lambda,-)=SL_{\frac{n_{\lambda}}{2}}(q)$.
\item If $\varphi(\lambda)=\lambda$, then we have the following possibilities.
\begin{enumerate}
\item If $n\equiv 0~(\bmod 4)$ then
\begin{enumerate}
\item  if $\tilde{\nu}(\lambda)=-1$ then $G(\lambda,+)=G(\lambda,-)=SP_{n_\lambda}(q)$,
\item if $\tilde{\nu}(\lambda)=1$ then $G(\lambda,+)=G(\lambda,-)=\Omega_{n_\lambda}^+(q)$.
\end{enumerate}
\item If $n\equiv 2~(\bmod 4)$ then $G(\lambda,+)=G(\lambda,-)=SL_{\frac{n_\lambda}{2}}(q)$.
\end{enumerate} 
\end{enumerate}
\end{enumerate}
\end{theo}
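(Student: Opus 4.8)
The plan is to prove Theorem \ref{Sandburg} by induction on $n$, using the case $n=4$ established in the previous subsection as the base. The overall strategy mirrors the proof of Theorem \ref{hortillonage} in type $B$: first observe that Theorem \ref{empty} and Propositions \ref{timberwolves}, \ref{chaise}, \ref{transpose2} already handle double-partitions with an empty component and the hook-related cases, so we may restrict attention to $\lambda\in\epsilon_n$ with no empty component (and to the two split pieces $V_{\lambda,\pm}$ when $\lambda=(\lambda_1,\lambda_1)$). For such $\lambda$ we want to apply the Guralnick--Saxl theorem (Theorem \ref{CGFS}) to $G=R_\lambda(\mathcal{A}_{D_n})$ (or $G(\lambda,\pm)$), so the body of the proof is a sequence of verifications: that $G$ is irreducible (Proposition \ref{isomorphisme2}(1)), primitive, tensor-indecomposable, not the exceptional alternating/symmetric case, and that $v_G(V)\le\max(2,\tfrac{\sqrt d}{2})$, so that we land in case (1) of Theorem \ref{CGFS}, i.e. $G$ is a classical group in its natural representation over some subfield $\F_{q'}$. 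Then one pins down $q'=q$ (or $q^{1/2}$ in the appropriate split/unitary cases) using Lemma \ref{field} together with the explicit diagonal elements $\diag(\alpha,\alpha^{-1},1,\dots,1)$ coming from restriction to parabolic subgroups, and one determines which classical group occurs by invoking Proposition \ref{bilin2} (which produces the invariant bilinear form when $\lambda=\varphi(\lambda)$, with type dictated by $\tilde\nu(\lambda)$) and Proposition \ref{isomorphisme2}(6)--(9) (which forbids invariant forms when $\lambda\neq\varphi(\lambda)$).

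The key inputs that make the Guralnick--Saxl hypotheses checkable are the embeddings of small classical groups inside $G$ coming from the branching rule (Proposition \ref{branch}) and surjectivity of $\Phi_{n-1}$ (the induction hypothesis). Concretely, as in type $B$ one shows that for $\lambda\in\epsilon_n$ with more than two rows and columns there is a sub-double-partition $\mu\subset\lambda$ containing $([2,1],[1])$ or $([1],[2,1])$ and, depending on whether $\varphi(\mu)\subset\lambda$ or not, $G$ contains either a natural $SL_{n_\mu}(q)$ or a twisted diagonal $\{\diag(M,{}^t\!M^{-1},I)\}$, hence a natural $SL_2(q)$ or a twisted diagonal $SL_2(q)$. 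The presence of an element conjugate to $\diag(\xi,\xi^{-1},1,\dots)$ gives tensor-indecomposability via Lemma \ref{tens1} (and Lemma \ref{tens2} when only a twisted diagonal embedding is available); primitivity is handled exactly as in \cite{BMM} and type $B$ using that $\mathcal{A}_{D_n}$ is normally generated by $\mathcal{A}_{D_{n-1}}$ (which follows from the relations, since $U=TS_1T$ commutes with $S_i$ for $i\ge 4$ and $\mathcal{A}_{A_n}\subset\mathcal{A}_{D_n}$); the alternating-group case is excluded by the analogue of the proposition that any morphism $\mathcal{A}_{D_n}\to PSL_2(K)$ is trivial for $n$ large, combined with the case-by-case treatment of the finitely many small $n_\lambda$. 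The small-dimensional representations with $n_\lambda\le 10$ (and $n_\lambda=10$ requiring $p\ne 2$) must be treated by hand, exactly as in type $B$, using Theorems \ref{transvections}, \ref{LBJ}, \ref{wag}, and the divisibility/cardinality arguments against the maximal-subgroup tables of \cite{BHRC}.

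The genuinely new feature, and the main obstacle, is the split representations $V_{\lambda,\pm}$ for $\lambda=(\lambda_1,\lambda_1)$, which have no analogue in type $B$. Here one must separate three subcases exactly as in Proposition \ref{isomorphisme2}(8)--(9) and Proposition \ref{bilin2}(4)--(5): when $\varphi(\lambda)>\lambda$ the two pieces are dual to each other (or to the $\pm$ pieces of $\varphi(\lambda)$) and carry no invariant form, giving $SL_{n_\lambda/2}(q)$; when $\varphi(\lambda)=\lambda$ and $n\equiv0\pmod4$ each piece carries a non-degenerate bilinear form whose type is governed by $\tilde\nu(\lambda)$, giving $SP$ or $\Omega^+$; and when $\varphi(\lambda)=\lambda$ with $n\equiv2\pmod4$ the form pairs $V_{\lambda,+}$ with $V_{\lambda,-}$, so again one gets $SL_{n_\lambda/2}(q)$. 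The branching rule (Proposition \ref{branch}(5)) feeds these pieces with $\mathcal{A}_{D_{n-1}}$-images of the form $\bigoplus_{\tilde\mu\subset\mu}V_{\lambda,\tilde\mu}$, so the same embedding/Guralnick--Saxl machinery applies, but one has to keep careful track of which of $\varphi$, transposition and the $\pm$-labels is in play, and verify the analogue of the tensor-indecomposability lemma for rank-$4$ elements when the $\pm$ splitting forces $\lambda$ to contain $([2,1],[2,1])$. After $G(\lambda,\pm)$ is identified on each component, a final round of Goursat's Lemma (using that $\mathcal{A}_{D_n}$ is perfect for $n\ge 5$ by \cite{MR}, that the relevant quasi-simple groups are pairwise non-isomorphic, and that no isomorphism $R_{\lambda,\pm}\simeq R_{\mu,\pm}^{(\star)}$ exists beyond those recorded in Proposition \ref{isomorphisme2}) assembles surjectivity of $\Phi_n$; the unitary case $\F_q=\F_p(\alpha)\neq\F_p(\alpha+\alpha^{-1})$ is obtained by the identical argument with $SL$ replaced by $SU$, $q$ by $q^{1/2}$, and using Proposition \ref{coolprop} for the form-preserving cases, exactly as in cases $2$--$3$ and $4$--$6$ for type $B$.
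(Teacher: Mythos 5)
Your proposal correctly identifies the general architecture — induction on $n$ from the $n=4$ base, factoring via Goursat, using the branching rule plus the induction hypothesis to seed $G(\lambda)$ with small classical subgroups, then Guralnick--Saxl to force $G$ to be classical, and form-theoretic arguments (Propositions \ref{bilin2}, \ref{isomorphisme2}, \ref{coolprop}) to pin down which one. This is essentially the paper's framework. However, there is a genuine gap at the point where you claim that for $\lambda\in\epsilon_n$ one can always find $\mu\subset\lambda$ with $G(\lambda)$ containing a natural or twisted-diagonal $SL_{n_\mu}(q)$, hence an $SL_2(q)$ on which to run Lemma \ref{field}. This fails precisely in the case where $\lambda=(\lambda_1,\lambda_2)=\varphi(\lambda)=(\lambda_1',\lambda_2')$ with \emph{both} $\lambda_1$ and $\lambda_2$ square partitions and $\tilde\nu(\lambda)=1$: then every $\mu\subset\lambda$ satisfies $\tilde\nu(\mu)=\tilde\nu(\lambda)=1$ and the branching rule feeds $G(\lambda)$ only with $\Omega^+_{n_1}(q)\times\Omega^+_{n_2}(q)$, which contains no transvections and no natural $SL_2(q)$ — only long root elements of rank $2$. (The symplectic companion case $\tilde\nu(\lambda)=-1$ is fine, since $SP_2=SL_2$ and Serezkin--Zalesskii applies; the paper treats it via Theorem 1 of \cite{SZ}.)

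In that orthogonal case, Guralnick--Saxl still applies and tells you $G(\lambda)$ is "a classical group in a natural representation," but it does not by itself distinguish $\Omega^+_{n_\lambda}(q)$ from subfield or twisted subgroups like $\Omega^-_{n_\lambda}(q^{1/2})$ or $SU_{n_\lambda/2}(q')$ that are also irreducibly embedded in $\Omega^+_{n_\lambda}(q)$ and generated by long root elements, and Lemma \ref{field} is not available because there is no natural $SL_m$ or $SU_m$ subgroup to apply it to. The paper resolves this with a separate argument: it verifies $O_p(G(\lambda))=1$ via Clifford's theorem, invokes Kantor's Theorem I to get an explicit list of possibilities, and then rules out the unwanted ones by a $p$-exponent comparison, which requires a non-trivial growth estimate $a_{k^2}>81k^4$ for the number of standard Young tableaux of the $k\times k$ square partition, proved by induction from the branching rule. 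None of this appears in your outline, so as written the proposal does not reach the conclusion in case 4(b) (and the corresponding split case 5(b)(i), second subcase). You would need to add this Kantor-plus-cardinality step, or supply an alternative mechanism for pinning down the field and the sign of the orthogonal form when only long root elements are available.
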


\begin{proof}
For $n=4$, we have the result by the previous section. Theorem \ref{empty} gives us the result for double-partitions with an empty component and Proposition \ref{timberwolves} gives us the result for double-partitions with two rows or two columns and one of the components of size one. For $n\geq 5$, we proceed by induction but we must first treat the following cases separately : $([2,2],[1]), ([1^3],[1^2])$ and $(([1^3],[1^3]),\pm)$.

By Lemma \ref{Berkeley}, Theorem \ref{empty}, Proposition \ref{branch} and Goursat's Lemma, we have $R_{([2,2],[1])}(\mathcal{A}_{D_4})=SP_8(q)\times SP_2(q)$. By Theorem 1 of \cite{SZ} and Lemma 5.6. of \cite{BMM}, we have that $R_{([2,2],[1])}(\mathcal{D}_5)\in \{SL_{10}(q),SU_{10}(q^\frac{1}{2}),SP_{10}(q)\}$. Since $([2,2],[1])=\varphi(([2,2],[1]))$ and $\tilde{\nu}([2,2],[1])=(-1)^{\frac{4-2}{2}}(-1)^{\frac{1-1}{2}}=-1$, we have up to conjugation that $R_{([2,2],[1])}(\mathcal{D}_5)\subset SP_{10}(q)$ by Proposition \ref{bilin2} and so up to conjugation, we have $R_{([2,2],[1])}(\mathcal{D}_5)=SP_{10}(q)$.

In the same way, we have that $R_{([1^3],[1^2])}(\mathcal{D}_4)=SL_4(q)\times SL_3(q)\times SL_3(q)$ so $G(([1^3],[1^2]))=R_{([1^3],[1^2])}(\mathcal{D}_5)$ is in $\{SL_{10}(q),SU_{10}(q^\frac{1}{2}),SP_{10}(q)\}$. By Proposition \ref{isomorphisme2}, we know that $G(([1^3],[1^2]))$ preserves no bilinear form, so we only have to exclude the unitary case. Assume $G(([1^3],[1^2]))$ is included up to conjugation in $SU_{10}(q^\frac{1}{2})$. There then exists an automorphism $\epsilon$ of order $2$ of $\F_q$ such that each $M$ in $G(\lambda)$ is conjugated to ${}^t\!\epsilon((M^{-1}))$. In particular $G(([1^3],[1^2]))$ contains a natural $SL_2(q)$. This implies that $\diag(I_8,\begin{pmatrix}\alpha & 0\\
0 & \alpha^{-1}
\end{pmatrix})$ is conjugated to $\diag(I_8,{}^t\!\epsilon((\begin{pmatrix}
\alpha & 0\\
0 & \alpha^{-1}
\end{pmatrix})^{-1}))$. Taking the traces of those matrices implies that $\epsilon(\alpha+\alpha^{-1}+8) =\alpha+\alpha^{-1}+8$. We have that $\F_q=\F_p(\alpha)=\F_p(\alpha+\alpha^{-1})$ so this shows that $\epsilon$ is trivial which is a contradiction. It follows that $G(([1^3],[1^2]))=SL_{10}(q)$.

By Proposition \ref{branch} and the fact that $R_{([1^3],[1^2])}(\mathcal{D}_5)=SL_{10}(q)$, we have that $SL_{10}(q)\subset R_{([1^3],[1^3]),\pm}(\mathcal{D}_6)\subset SL_{10}(q)$. It follows that $R_{([1^3],[1^3]),\pm}(\mathcal{D}_6)= SL_{10}(q)$.

We now proceed to the induction on $n$ using Theorem \ref{CGFS}.

Let $n\geq 5$ and $\lambda\vdash\!\vdash n$. Suppose the theorem is true for $n-1$. We use Proposition \ref{branch} for different possibilities to show that $G(\lambda)$ or $G(\lambda,\pm)$ contains a subgroup verifying the same properties as in type B.

\begin{enumerate}
\item If $\lambda=(\lambda_1,\lambda_2)$ and $\lambda_1>\lambda_2$ and $\lambda\neq \varphi(\lambda)$ then $\varphi(\lambda)=(\lambda_1',\lambda_2')$ because the order we picked for partitions of $\frac{n}{2}$ verifies that, if $\lambda_1\neq \lambda_2'$ and $\lambda_1>\lambda_2$, then $\lambda_1'>\lambda_2'$. We then have $\lambda_1\neq \lambda_1'$ or $\lambda_2\neq \lambda_2'$.
\begin{enumerate}
\item If $\lambda_2'\neq \lambda_2$, then there exists $\mu_2 \subset \lambda_2$ such that $\mu_2'\not\subset\lambda_2$. We have that $(\lambda_1',\mu_2')\not\subset (\lambda_1,\lambda_2)$ because $\mu_2'\not\subset \lambda_2)$ and $(\mu_2',\lambda_1') \not\subset (\lambda_1,\lambda_2)$ because otherwise $\lambda_1'=\lambda_2$ and so $\lambda_2'=\lambda_1$. This shows that $G(\lambda)$ contains a natural $SL_3(q)$.
\item If $\lambda_2=\lambda_2'$ and $\lambda_1\neq \lambda_1'$, then there exists $\mu_1\subset \lambda_1$ such that $\mu_1'\not\subset\lambda_1$. We then have $(\mu_1',\lambda_2') \not\subset (\lambda_1,\lambda_2)$ because $\mu_1'\not\subset \lambda_1$ and $(\lambda_2',\mu_1')\not\subset (\lambda_1,\lambda_2)$ because $\lambda_2'\neq \lambda_1$. This shows that $G(\lambda)$ also contains a natural $SL_3(q)$ in this case.
\end{enumerate}
\item If $\lambda=(\lambda_1,\lambda_2)=\varphi(\lambda)$ and $\lambda_1>\lambda_2$, then
\begin{enumerate}
\item If $\varphi(\lambda)=(\lambda_1',\lambda_2')$, then
\begin{enumerate}
\item If $\lambda_1$ and $\lambda_2$ are square partitions, then $R_{\lambda}(\mathcal{D}_{n-1})=G(\mu_1,\lambda_2)\times G(\lambda_1,\mu_2)$ and since \break$\tilde{\nu}(\mu_1,\lambda_2)=\tilde{\nu}(\lambda_1,\mu_2)=\tilde{\nu}(\lambda)$, we have that :
\begin{enumerate}
\item If $\tilde{\nu}(\lambda)=1$, then $\Omega_{n_{(\mu_1,\lambda_2)}}^+(q)\times \Omega_{n_{(\lambda_1,\mu_2)}}^+(q)\subset G(\lambda)\subset \Omega_{n_{(\lambda_1,\lambda_2)}}^+(q)$.
\item If $\tilde{\nu}(\lambda)=-1$ then $SP_{n_{(\mu,\lambda_2)}}(q)\times SP_{n_{(\lambda_1,\mu_2)}}(q) \subset G(\lambda)\subset SP_{n_\lambda}(q)$. It follows that $G(\lambda)$ is an irreducible group generated by transvections because it is normally generated by the group on the left of our inclusions so by Theorem 1 of Serezkin-Zalesskii \cite{SZ}, we have that $G(\lambda)$ is equal to the group on the right and the theorem is proved in this case.
\end{enumerate}
\item If $\lambda_1$ or $\lambda_2$ is not a square partition then there exists $\mu \subset \lambda$ such that $\varphi(\mu)\neq \mu$. It follows that $\varphi(\mu)\subset \lambda$ or $\sigma(\varphi(\mu))\subset \lambda$, so $G(\lambda)$ contains a twisted diagonal $SL_3(q)$.
\end{enumerate}
\item If $\varphi(\lambda)=(\lambda_2',\lambda_1')$, then if $\mu \subset \lambda_2$, we have that $(\lambda_1,\mu)\subset (\lambda_1,\lambda_2), \varphi((\lambda_1,\mu))=(\lambda_1',\mu')\not\subset (\lambda_1,\lambda_2)$ because $\lambda_1\neq\lambda_1'$. We have that $(\mu',\lambda_1')\subset (\lambda_1,\lambda_2)$ so $G(\lambda)$ contains a twisted diagonal $SL_3(q)$.
\end{enumerate}
\item If $\lambda=(\lambda_1,\lambda_1)\neq (\lambda_1',\lambda_1')$, then there exists $\mu_1\subset \lambda_1$ such that $\mu_1'\not\subset \lambda_1$. It follows that $(\lambda_1',\mu_1')\not\subset (\lambda_1,\lambda_1)$ and $(\mu_1',\lambda_1')\not\subset (\lambda_1,\lambda_1)$. This shows that $G(\lambda,\pm)$ contains a natural $SL_3(q)$.
\item If $\lambda=(\lambda_1,\lambda_1) =(\lambda_1',\lambda_1)$ and $\lambda_1$ is not a square partition, then there exists $\mu_1\subset \lambda_1$ such that $\mu_1\neq \mu_1'$ so $(\lambda_1,\mu_1)\neq \varphi((\lambda_1,\mu_1))=(\lambda_1',\mu_1')$. We have that $(\mu_1',\lambda_1')\subset(\lambda_1,\lambda_1)$ so $G(\lambda,\pm)$ contains a twisted diagonal $SL_3(q)$.
\item If $\lambda=(\lambda_1,\lambda_1)=(\lambda_1',\lambda_1')$ and $\lambda_1$ is a square partition, then we have the two following possibilities.
\begin{enumerate}
\item If $n \equiv 0~(\bmod 4)$, then for all $\mu\subset \lambda$, we have that $\tilde{\nu}(\lambda)=\nu(\lambda_1)^2(-1)^{(\frac{n}{2})^2}=1=\tilde{\nu}(\mu)$. This is because if $\lambda_1$ is a square, then the only sub-partition $\mu_1$ of $\lambda_1$ verifies $\nu(\mu_1)=\nu(\lambda_1)$. By the branching rule, we have that $\Omega^+_{\frac{n_\lambda}{2}}(q)\subset G(\lambda,\pm) \subset \Omega_{\frac{n_\lambda}{2}}^+(q)$. It follows that $G(\lambda)=\Omega_{\frac{n_\lambda}{2}}^+(q)$ and the theorem is proved in this case.
\item If $n \equiv 2~(\bmod 4)$, then $\tilde{\nu}(\mu)=\nu(\lambda_1)^2=1$ for all $\mu\subset \lambda$. The branching rule shows that $\Omega_{\frac{n_\lambda}{2}}^+(q)\subset G(\lambda,\pm) \subset SL_{\frac{n_\lambda}{2}}(q)$. By Proposition \ref{isomorphisme2}, $G(\lambda,\pm)$ preserves no bilinear form so $G(\lambda,\pm)=SL_{\frac{n_\lambda}{2}}(q)$.
\end{enumerate}
\end{enumerate}

In all the cases where $G(\lambda)$ or $G(\lambda,\pm)$ contains a natural $SL_3(q)$ or a twisted diagonal $SL_3(q)$, we can use exactly the same arguments as in \cite{BMM} beacuse if the morphism $A_{A_n}$ to $A_{D_n}$ defined  by $S_i \mapsto S_i$ is trivial then $A_{D_n}$ is trivial. 

The only case we need to treat separately is $(([2,1],[2,1]),\pm)$ because $n=6$. We need a separate argument to show that $G(([2,1],[2,1]),\pm)$ is tensor-indecomposable. In this case $R_{([2,1],[2,1])}(\mathcal{A}_{D_5})=G([2,1],[1^2])\times G([2,1],[2])=SL_{20}(q)\times SL_{20}(q)$. If $G(([2,1],[2,1]),\pm) \subset SL_{40}(q) \otimes SL_2(q)$, then the morphism from \break$R_{([2,1],[2,1])}(\mathcal{D}_5)$ to $SL_2(q)$ is trivial. Since $R_{([2,1],[2,1])}(\mathcal{D}_5)$ normally generates $G(([2,1],[2,1]),\pm)$, $G(([2,1],[2,1]),\pm)$ is included in $SL_{40}(q)\times SL_{40}(q)$. This contradicts its irreducibility.

This shows that it is sufficient to take care of case $2.a.i.A$. Assume we are in case $2.a.i.A$. We then have that $G(\lambda)\subset \Omega_{n_\lambda}^+(q)$ is generated by a conjugacy class of long root elements and $G(\lambda)$ is irreducible. Since $p\neq 2$, if we check that $O_p(G(\lambda))\subset [G,G]\cap Z(G)$, then we can apply Theorem I in Kantor's article \cite{K}. Applying Clifford's Theorem (Theorem 11.1 of \cite{C-R}), we have that $Res_{O_p(G(\lambda))}^{G(\lambda)}(V)$ is semisimple and since $O_p(G(\lambda))$ is a $p$-group, its only irreducible representation over $\F_q$ is the trivial one. This shows that $Res_{O_p(G(\lambda))}^{G(\lambda)}(V)$ is trivial so $O_p(G(\lambda))= 1$ and all the assumptions of Theorem I of Kantor are verified (the minimal dimension in this case is greater than or equal to the dimension of $([3,3,3],[2,2])$ and the dimension of $([4,4,4,4],[1])$ which are $42\times 2\times \binom {13} {4}\geq 5$ and $17\times 24024\geq 5$). This shows that we are in one of the following cases :
\begin{enumerate}
\item $G(\lambda)=\Omega_{n_\lambda}^{+}(q'), q'|q$,
\item $G(\lambda)=\Omega_{n_\lambda}^{-}(q') \subset \Omega^{+}_{n_\lambda}(q'^2), q'^2 | q$ and $n_\lambda$ is even,
\item $G(\lambda)=SU_{\frac{n_\lambda}{2}}(q') \subset \Omega^{+}_{n_\lambda}(q'), n_\lambda \equiv 0~ (\bmod 4)$ and $q'|q$,
\item $G(\lambda)\subset \Omega_8^+(q'), q'|q$,
\item $G(\lambda)=[G_2(q'),G_2(q')] \subset \Omega_7(q'), q'|q$,
\item $G= ~^3 D_4(q')\subset \Omega_8^+(q'^3), q'^3|q$.
\end{enumerate}
Since $n\geq 13$, $\alpha^{q-1}=1$ and $\alpha$ is of order greater than $2n$, we have $q\geq 29$ and $n_\lambda \geq \min(84\binom{13}{4},17\times 24024)$. This proves that Cases $4$, $5$ and $6$ are excluded by cardinality arguments.

Let us show that $3.$ is also excluded by cardinality arguments. We write $\vert G\vert_p$ the cardinal of a Sylow $p$-subgroup of a group $G$, so that $\vert SU_{\frac{n_\lambda}{2}}(q') \vert_p=q'^{\frac{\frac{n_\lambda}{2}(\frac{n_\lambda}{2}-1)}{2}}$. We know that $G(\lambda)$ contains $\Omega_{n_1}^+(q)\times \Omega_{n_2}^+(q)$. It follows that if $\lambda_1$ is the square partition of $r$ and $\lambda_2$ is the square partition of $n-r<r$, writing $a_l$ for the number of standard tableaux associated with a square partition of $l\in \N^\star$, we have that $n_\lambda=\binom {n} {r} a_{r}a_{n-r}$, $n_1=\binom {n-1} {r-1} a_{r}a_{n-r}$ and $n_2=\binom {n-1} {r} a_{r}a_{n-r}$. Note that $a_r$ is even because $r>1$ and using the branching rule twice, we get that $a_r$ is equal to twice the dimension of the two partitions we get by removing first the only extremal node and then one of the two extremal nodes of the resulting partition. It follows that $\vert \Omega_{n_1}^+(q)\times \Omega_{n_2}^+(q)\vert_p = q^{\frac{n_1}{2}(\frac{n_1}{2}-1)+\frac{n_2}{2}(\frac{n_2}{2}-1)}$. To exclude $3$, it is sufficient to show that this quantity is strictly greater than $q^{\frac{\frac{n_\lambda}{2}(\frac{n_\lambda}{2}-1)}{2}}$. If we write $A$ the $q$-logarithm of the quotient of those two quantities, we have that :
\begin{eqnarray*}
A & = & \frac{n_1}{2}(\frac{n_1}{2}-1)+\frac{n_2}{2}(\frac{n_2}{2}-1)-\frac{\frac{n_\lambda}{2}(\frac{n_\lambda}{2}-1)}{2}\\
 & = & \frac{n_1}{2}^2+\frac{n_2}{2}^2-\frac{\frac{n_\lambda}{2}^2}{2}-\frac{n_\lambda}{4}\\
 & = & \frac{\frac{n_\lambda}{2}^2}{2}-2\frac{n_1}{2}\frac{n_\lambda-n_1}{2}-\frac{n_\lambda}{4}\\
 & = & \frac{(\frac{n_\lambda}{2}-n_1)^2}{2}-\frac{n_\lambda}{4}\\
 & = & \frac{\left(\frac{\binom {n} {r} a_ra_{n-r}}{2}-\binom {n-1} {r-1}a_r a_{n-r}\right)^2}{2}-\frac{\binom {n} {r} a_r a_{n-r}}{4}\\
 & = &  a_ra_{n-r} \left(\frac{(\binom {n} {r} (\frac{1}{2}-\frac{r}{n}))^2}{2}a_ra_{n-r}-\frac{\binom{n}{r}}{4}\right)\\
 & = & \frac{a_r a_{n-r}}{4} \binom {n} {r}(2a_r a_{n-r} \binom {n} {r} \left(\frac{2r-n}{2n})^2-1\right). 
\end{eqnarray*} 

This shows that $A > 0$ if and only if $2a_r a_{n-r} \binom {n} {r} \frac{(2r-n)^2}{4n^2}>1$. Using the branching rule and the hook formula, we get  : $a_1=1, a_4=2, a_9=42, a_{16}=\frac{16!}{7\times 6^2\times 5^3\times 4^4\times 3^3\times 2^2}=24024>81\times 16^2, a_{25}=701149020> 81\times 25^2$ and $a_{36}>81\times 36^2$. Let $k\geq 6$, assume $a_{k^2} > 81(k^2)^2$. The branching rule shows that $a_{(k+1)^2}> 2a_{k^2} > 81(2k^4)> 81(k^4+4k^3+6k^2+4k+1)=81((k+1)^2)^2$, the last inequality being true because $k\geq 6$. It follows that for all $k\geq 4$, we have that $a_{k^2}>81\times(k^2)^2$. In our case, we have that $r\geq 16$ or $r=9$ and $n-r=4$. If $r\geq 16$ and $n-r\geq 2$ then we have $a_ra_{n_r}\geq r^2(n-r)^2\geq 4r^2\geq 2r^2+2r(n-r)\geq (r+n-r)^2\geq n^2$. It follows that $2a_r a_{n-r} \binom {n} {r} \frac{(2r-n)^2}{4n^2}\geq 2n^2\binom {n} {r} \frac{1}{4n^2}=\frac{\binom {n} {r}}{2}>1.$ If $r\geq 16$ and $n-r=1$, then $2a_r a_{n-r} \binom {n} {r} \frac{(2r-n)^2}{4n^2}=\frac{8na_{n-1}}{4n^2}=\frac{2a_{n-1}}{n}>162>1.$ If $r=9$ and $n-r=4$, then $2a_r a_{n-r} \binom {n} {r} \frac{(2r-n)^2}{4n^2}=2\times 42\times 2 \binom {13} {9} \frac{(18-13)^2}{4\times 13^2}>1.$ This shows that independently of $r$ and $n-r$, we have that $A>0$. This proves that $3.$ is excluded.

We have that $\vert \Omega_{n_\lambda}^+(q^\frac{1}{2})\vert_p=q^{\frac{\frac{n_\lambda}{2}(\frac{n_\lambda}{2}-1)}{2}}$. The previous arguments show that $2.$ is also impossible.

The only remaining possibility is $1$ and using again the same arguments, we get $q'>q^\frac{1}{2}$ so $q'=q$ and this concludes the proof of Theorem \ref{Sandburg}.
\end{proof}

In the unitary case, i.e. $\F_q=\F_p(\alpha)=\F_p(\alpha+\alpha^{-1})$, all the arguments are analogous.

\bibliographystyle{plain}
\bibliography{mybib}

\end{document}